\numberwithin{equation}{section}
\def\R{\mathbb{R}}
\def\Z{\mathbb{Z}}
\def\Lam{\Lambda}
\def\1{\mathds{1}}
\renewcommand\le{\leqslant}
\renewcommand\ge{\geqslant}
\renewcommand\leq{\leqslant}
\renewcommand\geq{\geqslant}
\renewcommand\hat{\widehat}
\renewcommand\Re{\operatorname{Re}}
\renewcommand\Im{\operatorname{Im}}
\newcommand{\ft}[1]{\widehat #1}
\newcommand{\dotprod}[2]{\langle #1 , #2 \rangle}
\newcommand{\dist}{\operatorname{dist}}
\renewcommand{\div}{\operatorname{div}}
\newcommand{\define}[1]{\emph{#1}}
\newcommand{\interior}{\operatorname{int}}
\theoremstyle{plain}
\newtheorem{thm}{Theorem}[section]
\newtheorem{lem}[thm]{Lemma}
\newtheorem{corollary}[thm]{Corollary}
\newtheorem{problem}[thm]{Problem}
\newtheorem*{claim*} {Claim}
\newcommand{\thmref}[1]{Theorem~\ref{#1}}
\newcommand{\secref}[1]{Section~\ref{#1}}
\newcommand{\lemref}[1]{Lemma~\ref{#1}}
\newcommand{\corref}[1]{Corollary~\ref{#1}}
\newcommand{\exampref}[1]{Example~\ref{#1}}
\theoremstyle{definition}
\newtheorem{definition}[thm]{Definition}
\newtheorem*{definition*}{Definition}
\newtheorem*{remarks*}{Remarks}
\newtheorem*{remark*}{Remark}
\newtheorem{example}[thm]{Example}
\newenvironment{enumerate-math}
{\begin{enumerate}
\addtolength{\itemsep}{5pt}
}
{\end{enumerate}}
\newenvironment{enumerate-text}
{\begin{enumerate}
\addtolength{\itemsep}{5pt}
}
{\end{enumerate}}
\begin{document}

 \title{Fuglede's spectral set conjecture for convex polytopes}

\author{Rachel Greenfeld}
\address{Department of Mathematics, Bar-Ilan University, Ramat-Gan 52900, Israel}
\email{rachelgrinf@gmail.com}
\author{Nir Lev}
\address{Department of Mathematics, Bar-Ilan University, Ramat-Gan 52900, Israel}
\email{levnir@math.biu.ac.il}

\thanks{Research supported by ISF grant No.\ 225/13 and ERC Starting Grant No.\ 713927.}
\subjclass[2010]{42B10, 52C22}
\date{May 29, 2017}

\keywords{Fuglede's conjecture, spectral set, tiling, convex polytope}

\begin{abstract}
	Let $\Omega$ be a convex polytope in $\R^d$. We say that $\Omega$ is spectral if the
	space $L^2(\Omega)$ admits an orthogonal basis consisting of exponential
	functions. There is a conjecture, which goes back to Fuglede (1974), that $\Omega$ is
	spectral if and only if it can tile the space by translations. It is known that if $\Omega$
	tiles then it is spectral, but the converse was proved only in
	dimension $d=2$, by Iosevich, Katz and Tao. 
\par
	By a result due to Kolountzakis, if a convex polytope
	$\Omega\subset \R^d$ is spectral, then it must be centrally symmetric. We
	prove that also all the facets of $\Omega$ are centrally symmetric. These
	conditions are necessary for $\Omega$ to tile by translations.
\par
	We also develop an approach which allows us to prove that in
	dimension $d=3$, any spectral convex polytope $\Omega$ indeed tiles by
	translations. Thus we obtain that Fuglede's conjecture is true for 
	convex polytopes in $\R^3$. 
\end{abstract}

\maketitle


\section{Introduction} \label{secI1}
\subsection{}
Let $\Omega\subset \R^d$ be a bounded, measurable set of positive Lebesgue measure. A
countable set $\Lambda\subset \R^d$ is called a \define{spectrum} for $\Omega$ if the
system of exponential functions
\begin{equation}
	\label{eqI1.1}
	E(\Lambda)=\{e_\lambda\}_{\lambda\in \Lambda}, \quad e_\lambda(x)=e^{2\pi
	i\dotprod{\lambda}{x}},
\end{equation}
constitutes an orthogonal basis in $L^2(\Omega)$,
that is, the system is orthogonal and complete in the space. A set $\Omega$ which admits a spectrum
$\Lambda$ is called a \define{spectral set}. 
\par
The classical example of such a situation is
when $\Omega$ is the unit cube in $\R^d$, and $\Lambda$ is the integer lattice $\Z^d$.
Which other sets $\Omega$ are spectral? The study of this problem was initiated by Fuglede in
1974 \cite{Fug74}. For example, in that paper it was shown that a triangle and a disk in
the plane are not spectral sets. 
\par
The set $\Omega$ is said to \define{tile} the space by
translations along a countable set $\Lambda\subset\R^d$ if the family of sets
$\Omega+\lambda$ $(\lambda\in\Lambda)$  constitutes a partition of $\R^d$ up to measure
zero. In this case we will say that $\Omega+\Lambda$ is a \define{tiling}. Fuglede
observed in \cite{Fug74} the following connection between the concepts of spectrality and tiling:
\par
\define{Let $\Lambda$ be a lattice. If $\Omega+\Lambda$ is a tiling, then the dual lattice
$\Lambda^*$ is a spectrum for $\Omega$, and also the converse is true}.
\par
 Here, by a lattice we mean the image of $\Z^d$ under some invertible linear transformation,
and the dual lattice is the set of all vectors $\lambda^*$ such that
$\dotprod{\lambda}{\lambda^*}\in \Z$, $\lambda\in\Lambda$. 
\par
Fuglede conjectured that the
spectral sets could be characterized in geometric terms using the concept of tiling, in the
following way: \define{the set $\Omega$ is spectral if and only if it can tile the space by
translations}. This conjecture inspired extensive research over the years, and a number of interesting
results supporting the conjecture had been obtained. See, for example, the survey
given in \cite[Section 3]{Kol04}.
\par
On the other hand, it turned out that there also exist counter-examples to Fuglede's
conjecture. In \cite{Tao04}, Tao constructed in dimensions 5 and higher an example of a
set $\Omega$ which is spectral, but cannot tile by translations. Subsequently,
 also examples of non-spectral sets which can tile by translations were
found, and eventually the dimension in these examples was reduced up to $d\ge 3$
(see \cite[Section 4]{KM10} and the references mentioned there).  In all these examples
the set $\Omega$ is the union of a finite number of unit cubes centered at points of the
integer lattice $\Z^d$.
\subsection{}\label{secI1.2}
It is nevertheless believed that Fuglede's conjecture should  be true if the set $\Omega$ is assumed to
be \define{convex}. There is a well-known characterization due to Venkov \cite{Ven54},
that was rediscovered by McMullen \cite{McM80, McM81}, of the convex bodies (compact
convex sets with non-empty interior) which can tile the space by translations: 
\par
\begin{em}
Let $\Omega$ be a convex body in $\R^d$. Then $\Omega$ tiles by translations if and only
if the following four conditions are satisfied:
\begin{enumerate-math}
\item \label{vm:i} $\Omega$ is a polytope; 
\item \label{vm:ii} $\Omega$ is centrally symmetric; 
\item \label{vm:iii} all the facets of $\Omega$ are centrally symmetric; 
\item \label{vm:iv} each ``belt'' of $\Omega$ consists of exactly $4$ or $6$ facets. 
\end{enumerate-math}
\end{em}
\par
By a \define{belt} of a convex polytope $\Omega \subset \R^d$ with centrally 
symmetric facets one means the collection of its facets which contain a translate of a
given subfacet (that is, a $(d-2)$-dimensional face) of $\Omega$. 
\par
It was also proved in
\cite{Ven54, McM80} that if a convex polytope $\Omega$ can tile by translations, then it
admits a face-to-face tiling by translates along a certain \define{lattice}. Hence,
combined with Fuglede's theorem above this yields the following result:
\par
\define{Let $\Omega\subset\R^d$ be a convex body. If $\Omega$ tiles by translations, then
$\Omega$ is spectral}. 
\par
The converse to this result, however, is known only in dimension
$d=2$. It is due to Iosevich, Katz and Tao \cite{IKT03}, who showed that a spectral convex
body in $\R^2$ must be either a parallelogram or a centrally symmetric hexagon, and hence
it tiles by translations.
\par
 The situation in dimensions $d\ge 3$ is much less understood. It is known
that the ball is not a spectral set \cite{IKP99, Fug01}, as well as any convex body with a
smooth boundary \cite{IKT01}. We established in \cite{GriLev16a} that if $\Omega$ is a cylindric
convex body whose base has a smooth boundary, then it can neither be spectral.
\par
Kolountzakis \cite{Kol00} proved the following result:
\par
\define{Let $\Omega$ be a convex body in $\R^d$.
If $\Omega$ is spectral, then it must be centrally symmetric.}

\subsection{} 
In this paper we will focus on the case when $\Omega$ is a
\define{convex polytope}. Our first
result shows that in this case, not only the central symmetry of $\Omega$, but also the
central symmetry of all the facets of $\Omega$, is a necessary condition for spectrality:
\begin{thm}
	\label{thmI1.1}
	Let $\Omega$ be a convex polytope in $\R^d$. If $\Omega$ is a spectral
	set, then all the facets of $\Omega$ must be centrally symmetric.
\end{thm}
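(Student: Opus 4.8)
The plan is to read off the local geometry of a facet from the asymptotic behaviour of $\hat\1_\Omega$ along its outer normal, and then to confront this with the spectral hypothesis. By Kolountzakis' theorem quoted above we may assume that $\Omega$ is centrally symmetric, and after a translation that $\Omega=-\Omega$. Fix a facet $F$ with outer unit normal $\theta$, let $h$ be its height (so $\dotprod{\theta}{x}=h$ on $F$), and let $\tilde F\subset\theta^{\perp}\cong\R^{d-1}$ be the translate of $F$ into the hyperplane $\theta^{\perp}$, with $y\in\tilde F$ parametrising $F$ via $x=h\theta+y$. Since $\Omega=-\Omega$, the facet opposite to $F$ is $-F$, it has normal $-\theta$, and its translate into $\theta^{\perp}$ is $-\tilde F$. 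I will use two standard consequences of spectrality: writing $\Lambda$ for a spectrum, orthogonality gives $(\Lambda-\Lambda)\setminus\{0\}\subset Z(\hat\1_\Omega)$, the zero set of $\hat\1_\Omega$, while orthogonality together with completeness gives the tiling identity $\sum_{\lambda\in\Lambda}|\hat\1_\Omega(\xi-\lambda)|^{2}=|\Omega|^{2}$ for every $\xi$, and that $\Lambda$ is relatively dense of density $|\Omega|$.

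The analytic heart of the argument is a divergence-theorem computation. Choosing a smooth vector field $V$ with $\div V(x)=e^{-2\pi i\dotprod{\xi}{x}}$ and integrating over $\Omega$ gives
\[ \hat\1_\Omega(\xi)=\frac{i}{2\pi|\xi|^{2}}\sum_{G}\dotprod{\xi}{\nu_G}\int_{G}e^{-2\pi i\dotprod{\xi}{x}}\,d\sigma(x), \]
the sum running over the facets $G$ with outer unit normals $\nu_G$. I then probe this in directions $\xi=\tau\theta+\eta$ with $\eta\in\theta^{\perp}$ and $\tau\to+\infty$. Each facet contributes its own $(d-1)$-dimensional Fourier transform: on $F$ the phase $\dotprod{\xi}{x}$ equals $\tau h+\dotprod{\eta}{y}$, so the $F$-term produces $e^{-2\pi i\tau h}\,\hat\1_{\tilde F}(\eta)$ and the $(-F)$-term produces $e^{2\pi i\tau h}\,\overline{\hat\1_{\tilde F}(\eta)}$; every other facet $G$ has $\nu_G\neq\pm\theta$, so $\tau\theta$ has a nonzero component along the affine hull of $G$ and its integral is $O(\tau^{-1})$, whence its total contribution is $O(\tau^{-2})$. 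Since $F$ is the \emph{unique} facet with normal $\theta$, collecting the two surviving terms yields, with $\rho(\eta)=|\hat\1_{\tilde F}(\eta)|$ and $\psi(\eta)=\arg\hat\1_{\tilde F}(\eta)$,
\[ \hat\1_\Omega(\tau\theta+\eta)=\frac{1}{\pi\tau}\,\rho(\eta)\sin\bigl(2\pi\tau h-\psi(\eta)\bigr)+O(\tau^{-2}),\qquad\tau\to+\infty, \]
uniformly for $\eta$ in a fixed ball. On the axis $\eta=0$ this reduces to a constant multiple of $\tau^{-1}\sin(2\pi\tau h)$, which is symmetric; the central symmetry of $\Omega$ already renders the axial slice featureless, so the asymmetry of the facet can only be detected through the transverse dependence, i.e.\ through the phase $\psi$.

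The computation sets up the following dictionary. The facet $F$ is centrally symmetric if and only if $\tilde F$ is, which by taking Fourier transforms means $\hat\1_{\tilde F}(\eta)=e^{-2\pi i\dotprod{\eta}{c}}R(\eta)$ with $R$ real for some centre $c$; equivalently, the zero sheets of the leading term, $\{2\pi\tau h-\psi(\eta)\in\pi\Z\}$, form a family of parallel affine hyperplanes rather than curved surfaces (equivalently, $\psi$ is affine modulo $\pi$). It therefore suffices to prove that these sheets are flat. Here spectrality enters: every difference $\lambda-\lambda'$ that falls in the slab $\{\tau\theta+\eta:\eta\ \text{bounded}\}$ with $\tau$ large must lie in $Z(\hat\1_\Omega)$, hence, by the displayed asymptotics, within $O(\tau^{-1})$ of one of the sheets, while $\Lambda$ — and so $\Lambda-\Lambda$ — is too dense to be confined to curved sheets if the tiling identity is to hold exactly.

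Making this last step precise is the main obstacle, and it splits into two parts. First, one must upgrade the asymptotic expansion to hold uniformly as $\eta$ ranges over a ball whose radius grows with $\tau$, so that the true zeros of $\hat\1_\Omega$ genuinely track the sheets of the leading term and the error $O(\tau^{-2})$ does not swamp the oscillation where $\rho$ is small. Second, one must convert the resulting rigidity of the zero set into an honest contradiction with the density $|\Omega|$ of $\Lambda$: the natural route is to read the constant-sum condition $\sum_{\lambda}|\hat\1_\Omega(\xi-\lambda)|^{2}=|\Omega|^{2}$ asymptotically in the $\theta$-direction, where the dominant contributions come precisely from the slowly decaying facet-normal terms, and to show that it forces a Parseval/tiling relation for $\hat\1_{\tilde F}$ that a non-affine $\psi$ cannot satisfy. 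Once $\psi$ is shown to be affine modulo $\pi$, the dictionary gives the central symmetry of $F$; as $F$ was arbitrary, every facet of $\Omega$ is centrally symmetric, proving the theorem.
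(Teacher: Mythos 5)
Your analytic setup is sound as far as it goes: the divergence-theorem expansion and the resulting asymptotics $\hat\1_\Omega(\tau\theta+\eta)=\tfrac{1}{\pi\tau}\,\rho(\eta)\sin\bigl(2\pi\tau h-\psi(\eta)\bigr)+O(\tau^{-2})$ are correct (this is essentially \lemref{lemP4} applied to $F$ and $-F$), and so is the dictionary identifying central symmetry of $F$ with $\psi$ being affine modulo $\pi$. But the proof stops exactly where the theorem begins: you never derive a contradiction from spectrality when $\psi$ is not affine. You say so yourself ("the main obstacle") and only gesture at two routes --- uniformity of the expansion over growing balls, and an asymptotic reading of $\sum_\lambda|\hat\1_\Omega(\xi-\lambda)|^2=|\Omega|^2$ --- neither of which is carried out, and neither of which is straightforward. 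In particular, the heuristic that "$\Lambda-\Lambda$ is too dense to be confined to curved sheets" is false as stated: the difference set of a relatively dense, uniformly discrete set can perfectly well lie in a countable union of hypersurfaces (take $\Lambda=\Z^d$), so flat versus curved is the entire content of the problem, not a counting issue. Worse, your sheet picture only controls the zero set of $\hat\1_\Omega$ where $\rho(\eta)$ is bounded below; near $\{\hat\1_{\tilde F}=0\}$ the leading term is swamped by the error, and differences of spectrum points are free to accumulate there --- this is precisely the phenomenon (the set $\Pi$ and its components) that the paper has to fight in Sections \ref{secD1}--\ref{secH3} for the much harder tiling theorem, and it cannot be dismissed by a density heuristic.

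For comparison, the paper's proof of this theorem (Section \ref{secA2}) never tries to flatten the zero set. It invokes Minkowski's \thmref{thmP3.1} to reduce central symmetry of $F$ to the equality $|A|=|B|$ for every pair of parallel subfacets $A,B$ of $F$; assuming $|A|>|B|$, it applies the facet expansion \emph{twice} --- once for $\Omega$ along the normal of $F$ (your computation), then again for the $(d-1)$-dimensional polytope $\Sigma=\tilde F$ along the normal of its facet $A$ --- to get $2\pi^2\xi_1\xi_2\hat\1_\Omega(\xi)\approx-\psi(\xi)$ with $\psi$ as in \eqref{eqA2.9.1}. The quantity $\psi$ is then bounded away from zero on the union of balls centered at integer multiples of the tilted vector $v_\delta=(2,\delta,0,\dots,0)$: the first coordinate $2$ forces the phase factor $e^{-\pi i\xi_1}$ to equal $1$ at those centers, and $|A|-|B|>0$ gives the lower bound. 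Hence $\hat\1_\Omega$ has \emph{no zeros at all} on that set outside a fixed ball, so any two spectrum points lying in a translate of the thinner union of balls are within bounded distance of each other; covering a full cylinder in direction $v_\delta$ by finitely many such translates then contradicts the relative density of the spectrum. This second-level descent to subfacets, producing an unconditional nonvanishing region for $\hat\1_\Omega$, is the idea your proposal is missing; without it, or a genuine substitute for it, the argument does not close.
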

Our proof of this result is inspired by the paper \cite{KP02}.
\par
 Together with the result
from \cite{Kol00} we thus obtain that a spectral convex polytope $\Omega\subset\R^d$ must
satisfy the conditions \ref{vm:ii} and \ref{vm:iii} in the Venkov-McMullen
theorem above. So this supports the conjecture that any such $\Omega$ can tile by translations.
\par
Our next theorem, which is the main result of this paper, confirms that this is indeed the
case in dimension $d=3$:
\begin{thm}
	\label{thmI1.2}
	Let $\Omega$ be a convex polytope in $\R^3$. If $\Omega$ is a spectral set, then it can
	tile by translations. 
\end{thm}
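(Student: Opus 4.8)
The plan is to deduce Theorem~\ref{thmI1.2} from the Venkov--McMullen criterion quoted in the introduction. Three of the four conditions come for free: condition~\ref{vm:i} holds because $\Omega$ is assumed to be a polytope, condition~\ref{vm:ii} is Kolountzakis' theorem, and condition~\ref{vm:iii} is exactly \thmref{thmI1.1}. Hence the whole content of the theorem is condition~\ref{vm:iv}: that every belt of $\Omega$ consists of $4$ or $6$ facets. (Equivalently, since in $\R^3$ a convex polytope with centrally symmetric facets is a zonotope, the task is to show that a spectral zonotope in $\R^3$ is one of the five space-filling parallelohedra.) Because $\Omega$ is centrally symmetric with centrally symmetric facets, the facets of any belt occur in antipodal pairs, so each belt has an even number $2k$ of facets with $k\ge 2$; the goal is therefore to rule out $k\ge 4$.

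I would fix a belt, determined by the direction $e$ of a subfacet of $\Omega$, and project $\Omega$ orthogonally along $e$ onto the plane $e^{\perp}$. The image is a centrally symmetric convex polygon $P$, and the facets of the belt are exactly those whose affine hull contains the $e$-direction, i.e.\ those projecting onto the edges of $P$; thus the belt has $2k$ facets precisely when $P$ is a $2k$-gon. The bridge to spectrality is the following elementary observation. For $\xi\in e^{\perp}$ the exponential $e^{-2\pi i\dotprod{\xi}{x}}$ is constant along $e$, so writing $x=y+se$ with $y\in e^{\perp}$ and integrating first in $s$ gives
\begin{equation*}
	\ft{\1_\Omega}(\xi)=\int_{P} h(y)\, e^{-2\pi i\dotprod{\xi}{y}}\,dy=\ft{h}(\xi),\qquad \xi\in e^{\perp},
\end{equation*}
where $h(y)$ is the length of the chord $\{s:\ y+se\in\Omega\}$. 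Since $\Omega$ is a convex polytope, $h$ is a nonnegative, concave, piecewise-linear ``tent'' over $P$ that vanishes on $\partial P$ and satisfies $\int_P h\,dy=\mes(\Omega)$. In other words, the restriction of $\ft{\1_\Omega}$ to $e^{\perp}$ is the planar Fourier transform of the weight $h$ on the shadow polygon $P$, and the zeros of $\ft{\1_\Omega}$ inside $e^{\perp}$ are exactly the zeros of $\ft h$.

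The final step is to feed this into the orthogonality and completeness encoded in the spectrum. Normalising $0\in\Lambda$, every nonzero difference of points of $\Lambda$ is a zero of $\ft{\1_\Omega}$, and completeness is the Fourier-tiling identity $\sum_{\lambda\in\Lambda}|\ft{\1_\Omega}(\xi-\lambda)|^{2}=\mes(\Omega)^{2}$ for all $\xi$. I would try to extract from $\Lambda$ a set $\Gamma\subset e^{\perp}$ whose difference set consists of zeros of $\ft h$ and for which the analogous identity $\sum_{\gamma\in\Gamma}|\ft h(\xi-\gamma)|^{2}=\mes(\Omega)^{2}$ holds for $\xi\in e^{\perp}$; such a $\Gamma$ would exhibit the measure $h\,dy$ on $P$ as a two-dimensional spectral object, and the decay and zero-set analysis behind the Iosevich--Katz--Tao classification in the plane would then force $P$ to be a parallelogram or a centrally symmetric hexagon, i.e.\ $k\in\{2,3\}$. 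I expect this transfer to be the main obstacle: spectrality is not preserved under projection, and manufacturing the planar structure on $e^{\perp}$ — simultaneously controlling orthogonality and completeness after collapsing the $e$-direction — is precisely the point where the new technique announced in the abstract is needed. The natural mechanism is a weak-limit argument: translate $E(\Lambda)$ and pass to a limit while using the asymptotics of $\ft{\1_\Omega}$ along $e^{\perp}$, which single out exactly the belt facets, so that in the limit only the two-dimensional data $(P,h\,dy)$ survive and the $2k$-gon is pinned down to $k\le 3$.
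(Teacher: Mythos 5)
Your overall route --- verifying condition~\ref{vm:iv} of the Venkov--McMullen theorem --- is exactly the strategy the paper announces and then deliberately abandons: at the start of \secref{secC1} the authors call it ``a conceivable strategy'' and state that their approach ``will not be based on such a strategy''. What they do instead is prove a packing criterion (\lemref{lemC2}, \corref{corC3}): if some spectrum $\Lambda$ satisfies $\dotprod{\Lambda-\Lambda}{\tau_F}\subset\Z$ for every facet $F$, then $\Omega+T$ is a packing, while conditions \ref{vm:i}--\ref{vm:iii} alone already make $\Omega+T$ a covering (\thmref{thmC1}); the belt condition is never touched. The bulk of their proof is then a structure analysis of spectra establishing this arithmetic condition when $\Omega$ is not a prism, with prisms handled separately via the cylindric-set theorem (\thmref{thmF2}) and the planar result. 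Your geometric setup is correct as far as it goes: the belt facets are precisely those projecting onto the edges of the shadow polygon $P$, and $\hat{\1}_\Omega$ restricted to $e^{\perp}$ equals $\hat h$ for the chord-length density $h$. One factual slip: $h$ does \emph{not} vanish on $\partial P$; over $\partial P$ lie the belt facets and the edges of $\Omega$ parallel to $e$, so in fact $h\ge |G|>0$ on all of $P$, where $|G|$ is the length of the subfacet.

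The genuine gap is your transfer step, and it is not merely unproven --- it is impossible whenever $\Omega$ is not a prism in the direction $e$. A set $\Gamma\subset e^{\perp}$ with $\Gamma-\Gamma\subset\{\hat h=0\}\cup\{0\}$ and $\sum_{\gamma\in\Gamma}|\hat h(\xi-\gamma)|^{2}=\big(\int_P h\big)^{2}$ for all $\xi\in e^{\perp}$ is exactly a spectrum for the measure $h\,dy$ on $P$. Since $0<|G|\le h\le C$, the substitution $g=fh$ shows $E(\Gamma)$ would then be a Fourier frame for $L^2(P)$, so Landau's density theorem forces the lower Beurling density of $\Gamma$ to be at least $|P|$. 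On the other hand, your Parseval identity says $|\hat h|^{2}$ tiles $e^{\perp}$ with $\Gamma$ at level $\big(\int_P h\big)^{2}$; integrating this over large balls and using Plancherel, $\int|\hat h|^{2}=\int_P h^{2}$, pins the average density of $\Gamma$ at $\big(\int_P h\big)^{2}/\int_P h^{2}$, which by Cauchy--Schwarz is \emph{strictly} less than $|P|$ unless $h$ is a.e.\ constant. These two facts are contradictory, so no such $\Gamma$ exists for non-constant $h$; and $h$ is constant precisely when $\Omega$ is a prism with edge direction $e$. Consequently, for a spectral non-prism such as the truncated octahedron (which tiles, hence is spectral), your plan would require manufacturing from its genuine spectrum an object that provably does not exist --- no weak-limit mechanism can deliver it. (This is an instance of the known fact that an absolutely continuous spectral measure must have constant density.) Note also that in the only case where the target does exist ($h$ constant), extracting a spectrum of the base from a spectrum of the prism is precisely the nontrivial ``only if'' half of \thmref{thmF2}, which the paper imports from \cite{GriLev16a}; and even then your endgame would need an Iosevich--Katz--Tao classification for weighted measures rather than indicators, which does not exist. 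So the reduction to the planar picture can work only for prisms, and the non-prism case genuinely requires a different mechanism, such as the paper's packing criterion combined with its analysis of the structure of spectra.
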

Combined with the above mentioned results we thus obtain that Fuglede's conjecture is true
for convex polytopes $\Omega\subset \R^3$.
\subsection{}
In two dimensions, the convex polygons which can tile by translations are
precisely the parallelograms and the centrally symmetric hexagons. 
The three-dimensional
convex polytopes which can tile by translations were classified in 1885 by Fedorov
\cite{Fed85} into five distinct combinatorial types: the parallelepiped, the hexagonal
prism, the rhombic dodecahedron, the elongated dodecahedron and the truncated octahedron
(see, for example, \cite[Figure 32.4]{Gru07} for a graphical illustration of these types). Thus, for
a convex polytope $\Omega\subset\R^3$ to tile by translations it is necessary and
sufficient that it belongs to one of these five types, and that $\Omega$, as well as all
its facets, are centrally symmetric. A detailed exposition of this result can be found in 
\cite[Section 8.1]{Ale05}.
\par
 \thmref{thmI1.2} therefore yields that these conditions are also
necessary and sufficient for a convex polytope $\Omega\subset\R^3$ to be spectral.
\par
(The requirement that $\Omega$ is centrally symmetric is in fact redundant in
this characterization: it is known \cite{Ale33} that if all the facets of a
convex polytope $\Omega\subset \R^d$, $d\ge 3$, are centrally symmetric, then $\Omega$
itself must also be centrally symmetric.)
\subsection{}
As mentioned above, the Venkov-McMullen and Fuglede results imply not only that a convex
polytope $\Omega \subset \R^d$ which can tile by translations is necessarily spectral, 
but also that $\Omega$ admits a \define{lattice} spectrum. Our approach
allows us to establish that for certain convex polytopes,
 this spectrum is the \define{unique} one, up to translation.
\par
 First we have the following result in two dimensions: 
\begin{thm}
	\label{thmI1.3}
	Let $\Omega$ be a centrally symmetric hexagon in $\R^2$. Then $\Omega$ has a
	unique spectrum up to translation. 
\end{thm}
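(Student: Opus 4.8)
The plan is to fix an arbitrary spectrum $\Lambda$ of $\Omega$ and show that it must be a translate of the dual of a lattice by which $\Omega$ tiles. First I would normalize: translate $\Omega$ so that its centre of symmetry is the origin (this only multiplies $\ft{\1_\Omega}$ by a unimodular factor and so leaves its zero set unchanged), and translate $\Lambda$ so that $0\in\Lambda$. The orthogonality of $E(\Lambda)$ in $L^2(\Omega)$ is equivalent to $\ft{\1_\Omega}(\lambda-\lambda')=0$ for distinct $\lambda,\lambda'\in\Lambda$; in particular $\Lambda\setminus\{0\}\subset Z:=\{\xi:\ft{\1_\Omega}(\xi)=0\}$, and $\Lambda$ is uniformly discrete since $\ft{\1_\Omega}$ is close to $|\Omega|$ near the origin. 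As a spectrum of a bounded set has uniform density $|\Omega|$, the goal reduces to proving $\Lambda=L^{*}$, where $L$ is a lattice with $\Omega+L$ a tiling (which exists by Venkov--McMullen, making $L^{*}$ a spectrum by Fuglede). Writing the vertices of $\Omega$ as $\pm P_1,\pm P_2,\pm P_3$ in cyclic order and $\mu_k$ for the midpoint of the $k$-th edge, one has $L=\Z\,t_1+\Z\,t_3$ with $t_k=2\mu_k$, and a shoelace computation gives $\operatorname{covol}(L)=|\Omega|$.

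The second step is to make $Z$ explicit. Pairing the six edges into the three pairs of parallel edges and using the divergence theorem on each pair, $\ft{\1_\Omega}$ takes the form
\begin{equation}
\ft{\1_\Omega}(\xi)=\frac{1}{\pi|\xi|^{2}}\sum_{k=1}^{3}\ell_k\,\dotprod{\xi}{n_k}\,\frac{\sin\pi\dotprod{\xi}{\tau_k}}{\pi\dotprod{\xi}{\tau_k}}\,\sin\!\big(2\pi\dotprod{\xi}{\mu_k}\big),
\end{equation}
where $\tau_k,\ell_k,n_k$ are the edge vector, length and unit outer normal of the $k$-th edge. Each summand carries the factor $\sin(2\pi\dotprod{\xi}{\mu_k})$, which vanishes precisely on the family of lines $\mathcal L_k=\{\xi:\dotprod{\xi}{t_k}\in\Z\}$. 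Since $t_2=t_1+t_3$, through every point of $L^{*}$ passes exactly one line from each of the three families, and at such points all three summands vanish simultaneously; this recovers $L^{*}\setminus\{0\}\subset Z$. Away from $L^{*}$ only single factors vanish, so $Z$ is a union of real-analytic curves threading the lattice points of $L^{*}$. Thus $\Lambda$ is constrained to live on exactly this curve system, just as $L^{*}$ does.

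The heart of the matter is a rigidity step, and here I would pass to the completeness relation $\sum_{\lambda\in\Lambda}|\ft{\1_\Omega}(\xi-\lambda)|^{2}\equiv|\Omega|^{2}$, which orthogonality alone cannot replace. Taking Fourier transforms, this identity is equivalent to $g\cdot\check\mu_\Lambda=|\Omega|^{2}\delta_0$, where $g=\1_\Omega*\1_{-\Omega}$ is the autocorrelation and $\mu_\Lambda=\sum_{\lambda\in\Lambda}\delta_\lambda$; since $g$ is continuous with $g(0)=|\Omega|$ and $g>0$ precisely on the interior of the difference body $\Omega-\Omega=2\Omega$, this reads
\begin{equation}
\check\mu_\Lambda=|\Omega|\,\delta_0\quad\text{on }\interior(2\Omega),
\end{equation}
with $2\Omega$ again a centrally symmetric hexagon. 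So $\check\mu_\Lambda$ is a positive-definite tempered distribution agreeing with $|\Omega|\delta_0$ on the open hexagon $\interior(2\Omega)$ (and for $L^{*}$ one checks via Poisson summation that $\check\mu_{L^{*}}=|\Omega|\mu_L$, whose restriction to $\interior(2\Omega)$ is $|\Omega|\delta_0$ exactly because $L$ tiles). I would then aim to show $\Lambda\subset L^{*}$, after which equality of densities forces $\Lambda=L^{*}$. To pin down $\Lambda$ I would reduce to one dimension along the edge-normal directions using the Fourier slice theorem: the restriction of $\ft{\1_\Omega}$ to a line through the origin is the Fourier transform of the $X$-ray profile $p_{n_k}(u)=\mes\{x\in\Omega:\dotprod{x}{n_k}=u\}$, a compactly supported piecewise-linear function whose jump discontinuities at $u=\pm h_k$ equal the lengths of the $k$-th (centrally symmetric) pair of edges; these jumps govern the asymptotic spacing of the one-dimensional zeros and feed the hexagonal geometry of $\interior(2\Omega)$ back into the structure of $\Lambda$.

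The main obstacle is exactly this last rigidity step. For a parallelogram $\ft{\1_\Omega}$ factors as a product of two one-dimensional sine-type functions, so $Z$ is a genuine grid of straight lines and any spectrum is trapped on a lattice automatically; for the hexagon the three-term structure makes $Z$ a union of curved arcs, and no such direct trapping is available. Consequently one is really asking a uniqueness question for crystalline measures: whether the single constraint $\check\mu_\Lambda=|\Omega|\delta_0$ on $\interior(2\Omega)$, together with positivity, determines $\mu_\Lambda$ up to translation. Such uniqueness can fail for less generic shapes (this is the source of the many spectra of high-dimensional cubes), so the crux is to show that the hexagonal geometry of $2\Omega$ and the edge data encoded in the slice profiles admit no second, incommensurate spectrum hiding on the curves of $Z$; I expect this to be the most delicate part of the argument.
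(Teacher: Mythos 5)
Your setup is correct but it is only setup: the divergence-theorem formula for $\ft{\1}_\Omega$, the observation that $L^*\setminus\{0\}$ lies in the zero set $Z=\{\ft{\1}_\Omega=0\}$, and the reformulation of completeness as $\sum_{\lambda\in\Lambda}|\ft{\1}_\Omega(\xi-\lambda)|^2\equiv|\Omega|^2$ (equivalently, $\check\mu_\Lambda=|\Omega|\,\delta_0$ on $\interior(2\Omega)$, with $\check\mu_{L^*}=|\Omega|\,\mu_L$ by Poisson summation) are all standard and accurately stated. The genuine gap is that the proposal stops exactly where the theorem begins. You never prove that a spectrum must be contained in a translate of $L^*$; you explicitly defer this ``rigidity step'' and acknowledge it as the most delicate part. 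Worse, the route you sketch for closing it --- treating the problem as uniqueness for positive-definite distributions agreeing with $|\Omega|\delta_0$ on the open hexagon $\interior(2\Omega)$ --- discards precisely the information that makes the theorem true: that $\mu_\Lambda$ is a sum of unit point masses on a uniformly discrete set arising from an \emph{orthogonal basis}. At the level of generality you propose, nothing prevents exotic measures supported on the curved components of $Z$, and your slice/X-ray discussion gives no mechanism for converting one-dimensional zero asymptotics into the arithmetic statement $\dotprod{\Lambda-\Lambda}{\tau_F}\subset\Z$. So this is a missing idea, not a missing computation.

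For comparison, the paper's proof supplies exactly that mechanism, and it is quite different from what you outline. For each pair of parallel edges, after an affine normalization, \lemref{lemD1} gives the asymptotics $\pi\xi_1\ft{\1}_\Omega(\xi)=\sin\pi\xi_1\cdot\ft{\1}_\Sigma(\xi_2)+O(|\xi_1|^{-1})$ in a cone about the edge normal; since orthogonality puts $\Lambda-\Lambda$ inside $Z\cup\{0\}$, any two spectrum points sitting over nearby ``directions at infinity'' must have first coordinates congruent modulo $\Z$. This is organized through the set $\Pi$ and the function $\theta(s)$ of \lemref{lemD2}, and completeness enters three separate times: weak limits of translated spectra are again spectra (used to build $\Lambda'$); a function orthogonal to all but one member of an orthogonal basis is a scalar multiple of that member (used in \lemref{lemE9} to exclude the degenerate case $\Pi-\Pi\subset\Z$, which would force $\Omega$ to be a parallelogram); and no proper subset of a spectrum is a spectrum (used in \lemref{lemG7}). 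The outcome of \lemref{lemE9} is $\dotprod{\Lambda-\Lambda}{\tau_F}\subset\Z$ for all three edge pairs, i.e.\ $\Lambda-\Lambda\subset T^*$; then, translating so that $0\in\Lambda$, one gets $\Lambda\subset T^*$, and since $T^*$ is itself a spectrum (Fuglede), \corref{corG7.1} gives $\Lambda=T^*$. Incidentally, this last ``no proper subset'' observation would also be the cleanest way to finish your argument once $\Lambda\subset L^*$ is known, replacing your appeal to density; but the trapping of $\Lambda-\Lambda$ in the dual lattice is the real content of the theorem, and your proposal does not contain it.
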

This result is essentially contained in \cite{IKT03}, although it was not stated
explicitly in that paper. 
\par
 The three-dimensional version of the result is the following: 
\begin{thm}
	\label{thmI1.4}
	Let $\Omega$ be a convex polytope in $\R^3$ which is spectral (and hence it
	can tile by translations), but
	which is neither a parallelepiped nor a hexagonal prism. Then $\Omega$ has a unique
	spectrum up to translation.
\end{thm}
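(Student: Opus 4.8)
The plan is to play the two defining properties of a spectrum against each other: orthogonality forces the difference set $\Lambda-\Lambda$ into the zero set of $\hat{\1}_\Omega$, while completeness forces $\Lambda$ to be as dense as the dual lattice, and the rigidity of the zero set for the three types in question then squeezes $\Lambda$ onto a single coset of $L^*$. After a translation I may assume $\Omega$ is centrally symmetric about the origin, so that $\hat{\1}_\Omega$ is real-valued and even and its zero set $Z=\{\xi:\hat{\1}_\Omega(\xi)=0\}$ is a real hypersurface. Since $\Omega$ tiles, it admits a lattice tiling $\Omega+L$, and $L^*$ is a spectrum; the goal is to show that every spectrum $\Lambda$ has the form $L^*+\xi_0$. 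I will use three standard facts: (i) $E(\Lambda)$ is orthogonal if and only if $(\Lambda-\Lambda)\setminus\{0\}\subseteq Z$; (ii) $\Lambda$ is a spectrum if and only if $\sum_{\lambda\in\Lambda}|\hat{\1}_\Omega(\xi-\lambda)|^2\equiv|\Omega|^2$; and (iii) any spectrum of $\Omega$ is uniformly discrete and has density $|\Omega|$, the same density as $L^*$. By (iii), once $\Lambda$ is known to lie in a single coset $\xi_0+L^*$, a density count gives $\Lambda=\xi_0+L^*$, so the whole problem reduces to the containment $\Lambda-\Lambda\subseteq L^*$.

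The next step is a sharp description of $Z$ near the facet-normal directions. Let $F$ be a facet, $\nu$ its unit normal, $c_F$ its centre, and write $\xi=\dotprod{\xi}{\nu}\,\nu+\eta$ with $\eta\perp\nu$. Applying the divergence theorem and using that $F$ is centrally symmetric (\thmref{thmI1.1}), the contributions of $F$ and of the opposite facet $-F$ combine to give, for $|\xi|$ large in a cone around $\R\nu$,
\[
\hat{\1}_\Omega(\xi)=\frac{\dotprod{\xi}{\nu}}{\pi|\xi|^{2}}\,g_F(\eta)\,\sin\bigl(2\pi\dotprod{\xi}{c_F}\bigr)+O(|\xi|^{-2}),
\]
where $g_F$ is the (real) Fourier transform of the centred facet $F-c_F$. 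Hence in such a cone the zeros of $\hat{\1}_\Omega$ accumulate along the parallel hyperplanes $\dotprod{\xi}{c_F}\in\tfrac12\Z$, equivalently $\dotprod{\xi}{e_F}\in\Z$, where $e_F=2c_F\in L$ is the vector carrying $\Omega$ onto its neighbour across $F$. The spacing of this family is exactly the spacing of $L^*$ in the direction $\nu$.

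The heart of the proof is to turn this direction-by-direction, asymptotic picture into the exact containment $\Lambda-\Lambda\subseteq L^*$. The strategy is: for each facet normal $\nu$, use the slow ($\sim|\xi|^{-1}$) decay of $\hat{\1}_\Omega$ along $\nu$ together with the spectral tiling relation (ii) to show, as in Iosevich--Katz--Tao \cite{IKT03}, that the $\nu$-components of $\Lambda$ must be quantized onto the arithmetic progression dictated by $L^*$, and then to combine these quantizations over all facet normals. Here the three rigid types part company with the two excluded ones. A parallelepiped and a hexagonal prism are precisely the Fedorov types that are affine products $\Omega=\Omega_1\times\Omega_2$; for them $\hat{\1}_\Omega$ factorizes and $Z$ contains \emph{exact} affine hyperplanes, so one may shear $\Lambda$ freely along the prism axis while preserving (i) and (ii) --- this is exactly why uniqueness fails there. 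The rhombic dodecahedron, the elongated dodecahedron and the truncated octahedron possess no such cylinder direction: $Z$ is only \emph{asymptotically} flat, a genuinely curved hypersurface containing no affine hyperplane. For these the quantizations coming from the various belts are mutually transverse and force $\Lambda-\Lambda$ into $L^*$, whence $\Lambda=\xi_0+L^*$ by (iii).

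The main obstacle is the passage from the asymptotic flatness of $Z$ to the exact transversality used in the last step: the formula above controls only high frequencies, so the tiling relation (ii) --- not merely orthogonality --- must be invoked to propagate the quantization down to all scales, and the precise combinatorics of the belts of each type must be used to verify that the hyperplane families are transverse and admit no shear. For the two types carrying hexagonal facets, the truncated octahedron and the elongated dodecahedron, I expect the cleanest route is to restrict to the belt determined by a hexagonal facet and reduce the required transversality to the planar uniqueness statement already available in \thmref{thmI1.3}; the rhombic dodecahedron, all of whose facets are rhombi, will instead require its four-facet belts to be treated directly. Establishing that these three types, and no others among Fedorov's five, are free of a cylinder direction --- equivalently, that their $Z$ contains no affine hyperplane --- is the geometric crux on which the entire rigidity argument rests.
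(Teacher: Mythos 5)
Your overall reduction is sound and coincides with the paper's (\secref{secG3}): if one shows that \emph{every} spectrum $\Lambda$ of $\Omega$ satisfies the quantization $\dotprod{\Lambda-\Lambda}{\tau_F}\subset\Z$ for every facet $F$, then after translating $\Lambda$ to contain the origin one gets $\Lambda\subset T^*$, and equality follows (the paper uses the fact that no proper subset of a spectrum is a spectrum, see \lemref{lemG7} and \corref{corG7.1}; your density count serves the same purpose). The genuine gap is that this quantization step --- which you correctly identify as the heart of the matter --- is precisely what you do not prove, and the route you sketch for it breaks down. From the cone asymptotics (\lemref{lemP4}, \lemref{lemD1}), $\pi\xi_1\hat{\1}_\Omega(\xi)\approx\sin\pi\xi_1\cdot\hat{\1}_\Sigma(\xi_2,\xi_3)$, the zeros of $\hat{\1}_\Omega$ in the cone do \emph{not} cluster only near the hyperplanes $\dotprod{\xi}{\tau_F}\in\Z$: since the main term is a product, they cluster equally along the entire cylinder $\R\times\{\hat{\1}_\Sigma=0\}$, and the zero set of $\hat{\1}_\Sigma$ is not known explicitly for a general polygonal facet $\Sigma$ (the paper points out exactly this as the new difficulty in $d=3$). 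Consequently a difference $\lambda'-\lambda$ can escape quantization by lying over this cylinder, and for a prism this genuinely happens: the spectra of \cite{JorPed99} (see \exampref{expD4.2}) have first coordinates in $\Z+\theta(\gamma)$ with arbitrary distinct $\theta(\gamma)$. Therefore no facet-by-facet argument from asymptotics plus your tiling identity (ii) can yield quantization; the hypothesis that $\Omega$ is not a prism must be exploited globally. In the paper this occupies Sections \ref{secD1}--\ref{secH3}: the set $\Pi$ and the function $\theta(s)$ attached to a spectrum, two successive weak limits $\Lambda'$ and $\Lambda''$ of the spectrum, the geometric Lemmas \ref{lemF3} and \ref{lemF4}, and the case analysis ending with \lemref{lemH13} ($\theta$ constant on $\Pi$), which produces the quantization via \corref{corE7} and \lemref{lemG2}. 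No substitute for this machinery appears in your plan.

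A second, related problem is that your endgame rests on an unproved geometric dichotomy: that $Z=\{\hat{\1}_\Omega=0\}$ contains an affine hyperplane exactly for the parallelepiped and the hexagonal prism, and that for the other three Fedorov types the hyperplane families coming from the belts are ``transverse and admit no shear''. You call this the ``geometric crux'', and nothing in the proposal establishes it; it is also doubtful that it is the right invariant, since what makes uniqueness fail is the prism structure of $\Omega$ (an affine product with an interval), which is what permits the shear construction of \cite{JorPed99}. The paper never invokes Fedorov's classification, belts, or any type-by-type analysis in its proof: the argument is uniform over all non-prisms, and the classification enters only to translate ``neither a parallelepiped nor a hexagonal prism'' into ``not a prism'' among the polytopes that tile. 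Likewise, \thmref{thmI1.3} in two dimensions is an output of the same machinery (via \lemref{lemE9}), not an input that can be fed to a hexagonal belt of a three-dimensional body: orthogonality constrains $\Lambda-\Lambda$ through the zero set of $\hat{\1}_\Omega$, not through the zero set of the Fourier transform of a single facet.
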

Remark that it is necessary in these results to exclude the 
parallelograms in $\R^2$, and the parallelepipeds and the centrally symmetric  hexagonal prisms in
$\R^3$. Indeed, these convex polytopes admit infinitely many non translation-equivalent
spectra (see \cite[Section 2]{JorPed99}).

\subsection{}
The paper is organized as follows. 
\par
In \secref{secP1} we present some preliminary background.
\par
In \secref{secA1} we give a proof of the fact that a spectral convex polytope
$\Omega\subset\R^d$ must be centrally symmetric. The proof given is based on the argument
from \cite{KP02}.
\par
 In \secref{secA2} we prove that also all the facets of such an $\Omega$ are
centrally symmetric (\thmref{thmI1.1}). 
\par
In Sections \ref{secC1}--\ref{secE1} we develop an approach to show that a spectral convex
polytope $\Omega \subset \R^d$ can tile by translations. In \secref{secE2} we give a
proof, based on this approach, of the result that a spectral convex polygon $\Omega\subset\R^2$
can tile by translations. 
\par
The proof of the three-dimensional \thmref{thmI1.2} is given through Sections \ref{secF1}--\ref{secG2}.
\par
In \secref{secG3} the results concerning the uniqueness of the spectrum up
to translation are deduced (Theorems \ref{thmI1.3} and \ref{thmI1.4}).
\par
 In the last
\secref{secJ1} we give additional remarks and discuss some open problems. 


\section{Preliminaries} \label{secP1}

\subsection{Notation}
We fix some notation that will be used throughout the paper.
\par
We shall denote by $\vec e_1, \dots, \vec e_d$ the standard basis vectors in $\R^d$.
\par
As usual, $\dotprod{\cdot}{\cdot}$ and $|\cdot|$ are the Euclidean scalar product and norm in $\R^d$.
\par
For a set $A \subset \R^d$ and a vector $x \in \R^d$, we  use
$\dotprod{A}{x}$ to denote the set $\{\dotprod{a}{x} : a \in A\}$.
\par
We denote by $|\Omega|$ the Lebesgue measure of a measurable set $\Omega \subset \R^d$.
\par
The Fourier transform in $\R^d$ will be normalized as
$$\ft f (\xi)=\int_{\R^d} f (x) \, e^{-2\pi i\langle \xi,x\rangle} dx.$$

\subsection{Properties of spectra}
We recall some basic properties of spectra that will be used in the paper. 
\par 
Let $\Omega\subset\R^d$ be a bounded, measurable set of positive measure. A countable set
$\Lambda\subset\R^d$ is a spectrum for $\Omega$ if the system of exponential functions
$E(\Lambda)$ defined by \eqref{eqI1.1} is an orthogonal basis in the space $L^2(\Omega)$.
Since we have 
\[
		\dotprod{e_\lambda}{e_{\lambda'}}_{L^2(\Omega)} = \int_{\Omega} e^{-2\pi
		i\dotprod{\lambda'-\lambda}{x}}dx = \hat{\1}_\Omega(\lambda'-\lambda), 
\]
it follows that the orthogonality of $E(\Lambda)$ in $L^2(\Omega)$ is equivalent to the
condition
\begin{equation}
	\label{eqP1.2}
	\Lambda-\Lambda\subset \{\hat{\1}_\Omega=0\} \cup \{0\}.
\end{equation}
\par
A set $\Lambda\subset \R^d$ is said to be \define{uniformly discrete} if there is
$\delta>0$ such that $|\lambda'-\lambda|\ge \delta$ for any two distinct points
$\lambda,\lambda'$ in $\Lambda$. The maximal constant $\delta$ with this property is
called the \define{separation constant} of $\Lambda$, and will be denoted by
$\delta(\Lambda)$.
\par
 The condition \eqref{eqP1.2} implies that if $\Lambda$ is a spectrum
for $\Omega$ then it is a uniformly discrete set, with separation constant
$\delta(\Lambda)$ not smaller than 
\begin{equation}
	\label{eqP1.3}
	\chi(\Omega):= \min \big\{ |\xi| \;:\; \xi\in\R^d, \;\; \hat{\1}_\Omega(\xi)=0
	\big\}> 0.
\end{equation}
\par
It is easy to verify that the property of $\Lambda$ being a spectrum for $\Omega$ is
invariant under translations of both $\Omega$ and $\Lambda$. It is also easy to check that
if $\Lambda$ is a spectrum for $\Omega$, and if $A$ is an invertible $d \times d$ matrix,
then the set $(A^{-1})^\top (\Lambda)$ is a spectrum for $A(\Omega)$. 
\subsection{Limits of spectra}\label{secLimits}
Let $\Lambda_n$ be a sequence of uniformly discrete sets in $\R^d$, such that
$\delta(\Lambda_n)\ge\delta>0$. The sequence $\Lambda_n$ is said to \define{converge
weakly} to a set $\Lambda$ if for every $\varepsilon>0$ and every $R$ there is
$N$ such that 
\[
\Lambda_n\cap B_R\subset \Lambda+B_\varepsilon \quad \text{and} \quad \Lambda\cap B_R\subset
\Lambda_n+B_\varepsilon
\]
 for all $n\ge N$, where by $B_r$ we denote the open ball of
radius $r$ centered at the origin. In this case, the weak limit $\Lambda$ is also 
uniformly discrete, and moreover, $\delta(\Lambda)\ge \delta$. 
\par
By a standard diagonalization
argument one can show that given any sequence $\Lambda_n$ satisfying $\delta(\Lambda_n)\ge
\delta>0$, there is a subsequence $\Lambda_{n_j}$ which converges weakly to some
(possibly empty) set $\Lambda$.
\par
 It is known that if for each $n$ the set $\Lambda_n$ is a spectrum for $\Omega$, and if $\Lambda_n$ converges
weakly to a limit $\Lambda$, then also $\Lambda$ is a spectrum for $\Omega$.
See, for example, \cite[Section~3]{GriLev16a} where  a simple proof of this fact can be found.

\par
 The latter
fact easily implies that any spectrum $\Lambda$ of $\Omega$ must be a \emph{relatively dense} set
in $\R^d$, namely, there is $R>0$ such that every ball of radius $R$ intersects $\Lambda$.
Moreover, the constant $R=R(\Omega)$ does not depend on the spectrum $\Lambda$. Indeed, if
this was not true then there would exist a sequence $\Lambda_n$ of spectra for $\Omega$
which converges weakly to the empty set, which contradicts the fact that the weak limit must
also be a spectrum for $\Omega$. 
\subsection{Fourier expansion with respect to a spectrum}
If $\Lam$ is a spectrum for $\Omega$, then each $f\in L^2(\Omega)$ admits a Fourier
expansion with respect to the orthogonal basis $E(\Lambda)$. If we extend such a function $f$ to the whole
$\R^d$ by defining it to be zero outside of $\Omega$, then we have
$\dotprod{f}{e_{\lambda}}_{L^2(\Omega)}=\hat{f}(\lambda)$, hence the Fourier expansion of $f$
has the form 
\begin{equation}
	\label{eqF5.3}
	f=\frac{1}{|\Omega|}\sum_{\lambda\in\Lambda} \hat{f}(\lambda)e_\lambda,
\end{equation}
and the series converges in $L^2(\Omega)$. Furthermore, Parseval's equality holds, namely 
\[ \|f\|_{L^2(\Omega)}^2 = \frac{1}{|\Omega|}\sum_{\lambda\in \Lambda}|\hat{f}(\lambda)|^2. \]
\par
The following fact will be useful for us:
\begin{lem}\label{lemPL6}
For each function $f \in L^2(\Omega)$ (extended to be zero outside of $\Omega$)
 the series \eqref{eqF5.3} converges unconditionally	in $L^2$ on any bounded set to some measurable function
$\tilde{f}$ defined a.e.\ on the whole $\R^d$, and $f$ coincides with $\tilde{f}$ a.e.\ on
$\Omega$.
\end{lem}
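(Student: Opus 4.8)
The plan is to reduce the whole statement to a single quantitative ingredient: an upper Bessel bound for the exponential system $E(\Lambda)$ on balls, which is available precisely because a spectrum is uniformly discrete. By \eqref{eqP1.2} and \eqref{eqP1.3} the spectrum satisfies $\delta(\Lambda)\ge\chi(\Omega)>0$, and Parseval's equality gives $\sum_{\lambda\in\Lambda}|\ft f(\lambda)|^2=|\Omega|\,\|f\|_{L^2(\Omega)}^2<\infty$, so the coefficients in \eqref{eqF5.3} are square-summable. Thus the only thing needed to upgrade the $L^2(\Omega)$ convergence to $L^2$ convergence on an arbitrary ball $B_R$ is an inequality of the form
\[
	\Big\|\sum_{\lambda\in F} c_\lambda e_\lambda\Big\|_{L^2(B_R)}^2 \le C(R)\sum_{\lambda\in F}|c_\lambda|^2,
\]
valid for every finite $F\subset\Lambda$ and every choice of coefficients, with $C(R)$ depending only on $R$ and on $\delta(\Lambda)$.

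First I would establish this Bessel bound. Fix a Schwartz function $\phi\ge 0$ with $\phi\ge 1$ on $B_R$ and with $\ft\phi$ decaying faster than any power (a suitably dilated Gaussian will do). Writing $g=\sum_{\lambda\in F}c_\lambda e_\lambda$, one bounds $\int_{B_R}|g|^2\le\int_{\R^d}\phi\,|g|^2$ and expands the right-hand side as the quadratic form $\sum_{\lambda,\mu\in F}c_\lambda\overline{c_\mu}\,\ft\phi(\mu-\lambda)$. The matrix $[\ft\phi(\mu-\lambda)]_{\lambda,\mu\in\Lambda}$ is bounded on $\ell^2(\Lambda)$ by Schur's test, with norm at most $\sup_\lambda\sum_{\mu\in\Lambda}|\ft\phi(\mu-\lambda)|$; since $\Lambda$ is uniformly discrete the number of its points in a ball of radius $r$ is $O((1+r/\delta(\Lambda))^d)$, and the rapid decay of $\ft\phi$ makes this sum finite and uniformly bounded in $\lambda$. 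This yields the displayed inequality with the required dependence of the constant.

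Granting the Bessel bound, the conclusion follows by a routine Cauchy-net argument. For the full series in \eqref{eqF5.3}, square-summability of the coefficients and the Bessel bound show that the net of finite partial sums is Cauchy in $L^2(B_R)$: for finite $F\subset F'$ one has $\|\sum_{\lambda\in F'\setminus F}\ft f(\lambda)e_\lambda\|_{L^2(B_R)}^2\le C(R)\sum_{\lambda\in F'\setminus F}|\ft f(\lambda)|^2$, and the tails tend to zero. Hence the series converges unconditionally in $L^2(B_R)$ to some $\tilde f_R\in L^2(B_R)$. Letting $R=n\to\infty$ and noting that $L^2(B_n)$ convergence forces the limits to agree on $B_m$ for $m<n$, these local limits patch together into a single measurable function $\tilde f$ defined a.e.\ on $\R^d$. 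Finally, $\Omega$ is bounded, so $\Omega\subset B_N$ for some $N$; convergence of \eqref{eqF5.3} in $L^2(B_N)$ gives convergence to $\tilde f$ in $L^2(\Omega)$, while the orthogonal-basis property gives convergence to $f$ in $L^2(\Omega)$, whence $f=\tilde f$ a.e.\ on $\Omega$ by uniqueness of the $L^2$ limit.

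The only genuine work is the Bessel bound, and that is where I expect the main obstacle to lie: one must choose the comparison function $\phi$ and run Schur's test so that the resulting constant $C(R)$ depends only on $R$ and on the separation constant $\delta(\Lambda)$, and not on the particular spectrum $\Lambda$. Everything after that is a standard exhaustion-and-uniqueness argument.
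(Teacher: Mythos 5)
Your proof is correct, and its overall architecture is the same as the paper's: first a Bessel-type inequality
\begin{equation*}
	\Big\| \sum_{\lambda\in F} c_\lambda e_\lambda\Big\|_{L^2(B_R)}^2 \le C(R)\sum_{\lambda\in F}|c_\lambda|^2
\end{equation*}
for finite sums over the uniformly discrete set $\Lambda$ (this is exactly \eqref{eqPL5.1}, i.e.\ \lemref{lemPL5}), then a Cauchy-net argument giving unconditional $L^2$ convergence on bounded sets, patching over an exhaustion by balls, and identification with $f$ on $\Omega$ by uniqueness of $L^2$ limits together with Parseval. Where you genuinely diverge from the paper is in the proof of the Bessel bound itself. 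You majorize $\1_{B_R}$ by a Schwartz function $\phi$, expand $\int\phi|g|^2$ as the quadratic form $\sum_{\lambda,\mu}c_\lambda\overline{c_\mu}\,\ft{\phi}(\mu-\lambda)$, and control its operator norm by Schur's test plus the point-counting estimate $\#(\Lambda\cap B(x,r))=O((1+r/\delta)^d)$; this is the standard "Schur test" route and it works, with $C(R)$ depending only on $R$ and $\delta(\Lambda)$. The paper instead picks a smooth $\varphi$ supported in a ball of radius $\delta(\Lambda)/2$ with $\int|\varphi|^2=1$ and $\eta:=\inf_{S}|\ft{\varphi}|>0$, and writes $\|g\|_{L^2(S)}^2\le \eta^{-2}\|\ft{\varphi}\,g\|_{L^2(\R^d)}^2 = \eta^{-2}\big\|\sum_\lambda c_\lambda\varphi(\cdot+\lambda)\big\|_{L^2}^2=\eta^{-2}\sum_\lambda|c_\lambda|^2$, the last step being an exact identity because the translates $\varphi(\cdot+\lambda)$ have pairwise disjoint supports. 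So the paper's argument trades your Schur test and lattice-counting for a single Plancherel step and disjointness of supports, giving equality rather than an estimate in the crucial step; your version is somewhat longer but equally quantitative. One small remark: the uniformity of $C(R)$ over all spectra with a given separation constant, which you flag as the main obstacle, is not actually needed for this lemma -- the spectrum $\Lambda$ is fixed throughout, so a constant $C(\Lambda,R)$ suffices (though both your proof and the paper's happen to deliver the uniform constant anyway).
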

This is a simple consequence of the following:
\begin{lem}\label{lemPL5}
	Let $\Lambda\subset \R^d$ be a uniformly discrete set, and $\{c(\lambda)\}$ be a
	sequence in $\ell^2(\Lambda)$. Then the series 
	\begin{equation}
		\label{eqPL5.2}
		\sum_{\lambda\in\Lambda} c(\lambda)e_\lambda
	\end{equation} 
	converges unconditionally in $L^2(S)$ for every bounded set $S \subset
	\R^{d}$. 
\end{lem}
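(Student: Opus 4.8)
The plan is to reduce the statement to a single uniform \emph{Bessel bound} for the exponential system $\{e_\lambda\}_{\lambda\in\Lambda}$ on the set $S$, and then to deduce unconditional convergence as a soft consequence. Concretely, I would first establish that there is a constant $C=C(S,\Lambda)$ such that
\begin{equation}
	\label{eqBessel}
	\Bigl\| \sum_{\lambda\in F} c(\lambda)\, e_\lambda \Bigr\|_{L^2(S)}^2 \le C \sum_{\lambda\in F} |c(\lambda)|^2 ,
\end{equation}
for every \emph{finite} set $F\subset\Lambda$ and all scalars $c(\lambda)$. Granting this, the conclusion follows at once. Unconditional convergence means that the net of finite partial sums, directed by inclusion, converges in $L^2(S)$; by the Cauchy criterion for nets it suffices to find, for each $\eps>0$, a finite set $F_0$ such that $\bigl\|\sum_{\lambda\in G}c(\lambda)e_\lambda\bigr\|_{L^2(S)}<\eps$ for every finite $G\subset\Lambda$ disjoint from $F_0$. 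Since $\{c(\lambda)\}\in\ell^2(\Lambda)$, we may choose $F_0$ with $\sum_{\lambda\notin F_0}|c(\lambda)|^2<\eps^2/C$, and then \eqref{eqBessel} applied to $G$ gives the desired smallness.

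To establish \eqref{eqBessel} I would dominate the indicator of $S$ by a smooth majorant. As $S$ is bounded, choose $g\in C_c^\infty(\R^d)$ with $g\ge \1_S$ pointwise (for instance a mollified indicator equal to $1$ on $S$ with $0\le g\le 1$). Then, using that $F$ is finite to expand the square and interchange sum and integral,
\begin{align*}
	\Bigl\| \sum_{\lambda\in F} c(\lambda)\, e_\lambda \Bigr\|_{L^2(S)}^2
	&\le \int_{\R^d} g(x) \Bigl| \sum_{\lambda\in F} c(\lambda)\, e^{2\pi i \dotprod{\lambda}{x}} \Bigr|^2 dx \\
	&= \sum_{\lambda,\lambda'\in F} c(\lambda)\, \overline{c(\lambda')}\, \hat g(\lambda'-\lambda) ,
\end{align*}
where the last step is just the definition of $\hat g$. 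Passing to absolute values and using the symmetry $|\hat g(\lambda'-\lambda)|=|\hat g(\lambda-\lambda')|$, the Schur test (via $|c(\lambda)||c(\lambda')|\le \tfrac12(|c(\lambda)|^2+|c(\lambda')|^2)$) bounds the right-hand side by $\bigl(\sup_{\lambda\in\Lambda}\sum_{\lambda'\in\Lambda}|\hat g(\lambda-\lambda')|\bigr)\sum_{\lambda\in F}|c(\lambda)|^2$, so it only remains to see that this supremum is finite. Here the hypothesis is used: because $g$ is smooth and compactly supported, $\hat g$ is a Schwartz function, so $|\hat g(\xi)|\le C_N(1+|\xi|)^{-N}$ for every $N$; and because $\Lambda$ is uniformly discrete with separation constant $\delta(\Lambda)>0$, the number of points of $\Lambda$ in any ball of radius $r$ is $O\bigl((1+r/\delta)^d\bigr)$. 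Choosing $N>d$ makes $\sum_{\lambda'}|\hat g(\lambda-\lambda')|$ converge uniformly in $\lambda$, which yields \eqref{eqBessel}.

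The only genuine work is the Bessel bound \eqref{eqBessel}; the passage from it to unconditional convergence is formal. Within that bound, the step I expect to require the most care is the uniform summability $\sup_{\lambda}\sum_{\lambda'}|\hat g(\lambda-\lambda')|<\infty$, which is precisely the point where uniform discreteness of $\Lambda$ enters and cannot be dispensed with. Everything else—the choice of the majorant $g$, and the interchange of summation and integration (legitimate because $F$ is finite)—is routine.
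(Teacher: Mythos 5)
Your proposal is correct, and its overall skeleton --- first a uniform Bessel-type bound for finite sums, then a purely formal completeness argument giving unconditional convergence --- is the same as the paper's (the paper phrases the second step via Cauchy sequences of partial sums over arbitrary arrangements, you via the net of finite partial sums; these are equivalent). Where you genuinely differ is in the proof of the Bessel bound itself. The paper works with a \emph{minorant on the Fourier side}: it picks a smooth $\varphi$ supported in a ball of radius $\delta(\Lambda)/2$ with $\int|\varphi|^2=1$ and $\eta:=\inf_{x\in S}|\hat{\varphi}(x)|>0$, bounds $\bigl\|\sum_\lambda c(\lambda)e_\lambda\bigr\|_{L^2(S)}^2$ by $\eta^{-2}\int_{\R^d}\bigl|\hat{\varphi}\sum_\lambda c(\lambda)e_\lambda\bigr|^2$, and then Plancherel identifies this integral with $\int_{\R^d}\bigl|\sum_\lambda c(\lambda)\varphi(t+\lambda)\bigr|^2 dt$, which equals $\sum_\lambda|c(\lambda)|^2$ \emph{exactly}, because the translates $\varphi(\cdot+\lambda)$ have pairwise disjoint supports. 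You instead use a \emph{majorant on the space side}: you dominate $\1_S$ by a smooth compactly supported $g$, expand the quadratic form $\sum_{\lambda,\lambda'}c(\lambda)\overline{c(\lambda')}\hat{g}(\lambda'-\lambda)$, and control it by Schur's test using the Schwartz decay of $\hat{g}$ together with the counting bound $\#\bigl(\Lambda\cap B(x,r)\bigr)=O\bigl((1+r/\delta)^d\bigr)$. Both routes invoke the two hypotheses exactly once each (uniform discreteness, boundedness of $S$), and both are complete as written. The paper's trick is shorter, needs no lattice-point counting, and yields the clean constant $C=1/\eta^2$; your Schur-test route is the more robust and transparent one --- it makes visible precisely how each hypothesis enters, and it extends verbatim to relatively separated sets (finite unions of uniformly discrete sets), a setting where the disjoint-support identity breaks down.
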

The latter fact is well-known, see for instance \cite[Section 4.3, Theorem 4]{You01}
where it is proved in dimension one. For the reader's convenience we provide
a self-contained proof in arbitrary dimension $d$.
\begin{proof}[Proof of \lemref{lemPL5}]
	First we show that if $S$ is a bounded set then there is a constant
	$C=C(\Lam,S)$, such that for every sequence $\{c(\lambda)\}$ with only finitely many
	non-zero terms we have
	\begin{equation}
		\label{eqPL5.1}
		\Big\| \sum_{\lambda\in \Lambda}
		c(\lambda)e_\lambda\Big\|_{L^2(S)}^2\le C
		\sum_{\lambda\in\Lambda}|c(\lambda)|^2.
	\end{equation}
	Indeed, let $\delta >0$ denote the separation constant of
	$\Lambda$, and choose a smooth function $\varphi$ supported on a ball of radius
	$\delta/2$ around the origin, such that $\int|\varphi(t)|^2 dt=1$, and 
	\[ \eta:= \inf_{x\in S} |\hat{\varphi}(x) | > 0. \]
	Then the left-hand side of \eqref{eqPL5.1} is not greater than $1 / \eta^2$ times 
\[
	\int_{\R^d}\Big|\hat{\varphi}(x)\sum_{\lambda\in\Lambda}c(\lambda)e_\lambda(x)\Big|^2 dx =
	\int_{\R^d}\Big|\sum_{\lambda\in\Lambda}c(\lambda)\varphi(t+\lambda)\Big|^2	dt 
	= \sum_{\lambda\in\Lambda}|c(\lambda)|^2,
\]
	hence \eqref{eqPL5.1} holds with $C=1/\eta^2$. 
	\par
	Now it follows from  \eqref{eqPL5.1}  that given an arbitrary sequence $\{c(\lambda)\}$ in
	$\ell^2(\Lambda)$, the partial sums of the series \eqref{eqPL5.2} constitute a
	Cauchy sequence in $L^2(S)$ for every arrangement of the terms of the series,
	and the limit in $L^2(S)$ of these partial sums is the same for every such arrangement.
	This confirms the assertion of the lemma. 
\end{proof}

\subsection{Convex polytopes}
By a \emph{convex polytope} $\Omega$ in $\mathbb R^d$ we mean a compact
set which is the convex hull of a finite number of points.
By a \emph{facet} of $\Omega$  we refer to a $(d-1)$-dimensional face of $\Omega$,
while a \emph{subfacet} is a $(d-2)$-dimensional face. 
\par
 If $G$ is a $k$-dimensional face
of $\Omega$ $(0 \leq k \leq d)$ then $|G|$ denotes the $k$-dimensional volume of $G$.
 For a facet $F$  of $\Omega$  we denote  by $\sigma_F$ the surface measure on $F$.
\par
The interior of $\Omega$ will be denoted by $\interior(\Omega)$.
\par
We say that $\Omega$ is \emph{centrally symmetric} if there
is a point $x \in \R^d$ (the center) such that $  \Omega - x = -\Omega + x$.
The following theorem, due to Minkowski, gives a criterion for the
central symmetry of a convex polytope $\Omega$ in terms of the areas of its facets:
\begin{thm}[Minkowski]
	\label{thmP3.1}
	A convex polytope $\Omega$ is centrally symmetric if and only if for
	each facet $F$ of $\Omega$ there is a parallel facet $F'$ such that $|F|=|F'|$.
\end{thm}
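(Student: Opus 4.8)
The plan is to prove the two implications separately; the forward one is immediate, and the reverse one carries all the content.

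For the easy direction, suppose $\Omega$ is centrally symmetric with center $c$, so that $\Omega - c = -(\Omega - c)$. If $F$ is a facet of $\Omega$ with outward unit normal $u$, then the reflected set $2c - F$ is again a facet of $\Omega$; it is parallel to $F$, since its outward normal is $-u$, and the reflection preserves $(d-1)$-volume, so $|2c-F| = |F|$. This is the required parallel facet $F'$ of equal area.

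The reverse direction is where the work lies. First I would record two elementary facts: distinct facets of a convex polytope have distinct outward unit normals, and two facets are parallel precisely when their outward normals are antipodal. Enumerate the facets as $F_1, \dots, F_m$ with outward unit normals $u_1, \dots, u_m$ and areas $a_i = |F_i|$; one also has the Minkowski relation $\sum_{i} a_i u_i = 0$, which holds automatically by the divergence theorem but is not needed for what follows. The hypothesis says exactly that the assignment $u_i \mapsto a_i$ from facet normals to facet areas is even: for every $i$ there is $j$ with $u_j = -u_i$ and $a_j = a_i$.

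The key step is to compare $\Omega$ with its reflection $-\Omega$. A facet $F_i$ of $\Omega$ corresponds to a facet $-F_i$ of $-\Omega$ with outward normal $-u_i$ and the same area $a_i$, so the facet of $-\Omega$ in direction $-u_i$ has area $a_i$; by the evenness of the area assignment, the facet of $\Omega$ in direction $-u_i$ also has area $a_i$. Hence $\Omega$ and $-\Omega$ have the same set of facet normals and, for each normal, the same corresponding facet area — equivalently, they have the same surface area measure on the unit sphere. I would then invoke Minkowski's uniqueness theorem: a convex polytope in $\R^d$ is determined up to translation by its facet normals and the corresponding facet areas. This forces $-\Omega = \Omega + v$ for some $v \in \R^d$, and setting $c = -v/2$ gives $\Omega - c = -(\Omega - c)$, i.e.\ $\Omega$ is centrally symmetric with center $c$.

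The main obstacle is the uniqueness theorem invoked in the last step, the genuinely nontrivial input. I would either cite it from the standard theory of convex bodies, or reprove it via mixed volumes. Writing $S_K$ for the surface area measure of a convex body $K$ and $h_K$ for its support function, the discussion above shows the hypothesis is equivalent to $S_\Omega = S_{-\Omega}$. Using the representation $V(K[d-1],L) = \tfrac{1}{d}\int h_L \, dS_K$ of the mixed volume, equal surface area measures give $V(\Omega[d-1], -\Omega) = \tfrac{1}{d}\int h_{-\Omega}\, dS_\Omega = \tfrac{1}{d}\int h_{-\Omega}\, dS_{-\Omega} = V(-\Omega)$ and, symmetrically, $V(-\Omega[d-1], \Omega) = V(\Omega)$; Minkowski's first inequality $V(K[d-1],L)^d \ge V(K)^{d-1} V(L)$, together with its equality case, then forces $V(\Omega) = V(-\Omega)$ and that $\Omega$ and $-\Omega$ are homothetic, hence translates. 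Everything else in the argument is routine.
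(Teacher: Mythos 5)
Your proof is correct and follows essentially the same route as the paper, which simply cites the classical Minkowski uniqueness theorem: the hypothesis translates into $S_\Omega = S_{-\Omega}$ for the surface area measures, uniqueness forces $-\Omega$ to be a translate of $\Omega$, and this gives central symmetry. Your additional sketch of the uniqueness theorem via mixed volumes and the equality case of Minkowski's first inequality is the standard proof of that ingredient and is also sound.
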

This is a consequence of the classical Minkowski's uniqueness theorem,
see for example \cite[Section 18.2]{Gru07}.
\par
We shall need some well-known facts about Fourier transforms related to convex polytopes 
in  $\R^d$ (actually, in some of these results the convexity is not necessary). Since the 
proofs are not difficult, they are included for completeness.
 \begin{lem}\label{lemP1}
 Let $\Omega$ be a convex polytope in $\mathbb R^d$ $(d \geq 1)$. 
For each facet $F$ of $\Omega$, let $n_F$ denote the outward unit normal to $\Omega$ on $F.$ Then
 \begin{equation}\label{eqP1.1}
 -2\pi i\xi \, \hat{\1}_\Omega(\xi)=\sum n_F \, \hat\sigma_F(\xi),\quad \xi\in\mathbb R^d,
 \end{equation}
 where the sum is over all the facets $F$ of $\Omega.$
 \end{lem}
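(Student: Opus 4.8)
The plan is to derive the vector identity \eqref{eqP1.1} componentwise from the divergence theorem. Fix $\xi\in\R^d$ and an index $j\in\{1,\dots,d\}$, and consider the smooth vector field $V(x)=e^{-2\pi i\dotprod{\xi}{x}}\,\vec e_j$ on $\R^d$. Since $\partial_{x_j}e^{-2\pi i\dotprod{\xi}{x}}=-2\pi i\xi_j\,e^{-2\pi i\dotprod{\xi}{x}}$, its divergence is $\div V(x)=-2\pi i\xi_j\,e^{-2\pi i\dotprod{\xi}{x}}$, so that integrating over $\Omega$ yields
\[
\int_\Omega \div V\,dx = -2\pi i\xi_j\,\hat{\1}_\Omega(\xi),
\]
which is precisely the $j$-th component of the left-hand side of \eqref{eqP1.1}.

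Next I would apply the divergence theorem, $\int_\Omega \div V\,dx=\int_{\partial\Omega}\dotprod{V}{n}\,d\sigma$, where $n$ denotes the outward unit normal. The boundary $\partial\Omega$ of a convex polytope is, up to a set of $(d-1)$-dimensional measure zero, the disjoint union of the relative interiors of its facets $F$, and on each such facet the outward normal is the constant vector $n_F$. Hence the boundary integral splits as
\[
\int_{\partial\Omega}\dotprod{\vec e_j}{n}\,e^{-2\pi i\dotprod{\xi}{x}}\,d\sigma
=\sum_F \dotprod{\vec e_j}{n_F}\int_F e^{-2\pi i\dotprod{\xi}{x}}\,d\sigma_F(x)
=\sum_F (n_F)_j\,\hat\sigma_F(\xi),
\]
which is exactly the $j$-th component of the right-hand side. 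Letting $j$ range over $1,\dots,d$ then assembles the full vector identity.

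The only point requiring care is the legitimacy of the divergence theorem on $\Omega$. A convex polytope is a bounded Lipschitz domain whose boundary is piecewise flat, carrying a well-defined constant outward normal on the relative interior of each facet, while the $(d-2)$-skeleton where the facets meet has $(d-1)$-dimensional measure zero; so the classical theorem applies directly, and I expect this to be the only (minor) obstacle. Alternatively, one can bypass the general theorem entirely: by Fubini, integrate $\partial_{x_j}e^{-2\pi i\dotprod{\xi}{x}}$ along each line parallel to $\vec e_j$ using the one-dimensional fundamental theorem of calculus, collect the boundary values at the points where the line enters and exits $\Omega$, and invoke the relation $dx_1\cdots\widehat{dx_j}\cdots dx_d=|(n_F)_j|\,d\sigma_F$ between the area element on a facet and its projection onto the coordinate hyperplane $\{x_j=0\}$; the sign of $(n_F)_j$ distinguishes exit from entry and reproduces the same sum. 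As a sanity check, in the base case $d=1$ one has $\Omega=[a,b]$, the two facets are the endpoints with normals $\mp1$, and the identity reduces to the elementary evaluation of $\int_a^b e^{-2\pi i\xi x}\,dx$.
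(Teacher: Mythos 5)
Your proof is correct and is essentially the paper's own argument: both apply the divergence theorem to the field $e^{-2\pi i\dotprod{\xi}{x}}$ times a constant vector (the paper uses an arbitrary vector $u$ where you use the basis vectors $\vec e_j$, an immaterial difference), and both split the boundary integral over the facets where the normal is constant. Your added remarks on the validity of the divergence theorem for polytopes and the elementary Fubini alternative are fine but not needed beyond what the paper does.
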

\begin{proof} Fix two vectors $\xi$ and $u$ in $\mathbb R^d,$ and let
$$\Phi(x):=u \, e^{-2\pi i\langle \xi,x\rangle},\quad x\in\mathbb R^d.$$
Then we have
$$\div \Phi(x)=-2\pi i\langle\xi,u\rangle e^{-2\pi i\langle \xi,x\rangle}.$$
By the divergence theorem,
$$\int_\Omega \div \Phi(x)dx=\int_{\partial \Omega}\langle \Phi(x),n(x)\rangle \,  d\sigma(x),$$
where $\sigma$ denotes the surface measure on the boundary $\partial \Omega$, and 
$n(x):=n_F$ if $x$ belongs to the relative interior of a facet $F$ of $\Omega$. This means that
$$-2\pi i\langle\xi,u\rangle\hat{\1}_\Omega(\xi)=\sum\langle n_F,u\rangle\hat\sigma_F(\xi),$$
where the sum is over all the facets $F$  of $\Omega$. But since $\xi$ and $u$ were
arbitrary vectors  in $\R^d$, this implies \eqref{eqP1.1}.
\end{proof}

\begin{corollary}\label{corP2}
If $\Omega$ is a convex polytope in $\mathbb R^d$ $(d \geq 1)$, then
$$|\hat{\1}_\Omega(\xi)|\le\frac{|\partial\Omega|}{2\pi}\cdot|\xi|^{-1},$$
where $|\partial\Omega|$ denotes the total surface area of $\Omega.$
\end{corollary}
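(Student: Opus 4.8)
The plan is to read this off directly from \lemref{lemP1}, which I am allowed to assume. The identity there expresses the vector $-2\pi i \xi\, \hat{\1}_\Omega(\xi)$ as a finite sum over the facets, so the strategy is simply to pass to Euclidean norms on both sides and estimate each term crudely. I expect no genuine obstacle here; the content is entirely in \lemref{lemP1}, and what remains is a one-line application of the triangle inequality together with trivial bounds on the Fourier transforms of the surface measures.

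Concretely, I would first observe that on the left-hand side of \eqref{eqP1.1} the quantity $\hat{\1}_\Omega(\xi)$ is a complex scalar multiplying the vector $\xi$, so taking the Euclidean norm gives exactly $2\pi |\xi|\cdot |\hat{\1}_\Omega(\xi)|$. For the right-hand side I would apply the triangle inequality in $\R^d$,
\[
\Big| \sum_F n_F\, \hat\sigma_F(\xi) \Big| \le \sum_F |n_F|\, |\hat\sigma_F(\xi)| = \sum_F |\hat\sigma_F(\xi)|,
\]
using that each $n_F$ is a \emph{unit} normal, so $|n_F| = 1$.

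Next I would bound each summand. Since $\sigma_F$ is a positive (surface) measure on the facet $F$, its Fourier transform satisfies
\[
|\hat\sigma_F(\xi)| = \Big| \int_F e^{-2\pi i \dotprod{\xi}{x}}\, d\sigma_F(x) \Big| \le \int_F d\sigma_F = |F|,
\]
the $(d-1)$-dimensional volume of $F$. Summing over all facets yields $\sum_F |\hat\sigma_F(\xi)| \le \sum_F |F| = |\partial\Omega|$, the total surface area.

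Combining these estimates gives $2\pi |\xi|\cdot |\hat{\1}_\Omega(\xi)| \le |\partial\Omega|$, and dividing by $2\pi |\xi|$ for $\xi \ne 0$ produces the claimed inequality. (The case $\xi = 0$ is not included, as the stated bound carries the factor $|\xi|^{-1}$.) The only point requiring any care is the bookkeeping on the left-hand side — remembering that $\hat{\1}_\Omega(\xi)$ is a scalar and the norm of $\xi$ factors out cleanly — but this is immediate and not a real difficulty.
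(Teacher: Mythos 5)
Your proof is correct and is exactly the paper's argument: the paper simply remarks that the right-hand side of \eqref{eqP1.1} is bounded in norm by $|\partial\Omega|$, which is precisely your triangle-inequality plus $|\hat\sigma_F(\xi)|\le|F|$ estimate, followed by dividing by $2\pi|\xi|$. No difference in substance; you have merely written out the details the paper leaves implicit.
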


This follows from \lemref{lemP1} using the fact that the right-hand side of
\eqref{eqP1.1} is bounded in norm by $|\partial\Omega|.$

 \begin{lem}\label{lemP3}
 Let $\Omega$ be a convex polytope in $\mathbb R^d$ $(d \geq 2)$, and   $F$ be a facet of $\Omega$. 
 Let $\theta(\xi,F)$ denote the angle between a non-zero vector $\xi\in\mathbb R^d$ and the outward normal vector to $\Omega$ on $F.$
Then
$$\left|\hat\sigma_F(\xi)\right|\le\frac{|\partial F|}{2\pi}\cdot\frac{|\xi|^{-1}}{|\sin \theta(\xi,F)|},$$
where $|\partial F|$ is the $(d-2)$-dimensional volume of the relative boundary of $F$.
 \end{lem}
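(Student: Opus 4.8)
The plan is to reduce the $d$-dimensional estimate to a lower-dimensional instance of \corref{corP2}. Let $H$ be the hyperplane containing the facet $F$, and let $n_F$ be the outward unit normal to $\Omega$ on $F$, so that $n_F$ is orthogonal to $H$. I would split the frequency vector as $\xi = \xi_\perp + \xi_\parallel$, where $\xi_\perp = \dotprod{\xi}{n_F}\, n_F$ is the component along the normal and $\xi_\parallel$ is the component orthogonal to $n_F$ (parallel to $H$). By the definition of $\theta(\xi, F)$ as the angle between $\xi$ and $n_F$, one has $|\xi_\parallel| = |\xi|\,|\sin\theta(\xi,F)|$. When $\sin\theta(\xi,F) = 0$ the asserted bound is vacuous, so I may assume $\xi_\parallel \neq 0$.

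Next I would observe that the normal component contributes only a unimodular factor. Fix a point $x_0 \in H$ and write each $x \in F$ as $x = x_0 + y$, with $y$ ranging over the translate $F - x_0$, which lies in the $(d-1)$-dimensional linear subspace $V := H - x_0$ orthogonal to $n_F$. Since $\dotprod{\xi_\perp}{y} = 0$ for $y \in V$, we get
$$\hat\sigma_F(\xi) = \int_F e^{-2\pi i \dotprod{\xi}{x}}\, d\sigma_F(x) = e^{-2\pi i \dotprod{\xi}{x_0}} \int_{F - x_0} e^{-2\pi i \dotprod{\xi_\parallel}{y}}\, dy,$$
where the integral on the right is taken with respect to $(d-1)$-dimensional Lebesgue measure on $V$, since $\sigma_F$ is precisely this measure restricted to $F$. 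Taking absolute values, $|\hat\sigma_F(\xi)|$ equals the modulus of the $(d-1)$-dimensional Fourier transform of $\1_{F - x_0}$ evaluated at $\xi_\parallel \in V$.

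Now $F - x_0$ is a convex polytope in the $(d-1)$-dimensional space $V$, and its total surface area in $V$ — that is, the $(d-2)$-dimensional volume of its relative boundary — is exactly $|\partial F|$. Applying \corref{corP2} with $d$ replaced by $d - 1$ (which is legitimate since $d \geq 2$, so that $d - 1 \geq 1$) therefore yields
$$|\hat\sigma_F(\xi)| \le \frac{|\partial F|}{2\pi}\,|\xi_\parallel|^{-1} = \frac{|\partial F|}{2\pi}\cdot\frac{|\xi|^{-1}}{|\sin\theta(\xi,F)|},$$
which is the desired estimate. The only point requiring care is the coordinate identification in the middle step: one must verify that $\sigma_F$ coincides with $(d-1)$-dimensional Lebesgue measure on $V$, and that the quantity $|\partial F|$ appearing in the statement matches the ``surface area'' of $F$ regarded as a polytope in $V$, so that \corref{corP2} applies verbatim. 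Beyond this bookkeeping the argument is immediate.
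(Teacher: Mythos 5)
Your proof is correct and follows essentially the same route as the paper: both arguments reduce the estimate to \corref{corP2} applied in dimension $d-1$, using the fact that $\hat\sigma_F(\xi)$ equals (up to a unimodular factor) the $(d-1)$-dimensional Fourier transform of the flattened facet evaluated at the component of $\xi$ parallel to the facet's hyperplane, whose length is $|\xi|\,|\sin\theta(\xi,F)|$. The only difference is cosmetic: the paper applies a rotation and translation to place $F$ in $\{x_1=0\}$ with normal $\vec e_1$ and then projects onto the remaining coordinates, whereas you perform the equivalent orthogonal decomposition $\xi=\xi_\perp+\xi_\parallel$ without fixing coordinates.
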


\begin{proof} By applying a rotation and a translation we may assume that $F$ is contained in the hyperplane $\{x_1=0\}$, and that the outward unit normal to $\Omega$ on $F$ is $\vec e_1.$ Hence
$$\hat\sigma_F(\xi)=\varphi_F(\xi_2,\xi_3,\dots,\xi_d),$$
where $\varphi_F$ denotes the Fourier transform of the indicator function of the polytope in $\mathbb R^{d-1}$ obtained by projecting the facet $F$ on the $(x_2,x_3,\dots,x_d)$ coordinates.
Using \corref{corP2}, this implies that
$$\left|\hat\sigma_F(\xi)\right|\le\frac{|\partial F|}{2\pi}
\Big( \sum_{j=2}^d\xi_j^2 \Big)^{-1/2}.$$
However, since we have
$$\xi_1=\langle\xi, \vec e_1\rangle=|\xi| \cos\theta(\xi,F),$$
it follows that
$$\sum_{j=2}^d\xi_j^2=|\xi|^2-\xi_1^2 = |\xi|^2 (1-\cos^2\theta(\xi,F)) = |\xi|^2 \sin^2\theta(\xi,F),$$
so this proves the claim.
\end{proof}
The previous lemmas imply the following result, that will be used in the next sections:
 \begin{lem}\label{lemP4}
 Let $\Omega$ be a convex polytope in $\mathbb R^d$ $(d \geq 2)$.
Assume that $A$ and $B$ are two parallel facets of $\Omega$,
and that the outward unit normals to $\Omega$ on $A$ and $B$ are respectively the vectors $\vec e_1$ and $-\vec e_1$
(we also allow $A$ to be a facet which does not have a parallel facet, in which case
we understand $B$ to be the empty set). Then there is $\alpha = \alpha(\Omega)>0$ such that 
\begin{equation}
	 -2\pi i\xi_1\hat{\1}_\Omega(\xi)=\hat\sigma_A(\xi)-\hat\sigma_B(\xi)+O(|\xi_1|^{-1}),\quad |\xi_1|\to \infty 
	\label{eqP4.3}
\end{equation}
in the cone 
\begin{equation}
	K(\alpha) := \left\{\xi\in \R^d\; : \; |\xi_j|\le \alpha|\xi_1| \quad (2 \le j \le d)  \right\}.
	\label{eqP4.1}
\end{equation}
 \end{lem}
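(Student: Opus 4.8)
The plan is to start from the identity in \lemref{lemP1} and extract its first coordinate. Taking the inner product of \eqref{eqP1.1} with $\vec e_1$ yields
$$-2\pi i \xi_1 \hat{\1}_\Omega(\xi) = \sum_F \langle n_F, \vec e_1 \rangle \, \hat\sigma_F(\xi),$$
the sum being over all facets $F$ of $\Omega$. Since a convex polytope has at most one facet with a prescribed outward normal direction, the only facets with $n_F = \pm\vec e_1$ are $A$ (with $n_A = \vec e_1$) and $B$ (with $n_B = -\vec e_1$, which is absent when $B = \emptyset$); these contribute exactly $\hat\sigma_A(\xi) - \hat\sigma_B(\xi)$, matching the leading terms on the right-hand side of \eqref{eqP4.3}. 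It then remains to show that the sum over the remaining facets is $O(|\xi_1|^{-1})$ in the cone $K(\alpha)$, for a suitable choice of $\alpha$.

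For each remaining facet $F$ one has $n_F \neq \pm\vec e_1$, so its first component $(n_F)_1 = \langle n_F, \vec e_1\rangle$ satisfies $|(n_F)_1| < 1$ strictly. I would apply \lemref{lemP3}, which bounds $|\hat\sigma_F(\xi)|$ by a constant times $|\xi|^{-1}/|\sin\theta(\xi,F)|$. Inside $K(\alpha)$ we have $|\xi| \geq |\xi_1|$, hence $|\xi|^{-1} \leq |\xi_1|^{-1}$, so the only thing needed is a uniform lower bound on $|\sin\theta(\xi, F)|$ over the cone. Writing $\cos\theta(\xi, F) = \langle \xi, n_F\rangle / |\xi|$, expanding $\langle \xi, n_F\rangle = \xi_1 (n_F)_1 + \sum_{j=2}^d \xi_j (n_F)_j$, and estimating the tail by Cauchy--Schwarz together with $|\xi_j| \leq \alpha|\xi_1|$ and $|\xi| \geq |\xi_1|$, I expect
$$|\cos\theta(\xi, F)| \leq |(n_F)_1| + \alpha \sqrt{d-1}.$$

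The key step, and the only place where $\alpha$ must be chosen with care, is to force this bound strictly below $1$. Since there are finitely many facets, the quantity $c := \max_{F \neq A, B}|(n_F)_1|$ satisfies $c < 1$, so I would fix any $\alpha > 0$ with $\alpha < (1-c)/\sqrt{d-1}$. Then $|\cos\theta(\xi, F)| \leq c + \alpha\sqrt{d-1} =: c' < 1$ uniformly over all $F \neq A, B$ and all $\xi \in K(\alpha)\setminus\{0\}$, whence $|\sin\theta(\xi, F)| \geq \sqrt{1 - (c')^2} > 0$. Feeding this into \lemref{lemP3} gives $|\hat\sigma_F(\xi)| \leq C_F |\xi_1|^{-1}$ for each such $F$; as the coefficients $|\langle n_F, \vec e_1\rangle| \leq 1$ are bounded and the facets are finite in number, the sum over $F \neq A, B$ is $O(|\xi_1|^{-1})$ as $|\xi_1|\to\infty$ in $K(\alpha)$, which establishes \eqref{eqP4.3}. (Note that in $K(\alpha)$ one also has $|\xi| \leq |\xi_1|\sqrt{1+(d-1)\alpha^2}$, so $|\xi_1|\to\infty$ is equivalent to $|\xi|\to\infty$ there.)

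The main obstacle is really just the geometric observation underlying the key step: the cone $K(\alpha)$ is a neighborhood of the $\xi_1$-axis, and for $\alpha$ small enough it stays away from every direction $n_F$ with $F \neq A, B$, which is precisely what keeps $|\sin\theta(\xi, F)|$ bounded below and prevents the estimate of \lemref{lemP3} from degenerating. The existence of such an $\alpha$ relies on the fact that no facet other than $A$ and $B$ has normal $\pm\vec e_1$, and that $\Omega$ has only finitely many facets.
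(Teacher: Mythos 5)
Your proposal is correct and follows essentially the same route as the paper: both start from the vector identity of \lemref{lemP1}, isolate the contributions of $A$ and $B$, and control the remaining facets via \lemref{lemP3} by noting that for $\alpha$ small the cone $K(\alpha)$ keeps the angle to every other facet normal bounded away from $0$ and $\pi$, while $|\xi_1|/|\xi|$ stays bounded below. Your write-up merely makes the paper's ``$\alpha$ sufficiently small'' step quantitative via the Cauchy--Schwarz bound $|\cos\theta(\xi,F)|\le |(n_F)_1|+\alpha\sqrt{d-1}$, which is a valid instantiation of the same argument.
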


\begin{proof} 
By \lemref{lemP1} we have
\begin{equation}
 -2\pi i\xi_1 \, \hat{\1}_\Omega(\xi)=\hat\sigma_A(\xi)-\hat\sigma_B(\xi) + \sum \dotprod{n_F}{\vec e_1} \, \hat\sigma_F(\xi),
	\label{eqP4.2}
\end{equation}
 where the sum is over all the facets $F$ of $\Omega$ other than $A$ and $B$.
	If $\alpha$ is sufficiently small, then the angle between any vector in
	$K(\alpha)$ and the outward normal to $\Omega$ on any facet $F$ other than $A$ and
	$B$, is bounded away from $0$ and $\pi$. Hence by \lemref{lemP3}, the sum
	on the right-hand side of \eqref{eqP4.2} is $O(|\xi|^{-1})$ as $|\xi|\to\infty$ in
	the cone $K(\alpha)$. But since the ratio $|\xi_1|  / |\xi|$ is bounded from 
	below in $K(\alpha)$,  this implies \eqref{eqP4.3}.
\end{proof}


\section{Spectral convex polytopes are symmetric} \label{secA1}
\subsection{}
In this section we give a proof of the following result:
\begin{thm}[Kolountzakis \cite{Kol00}]
\label{thmA1}
Let $\Omega$ be a convex polytope in $\mathbb R^d$ $(d \ge 2)$.
If $\Omega$ is spectral then $\Omega$ is centrally symmetric.
\end{thm}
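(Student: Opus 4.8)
The plan is to verify the criterion of Minkowski (\thmref{thmP3.1}): it suffices to prove that for every facet $A$ of $\Omega$ there is a parallel facet $B$ with $|A|=|B|$. So I fix a facet $A$, and after a rotation I may assume that the outward unit normal to $\Omega$ on $A$ is $\vec e_1$; let $B$ be the facet whose outward normal is $-\vec e_1$, if such a facet exists, and otherwise set $B=\emptyset$ (so that $|B|=0$). The goal then becomes to show $|A|=|B|$, since this simultaneously produces an opposite facet and matches its area. To begin, I would use the spectrum $\Lambda$ to manufacture a rich zero set of $\hat\1_\Omega$: after translating I may assume $0\in\Lambda$, so by \eqref{eqP1.2} every nonzero element of $\Lambda-\Lambda$ is a zero of $\hat\1_\Omega$. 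Since $\Lambda$ is relatively dense (as recorded in \secref{secLimits}), so is $\Lambda-\Lambda$, and hence the zero set $\{\hat\1_\Omega=0\}$ is relatively dense in $\R^d$, say every ball of radius $R$ meets it.

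Next I would extract the facet data via \lemref{lemP4}, which in the cone $K(\alpha)$ gives
\[
-2\pi i\,\xi_1\,\hat\1_\Omega(\xi)=\hat\sigma_A(\xi)-\hat\sigma_B(\xi)+O(|\xi_1|^{-1}),\qquad |\xi_1|\to\infty.
\]
Writing $\xi=(\xi_1,\xi')$, the facet transforms factor as $\hat\sigma_A(\xi)=e^{-2\pi i a\xi_1}\Psi_A(\xi')$ and $\hat\sigma_B(\xi)=e^{-2\pi i b\xi_1}\Psi_B(\xi')$, where $a,b$ are the heights of the parallel hyperplanes carrying $A,B$ and $\Psi_A,\Psi_B$ are the Fourier transforms of the indicators of $A,B$ projected onto the $\xi'$-variables; in particular $\Psi_A(0)=|A|$ and $\Psi_B(0)=|B|$. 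Now for each large $s$ I would pick, using relative density, a nonzero $w=w(s)\in\Lambda-\Lambda$ with $|w-s\vec e_1|\le R$. Then $w\in K(\alpha)$ with $w_1\to\infty$, and it is a zero of $\hat\1_\Omega$; plugging $w$ into the asymptotics yields $\hat\sigma_A(w)-\hat\sigma_B(w)\to0$, and taking absolute values to kill the unimodular phases gives $\big|\,|\Psi_A(w')|-|\Psi_B(w')|\,\big|\to0$. This says that $|\Psi_A|$ and $|\Psi_B|$ agree, asymptotically, at the transverse frequencies $w'$ produced by the spectrum.

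The main obstacle is the passage from this to the exact identity $|A|=|B|$. The difficulty is that the spectrum only furnishes zeros whose transverse component $w'$ lies within the fixed distance $R$ of a prescribed point, so I obtain $|\Psi_A|=|\Psi_B|$ merely on a relatively dense (and possibly coarse) set of transverse frequencies; moreover the phase-blind $L^2$/density information about $\Lambda$ cannot by itself distinguish $|A|$ from $|B|$, so the surface terms must be tracked through \lemref{lemP4} as above. To close the gap I would vary the transverse target over a net, pass to the limit along a diagonal subsequence, and try to upgrade the relation to $|\Psi_A|\equiv|\Psi_B|$ — exploiting that $|\Psi_A|^2,|\Psi_B|^2$ are Fourier transforms of the compactly supported autocorrelations of $\1_A,\1_B$, so that a sampling/uniqueness argument forces equality. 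Once $|\Psi_A|\equiv|\Psi_B|$ is established, the conclusion $|A|=\Psi_A(0)=\Psi_B(0)=|B|$ follows (equivalently by integrating $|\Psi_A|^2=|\Psi_B|^2$ and invoking Plancherel). The degenerate case $B=\emptyset$ is subsumed: it would force $|A|=0$, which is impossible, so in fact every facet admits an opposite facet of equal area, and Minkowski's criterion is verified.
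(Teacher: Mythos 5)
Your opening moves are fine: the reduction to Minkowski's criterion (\thmref{thmP3.1}), the normalization of $A$ and $B$, the use of \eqref{eqP1.2} together with \lemref{lemP4}, and the deduction that a zero $w\in(\Lambda-\Lambda)\setminus\{0\}$ lying in the cone with $w_1\to\infty$ forces $\bigl|\,|\Psi_A(w')|-|\Psi_B(w')|\,\bigr|\to 0$ are all correct. The genuine gap is the final upgrade, and it cannot be repaired as proposed. What your argument actually delivers is only that the set $Z=\{u\in\R^{d-1}:|\Psi_A(u)|=|\Psi_B(u)|\}$ is relatively dense with constant $R$, where $R$ is the relative-density radius of $\Lambda-\Lambda$; crucially, $R$ is dictated by $\Omega$ and cannot be made small. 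The proposed ``sampling/uniqueness'' step --- that the Fourier transform of a compactly supported function vanishing on a relatively dense set must vanish identically --- is false. Band-limited functions vanish on relatively dense sets all the time: $\hat{\1}_{[-1/2,1/2]}(\xi)=\sin(\pi\xi)/(\pi\xi)$ vanishes on $\Z\setminus\{0\}$, and similarly $|\Psi_A|^2-|\Psi_B|^2$ (the transform of the difference of autocorrelations) can vanish on a relatively dense set without being zero; no conclusion about its value at the single point $0$ follows. Your own degenerate case exposes the failure most clearly: if $B=\emptyset$ (say $\Omega$ is a triangle in $\R^2$ and $A$ an edge), your information reads ``$\hat{\1}_A$ vanishes on a relatively dense subset of $\R^{d-1}$,'' which is literally true for an honest segment or cube and therefore yields no contradiction whatsoever --- certainly not $|A|=0$, as you claim.

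The paper's proof runs in the opposite direction, and this is the idea you are missing. One assumes, for contradiction, the \emph{strict} inequality $|A|>|B|$ and fixes $0<\eta<|A|-|B|$. By continuity of $\Psi_A,\Psi_B$ at the origin there is a thin cylinder $S(2\varepsilon)$ of radius $2\varepsilon$ around the $\xi_1$-axis on which $|\hat\sigma_A(\xi)-\hat\sigma_B(\xi)|\ge|\hat\sigma_A(\xi)|-|\hat\sigma_B(\xi)|>\eta$; combined with \lemref{lemP4} this shows that $\hat{\1}_\Omega$ has \emph{no} zeros on $S(2\varepsilon)\setminus B_R$ for a suitable $R$. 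Note that $\varepsilon$ may be taken arbitrarily small, precisely because one needs the absence of zeros in the cylinder rather than the presence of zeros near the axis (which, under the contradiction hypothesis, do not exist --- so an argument like yours, which tries to produce useful zeros, cannot get close enough to the axis). From the zero-free region one concludes that any two points of $\Lambda$ lying in a common translate $S(\varepsilon)+\tau$ are within distance $R$ of each other, hence by uniform discreteness $\Lambda\cap(S(\varepsilon)+\tau)$ is finite; covering the fat cylinder $S(M)$, where $M$ is the relative-density radius of $\Lambda$, by finitely many translates of $S(\varepsilon)$ shows $\Lambda\cap S(M)$ is finite, so $S(M)$ contains a ball of radius $M$ free of $\Lambda$ --- contradicting relative density. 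In short: the spectrum is not used to manufacture zeros that pin down $|A|-|B|$; it is used, via uniform discreteness and a covering argument, to show that a zero-free cylinder is incompatible with $\Lambda$ being relatively dense.
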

In fact, it was proved in \cite{Kol00} that any convex body (not assumed to be a polytope) 
which is spectral, must be centrally symmetric. This supports the conjecture that a spectral 
convex body $\Omega$ can tile by translations, as the central symmetry is a necessary condition for
$\Omega$ to tile, by the Venkov-McMullen theorem.
\par
There is another approach to prove \thmref{thmA1}, that was introduced in
the paper \cite{KP02} due to Kolountzakis and
Papadimitrakis. This approach is specific for polytopes, but on the other hand
it does not require $\Omega$ to be convex. 
The main result in \cite{KP02} gives a certain condition 
on a polytope $\Omega \subset \R^d$ that is necessary for its spectrality.
If the  polytope $\Omega$ is  convex, then this condition coincides with the requirement that 
$\Omega$ is  centrally symmetric.
\par
For the completeness of our exposition, below we give 
a proof of \thmref{thmA1} based on the argument in \cite{KP02}.
See also \cite[pp.\ 184--185]{Kol04}. The proof may also serve as a preparation for the next section,
where the argument will be further developed. 

\subsection{Proof of \thmref{thmA1}}
By Minkowski's \thmref{thmP3.1} it would be enough to show that
for each facet $A$ of $\Omega$ there is a parallel facet $B$ such that $|A|=|B|$.
If this is not true, then there is a facet $A$ of $\Omega$ whose parallel facet $B$
satisfies $|A|>|B|,$ where we understand $B$ to be the empty set if $A$ is a facet of
$\Omega$ with no parallel facet.
\par
By applying an affine transformation, we may assume that $A$ is contained in the hyperplane $\{x_1=0\}$, that $B$ is contained in the hyperplane $\{x_1=-1\}$, and that the outward unit normals to $\Omega$ on $A$ and $B$ are respectively the vectors $\vec e_1$ and $-\vec e_1.$
It follows that
\begin{align}
\label{eqA1.1} \hat\sigma_A(\xi)&=\varphi_A(\xi_2,\xi_3,\dots,\xi_d),\\[4pt]
\label{eqA1.2} \hat\sigma_B(\xi)&=e^{2\pi i\,\xi_1}\varphi_B(\xi_2,\xi_3,\dots,\xi_d),
\end{align}
where $\varphi_A,\varphi_B$ are respectively the Fourier transforms of the indicator
functions of the polytopes in $\mathbb R^{d-1}$ obtained by projecting the facets 
$A,B$ on the $(x_2,x_3,\dots,x_d)$ coordinates.
In particular, $\varphi_A$ and $\varphi_B$ are continuous functions, and
\begin{equation}\label{eqA1.3}
\varphi_A(0)=|A|,\quad \varphi_B(0)=|B|.
\end{equation}
\par
For any $r>0$ we denote by $S(r)$ the cylinder of radius $r$ along the $x_1$-axis, namely
$$S(r) := \{t \vec e_1+w \, : \, t\in\mathbb R, \; w \in\mathbb R^d, \; |w|<r\}.$$
Notice that
\begin{equation}\label{eqA1.4}
S(r)-S(r) = S(2r).
\end{equation}
By assumption, we have $|A|>|B|.$ Choose a number $\eta$ such that
$$0<\eta<|A|-|B|.$$
It follows from \eqref{eqA1.1}, \eqref{eqA1.2} and \eqref{eqA1.3} that there is $\varepsilon >0$  such that
$$\left|\hat\sigma_A(\xi) -\hat\sigma_B(\xi) \right|\ge\left|\hat\sigma_A(\xi)\right|-\left|\hat\sigma_B(\xi)\right|>\eta, \quad \xi \in S(2\varepsilon).$$
By \lemref{lemP4} we have
$$-2\pi i\xi_1\hat{\1}_\Omega(\xi)=\hat\sigma_A(\xi)-\hat\sigma_B(\xi)+O(|\xi_1|^{-1}),\quad |\xi_1|\to \infty$$
in the cylinder $S(2\varepsilon)$. It follows that there is $R>0$ such that
\begin{equation}\label{eqA1.5}
\hat{\1}_\Omega(\xi)\ne0,\quad \xi\in S(2\varepsilon) \setminus B_R
\end{equation}
where $B_R$ denotes the ball of radius $R$ centered at the origin.

Now let $\Lambda$ be a spectrum for $\Omega.$ We claim that for any $\tau\in\mathbb R^d,$
if $\lambda,\lambda'$ are two points in $\Lambda\cap(S(\varepsilon)+\tau)$, then
$|\lambda'-\lambda|\le R$. Indeed, if not then using \eqref{eqA1.4} we get
$$\lambda'-\lambda \in S(2\varepsilon)\setminus B_R,$$
but due to \eqref{eqA1.5} this implies that $\hat{\1}_\Omega(\lambda'-\lambda)\ne0,$ a contradiction.

Since $\Lambda$ is a uniformly discrete set, it follows  that $\Lambda\cap(S(\varepsilon)+\tau)$ is a finite set, for every $\tau\in\mathbb R^d.$
Since $\Lambda$ is a relatively dense set, there is $M>0$ such that every ball of radius
$M$ intersects $\Lambda.$ The cylinder $S(M)$ may be covered by a finite number of
cylinders $S(\varepsilon)+\tau_j$ $(1\leq j \leq N)$; hence $\Lambda\cap S(M)$ is also a
finite set. But this implies that $S(M)$ must contain a ball of radius $M$ free from points of
$\Lambda,$ a contradiction. This completes the proof of \thmref{thmA1}.
\qed


\section{Spectral convex polytopes have symmetric facets} \label{secA2}
\subsection{} \label{subsecA2.1}
The result in \secref{secA1} shows that the central symmetry is a necessary 
condition for a convex polytope $\Omega\subset\R^d$ to be spectral. In the 
present section we prove that also the central symmetry of all the facets 
of $\Omega$ is necessary for spectrality: 
\begin{thm}\label{thmA2}
Let $\Omega$ be a convex, centrally symmetric polytope in $\mathbb R^d$ $(d \ge 3)$.
If $\Omega$ is spectral then all the facets of  $\Omega$ are also  centrally symmetric.
\end{thm}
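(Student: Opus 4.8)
The plan is to argue by contradiction, refining the argument of \secref{secA1}. Suppose some facet $A$ of $\Omega$ is not centrally symmetric. Since $\Omega$ is centrally symmetric, $A$ has a parallel facet $B$, and $B$ is precisely the reflection of $A$ through the center of $\Omega$. After an affine change of variables (which preserves spectrality) I would place $A \subset \{x_1=0\}$ and $B \subset \{x_1=-1\}$, with outward normals $\vec e_1$ and $-\vec e_1$, and arrange the center of $\Omega$ at $(-\tfrac12,0,\dots,0)$ so that the orthogonal projections of $A$ and $B$ onto the $(x_2,\dots,x_d)$-coordinates are reflections of one another through the origin. Writing $\eta=(\xi_2,\dots,\xi_d)$ and letting $\varphi_A=\hat{\1}_{P_A}$ be the Fourier transform of the $(d-1)$-dimensional projection $P_A$ of $A$, I then have $\hat\sigma_A(\xi)=\varphi_A(\eta)$ and $\hat\sigma_B(\xi)=e^{2\pi i\xi_1}\overline{\varphi_A(\eta)}$.

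The geometry enters through one observation. The quotient $f(\eta):=\varphi_A(\eta)/\overline{\varphi_A(\eta)}$ (defined where $\varphi_A(\eta)\ne0$) equals a unimodular character $e^{-2\pi i\dotprod{\eta}{w}}$ for all $\eta$ if and only if $\overline{\varphi_A(\eta)}=e^{2\pi i\dotprod{\eta}{w}}\varphi_A(\eta)$; since $\overline{\varphi_A}$ is the Fourier transform of $-P_A$, the injectivity of the Fourier transform shows that this happens exactly when $P_A=-P_A+w$, that is, when the facet $A$ is centrally symmetric. Thus the assumed asymmetry of $A$ is equivalent to $f$ failing to be a character.

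Next I would locate the zeros of $\hat{\1}_\Omega$ near the $\vec e_1$-axis. By \lemref{lemP4}, in the cone $K(\alpha)$ one has
\[
-2\pi i\,\xi_1\,\hat{\1}_\Omega(\xi)=\varphi_A(\eta)-e^{2\pi i\xi_1}\overline{\varphi_A(\eta)}+O(|\xi_1|^{-1}),\qquad |\xi_1|\to\infty.
\]
Hence, if $\Lambda$ is a spectrum and $\mu\in(\Lambda-\Lambda)\cap K(\alpha)$, then $\hat{\1}_\Omega(\mu)=0$ forces $\varphi_A(\tilde\mu)-e^{2\pi i\mu_1}\overline{\varphi_A(\tilde\mu)}=O(|\mu_1|^{-1})$, where $\tilde\mu=(\mu_2,\dots,\mu_d)$; so at any $\tilde\mu$ with $\varphi_A(\tilde\mu)\ne0$ one obtains
\[
e^{2\pi i\mu_1}=f(\tilde\mu)+O(|\mu_1|^{-1}),\qquad |\mu_1|\to\infty.
\]
In other words, the admissible differences of $\Lambda$ inside the cone are forced, asymptotically, onto the graph $\{\xi_1\equiv \arg\varphi_A(\eta)/\pi \pmod 1\}$, and the asymmetry of $A$ means this graph is genuinely curved rather than a union of flat hyperplanes.

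The contradiction — and the step I expect to be the main obstacle — is to turn this into the statement that $f$ is a character after all. Given the relative density and uniform discreteness of $\Lambda$, for arbitrary transverse targets $\eta',\eta''$ (chosen in the open set where $\varphi_A\ne0$) and arbitrarily large $t',t''$, I would use relative density to select $\lambda,\lambda',\lambda''\in\Lambda$ with $\lambda-\lambda'$ close to $(t',\eta')$ and $\lambda'-\lambda''$ close to $(t'',\eta'')$. All three pairwise differences then lie in $K(\alpha)$ with large first coordinate, and the additivity $(\lambda-\lambda'')_1=(\lambda-\lambda')_1+(\lambda'-\lambda'')_1$, combined with the displayed relation, yields $f(\tilde\mu'+\tilde\mu'')=f(\tilde\mu')f(\tilde\mu'')+o(1)$ as $t',t''\to\infty$ (the transverse parts add exactly). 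Passing to a subsequential limit of the transverse parts, which stay within a bounded distance of $\eta',\eta''$, produces an exact multiplicative identity for $f$ on a relatively dense set of pairs; by continuity of $f$ on the connected complement of the zero set of $\varphi_A$ this propagates to the full character equation, whence $\varphi_A(\eta)=e^{-2\pi i\dotprod{\eta}{w}}\overline{\varphi_A(\eta)}$ identically and $A$ is centrally symmetric, contradicting our assumption. The delicate points are the bounded ``slack'' in the transverse coordinates caused by the discreteness of $\Lambda$, the need to stay away from — and ultimately dispose of — the zero set of $\varphi_A$, and the uniform control of the $O(|\mu_1|^{-1})$ error as the first coordinates tend to infinity.
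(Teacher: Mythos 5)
Your setup is correct up to the displayed phase relation, and your route is genuinely different from the paper's, but the final step contains a genuine gap that the sketch does not close. Two concrete problems. First, to pass from $\hat{\1}_\Omega(\mu)=0$ to $e^{2\pi i\mu_1}=f(\tilde\mu)+O(|\mu_1|^{-1})$ you must divide by $\overline{\varphi_A(\tilde\mu)}$, so you need $|\varphi_A(\tilde\mu)|$ bounded away from zero; but relative density only places spectrum points within a fixed distance $R(\Omega)$ of your targets, and you have no control over where inside those $R$-balls the transverse parts $\tilde\mu'$, $\tilde\mu''$, $\tilde\mu'+\tilde\mu''$ actually land --- they may fall on, or arbitrarily close to, the zero set of $\varphi_A$, destroying the estimate. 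Second, and fatally, even granting this, what you obtain is the identity $f(a+b)=f(a)f(b)$ only for \emph{some} uncontrolled pair $(a,b)$ within distance $R$ of each prescribed target pair $(\eta',\eta'')$. Such a collection of valid pairs is relatively dense in the metric sense but in no way topologically dense, and continuity cannot propagate an identity across gaps of fixed size comparable to $R$: a continuous unimodular function can satisfy the multiplicative equation on such a coarse net without being a character, and even the real-analyticity of $\varphi_A$ does not help, because relatively dense discrete sets are not uniqueness sets for real-analytic functions (compare $\sin\pi x$ vanishing on $\Z$). In addition, the ``connected complement of the zero set'' you invoke need not be connected at all: if the projection $P_A$ is a cube, the zero set of $\varphi_A$ is a union of hyperplanes which disconnects $\R^{d-1}$ into boxes, so even an exact local identity could not be propagated globally by connectedness.

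For contrast, the paper's proof avoids every one of these issues by never trying to detect the asymmetry of a facet through the global phase function $f=\varphi_A/\overline{\varphi_A}$. Instead it applies Minkowski's criterion (\thmref{thmP3.1}) to the facet $F$ itself: if $F$ is not centrally symmetric, then $F$ has two parallel \emph{subfacets} $A,B$ with $|A|>|B|$. Iterating \lemref{lemP4} twice --- once for $\Omega$ and the pair of facets $F,-F$, and once for $F$ (as a polytope in $\R^{d-1}$) and the pair of subfacets $A,B$ --- yields the two-variable asymptotics of \lemref{lemA2.5} for $2\pi^2\xi_1\xi_2\hat{\1}_\Omega(\xi)+\psi(\xi)$ in the region $K(L,\delta,\alpha)$, and the function $\psi$ is then evaluated only near the points $kv_\delta=k(2,\delta,0,\dots,0)$, where $\varphi_A\approx|A|$ and $\varphi_B\approx|B|$; the strict inequality $|A|>|B|$ gives a clean lower bound $|\psi|>\eta$ there, with no zero-set or phase-propagation difficulty. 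Non-vanishing of $\hat{\1}_\Omega$ on the thickened arithmetic progression $E(2\varepsilon,\delta)\setminus B_R$ then contradicts the uniform discreteness and relative density of the spectrum, exactly as in \secref{secA1}. If you wish to salvage your argument, this is the structural idea to borrow: replace the global character equation by an area deficit between parallel subfacets, detected near the transverse origin where the relevant Fourier transforms are close to the volumes $|A|$ and $|B|$.
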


Recall that by the Venkov-McMullen theorem, the central symmetry of the facets is also
a necessary condition for $\Omega$ to tile by translations. Hence this result further
supports the conjecture that any spectral convex polytope $\Omega$
can tile by translations.
\par
Notice that the conclusion cannot be further improved by showing that also
all the $k$-dimensional faces of  $\Omega$, for some $2 \le k \le d-2$, 
are centrally symmetric. Indeed, this would imply \cite{McMul70} that
all the faces of  $\Omega$ of every dimension are centrally symmetric.
However, the $24$-cell in $\R^4$ is a well-known example of a convex 
polytope which tiles by translations, and hence it is spectral, but which
does not satisfy this property.
\par
The rest of this section is devoted to the proof of  \thmref{thmA2}. 
The proof is based on a development of the argument in \cite{KP02}.

\subsection{}
Let $F$ be one of the facets of $\Omega$. As before, to prove that $F$ is centrally symmetric
it would be enough, by Minkowski's \thmref{thmP3.1}, to show that for each subfacet $A$ of $F$ 
there is a parallel subfacet $B$ of $F$ such that $|A|=|B|$. So, again, suppose to the contrary 
that $A,B$ are two parallel subfacets of $F$ such that $|A|>|B|$,
 with the agreement that $B$ is empty if $A$ has no parallel subfacet of $F.$

By applying an affine transformation, we may assume that
\begin{equation}\label{eqA2.1}
\Omega=-\Omega,
\end{equation}
namely, $\Omega$ is symmetric about the origin,
\begin{equation}\label{eqA2.2}
F\subset\{x_1=\tfrac{1}{2}\}
\end{equation}
and the outward unit normal to $\Omega$ on $F$ is $\vec e_1,$
\begin{align}
\label{eqA2.3} A & \subset\{x_1=\tfrac{1}{2},\, x_2=0\},\\[4pt]
\label{eqA2.4} B & \subset\{x_1=\tfrac{1}{2},\, x_2=-1\},
\end{align}
and the outward unit normals to $F$ on $A$ and $B$ are respectively $\vec e_2$ and $-\vec e_2.$

\subsection{}
Let $\varphi_F$ (respectively, $\varphi_A$ and $\varphi_B$) denote the Fourier transform of the indicator function of the polytope in $\mathbb R^{d-1}$ (respectively $\mathbb R^{d-2})$ obtained by projecting the facet $F$ on the $(x_2,x_3,\dots,x_d)$ coordinates (respectively, the subfacets $A$ and $B$ on the $(x_3,\dots,x_d)$ coordinates).
Define
\begin{equation}\label{eqA2.9.1}
\psi(\xi):=\Re\left[e^{-\pi i\xi_1}(\varphi_A(\xi_3,\dots,\xi_d)-e^{2\pi i\, \xi_2}\varphi_B(\xi_3,\dots,\xi_d)) \right],
\quad \xi \in \R^d. 
\end{equation}
Also, for any three positive real numbers $L$, $\delta$ and $\alpha$, we let
$$K(L,\delta,\alpha):=\{\xi \in  \mathbb R^d \; : \; L \le|\xi_2|\le\delta|\xi_1|,  \; \; |\xi_j|\le \alpha|\xi_2|\; (3\le j\le d)\}.$$

\begin{lem}\label{lemA2.5}
There is $\alpha>0$ such that given any $\eta>0$ one can find  $\delta>0$ and $L$ such that
\begin{equation}\label{eqA2.9}
\left|2\pi^2\xi_1\xi_2\hat{\1}_\Omega(\xi)+\psi(\xi)\right|<\eta, \quad \xi\in K(L,\delta,\alpha).
\end{equation}
\end{lem}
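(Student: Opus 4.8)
The plan is to obtain \eqref{eqA2.9} by applying \lemref{lemP4} twice: once to the polytope $\Omega$ itself in $\R^d$, in order to isolate the contribution of the facet $F$ together with its parallel facet, and then a second time, in the $(d-1)$-dimensional space of the coordinates $(x_2,\dots,x_d)$, to the projected facet $F$, in order to isolate the contribution of the subfacets $A$ and $B$. The three parameters $L,\delta,\alpha$ of the cone $K(L,\delta,\alpha)$ will be exactly what is needed to validate both applications simultaneously and to absorb the two remainder terms they produce.

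First I would apply \lemref{lemP4} to $\Omega$. By \eqref{eqA2.1} and \eqref{eqA2.2} the facet $F\subset\{x_1=\tfrac12\}$ has outward normal $\vec e_1$, while its parallel facet, since $\Omega=-\Omega$, is $-F\subset\{x_1=-\tfrac12\}$ with outward normal $-\vec e_1$. This yields
\[ -2\pi i\,\xi_1\hat{\1}_\Omega(\xi)=\hat\sigma_F(\xi)-\hat\sigma_{-F}(\xi)+O(|\xi_1|^{-1}) \]
in a cone $\{|\xi_j|\le\alpha_0|\xi_1|\}$. Because $F$ lies in the hyperplane $\{x_1=\tfrac12\}$, a direct computation gives $\hat\sigma_F(\xi)=e^{-\pi i\xi_1}\varphi_F(\xi_2,\dots,\xi_d)$; and because $\Omega=-\Omega$, substituting $x\mapsto -x$ shows $\hat\sigma_{-F}(\xi)=\overline{\hat\sigma_F(\xi)}$ for real $\xi$. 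This conjugate relation is precisely where the central symmetry of $\Omega$ is used, and it is what will produce the real part $\Re$ occurring in the definition \eqref{eqA2.9.1} of $\psi$.

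Next I would apply \lemref{lemP4} a second time, now in the $(d-1)$-dimensional space of $(x_2,\dots,x_d)$, to the projected facet whose indicator has Fourier transform $\varphi_F$. By \eqref{eqA2.3} and \eqref{eqA2.4} the projection of $A$ lies in $\{x_2=0\}$ with outward normal $\vec e_2$, and that of $B$ lies in $\{x_2=-1\}$ with outward normal $-\vec e_2$, giving
\[ -2\pi i\,\xi_2\varphi_F(\xi_2,\dots,\xi_d)=\varphi_A(\xi_3,\dots,\xi_d)-e^{2\pi i\,\xi_2}\varphi_B(\xi_3,\dots,\xi_d)+O(|\xi_2|^{-1}) \]
in a cone $\{|\xi_j|\le\alpha|\xi_2|\ (3\le j\le d)\}$; it is this $\alpha$ that one fixes once and for all. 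Multiplying the first relation by $\pi i\,\xi_2$ turns its left-hand side into $2\pi^2\xi_1\xi_2\hat{\1}_\Omega(\xi)$, while the identity $\pi i\,\xi_2(\hat\sigma_F-\hat\sigma_{-F})=2\Re[\pi i\,\xi_2\hat\sigma_F]$ lets me replace $\xi_2\hat\sigma_F=e^{-\pi i\xi_1}\xi_2\varphi_F$ using the second relation. Collecting the constants should collapse everything to $2\pi^2\xi_1\xi_2\hat{\1}_\Omega(\xi)=-\psi(\xi)+(\text{errors})$.

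The main work, and the only real obstacle, is the error control in the cone. The remainder inherited from the first application is $O(|\xi_1|^{-1})$ multiplied by $\xi_2$, which is $O(\delta)$ once $|\xi_2|\le\delta|\xi_1|$; the remainder from the inner recursion is $O(|\xi_2|^{-1})=O(L^{-1})$ once $|\xi_2|\ge L$. What must be verified is that both invocations of \lemref{lemP4} are legitimate on the same set $K(L,\delta,\alpha)$: having fixed $\alpha$ from the inner recursion, the constraints $|\xi_2|\le\delta|\xi_1|$ and $|\xi_j|\le\alpha\delta|\xi_1|$ place $K(L,\delta,\alpha)$ inside the outer cone $\{|\xi_j|\le\alpha_0|\xi_1|\}$ as soon as $\delta$ is small enough. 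Thus the logical order is forced: fix $\alpha$, and then, given $\eta$, choose $\delta$ small and afterward $L$ large, so that both error terms fall below $\eta$, which is exactly \eqref{eqA2.9}.
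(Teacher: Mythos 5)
Your proposal is correct and follows essentially the same route as the paper: two applications of \lemref{lemP4} (first to $\Omega$ in the cone $\{|\xi_j|\le\alpha|\xi_1|\}$, then to the projected facet $F$ in the cone $\{|\xi_j|\le\alpha|\xi_2|\}$), with the central symmetry $\Omega=-\Omega$ converting $\hat\sigma_F-\hat\sigma_{-F}$ into the real-part expression defining $\psi$, and the two remainders bounded by $O(|\xi_2/\xi_1|)+O(|1/\xi_2|)\le C(\delta+L^{-1})$ on $K(L,\delta,\alpha)$. Your bookkeeping of the parameters (fix $\alpha$ first, then shrink $\delta$ to fit inside the outer cone, then take $L$ large) is exactly the paper's logical order, so there is nothing to add.
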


\begin{proof}
Due to \eqref{eqA2.1}, the facet of $\Omega$ parallel to $F$ is $-F$. If $0<\delta \leq \alpha <1$, then the set $K(L,\delta,\alpha)$ is contained in the cone
\begin{equation}\label{eqA2.5.1}
\{|\xi_j|\le \alpha|\xi_1|,\; 2\le j\le d\}.
\end{equation}
Hence by \lemref{lemP4}, if $\alpha$ is sufficiently small then 
\begin{equation}\label{eqA2.6}
-2\pi i\xi_1\hat{\1}_\Omega(\xi)=\hat\sigma_F(\xi)-\hat\sigma_{-F}(\xi)+O(|\xi_1|^{-1}),\quad |\xi_1|\to \infty
\end{equation}
in the cone \eqref{eqA2.5.1}. Observe that by \eqref{eqA2.2} we have
\begin{equation}\label{eqA2.7}
\hat\sigma_F(\xi)-\hat\sigma_{-F}(\xi) =2i\Im\left[\hat \sigma_F(\xi)\right] 
 =2i\Im\left[e^{-\pi i\,\xi_1}\varphi_F(\xi_2,\xi_3,\dots,\xi_d)\right].
\end{equation}
Now if $\xi\in K(L,\delta,\alpha)$ then the vector $(\xi_2,\xi_3,\dots,\xi_d)$ belongs to the cone
\begin{equation}\label{eqA2.5.2}
\{|\xi_j|\le \alpha|\xi_2|,\, 3\le j\le d\}\subset \mathbb R^{d-1},
\end{equation}
so again by \lemref{lemP4} and by \eqref{eqA2.3}, \eqref{eqA2.4}
it follows that if $\alpha$ is sufficiently small then
\begin{equation}\label{eqA2.8}
-2\pi i\xi_2\varphi_F(\xi_2,\xi_3,\dots,\xi_d)=
\varphi_A(\xi_3,\dots,\xi_d)-e^{2\pi i\, \xi_2}\varphi_B(\xi_3,\dots,\xi_d)+O(|\xi_2|^{-1}),\quad |\xi_2|\to \infty
\end{equation}
in the cone \eqref{eqA2.5.2}.
Combining \eqref{eqA2.6}, \eqref{eqA2.7} and \eqref{eqA2.8} shows that there is $\alpha>0$ and a positive constant $C$, such that
for any $0<\delta \leq \alpha$ and any $L>0$ we have
$$\left|2\pi^2\xi_1\xi_2\hat{\1}_\Omega(\xi)+\psi(\xi)\right|\le C(|\xi_2/\xi_1|+|1/\xi_2|),  \quad \xi\in K(L,\delta,\alpha).$$
But for $\xi\in K(L,\delta,\alpha)$ we have $|\xi_2/\xi_1| \leq \delta$ and $|1/\xi_2| \leq L^{-1}$.
Hence given any $\eta>0$, by choosing $\delta$ sufficiently small and $L$ sufficiently
large, we obtain \eqref{eqA2.9}.
\end{proof}

\subsection{}
Recall that, by assumption, we have $|A|>|B|$. Choose a number $\eta$ such that
$$0<2\eta<|A|-|B|.$$
Use \lemref{lemA2.5} to find $L$, $\delta$ and $\alpha$ such that \eqref{eqA2.9} holds.
Define the vector
\begin{equation}\label{eqA2.10}
v_\delta := 2 \vec e_1 + \delta \vec e_2 = \left(2,\delta,0,0,\dots,0\right).
\end{equation}

For any $r>0$ we denote by $E(r,\delta)$ the union of balls of radius $r$ centered at the integral multiples of the vector $v_\delta$, that is,
\begin{equation}\label{eqA2.11b}
E(r,\delta):=\{k v_\delta +w \,:\, k\in\mathbb Z,\, w \in\mathbb R^d, \, |w|<r\}.
\end{equation}
Notice that
\begin{equation}\label{eqA2.11a}
E(r,\delta)-E(r,\delta) = E(2r,\delta).
\end{equation}

Since $\varphi_A,$ $\varphi_B$ are continuous functions satisfying
$$\varphi_A(0)=|A|,\quad\varphi_B(0)=|B|,$$
it follows from \eqref{eqA2.9.1}, \eqref{eqA2.10} and \eqref{eqA2.11b} that there is $\varepsilon>0$ such that
$$\left|\psi(\xi)-\Re\left[|A|-e^{2\pi i\, \xi_2}|B|\right]\right|<\eta, \quad \xi\in E(2\varepsilon,\delta).$$
In particular, this implies that
\begin{equation}\label{eqA2.12}
|\psi(\xi)|\ge|A|-|B|-\eta>\eta, \quad \xi\in E(2\varepsilon,\delta).
\end{equation}

\subsection{}

\begin{lem}\label{lemA2.11}
There is $R>0$ such that 
\begin{equation}\label{eqA2.11.1}
E(2\varepsilon,\delta) \setminus B_R \subset K(L,\delta,\alpha),
\end{equation}
where $B_R$ denotes the ball of radius $R$ centered at the origin.
\end{lem}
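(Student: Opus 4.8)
The plan is to exploit the fact that $E(2\varepsilon,\delta)$ is a union of balls of the fixed radius $2\varepsilon$ placed along the discrete line $\{k v_\delta : k\in\Z\}$. I would write an arbitrary point of $E(2\varepsilon,\delta)$ in the form $\xi = k v_\delta + w$ with $k\in\Z$ and $|w|<2\varepsilon$, so that its coordinates are
$$\xi_1 = 2k + w_1, \quad \xi_2 = \delta k + w_2, \quad \xi_j = w_j \;\; (3\le j\le d).$$
Since $|w|$ is bounded by the fixed number $2\varepsilon$, the requirement $|\xi|\ge R$ with $R$ large forces $|k|$ to be large; indeed $|\xi| < |k|\,|v_\delta| + 2\varepsilon$ gives $|k| > (R-2\varepsilon)/|v_\delta|$. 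Thus it suffices to show that each of the three inequalities defining $K(L,\delta,\alpha)$ holds once $|k|$ exceeds some threshold depending only on $L$, $\delta$, $\alpha$ and $\varepsilon$.

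By symmetry I may assume $k>0$ (the case $k=0$ only contributes points lying in a fixed ball). For the first inequality, $|\xi_2|\ge \delta k - 2\varepsilon$, which exceeds $L$ once $k$ is large. For the inequalities $|\xi_j|\le\alpha|\xi_2|$ with $3\le j\le d$, one has $|\xi_j|=|w_j|<2\varepsilon$ while $|\xi_2|\ge \delta k - 2\varepsilon\to\infty$, so they hold for $k$ large. The crucial one is the middle inequality $|\xi_2|\le\delta|\xi_1|$: here the point is that the direction $v_\delta=(2,\delta,0,\dots,0)$ satisfies $\xi_2/\xi_1=\delta/2$, which is strictly below the threshold slope $\delta$, leaving a fixed margin. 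Concretely, for $k$ large enough that $\xi_1,\xi_2>0$,
$$\delta|\xi_1| - |\xi_2| = \delta(2k+w_1) - (\delta k + w_2) = \delta k + \delta w_1 - w_2,$$
and the leading term $\delta k$ dominates the bounded remainder $|\delta w_1 - w_2|<4\varepsilon$, so the expression is positive for $k>4\varepsilon/\delta$.

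The only genuine bookkeeping is to make these three thresholds explicit and to observe that they are finite precisely because $|w|<2\varepsilon$ is a uniform bound over all of $E(2\varepsilon,\delta)$, and because $0<\delta\le\alpha<1$ by the choice made in \lemref{lemA2.5}. Taking $R$ large enough that $|\xi|\ge R$ forces $|k|$ to exceed the maximum of the thresholds then yields \eqref{eqA2.11.1}. I do not expect any real obstacle: the content of the lemma is simply that the thickened line $E(2\varepsilon,\delta)$ enters and remains in the cone $K(L,\delta,\alpha)$ at infinity, which holds exactly because the factor $2$ in $v_\delta$ places the line strictly inside the angular opening $|\xi_2|\le\delta|\xi_1|$, so that a bounded transversal perturbation of size $2\varepsilon$ cannot push the far portions of the line out of the cone.
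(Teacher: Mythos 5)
Your proof is correct, and it is precisely the routine verification the paper has in mind: the authors state only that the lemma ``can be verified easily'' and omit the argument entirely. Your decomposition $\xi = k v_\delta + w$, the observation that the slope $\delta/2$ of the line $\{k v_\delta\}$ lies strictly inside the angular condition $|\xi_2|\le\delta|\xi_1|$ with a fixed margin, and the bookkeeping with the uniform bound $|w|<2\varepsilon$ (using $\delta\le\alpha<1$) supply exactly the missing details.
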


This can be verified easily, so we skip the proof.

\subsection{}
 Now suppose that $\Lambda$ is a spectrum for $\Omega$. Use \lemref{lemA2.11} to choose $R$
 such that \eqref{eqA2.11.1} holds.
 We claim that for any $\tau\in\mathbb R^d,$ if $\lambda,\lambda'$ are two points in
 $\Lambda\cap(E(\varepsilon,\delta)+\tau)$, then $|\lambda'-\lambda|\le R.$ Indeed, if not
 then using \eqref{eqA2.11a} we get
$$\lambda'-\lambda\in E(2\varepsilon,\delta) \setminus B_R  \subset K(L,\delta,\alpha).$$
It thus follows from \eqref{eqA2.9} and \eqref{eqA2.12} that $\hat{\1}_\Omega(\lambda'-\lambda)\ne0,$ a contradiction.

Since $\Lambda$ is a uniformly discrete set, it follows  that $\Lambda\cap(E(\varepsilon,\delta)+\tau)$ is a finite set, for every $\tau\in\mathbb R^d.$
Since $\Lambda$ is a relatively dense set, there is $M>0$ such that every ball of radius $M$ intersects $\Lambda.$
Let $S(M,\delta)$ denote the cylinder of radius $M$ along the vector $v_\delta$, 
$$S(M,\delta) := \{t v_\delta+w \,:\, t\in\mathbb R, \; w \in\mathbb R^d, \; |w|<M\}.$$
Then $S(M,\delta) $ may be covered by a finite number of sets
$E(\varepsilon,\delta)+\tau_j$ $(1\le j\le N)$; hence $\Lambda\cap S(M,\delta)$ is also a
finite set. It follows that $S(M,\delta)$ contains a ball of radius $M$ free from points
of $\Lambda$, a contradiction. This completes the proof of \thmref{thmA2}.
\qed


\section{Covering and packing} \label{secC1}
It was shown in Sections \ref{secA1} and \ref{secA2} that if a convex polytope
$\Omega\subset\R^d$ is spectral, then it must be centrally symmetric and have
centrally symmetric facets. In order to prove that $\Omega$ tiles by translations,
a conceivable strategy may therefore be to try and show that every belt of 
$\Omega$ must consist of either $4$ or $6$ facets. Indeed, this 
would imply that $\Omega$ tiles, by the Venkov-McMullen theorem.
\par
Our approach, however, will not be based on such a strategy. Instead, we will use
another condition, given in terms of the spectrum $\Lam$, which implies that
$\Omega$ tiles by translations. In this section, we prove the sufficiency of
this condition (\corref{corC3}).

\subsection{}
Let $\Omega\subset \R^d$ be a convex polytope, which is centrally symmetric and has
centrally symmetric facets. If $F$ is any facet of $\Omega$, then by the central symmetry,
the opposite facet $F'$ is a translate of $F$. We shall denote by $\tau_F$ 
the translation vector in $\R^d$ which carries $F'$ onto $F$.
\par
Following \cite{Ven54, McM80} we consider the set
\begin{equation}
	T = T(\Omega) = \left\{ \sum_{F}  k_F \, \tau_F\;:\; k_F\in \Z \right\},
	\label{eqC1.1}
\end{equation}
that is, $T$ is the set of all linear combinations with integer coefficients of the
vectors $\tau_F$, where $F$ goes through all the facets of $\Omega$. The set
$T$ is a countable subgroup of $\R^d$. 
\begin{thm}[\cite{Ven54, McM80}] 
 \label{thmC1}
	$\Omega + T$ is a covering, that is, each point in
	$\R^d$ belongs to at least one of the sets $\Omega+\tau$, $\tau\in T$. 
\end{thm}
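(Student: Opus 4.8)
The plan is to show that the closed-set union $U := \bigcup_{\tau\in T}(\Omega+\tau)$ is all of $\R^d$. Since $U$ is nonempty and $\R^d$ is connected, it suffices to prove that $U$ is simultaneously open and closed. The basic geometric input is that translation across a facet produces the facet-to-facet neighbor: $\tau_F$ carries the opposite facet $F'$ onto $F$, and since central symmetry forces the outward normal to $\Omega$ on $F'$ to be $-n_F$, the translate $\Omega+\tau_F$ has $F$ as a facet with outward normal $-n_F$, hence lies on the opposite side of the hyperplane $\operatorname{aff}(F)$ from $\Omega$ and meets $\Omega$ exactly along $F$. Consequently every point in the relative interior of a facet of some translate $\Omega+\tau$ is an interior point of $U$. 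For closedness I would argue as follows: if $T$ is discrete then the family $\{\Omega+\tau\}$ is locally finite and $U$ is closed in the usual way; if $T$ fails to be discrete then it is dense in some subspace, and the covering becomes immediate, because a dense orbit reaches the interior of $x-\Omega$ for every point $x$. So the substance of the proof is the \emph{openness} of $U$, i.e.\ producing a covered neighborhood of each point lying on a face of codimension $\ge 2$ of some translate.

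I would establish openness by descending induction on the dimension of the face containing the point. The crucial case is a subfacet $G$ (codimension $2$), and here I would run the belt argument. Starting from $\Omega$ and repeatedly passing to the neighbor across the facet of the current translate that contains the relevant translate of $G$, one generates a cyclic chain $\Omega=\Omega_0,\Omega_1,\dots,\Omega_m$ of translates, each sharing a facet with the next and all containing $G$ in their boundaries. The central symmetry of the facets is precisely what forces the successive facets in this chain to be the facets of a single belt of $\Omega$ and what forces the chain to close up, $\Omega_m=\Omega_0$. Since the dihedral wedges of the $\Omega_i$ at $G$ are consecutive and return to the start, their union fills the full angle $2\pi$ in the two-dimensional plane transverse to $G$, which yields a covered neighborhood of the relative interior of $G$. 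For a face of codimension $\ge 3$ I would pass to the local cone (vertex-figure) structure at the face: intersecting with a small transverse affine subspace reduces the covering question to the same statement for a centrally symmetric polytope with centrally symmetric facets of strictly lower dimension, so that the inductive hypothesis applies.

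I expect the main obstacle to be the belt step: verifying that the facet-to-facet chain around a subfacet is genuinely governed by a belt of $\Omega$, that it closes after finitely many steps, and that the accumulated dihedral angles leave no angular gap. Making this precise requires tracking, through each translation $\tau_F$, how the subfacet $G$ sits inside the successive facets, and invoking the central symmetry of each facet to identify the next facet of the belt; this is exactly where the hypothesis on the facets is indispensable, and it is the point at which central symmetry of $\Omega$ alone would not suffice. The remaining ingredients—the facet-interior coverage, the transverse reduction at higher-codimension faces, the closedness of $U$, and the connectedness conclusion—are comparatively routine, although the induction on face dimension must be organized so that the lower-dimensional covering statement being invoked is literally an instance of the theorem under proof.
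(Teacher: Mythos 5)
Your proposal does not follow the paper's route --- in fact the paper gives no self-contained proof of \thmref{thmC1} at all: it cites Venkov and McMullen and observes that the covering half of their sufficiency argument nowhere uses the belt condition~(iv), so that conditions (i)--(iii) already yield a covering. Judged as a self-contained proof, your argument has two genuine gaps, both at load-bearing points. The first is the codimension~$\ge 3$ step. Intersecting with a small transverse affine subspace does \emph{not} reduce the problem to ``the same statement for a centrally symmetric polytope with centrally symmetric facets of strictly lower dimension.'' Near a face $G$ of codimension $k$ the transverse section of $\Omega$ is a cone over the face figure of $G$, and face figures of polytopes satisfying (ii) and (iii) are in general not centrally symmetric: the vertex figure of the cube is a triangle, as is the edge figure of the $24$-cell (which satisfies (i)--(iii)). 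Moreover, the translates $\Omega+t$ containing $G$ cut the transverse subspace in sections that are not translates of a single polytope, and the group that acts is $T$, not the group generated by facet vectors of any section. So the lower-dimensional statement you invoke is not an instance of the theorem being proved, and the induction collapses exactly where you yourself required it to be ``literally an instance.''

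The second gap is the non-discrete case, which your dichotomy does not dispose of. If $T$ is not discrete its closure is $V\oplus L$, with $V$ a nonzero subspace and $L$ a lattice in a complement; when $V$ is a \emph{proper} subspace nothing is immediate: $T$ meets $x-\interior(\Omega)$ for every $x$ only if $\overline{T}+\interior(\Omega)=\R^d$, which is the covering statement itself, and in this regime $U$ need not be closed and the family $\{\Omega+t\}$ is not locally finite, so both halves of your open--closed scheme fail. This case really occurs under (i)--(iii): for a centrally symmetric octagon the vectors $\tau_F$ generate the group generated by the edge vectors, which is generically dense in $\R^2$, and for suitable octagons $\overline{T}=\R\times\Z$. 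Such examples --- polytopes with (i)--(iii) but not (iv) --- are precisely the ones for which \thmref{thmC1} says more than the classical tiling theorem, and they also show that your belt step must be formulated with care: around a vertex of the regular octagon the consecutive wedges have angle $3\pi/4$, three of them already overshoot $2\pi$, and the chain closes only after eight steps with total winding $6\pi$. Hence any version of the argument that asks the wedges to fit together without overlap, filling exactly $2\pi$, proves covering only for polytopes that tile, which is circular here; what is true, and is all a covering needs, is that the wedges are consecutive and each angle is bounded below, so finitely many of them cover the full transverse circle, overlaps allowed. Finally, note that in the discrete case your two hard steps can be bypassed entirely: if $U\neq\R^d$, pick $x\in\interior(\Omega)$ and $y\notin U$, perturb $y$ inside the open complement so that the segment $[x,y]$ avoids the countably many codimension-$2$ faces of all translates, and let $z$ be the last point of the closed set $U$ on $[x,y]$; then $z$ lies in the relative interior of a facet of some translate, and the facet-to-facet neighbor covers points of the segment beyond $z$, a contradiction. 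This rescues everything except the non-discrete case, which still needs a real idea --- one reason the paper simply defers to McMullen's argument.
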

This is a part of the Venkov-McMullen theorem, which characterizes the 
convex bodies that tile by translations by the four conditions \ref{vm:i}--\ref{vm:iv}
mentioned in \secref{secI1.2}. In the sufficiency part of the theorem it is shown 
that these four conditions imply that $\Omega + T$ is a tiling. However the last condition,
namely the requirement \ref{vm:iv} that each belt consists of exactly $4$ or $6$ facets,
is not used in that part of the proof where it is shown that $\Omega + T$ is a covering
(see \cite[pp.\ 115--116]{McM80} where the latter fact is also mentioned explicitly).
Hence the proof yields that the first three conditions \ref{vm:i}--\ref{vm:iii}
are enough to conclude that $\Omega + T$ is a covering, as stated in \thmref{thmC1}.
\par
Observe that \thmref{thmC1} implies that $T$ is a relatively dense set in $\R^d$.
\par
 It also follows from this theorem that, in order to prove that $\Omega$ tiles by translations,
it would be enough to show that $\Omega+T$ is a \define{packing}, which means that the
sets $\Omega+\tau$, $\tau\in T$, are disjoint up to measure zero. Indeed, in such a case
$\Omega+T$ is simultaneously a covering and a packing, hence $\Omega$ tiles
by translations along the set $T$. 
\par
Notice that if $\Omega+T$ is a packing (and hence a
tiling), then $T$ must be a uniformly discrete set in $\R^d$. So in this case $T$ is a
subgroup of $\R^d$ which is both uniformly discrete and relatively dense,
and it follows that $T$ must be a \define{lattice}. As mentioned in
\cite{McM80}, the tiling by translations of $\Omega$ along the lattice $T$
constitutes a \emph{face-to-face} tiling.
\par
\subsection{}
The next lemma gives a sufficient condition for
$\Omega+T$ to be a packing: 
\begin{lem} \label{lemC2}
	Suppose that $\Lambda\subset \R^d$ is a set satisfying the following condition: 
	\begin{equation}
		\dotprod{\Lambda-\Lambda}{\tau_F}\subset \Z  
		\label{eqC1.2}
	\end{equation}
	for every facet $F$ of $\Omega$. If the system of exponentials $E(\Lambda)$ is
	complete in $L^2(\Omega)$, then $\Omega+T$ is a packing. 
\end{lem}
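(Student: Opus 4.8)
The plan is to argue by contradiction. Recall that $\Omega + T$ fails to be a packing precisely when there exist two distinct vectors of $T$ whose corresponding translates of $\Omega$ overlap in positive measure; subtracting these vectors, and using that $T$ is a group, this is equivalent to the existence of a single vector $\tau \in T$, $\tau \neq 0$, with $|\Omega \cap (\Omega + \tau)| > 0$. I would assume such a $\tau$ exists and then produce a nonzero obstruction to the completeness of $E(\Lambda)$. Two preliminary normalizations will be convenient. First, since the completeness of $E(\Lambda)$ in $L^2(\Omega)$ and the hypothesis \eqref{eqC1.2} are both invariant under a translation of $\Lambda$ (multiplication by a unimodular exponential $e_a$ is unitary on $L^2(\Omega)$ and carries $e_\lambda$ to $e_{\lambda+a}$, while \eqref{eqC1.2} involves only differences), I may assume that $0 \in \Lambda$. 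Then \eqref{eqC1.2}, combined with the bilinearity of the scalar product and the definition \eqref{eqC1.1} of $T$, yields $\dotprod{\lambda}{\tau} \in \Z$ for \emph{every} $\lambda \in \Lambda$ and every $\tau \in T$.

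The key step is to exhibit a test function supported on $\Omega$ whose Fourier coefficients all vanish on $\Lambda$. I would set $W := \Omega \cap (\Omega + \tau)$, so that $|W| > 0$, and consider
\[
f := \1_W - \1_{W - \tau}.
\]
Both $W$ and $W - \tau$ are contained in $\Omega$, so $f \in L^2(\Omega)$. A change of variables gives $\hat{\1}_{W-\tau}(\lambda) = e^{2\pi i \dotprod{\lambda}{\tau}} \, \hat{\1}_W(\lambda)$, whence
\[
\hat f(\lambda) = \big(1 - e^{2\pi i \dotprod{\lambda}{\tau}}\big) \, \hat{\1}_W(\lambda), \quad \lambda \in \Lambda.
\]
Since $\dotprod{\lambda}{\tau} \in \Z$ for every $\lambda \in \Lambda$, the prefactor is zero, so $\hat f(\lambda) = 0$ for all $\lambda \in \Lambda$.

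By the completeness of $E(\Lambda)$ in $L^2(\Omega)$, this forces $f = 0$, that is $\1_W = \1_{W-\tau}$ almost everywhere, so $W$ agrees with $W - \tau$ up to measure zero. Iterating, $W$ agrees up to measure zero with $W - k\tau$ for every $k \in \Z$; but $W$ is bounded while the translates $W - k\tau$ escape to infinity, so for large $k$ the sets $W$ and $W - k\tau$ are essentially disjoint, which forces $|W| = 0$ and contradicts $|W| > 0$. Hence no such $\tau$ exists and $\Omega + T$ is a packing. I expect the genuinely creative point to be the choice of the test function $f = \1_W - \1_{W-\tau}$, engineered so that its Fourier transform annihilates $\Lambda$; the remaining places that merely require care are the two normalizations of the first paragraph, especially the upgrade of the facet-by-facet hypothesis \eqref{eqC1.2} to $\dotprod{\lambda}{\tau}\in\Z$ for all $\tau \in T$, on which the vanishing of $e^{2\pi i \dotprod{\lambda}{\tau}}-1$ depends. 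The rest is a short, direct computation rather than the main difficulty.
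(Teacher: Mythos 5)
Your proof is correct and follows essentially the same route as the paper: the paper also normalizes $0\in\Lambda$ to get $\dotprod{\Lambda}{\tau}\subset\Z$ for all $\tau\in T$, forms the overlap set $E=\Omega\cap(\Omega-\tau)$ and the test function $\1_E-\1_{E+\tau}$ (your $W$, $f$ up to replacing $\tau$ by $-\tau$), and contradicts completeness. The only cosmetic difference is that the paper asserts directly that $f\not\equiv 0$ since $\tau\neq 0$, whereas you instead conclude $f=0$ from completeness and then derive the contradiction by the iteration argument showing a bounded set of positive measure cannot be a.e.\ invariant under a nonzero translation --- this is exactly the justification the paper leaves implicit.
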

\begin{proof}
By translating $\Lambda$ we may assume that it contains the origin, hence 
$ \dotprod{\Lambda}{\tau_F}\subset \Z $
for every facet $F$. It follows that the exponential functions $e_\lambda$ ($\lambda\in
\Lambda$) are periodic with respect to $T$, namely 
\[ e_\lambda(x+\tau) = e_\lambda(x) \]
for every $\tau\in T$. If $\Omega+T$ is not a packing then there exist distinct vectors
$\tau',\tau''\in T$ such that the set 
$ (\Omega+\tau') \cap (\Omega+\tau'') $
has positive measure. Thus the set $E$ defined by 
\[ E:= \Omega\cap (\Omega-\tau),  \quad \tau:=\tau''-\tau' \]
is a set of positive measure, and $E$, $E+\tau$ are both contained in $\Omega$.
Hence the function $ f:= \1_E-\1_{E+\tau} $ is supported by $\Omega$,
and since $\tau\ne 0$, the function $f$ does not vanish identically a.e.
On the other hand, for every
$\lambda\in \Lambda$ we have  
\[ \dotprod{e_\lambda}{f}_{L^2(\Omega)}= \int_E {e_\lambda(x)} dx
- \int_{E+\tau} {e_\lambda(x)} dx = 0, \]
due to the periodicity of $e_\lambda$. Hence $f$ is orthogonal in $L^2(\Omega)$ to 
all the exponentials $\left\{ e_\lambda \right\}$, $\lambda \in \Lambda$, 
which contradicts the completeness of the system $E(\Lambda)$
in the space $L^2(\Omega)$.
\end{proof}
\subsection{}
Combining \thmref{thmC1} and \lemref{lemC2} we obtain the following: 
\begin{corollary}
	\label{corC3} 
	Let $\Omega\subset\R^d$ be a convex polytope, which is centrally symmetric and has
	centrally symmetric facets. Suppose that $\Omega$ admits a spectrum $\Lambda$ satisfying
	\eqref{eqC1.2} for every facet $F$ of $\Omega$. Then $\Omega+T$ is a tiling,
	and so $\Omega$ can tile by translations. 
\end{corollary}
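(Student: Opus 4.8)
The plan is simply to assemble the two ingredients just established --- \thmref{thmC1} and \lemref{lemC2} --- using the elementary fact that a family of translates of $\Omega$ which is at once a covering and a packing must be a tiling. Accordingly I would prove the covering and the packing separately and then combine them. The conceptual point worth stressing is that this route lets us avoid verifying condition \ref{vm:iv} of the Venkov--McMullen theorem (that every belt consists of exactly $4$ or $6$ facets); in its place we exploit the arithmetic condition \eqref{eqC1.2} on the spectrum $\Lambda$.

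First I would read off the covering. The hypotheses of the corollary state precisely that $\Omega$ is a convex polytope which is centrally symmetric and has centrally symmetric facets, i.e.\ conditions \ref{vm:i}--\ref{vm:iii}. These are exactly the hypotheses of \thmref{thmC1}, which therefore applies verbatim and yields that $\Omega + T$ is a covering of $\R^d$, where $T = T(\Omega)$ is the subgroup generated by the facet-vectors $\tau_F$ as defined in \eqref{eqC1.1}.

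Next I would read off the packing. By hypothesis $\Omega$ admits a spectrum $\Lambda$ satisfying \eqref{eqC1.2} for every facet $F$. Since a spectrum is by definition an orthogonal \emph{basis} of $L^2(\Omega)$, the system $E(\Lambda)$ is in particular complete in $L^2(\Omega)$; thus both hypotheses of \lemref{lemC2} are met, and the lemma gives that $\Omega + T$ is a packing.

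Finally I would combine the two halves: being simultaneously a covering and a packing, the translates $\Omega + \tau$ $(\tau \in T)$ partition $\R^d$ up to measure zero, so $\Omega + T$ is a tiling and $\Omega$ tiles by translations along $T$. I do not anticipate any genuine obstacle here, as all the substance has already been absorbed into \thmref{thmC1} (the covering half of Venkov--McMullen) and into the proof of \lemref{lemC2} (which exploited the $T$-periodicity of the exponentials $e_\lambda$ granted by \eqref{eqC1.2}). The only step requiring a moment's care is the passage from ``$\Lambda$ is a spectrum'' to ``$E(\Lambda)$ is complete in $L^2(\Omega)$,'' which is immediate from the definition of a spectrum.
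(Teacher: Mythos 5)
Your proposal is correct and follows exactly the paper's own route: the paper also obtains \corref{corC3} by combining \thmref{thmC1} (covering, from conditions \ref{vm:i}--\ref{vm:iii}) with \lemref{lemC2} (packing, using completeness of $E(\Lambda)$ under hypothesis \eqref{eqC1.2}), and then notes that a simultaneous covering and packing is a tiling. There is nothing to add or correct.
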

Moreover, in this case the set $T$ defined by \eqref{eqC1.1} is a lattice in $\R^d$, and 
$\Omega$ tiles face-to-face by translations along the lattice $T$.

\begin{remark*}
The formulation of \corref{corC3} is inspired by \cite[p.\ 568]{IKT03}, where 
the assertion was proved in dimension $d=2$ by directly showing that $\Omega$ must
be either a parallelogram or a  centrally symmetric hexagon.
The proof in arbitrary dimension that we have given above is based 
on different considerations than the one in \cite{IKT03}.
\end{remark*}


\section{Structure of spectrum, I} \label{secD1}

We obtained  in \secref{secC1} a sufficient condition for a spectral convex 
polytope $\Omega$ in $\R^d$ to tile by translations. This condition (\corref{corC3})
requires the existence of a spectrum $\Lam$ admitting a certain structure. 
In the present section we start to develop an approach to analyze the structure of a given
spectrum $\Lam$.

\subsection{}
Let $\Omega\subset\R^d$ be a convex polytope, which is centrally symmetric and has
centrally symmetric facets. We will assume that 
$ \Omega = -\Omega$, that is, $\Omega$ is symmetric about the origin.
Let $F$ be one of the facets of $\Omega$, and assume that
$ F\subset \left\{ x_1 = \tfrac{1}{2} \right\}$,
and that the center of $F$ is the point 
$ \left( \tfrac{1}{2}, 0,0,\dots , 0 \right)$.
\par
These assumptions are made merely for convenience. Later on, we will reduce the general
situation to this more specific one by applying an affine transformation. 
\par
The assumptions
imply that 
\[ F = \left\{ \tfrac{1}{2} \right\}\times \Sigma, \]
where $\Sigma$ is a convex polytope in $\R^{d-1}$ such that 
\[ \Sigma = -\Sigma. \] 
The facet opposite to $F$ is therefore 
\[ -F = \left\{ -\tfrac{1}{2} \right\}\times \Sigma. \]

\subsection{} 
For $\alpha> 0$ we consider the cone 
\begin{equation}
	K(\alpha) := \left\{\xi\in \R^d\; : \; |\xi_j|\le \alpha|\xi_1| \quad (2 \le j \le d)  \right\}.
	\label{eqD1.2}
\end{equation}
\begin{lem}\label{lemD1}
	There is $\alpha = \alpha(\Omega)>0$ such that 
	\begin{equation}
		\pi\xi_1 \hat{\1}_\Omega(\xi)= \sin \pi\xi_1\cdot
		\hat{\1}_\Sigma(\xi_2,\xi_3,\dots,\xi_d) + O( |\xi_1|^{-1} ), \quad
		|\xi_1|\to \infty
		\label{eqD1.1}
	\end{equation}
in the cone $K(\alpha)$.
\end{lem}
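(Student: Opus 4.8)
The plan is to reduce the statement directly to \lemref{lemP4}, applied to the pair of opposite parallel facets $F$ and $-F$. By the normalization assumptions we have $F\subset\{x_1=\tfrac12\}$ with outward unit normal $\vec e_1$, and since $\Omega=-\Omega$ the facet opposite to $F$ is $-F\subset\{x_1=-\tfrac12\}$, whose outward unit normal is $-\vec e_1$. Thus $F$ and $-F$ meet exactly the hypotheses of \lemref{lemP4} with $A=F$ and $B=-F$, and that lemma furnishes a constant $\alpha=\alpha(\Omega)>0$ for which
\[ -2\pi i\,\xi_1\hat{\1}_\Omega(\xi)=\hat\sigma_F(\xi)-\hat\sigma_{-F}(\xi)+O(|\xi_1|^{-1}),\quad |\xi_1|\to\infty, \]
in the cone $K(\alpha)$ of \eqref{eqD1.2}. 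All the contributions from the remaining facets have already been absorbed into the error term by \lemref{lemP4}, so no further estimation of them is needed.

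The second step is to evaluate the two surface-measure transforms explicitly, exploiting the product structure $F=\{\tfrac12\}\times\Sigma$. Parametrizing a point of $F$ as $(\tfrac12,y)$ with $y\in\Sigma\subset\R^{d-1}$, and observing that $\sigma_F$ is simply the $(d-1)$-dimensional Lebesgue measure on $\Sigma$, I would factor the $x_1$-contribution out of the integral to obtain $\hat\sigma_F(\xi)=e^{-\pi i\xi_1}\hat{\1}_\Sigma(\xi_2,\dots,\xi_d)$. The opposite facet $-F=\{-\tfrac12\}\times\Sigma$ differs only by the sign of its first coordinate, giving $\hat\sigma_{-F}(\xi)=e^{\pi i\xi_1}\hat{\1}_\Sigma(\xi_2,\dots,\xi_d)$. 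Subtracting and using $e^{-\pi i\xi_1}-e^{\pi i\xi_1}=-2i\sin\pi\xi_1$ yields
\[ \hat\sigma_F(\xi)-\hat\sigma_{-F}(\xi)=-2i\,\sin(\pi\xi_1)\,\hat{\1}_\Sigma(\xi_2,\dots,\xi_d). \]

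Finally I would substitute this identity into the asymptotic from \lemref{lemP4} and divide through by the constant $-2i$. The left-hand side then becomes $\pi\xi_1\hat{\1}_\Omega(\xi)$, the main term becomes $\sin(\pi\xi_1)\hat{\1}_\Sigma(\xi_2,\dots,\xi_d)$, and the remainder stays of order $O(|\xi_1|^{-1})$; this is precisely \eqref{eqD1.1}.

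I do not expect a genuine analytic obstacle here, since the hard part — controlling the Fourier transforms of the surface measures over all the non-opposite facets in the cone $K(\alpha)$ — has already been carried out in the proof of \lemref{lemP4} via \lemref{lemP3}. The only points requiring care are bookkeeping ones: confirming that central symmetry forces $-F$ to be the facet opposite $F$ with the stated normal (so that \lemref{lemP4} really applies), and checking that division by the nonzero constant $-2i$ preserves the order of the error term. Both are routine.
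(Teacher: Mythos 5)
Your proposal is correct and follows essentially the same route as the paper's own proof: apply \lemref{lemP4} to the opposite pair of facets $F$ and $-F$, compute $\hat\sigma_{\pm F}(\xi)=e^{\mp\pi i\xi_1}\,\hat{\1}_\Sigma(\xi_2,\dots,\xi_d)$ from the product structure $F=\{\tfrac12\}\times\Sigma$ (using $\Sigma=-\Sigma$), and divide the resulting identity by $-2i$. The only cosmetic difference is that you spell out the bookkeeping (the sine identity and the harmlessness of dividing the error term by a constant) which the paper leaves implicit.
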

\begin{proof}
	By \lemref{lemP4}, if $\alpha$ is sufficiently small then 
	\[ -2\pi i  \xi_1 \hat{\1}_\Omega(\xi) = \hat{\sigma}_F(\xi) -
		\hat{\sigma}_{-F}(\xi) + O( |\xi_1|^{-1} ), \quad
	|\xi_1|\to\infty \]
	in $K(\alpha)$. But we have
\[
				\hat{\sigma}_F(\xi) =  e^{-\pi i \xi_1}\,
				\hat{\1}_{\Sigma}(\xi_2,\xi_3,\dots, \xi_d) \quad \text{and} \quad
				\hat{\sigma}_{-F}(\xi) =  e^{\pi i \xi_1}\,
				\hat{\1}_{\Sigma}(\xi_2,\xi_3,\dots, \xi_d), 
\]
	which yields the conclusion of the lemma. 	
\end{proof}
\subsection{} 
Assume now that we are given a set $\Lambda\subset\R^{d}$ which is a spectrum for $\Omega$. 
To this spectrum $\Lambda$ we associate a set $\Pi\subset \R^{d-1}$ defined as follows: 
$\Pi$ is the set of all points $s\in \R^{d-1}$ such that for every open ball $B$ containing $s$, 
the cylinder $\R\times B$ contains infinitely many points of $\Lambda$.
\par
 If we denote a point in $\R^d$ as $(t,s)\in \R\times \R^{d-1}$, then one can check
that a point $s\in \R^{d-1}$ belongs to $\Pi$ if and only if there is a sequence
$(t_n,s_n)\in \Lambda$ such that 
\begin{equation}
	|t_n|\to \infty, \quad s_n\to s \quad (n\to \infty).
	\label{eqD1.3}
\end{equation}
\par
It is also not difficult to verify that $\Pi$ is a \define{closed} subset of $\R^{d-1}$.
\par
The motivation for introducing the set $\Pi$ is the following observation:
\begin{lem} \label{lemD2}
	For each $s\in \Pi$ there is a (unique) number $0\le \theta(s)<1$ such that
\[ \Lambda\cap \big( \R \times (s+U_{\Sigma}) \big)\subset \big( \Z
	+\theta(s)\big)\times \R^{d-1},\]
 where 
	\[ U_{\Sigma}:= \big\{ \hat{\1}_\Sigma \ne 0 \big\}. \]
	In other  words, if $(t',s')\in \Lambda$ and if $\hat{\1}_{\Sigma}(s'-s)\ne 0$,
	then $t'\in \Z+\theta(s)$. 
\end{lem}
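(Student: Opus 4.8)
The plan is to reduce the statement to a single congruence: it suffices to show that any two points $(t',s'), (t'',s'') \in \Lambda$ for which $\hat{\1}_\Sigma(s'-s) \ne 0$ and $\hat{\1}_\Sigma(s''-s)\ne 0$ satisfy $t'-t'' \in \Z$. Granting this, all the first coordinates of points of $\Lambda \cap (\R \times (s+U_{\Sigma}))$ share a common fractional part, which we take to be $\theta(s) \in [0,1)$; this $\theta(s)$ is then uniquely determined. The set in question is nonempty: since $s \in \Pi$ we may fix, using \eqref{eqD1.3}, a sequence $(t_n,s_n)\in \Lambda$ with $|t_n|\to\infty$ and $s_n \to s$, and because $\hat{\1}_\Sigma$ is continuous with $\hat{\1}_\Sigma(0)=|\Sigma|>0$, we have $\hat{\1}_\Sigma(s_n-s)\ne 0$ for large $n$, so $(t_n,s_n) \in \R \times (s+U_{\Sigma})$ eventually.

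The heart of the argument is the following claim: for every fixed $(t',s')\in\Lambda$ with $\hat{\1}_\Sigma(s'-s)\ne 0$ one has $\dist(t_n - t', \Z) \to 0$ as $n\to\infty$. To see this, consider the difference vector $(t_n - t', s_n - s') \in \Lambda - \Lambda$. Its last $d-1$ coordinates $s_n - s'$ converge to $s-s'$ and are therefore bounded, while its first coordinate $t_n - t' \to \pm\infty$; hence for all large $n$ this vector is a nonzero element of $\Lambda-\Lambda$ lying in the cone $K(\alpha)$ of \lemref{lemD1}. By the orthogonality condition \eqref{eqP1.2} we get $\hat{\1}_\Omega(t_n - t', s_n - s') = 0$, so substituting into the asymptotic formula \eqref{eqD1.1} yields $\sin\big(\pi(t_n - t')\big)\,\hat{\1}_\Sigma(s_n - s') = O(|t_n - t'|^{-1}) \to 0$. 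Now $\hat{\1}_\Sigma$ is real and even (because $\Sigma = -\Sigma$), so by continuity $\hat{\1}_\Sigma(s_n - s') \to \hat{\1}_\Sigma(s - s') = \hat{\1}_\Sigma(s'-s) \ne 0$; consequently $\sin\big(\pi(t_n - t')\big)\to 0$, which is precisely $\dist(t_n - t', \Z)\to 0$.

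With the claim in hand, applying it to both $(t',s')$ and $(t'',s'')$ and using the subadditivity of the distance to $\Z$ gives $\dist(t'-t'',\Z) = \dist\big((t_n - t'')-(t_n - t'),\Z\big) \le \dist(t_n-t',\Z)+\dist(t_n-t'',\Z)\to 0$. Since $t'-t''$ is a fixed number, this forces $t'-t'' \in \Z$, completing the reduction. I expect the main obstacle to be conceptual rather than computational: \lemref{lemD1} only controls $\sin\big(\pi(t_n-t')\big)$ and hence constrains the \emph{moving} differences $t_n - t'$, not the first coordinate $t'$ itself. The key device is to cancel the diverging part $t_n$ by comparing two points lying over $s + U_{\Sigma}$ and invoking the triangle inequality for $\dist(\cdot,\Z)$; the remaining care is merely in checking that the relevant difference vectors eventually enter the cone $K(\alpha)$ so that the asymptotics apply.
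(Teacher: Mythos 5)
Your proposal is correct and follows essentially the same route as the paper's own proof: both fix the sequence $(t_n,s_n)\in\Lambda$ coming from the definition of $\Pi$, apply \lemref{lemD1} together with the orthogonality condition \eqref{eqP1.2} to the difference vectors to conclude that $\dist(t_n-t',\Z)$ and $\dist(t_n-t'',\Z)$ tend to zero, and finish with the triangle inequality for $\dist(\cdot,\Z)$. Your additional remarks (verifying that the differences eventually lie in the cone $K(\alpha)$, and that $\Lambda\cap(\R\times(s+U_\Sigma))$ is nonempty so that $\theta(s)$ is genuinely unique) are correct refinements of details the paper leaves implicit.
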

\begin{proof}
	It would be enough to show that if $(t',s')$ and $(t'',s'')$ are two points in $\Lambda\cap
	\big(\R\times(s+U_\Sigma)\big)$, then $ t'' - t' \in \Z$.
	Since $s\in \Pi$ there is a sequence $(t_n,s_n)\in \Lambda$ such that $|t_n|\to
	\infty$, $s_n\to s$. The vectors $(t'-t_n,s'-s_n)$ and $(t''-t_n,s''-s_n)$ belong
	to the set $(\Lambda-\Lambda) \setminus \{0\}$ for all large enough $n$, 
	hence they lie in the zero set of $\hat{\1}_\Omega$. Using
	\lemref{lemD1} it follows that 
	\[
		\sin \pi (t'-t_n)\cdot \hat{\1}_\Sigma (s'-s_n) \to 0, \qquad
		\sin \pi (t''-t_n)\cdot \hat{\1}_\Sigma (s''-s_n) \to 0, 
	\] 
	as $n\to \infty$. 
	Recall that $s'-s$ and $s''-s$ are not in the zero set of $\hat{\1}_\Sigma$. Hence
	$|\hat{\1}_\Sigma(s'-s_n)|$ and $|\hat{\1}_\Sigma(s''-s_n)|$ remain bounded away from zero as
	$n\to \infty$. We conclude that 
	\[ \sin \pi(t'-t_n), \quad \sin \pi (t''-t_n) \]
	both tend to zero as $n\to\infty$, or equivalently, 
	\[ \dist(t'-t_n,\Z), \quad \dist (t''-t_n,\Z) \]
	both tend to zero. But 
	\[ \dist (t''-t',\Z) \le \dist (t'-t_n,\Z)+\dist(t''-t_n,\Z), \]
	which implies that $t''-t'\in \Z$. 
\end{proof}
\begin{corollary}
	Let $s',s''\in \Pi$. If $\theta(s')\ne \theta(s'')$, then
	$\hat{\1}_\Sigma(s''-s')=0$.  
	\label{corD3}
\end{corollary}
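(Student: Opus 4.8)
The plan is to prove the contrapositive: assuming $\hat{\1}_\Sigma(s''-s')\neq 0$, I will show that $\theta(s')=\theta(s'')$. The whole argument rests on \lemref{lemD2} together with the continuity of $\hat{\1}_\Sigma$ and the fact that $\hat{\1}_\Sigma(0)=|\Sigma|>0$, so that the set $U_\Sigma=\{\hat{\1}_\Sigma\neq 0\}$ is an open neighborhood of the origin.

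First I would exploit the membership of $s''$ in $\Pi$. By the characterization recorded after the definition of $\Pi$, there is a sequence $(t_n,s_n)\in\Lambda$ with $|t_n|\to\infty$ and $s_n\to s''$. The key observation is that, for all sufficiently large $n$, this single sequence is ``visible'' from both $s'$ and $s''$ in the sense required by \lemref{lemD2}. Indeed, by continuity $\hat{\1}_\Sigma(s_n-s'')\to\hat{\1}_\Sigma(0)=|\Sigma|>0$, so $\hat{\1}_\Sigma(s_n-s'')\neq 0$ for large $n$; and likewise $\hat{\1}_\Sigma(s_n-s')\to\hat{\1}_\Sigma(s''-s')\neq 0$ by the standing assumption, so $\hat{\1}_\Sigma(s_n-s')\neq 0$ for large $n$ as well.

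Then I would apply \lemref{lemD2} twice to the same point $(t_n,s_n)\in\Lambda$. Taking the center to be $s''$ gives $t_n\in\Z+\theta(s'')$, while taking the center to be $s'$ gives $t_n\in\Z+\theta(s')$, for all large $n$. Consequently $\theta(s')-\theta(s'')\in\Z$; since both numbers lie in $[0,1)$, their difference lies in $(-1,1)$, forcing $\theta(s')=\theta(s'')$. This contradicts the hypothesis $\theta(s')\neq\theta(s'')$, and hence $\hat{\1}_\Sigma(s''-s')=0$, which is the desired conclusion.

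I do not expect any serious obstacle here: the statement is essentially an immediate corollary of \lemref{lemD2}, and the only point requiring care is the bookkeeping that a single sequence approaching $s''$ simultaneously satisfies the nonvanishing condition relative to both centers. The crucial input is that $\hat{\1}_\Sigma$ does not vanish at the origin, which makes every point of $\Lambda$ lying over a small neighborhood of $s''$ automatically detectable from $s''$, while the assumption $\hat{\1}_\Sigma(s''-s')\neq 0$ (together with continuity) does the same from $s'$.
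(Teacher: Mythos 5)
Your proposal is correct and is essentially identical to the paper's own proof: both take a sequence $(t_n,s_n)\in\Lambda$ with $|t_n|\to\infty$, $s_n\to s''$, use continuity together with $\hat{\1}_\Sigma(0)=|\Sigma|>0$ and the assumption $\hat{\1}_\Sigma(s''-s')\ne 0$ to see that $\hat{\1}_\Sigma(s_n-s'')$ and $\hat{\1}_\Sigma(s_n-s')$ are both nonzero for large $n$, and then apply \lemref{lemD2} twice to conclude $t_n\in(\Z+\theta(s'))\cap(\Z+\theta(s''))$, forcing $\theta(s')=\theta(s'')$. The only difference is presentational (you phrase it as a contrapositive), so there is nothing to add.
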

\begin{proof}
	Let $(t_n,s_n)\in \Lambda$ be a sequence such that $|t_n|\to \infty$, $s_n\to
	s''$. If $\hat{\1}_\Sigma(s''-s')\ne 0$, then for large enough $n$ we would have
	$\hat{\1}_\Sigma(s_n-s')\ne 0$. By \lemref{lemD2} it follows that $t_n\in
	\Z+\theta(s')$. On the other hand, for all large enough $n$ we also have
	$\hat{\1}_\Sigma(s_n-s'')\ne 0$, since 
	\[ \hat{\1}_\Sigma(0) = |\Sigma|> 0. \]
	Hence, again by \lemref{lemD2}, we have $t_n\in \Z+\theta(s'')$. 
	So we must have $\theta(s')=\theta(s'')$. 
\end{proof}
\subsection{}
\lemref{lemD2} allows us to define an equivalence relation on $\Pi$ by saying that
$s'\sim s''$ if $\theta(s')=\theta(s'')$. It follows from \corref{corD3} that if
$\Pi'$ and $\Pi''$ are two distinct equivalence classes, then 
\[ \Pi''-\Pi' \subset \big\{\hat{\1}_\Sigma = 0\big\}.\]
The set $\{ \hat{\1}_\Sigma = 0 \}$ is disjoint from the open ball of radius
$\chi(\Sigma)>0$ centered at the origin, see \eqref{eqP1.3}. It follows that each equivalence class
is a closed set, and that there can be at most countably many  equivalence classes.
So we may enumerate them as $\Pi_0,\Pi_1,\Pi_2,\dots$ (finitely or infinitely many), and we
denote by $\theta_0,\theta_1,\theta_2,\dots$ respectively the values of the function
$\theta(s)$ on these equivalence classes. 
\subsection{}
To illustrate the construction above, let us consider two representative examples. 
\begin{example} \label{expD4.1}
	Assume that $\Omega$ tiles face-to-face along a lattice $T$ of translation vectors,
	which in this case is given by \eqref{eqC1.1}. Since the facet $F$ has the form
	$F=\left\{ \frac{1}{2} \right\}\times \Sigma$, we have $\tau_F=(1,0,0,\dots, 0)\in T$. 
Let $\Lambda$ be a spectrum of $\Omega$ given by the dual lattice, that is, $\Lambda=T^*$.
Then $\dotprod{\Lambda}{\tau}\subset \Z$ for any $\tau\in T$. In particular this is true
for $\tau=\tau_F$, hence 
\[ \Lambda\subset\Z\times\R^{d-1}. \]
It follows that $\theta(s)=0$ for all $s\in \Pi$. Thus in this case the set $\Pi$ consists of a single
equivalence class, namely $\Pi=\Pi_0$, and we have $\theta_0=0$.
\end{example}

\begin{example} \label{expD4.2}
	Assume that 
	$ \Omega = I \times \Sigma$,
	where $I$ denotes the interval $[-\frac{1}{2}, \frac{1}{2}]$. Then
	$\Omega$ is a prism with base $\Sigma$. Suppose that $\Sigma$ is a spectral set, and let
	$\Gamma \subset \R^{d-1}$ be a spectrum for $\Sigma$. For each $\gamma\in \Gamma$,
	let $\theta(\gamma)$ be an arbitrary real number, $0\le \theta(\gamma)<1$, and define 
	\[ \Lambda := \bigcup_{\gamma\in \Gamma} \big( \Z+\theta(\gamma) \big)\times
	\{ \gamma \}. \]
	It is known, see \cite[Theorem 4]{JorPed99}, that $\Lambda$ is a spectrum for
	$\Omega$. In this case we clearly have $\Pi=\Gamma$, and the numbers
	$\theta(\gamma)$ coincide with the ones given by \lemref{lemD2}. The equivalence
	classes $\Pi_j$ depend on the specific choice of the numbers $\theta(\gamma)$, but in the
	case when all the $\theta(\gamma)$ are distinct, the sets $\Pi_j$ are singletons.
	Observe that we have 
	\[ \Pi_j-\Pi_k \subset \big\{ \hat{\1}_\Sigma = 0 \big\} \quad (k\ne j), \]
	since $\Gamma$ is a spectrum for $\Sigma$. This is in accordance with
	\corref{corD3}.
\end{example}


\section{Structure of spectrum, II} \label{secE1}
In this section we continue to work under the same assumptions as in \secref{secD1}.
Namely, we assume that $\Omega\subset\R^d$ is a convex polytope which is centrally
symmetric, $\Omega=-\Omega$, and has centrally symmetric facets, $F$ is one of the facets
of $\Omega$, and $F=\{\tfrac{1}{2}\}\times \Sigma$ where $\Sigma$ is a convex polytope in $\R^{d-1}$ such
that $\Sigma=-\Sigma$. 
\par
We also assume that $\Lambda$ is a spectrum for $\Omega$, and to
this spectrum $\Lambda$ we associate the set $\Pi\subset\R^{d-1}$ that was defined
in \secref{secD1}.

\subsection{}
From the given spectrum $\Lambda$ one can construct a new spectrum $\Lambda'$ for
$\Omega$, in the following way. Consider the sequence of translates of $\Lam$ given by
\[ \Lambda-k\cdot(1,0,0,\dots,0), \quad k=1,2,3,\dots. \]
Each one of these sets is a spectrum for $\Omega$, and they are uniformly
discrete with the same separation constant. Hence one may extract from this sequence a subsequence 
\[ \Lambda-k_n\cdot (1,0,0,\dots,0), \quad k_n\to\infty, \]
which converges weakly to some set $\Lambda'$, which is also a spectrum for $\Omega$
(see \secref{secLimits}). 
Notice that we do not make any claim concerning the uniqueness of the weak limit
$\Lambda'$, which in general may depend on the particular subsequence that was selected. 
\begin{lem} \label{lemE1}
	We have 
	\begin{equation}
		\Lambda'\subset\bigcup_{j \ge 0} \left(\Z+\theta_j \right)\times \Pi_j.
		\label{eqE1.6}
	\end{equation}
\end{lem}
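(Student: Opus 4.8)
The plan is to argue one point at a time: I fix an arbitrary point $(t',s')\in\Lambda'$ and show that $s'$ lies in some equivalence class $\Pi_j$ and that $t'\in\Z+\theta_j$. Since $\Lambda'$ is by construction a weak limit of the translates $\Lambda-k_n\cdot(1,0,\dots,0)$ with $k_n\to\infty$ (and the $k_n$ positive integers), the defining property of weak convergence from \secref{secLimits} — in the direction $\Lambda'\cap B_R\subset(\Lambda-k_n\cdot(1,0,\dots,0))+B_\eps$ — lets me produce points $\mu_n=(u_n,s_n)\in\Lambda$ such that $(u_n-k_n,\,s_n)\to(t',s')$ as $n\to\infty$. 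These $\mu_n$ are the bridge between the limit set $\Lambda'$ and the original spectrum $\Lambda$, and everything else is extracted from them.

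First I would establish that $s'\in\Pi$. From $(u_n-k_n,s_n)\to(t',s')$ I read off $s_n\to s'$ and $u_n-k_n\to t'$; since $k_n\to\infty$ while $u_n-k_n$ stays bounded, the first coordinates satisfy $|u_n|\to\infty$. By the characterization \eqref{eqD1.3} of $\Pi$, the existence of such a sequence $(u_n,s_n)\in\Lambda$ is exactly the statement that $s'\in\Pi$. Consequently $s'$ belongs to a unique equivalence class $\Pi_j$, and $\theta(s')=\theta_j$.

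Next I would pin down the first coordinate. Because $\hat{\1}_\Sigma$ is continuous with $\hat{\1}_\Sigma(0)=|\Sigma|>0$, and $s_n\to s'$, for all large $n$ we have $\hat{\1}_\Sigma(s_n-s')\ne0$, i.e.\ $s_n\in s'+U_\Sigma$. Thus $\mu_n=(u_n,s_n)$ lies in $\Lambda\cap\big(\R\times(s'+U_\Sigma)\big)$, so \lemref{lemD2} forces $u_n\in\Z+\theta(s')=\Z+\theta_j$. Since each $k_n$ is an integer, subtracting it preserves this coset, so $u_n-k_n\in\Z+\theta_j$. Finally $\Z+\theta_j$ is a closed (discrete) subset of $\R$, so passing to the limit in $u_n-k_n\to t'$ yields $t'\in\Z+\theta_j$. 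Combining with $s'\in\Pi_j$ gives $(t',s')\in(\Z+\theta_j)\times\Pi_j$, which is precisely the inclusion \eqref{eqE1.6}.

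I expect the only place requiring genuine care to be the passage between the weak limit and the original spectrum, namely the clean extraction of the points $\mu_n\in\Lambda$ with $(u_n-k_n,s_n)\to(t',s')$: one must invoke weak convergence in the correct direction and be sure the approximating points really come from $\Lambda$ (after undoing the translation by $k_n(1,0,\dots,0)$). Once these $\mu_n$ are in hand, the two structural facts that drive the proof — that $|u_n|\to\infty$, which both places $s'$ in $\Pi$ and makes \lemref{lemD2} applicable, and that $k_n$ is an integer, which keeps the first coordinate in the coset $\Z+\theta_j$ — make the remaining steps routine.
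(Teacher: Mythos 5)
Your proof is correct and is essentially identical to the paper's: both extract points $(u_n,s_n)\in\Lambda$ with $(u_n-k_n,s_n)\to(t',s')$ from the weak convergence, use $k_n\to\infty$ to conclude $|u_n|\to\infty$ and hence $s'\in\Pi$, and then apply \lemref{lemD2} (valid since $\hat{\1}_\Sigma(s_n-s')\ne 0$ for large $n$) together with the integrality of the $k_n$ to place $t'$ in $\Z+\theta(s')$. The extra care you take with the direction of weak convergence and the closedness of $\Z+\theta_j$ only makes explicit what the paper leaves implicit.
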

We remind that by $\theta_j$ $(j \ge 0)$ we denote the distinct values attained by
the function $\theta(s)$ defined on $\Pi$, given in \lemref{lemD2}, and 
	\begin{equation}
		\label{eqE1.1.2}
		\Pi_j=\left\{ s\in \Pi\;:\; \theta(s)=\theta_j \right\}.
	\end{equation}
Recall also that according to \corref{corD3} we have
\begin{equation}
\Pi_k-\Pi_j\subset \big\{ \hat{\1}_\Sigma=0 \big\} \quad (j\ne k),
\label{eqE1.1}
\end{equation}
hence \lemref{lemE1} reveals a certain structure satisfied by the new spectrum $\Lambda'$.
\begin{proof}[Proof of \lemref{lemE1}]
	The claim is equivalent to the statement that for every $(t',s')\in \Lambda'$ we
	have $s'\in\Pi$ and $t'\in\Z+\theta(s')$. Let therefore $(t',s')\in\Lambda'$.
	Since $\Lambda'$ is the weak limit of the sequence
	$\Lambda-k_n\cdot(1,0,0,\dots,0)$, there exist $(t_n,s_n)\in \Lambda$ such that 
	\[ (t_n-k_n,s_n)\to (t',s'), \quad n\to\infty. \]
	Hence $s_n\to s'$, and $t_n\to \infty$ since $k_n\to\infty$. This implies that
	$s'\in\Pi$. For all sufficiently large $n$ we have $\hat{\1}_\Sigma(s_n-s')\ne 0$,
	thus by \lemref{lemD2} we have $t_n\in\Z+\theta(s')$. Since $t_n-k_n\to t'$ and
	the $k_n$ are integers, this implies that also $t'\in \Z+\theta(s')$. 
\end{proof}
\subsection{}
Given a point  $(t_0,s_0)$ in $\R\times\R^{d-1}$, we associate with it a function $f$ defined by 
\begin{equation}
	f(x,y) := \1_{I}(x)e^{2\pi i t_0 x} \, \1_{\Sigma}(y)e^{2\pi i \dotprod{s_0}{y}},
\quad (x,y)\in \R\times\R^{d-1}
	\label{eqE6.1}
\end{equation}
where $I$ denotes again the  interval $[-\tfrac{1}{2},\tfrac{1}{2}]$. 
Notice that the function $f$ is supported by the prism $I\times\Sigma$.
This prism is contained in $\Omega$, since
$ \left\{ \tfrac{1}{2} \right\}\times \Sigma$ and $\left\{ -\tfrac{1}{2}
\right\}\times\Sigma$ are facets of $\Omega$, and $\Omega$ is convex.
Hence $f$ is also supported by $\Omega$.
\par
It follows from the definition \eqref{eqE6.1} of $f$ that its Fourier transform is given by
\begin{equation}
	\label{eqE6.1.2}
\hat{f}(t,s)=\hat{\1}_I(t-t_0)\hat{\1}_\Sigma(s-s_0), \quad (t,s)\in
		\R\times \R^{d-1}.
\end{equation}
\par
Using the function $f$ thus defined, we can prove a result similar to \lemref{lemE1} but
which is concerned with the originally given spectrum $\Lambda$.
However, the conclusion is somewhat weaker, as the right hand side of
\eqref{eqE1.6} is replaced by a larger set:
\begin{lem} \label{lemE3}
We have
\[ \Lambda \subset \bigcup_{j \ge 0} \left(\Z+\theta_j\right)\times (\Pi_j+U_\Sigma), \]
where, as before, we let 
\[ U_\Sigma = \big\{ \hat{\1}_\Sigma \ne 0 \big\}. \]
\end{lem}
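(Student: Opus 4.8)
The plan is to reduce the inclusion to a single nonvanishing assertion and then extract the coset information exactly as in \lemref{lemE1}. Fix a point $(t',s')\in\Lambda$. I claim it suffices to produce one point $p\in\Pi$ for which $\hat{\1}_\Sigma(s'-p)\ne 0$. Indeed, given such a $p$, let $\Pi_j$ be the equivalence class containing it, so that $\theta(p)=\theta_j$; since $s'-p\in U_\Sigma$ we then have $s'\in\Pi_j+U_\Sigma$, while applying \lemref{lemD2} with $s=p$ to the point $(t',s')\in\Lambda$ (which satisfies $\hat{\1}_\Sigma(s'-p)\ne 0$) gives $t'\in\Z+\theta(p)=\Z+\theta_j$. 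Thus $(t',s')\in(\Z+\theta_j)\times(\Pi_j+U_\Sigma)$, as required. Here I use that $\hat{\1}_\Sigma$ is real and even because $\Sigma=-\Sigma$, so that $U_\Sigma=-U_\Sigma$ and the condition $\hat{\1}_\Sigma(s'-p)\ne 0$ is symmetric in its two arguments.

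To produce such a $p$ I would argue by contradiction, assuming $\hat{\1}_\Sigma(s'-p)=0$ for every $p\in\Pi$; equivalently $\Pi\cap(s'+U_\Sigma)=\emptyset$, so $\Pi$ lies in the zero set of $\xi\mapsto\hat{\1}_\Sigma(\xi-s')$. Two easy structural facts about $\Pi$ are available. First, for every compact $Q\subset\R^{d-1}$ with $Q\cap\Pi=\emptyset$ the cylinder $\R\times Q$ meets $\Lambda$ in finitely many points: each $s\in Q$ admits a ball $B_s$ with $\Lambda\cap(\R\times B_s)$ finite, and one passes to a finite subcover of $Q$. Second, $\Pi$ is relatively dense: since $\Lambda$ is relatively dense, the balls $B((n,w),R)$ with $n\to\infty$ force points of $\Lambda$ whose first coordinate tends to infinity and whose second coordinate lies in $\overline{B(w,R)}$, so their accumulation lies in $\Pi$ and $\dist(w,\Pi)\le R$ for all $w$.

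Completeness now enters through the prism test function. Center $f$ at $(t',s')$, so that $\hat f(t,s)=\hat{\1}_I(t-t')\hat{\1}_\Sigma(s-s')$ and $\|f\|_{L^2(\Omega)}^2=|\Sigma|$. Since $E(\Lambda)$ is complete, Parseval gives $\sum_{(t,s)\in\Lambda}|\hat{\1}_I(t-t')|^2|\hat{\1}_\Sigma(s-s')|^2=|\Sigma|\,|\Omega|$, and the same identity holds with $t'$ replaced by any real $a$, because each $\1_I(x)e^{2\pi iax}\1_\Sigma(y)e^{2\pi i\dotprod{s'}{y}}$ is again supported in $I\times\Sigma\subset\Omega$ with the same norm. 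Under the contradiction hypothesis every contributing point $(t,s)$, i.e.\ one with $\hat{\1}_\Sigma(s-s')\ne 0$, has $s\in s'+U_\Sigma$ and hence $s\notin\Pi$; by the local finiteness above, the second coordinates of these points escape every compact subset of $s'+U_\Sigma$, so along any subsequence with $|t|\to\infty$ they can accumulate only on $\Pi$, where $\hat{\1}_\Sigma(\,\cdot-s')$ vanishes. The goal is to show that this forces the Parseval mass to fall short of $|\Sigma|\,|\Omega|$, contradicting the identity.

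I expect this last step to be the main obstacle. One must turn the global Parseval identity into a quantitative local estimate, balancing the $\ell^2$-mass carried by spectral points whose second coordinates lie near the zero set of $\hat{\1}_\Sigma(\,\cdot-s')$ (where the weight $|\hat{\1}_\Sigma(s-s')|^2$ is small) against the decay of $|\hat{\1}_I(t-a)|^2$, which behaves like $|t-a|^{-2}$, while exploiting the $a$-independence of the sum. Equivalently, the entire difficulty is the statement that the exponential system $\{e_p\}_{p\in\Pi}$ on $\R^{d-1}$ is complete in $L^2(\Sigma)$: were it complete, the nonzero function $\1_\Sigma e_{s'}$ could not be orthogonal to all $e_p$, so some $p\in\Pi$ would satisfy $\hat{\1}_\Sigma(s'-p)\ne 0$, which is exactly what is needed. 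Deriving this base-level completeness from the completeness of $E(\Lambda)$ on $\Omega$, through the lifted functions $f$, is the crux I would have to resolve.
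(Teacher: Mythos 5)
Your reduction step is correct and coincides with the paper's: by \lemref{lemD2} (together with the symmetry $U_\Sigma=-U_\Sigma$), the lemma follows once one knows that every relevant point of $\R^{d-1}$ lies in some $\Pi_j+U_\Sigma$. But the proof of this covering claim is exactly what is missing from your proposal, and you say so yourself (``the crux I would have to resolve''). This is a genuine gap, not a routine verification: your local-finiteness and relative-density observations about $\Pi$ are true but far too weak, and the quantitative Parseval balancing you sketch --- playing the weights $|\hat{\1}_I(t-a)|^2$ against the smallness of $|\hat{\1}_\Sigma(s-s')|^2$ near $\Pi$ --- is not carried out; making it work would essentially amount to re-deriving, in quantitative form, the weak-limit machinery that the paper has already set up for this purpose.

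The missing idea is to switch from $\Lambda$ to the auxiliary spectrum $\Lambda'$ constructed at the start of \secref{secE1} as a weak limit of the translates $\Lambda-k_n\vec e_1$, $k_n\to\infty$, and to use the \emph{statement} (not merely the method) of \lemref{lemE1}: $\Lambda'\subset\bigcup_{j}(\Z+\theta_j)\times\Pi_j\subset\R\times\Pi$. If some $s_0$ lay outside all the sets $\Pi_j+U_\Sigma$, then $\hat{\1}_\Sigma(s-s_0)=0$ for every $s\in\Pi$, so the test function $f$ of \eqref{eqE6.1} centered at any $(t_0,s_0)$ has $\hat f$ vanishing on $\R\times\Pi$, hence on all of $\Lambda'$; thus $f$ is orthogonal to every exponential of $E(\Lambda')$. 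Since a weak limit of spectra is again a spectrum (\secref{secLimits}), $E(\Lambda')$ is complete in $L^2(\Omega)$, and this is an immediate contradiction, with no estimates needed. The obstacle you could not get around --- that $\Lambda$ itself need not lie in $\R\times\Pi$, so completeness of $E(\Lambda)$ gives no direct contradiction --- is precisely why the paper performs the limiting construction: translating by $k_n\to\infty$ discards the points of $\Lambda$ whose second coordinates stay away from $\Pi$, and what survives in $\Lambda'$ is governed entirely by $\Pi$. Incidentally, your reformulation via completeness of $\{e_p\}_{p\in\Pi}$ in $L^2(\Sigma)$ asks for more than is needed: the argument only requires that no pure exponential $\1_\Sigma e_{s_0}$ be orthogonal to all $e_p$, $p\in\Pi$, and even this weaker fact is obtained in the paper only as a by-product of the contradiction with $E(\Lambda')$, not proved directly at the level of $\Sigma$.
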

\begin{proof}
	By \lemref{lemD2} we have 
	\[ \Lambda \cap \big( \R\times (\Pi_j+U_\Sigma)\big) \subset (\Z+\theta_j)\times
	(\Pi_j+U_\Sigma) \]
	for every $j$. Hence, to prove the claim it would be enough to show that the sets 
	$\Pi_j+U_\Sigma$  cover the whole $\R^{d-1}$. 	Suppose to the contrary that
	there is a point $s_0\in\R^{d-1}$ which lies outside all the sets $\Pi_j+U_\Sigma$.
	Since $U_\Sigma = -U_\Sigma$, this means that 
	\[ \hat{\1}_\Sigma(s-s_0)= 0, \quad s\in\Pi. \]
	Let $t_0$ be an arbitrary real number, and consider the function $f$ defined by
	\eqref{eqE6.1}. Then $f$ is supported by $\Omega$, and by \eqref{eqE6.1.2}
	its Fourier transform $\hat{f}$ vanishes on $\R\times\Pi$. In particular we have
	$\hat{f}(\lambda)=0$ for all $\lambda\in\Lambda'$, due to \lemref{lemE1}. That is, 
	\[ \dotprod{f}{e_\lambda}_{L^2(\Omega)} = \hat{f}(\lambda)=0, \quad \lambda\in
	\Lambda'. \]
	Hence $f$ is orthogonal in $L^2(\Omega)$ to all the exponentials $\left\{ e_\lambda \right\}$,
	$\lambda \in \Lambda'$, which contradicts the completeness of the system $E(\Lambda')$
	in the space $L^2(\Omega)$. This  concludes the proof.
\end{proof}
\begin{corollary}
	\label{corE7}
	Assume that the function $\theta(s)$ is constant on $\Pi$. Then  
	\begin{equation}
		\label{eqE7.1}
		\Lambda-\Lambda \subset \Z\times \R^{d-1}.
	\end{equation}
\end{corollary}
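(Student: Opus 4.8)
The plan is to derive this directly from \lemref{lemE3}, which already contains the substance of the argument. The hypothesis that $\theta(s)$ is constant on $\Pi$ means, in the language of the equivalence relation introduced just after \corref{corD3} (where $s'\sim s''$ iff $\theta(s')=\theta(s'')$), that there is a single equivalence class: $\Pi=\Pi_0$ and $\theta\equiv\theta_0$ for a fixed $0\le\theta_0<1$. Consequently the union on the right-hand side of the inclusion in \lemref{lemE3} collapses to its single $j=0$ term.

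First I would record this collapse explicitly, obtaining
\[ \Lambda \subset (\Z+\theta_0)\times(\Pi_0+U_\Sigma). \]
The crucial feature of this inclusion is its first factor: it says that every point $(t,s)\in\Lambda$ has first coordinate $t\in\Z+\theta_0$, irrespective of the value of the second coordinate $s$. Note that I would not even need the covering property $\Pi_0+U_\Sigma=\R^{d-1}$ that was established inside the proof of \lemref{lemE3}, since the inclusion above already constrains the first coordinate of \emph{every} point of $\Lambda$.

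The conclusion is then immediate. If $\lambda=(t,s)$ and $\lambda'=(t',s')$ are any two points of $\Lambda$, then $t,t'\in\Z+\theta_0$, so their difference $t'-t$ lies in $\Z$, while the second coordinates range freely in $\R^{d-1}$. Hence $\lambda'-\lambda\in\Z\times\R^{d-1}$, which is precisely \eqref{eqE7.1}.

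I do not anticipate any genuine obstacle here. The real work has already been carried out upstream: constructing the auxiliary spectrum $\Lambda'$ with its rigid coset structure via \lemref{lemE1}, and then transferring that constraint back to the original spectrum $\Lambda$ through the completeness/orthogonality argument of \lemref{lemE3}. Under the constancy hypothesis the multi-class bookkeeping simply disappears and only the single first-coordinate constraint survives, so \corref{corE7} is essentially a one-line specialization of \lemref{lemE3}.
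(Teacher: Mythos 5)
Your proposal is correct and follows exactly the paper's own proof: both specialize \lemref{lemE3} to the single equivalence class $\Pi=\Pi_0$, obtaining $\Lambda\subset(\Z+\theta_0)\times(\Pi_0+U_\Sigma)$, from which \eqref{eqE7.1} follows immediately by differencing the first coordinates. There is nothing to add or correct.
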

\begin{proof}
	It is assumed that $\Pi=\Pi_0$ and $\theta(s)=\theta_0$ for all $s\in\Pi$. Hence by
	\lemref{lemE3}, the set $\Lam$ is contained in 
$ (\Z+\theta_0)\times(\Pi_0+U_\Sigma)$, which implies \eqref{eqE7.1}.
\end{proof} 
\subsection{}
\corref{corE7} is an important point in our approach to the proof that $\Omega$ can tile by
translations. Let us clarify its role. Recall that a sufficient condition for $\Omega$ to tile
was given by \corref{corC3}, namely, it is enough to know that the spectrum $\Lambda$
satisfies condition \eqref{eqC1.2} for every facet $F$ of $\Omega$. For the facet
$F=\{\tfrac{1}{2}\}\times \Sigma$ we have $\tau_F=(1,0,0,\dots,0)$, hence for this facet
the condition \eqref{eqC1.2} is the same as \eqref{eqE7.1}. It thus follows from
\corref{corE7} that in order to establish \eqref{eqC1.2} for the facet
$F=\{\tfrac{1}{2}\}\times\Sigma$, it would be sufficient to prove that the function $\theta(s)$ is
constant on $\Pi$.


\section{Spectral convex polygons tile the plane} \label{secE2}
\subsection{}
In this section we will demonstrate how the tools developed so far can be useful in our problem, 
by showing that at this point they already enable us to give an alternative proof of the
following result in dimension $d=2$:
\begin{thm}[Iosevich, Katz and Tao \cite{IKT03}]
\label{thmE8}
	Let $\Omega$ be a convex polygon in $\R^2$. If $\Omega$ is spectral, then $\Omega$ tiles
	by translations. 
\end{thm}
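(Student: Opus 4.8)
The plan is to combine the structural results from Sections \ref{secD1} and \ref{secE1} with the sufficient condition for tiling given in \corref{corC3}. The strategy is to show that in dimension $d=2$ the spectrum $\Lambda$ of a spectral convex polygon $\Omega$ can, after reduction to the normalized position, be made to satisfy condition \eqref{eqC1.2} for every side (facet) of $\Omega$; by \corref{corC3} this forces $\Omega + T$ to be a tiling. The essential simplification in two dimensions is that the base $\Sigma$ of a facet is now a $1$-dimensional object, namely a segment $[-\tfrac{1}{2},\tfrac{1}{2}]$ (after normalization), so that $\hat{\1}_\Sigma$ is an explicit one-variable sinc-type function whose zero set is a discrete set of points. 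This rigidity of the zero set is exactly what will let us control the set $\Pi$ and the function $\theta(s)$.

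The key steps are as follows. First, I would reduce to the normalized situation of \secref{secD1}: by Theorems \ref{thmA1} and \ref{thmA2} (or their planar analogues) together with the affine invariance of spectrality, I may assume $\Omega = -\Omega$, that a chosen side $F$ lies in $\{x_1 = \tfrac{1}{2}\}$ with center at $(\tfrac{1}{2},0)$, so that $F = \{\tfrac{1}{2}\} \times \Sigma$ with $\Sigma = [-\tfrac{1}{2},\tfrac{1}{2}]$ (up to scaling). Second, I would analyze the set $\Pi \subset \R$ attached to $\Lambda$ via \secref{secD1}. Since $\hat{\1}_\Sigma$ vanishes only on a discrete set of real points, \corref{corD3} tells us that any two points of $\Pi$ lying in distinct equivalence classes must differ by a zero of $\hat{\1}_\Sigma$; I expect to argue that in fact $\Pi$ must reduce to a single equivalence class, so that $\theta(s)$ is \emph{constant} on $\Pi$. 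Third, once $\theta$ is constant, \corref{corE7} gives immediately that $\Lambda - \Lambda \subset \Z \times \R$, which is precisely condition \eqref{eqC1.2} for the facet $F$ (recall $\tau_F = (1,0)$). Applying the same reasoning to the other pair of parallel sides yields \eqref{eqC1.2} for every facet, and \corref{corC3} then finishes the proof.

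The main obstacle I anticipate is the third step in spirit but really the heart of the second: namely, showing that $\theta(s)$ is constant on $\Pi$, equivalently that $\Pi$ consists of a single equivalence class. A priori \corref{corD3} only constrains differences of points in distinct classes to lie in the (discrete) zero set of $\hat{\1}_\Sigma$, which does not by itself rule out several classes separated by those discrete zeros. To close this gap I would exploit the density properties of the spectrum $\Lambda$ (relative denseness, with a denseness radius $R(\Omega)$ independent of $\Lambda$, as recalled in \secref{secLimits}) together with the explicit one-dimensional structure of $U_\Sigma = \{\hat{\1}_\Sigma \ne 0\}$: the complement of $U_\Sigma$ being discrete means that $\Pi_j + U_\Sigma$ covers all of $\R$ except possibly a discrete set, so the multiplicity of equivalence classes is severely limited. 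Combining a counting or denseness argument along the $x_1$-axis — in the spirit of the cylinder-covering arguments used in the proofs of Theorems \ref{thmA1} and \ref{thmA2} — with the orthogonality constraint \eqref{eqP1.2} should force all the $\theta_j$ to coincide. I would expect that carefully ruling out the possibility of two or more distinct $\theta$-values is where the real work lies, since everything after that is a direct application of the already-established corollaries.
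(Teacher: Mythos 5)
Your overall skeleton matches the paper's: normalize as in \secref{secD1}, prove that $\theta(s)$ is constant on $\Pi$, then apply \corref{corE7} facet by facet and finish with \corref{corC3}. The gap is in the step you yourself flag as the heart of the argument, and it is not merely technical: the statement you propose to prove --- that $\Pi$ always reduces to a single equivalence class --- is \emph{false} for a general spectral convex polygon. If $\Omega$ is a parallelogram (say the unit square $I\times I$), then \exampref{expD4.2} (with $\Sigma=I$, $\Gamma=\Z$) produces spectra for which $\theta(s)$ attains countably many arbitrary distinct values, the classes $\Pi_j$ being singletons separated by integers; these spectra are relatively dense and of course satisfy the orthogonality constraint \eqref{eqP1.2}. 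Consequently no argument based only on relative denseness, the discreteness of $\{\hat{\1}_I=0\}=\Z\setminus\{0\}$, and covering/counting along the $x_1$-axis --- the only tools you invoke --- can force the $\theta_j$ to coincide, because the square satisfies all of those hypotheses and violates the conclusion. Any correct proof must first set aside parallelograms (harmless, since they tile trivially) and must then actually \emph{use} the assumption that $\Omega$ is not a parallelogram when proving constancy; your outline does neither.

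The paper closes exactly this gap with a dichotomy (proof of \lemref{lemE9}). If $\Pi-\Pi\not\subset\Z$, constancy follows from \corref{corD3} in three lines: pick $s',s''\in\Pi$ with $s''-s'\notin\Z$; since the zero set of $\hat{\1}_I$ is $\Z\setminus\{0\}$, \corref{corD3} forces $\theta(s')=\theta(s'')$, and any $s\in\Pi$ satisfies $s-s'\notin\Z$ or $s-s''\notin\Z$, hence $\theta(s)$ equals this common value. If instead $\Pi-\Pi\subset\Z$, one shows $\Omega$ must be the unit square: passing to the limit spectrum $\Lambda'$ of \secref{secE1}, \lemref{lemE1} shows that every point of $\Lambda'$ other than a fixed $\lambda_0$ differs from $\lambda_0$ by a nonzero integer in some coordinate, so the test function $f$ of \eqref{eqE6.1} is orthogonal to all $e_\lambda$, $\lambda\in\Lambda'\setminus\{\lambda_0\}$; \emph{completeness} of $E(\Lambda')$ (not just orthogonality, which is all your sketch appeals to) then forces $f$ to coincide a.e.\ on $\Omega$ with a nonzero multiple of $e_{\lambda_0}$, whence $\Omega=I\times I$, contradicting the parallelogram exclusion. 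Note also that in two dimensions \thmref{thmA2} is not needed (facets are segments, hence automatically centrally symmetric), so only \thmref{thmA1} enters the normalization.
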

We remark that the paper \cite{IKT03} actually contains a proof of a more general result, which yields the
same conclusion for any convex body $\Omega \subset \R^2$ (not assumed a priori to be a polygon). 

\subsection{}
In order to prove \thmref{thmE8}, we now restrict ourselves to dimension $d=2$. Let
$\Omega$ be a convex polygon in $\R^2$. Assume that $\Omega$ is spectral, and let
$\Lambda$ be a spectrum for $\Omega$. We must prove that $\Omega$ tiles by translations.
This is obvious if $\Omega$ is a parallelogram, so in what follows we will assume 
that $\Omega$ is not a parallelogram. 
\par
By \thmref{thmA1} the polygon $\Omega$ is centrally symmetric, and since the facets
of $\Omega$ are line segments, then automatically also all the facets of $\Omega$ are 
centrally symmetric.
\begin{lem} \label{lemE9}
	Let $\Omega$ be a convex, centrally symmetric polygon in $\R^2$, and assume that
	$\Omega$ is not a parallelogram. If $\Lambda$ is a spectrum of $\Omega$, then 
	\begin{equation}
		\dotprod{\Lambda-\Lambda}{\tau_F}\subset \Z
		\label{eqE8.1}
	\end{equation}
	for every facet $F$ of $\Omega$. 
\end{lem}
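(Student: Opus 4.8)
The plan is to treat each facet $F$ of $\Omega$ separately and reduce to the standard position of \secref{secD1}. By an affine transformation, which preserves both spectrality and the property of not being a parallelogram, I may assume $\Omega=-\Omega$ and $F=\{\tfrac12\}\times\Sigma$ with $\Sigma=[-a,a]\subset\R$; then $\tau_F=(1,0)$, and the desired conclusion $\dotprod{\Lambda-\Lambda}{\tau_F}\subset\Z$ is exactly $\Lambda-\Lambda\subset\Z\times\R$. By \corref{corE7} this follows once the function $\theta$ attached to $\Lambda$ and $F$ via \lemref{lemD2} is shown to be constant on $\Pi$. So the whole lemma reduces to proving that $\theta$ is constant.

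Suppose, for contradiction, that $\theta$ takes at least two values. In the planar case $\hat{\1}_\Sigma(\xi_2)=\sin(2\pi a\xi_2)/(\pi\xi_2)$, so $\{\hat{\1}_\Sigma=0\}=\tfrac{1}{2a}\Z\setminus\{0\}$. Hence \corref{corD3} forces every difference between distinct equivalence classes to lie in $\tfrac1{2a}\Z$, and once there are two classes this pins all of $\Pi$ inside a single arithmetic progression $c+\tfrac1{2a}\Z$. Passing to the weak-limit spectrum $\Lambda'$ constructed in \secref{secE1}, \lemref{lemE1} gives $\Lambda'\subset\bigcup_j(\Z+\theta_j)\times\Pi_j\subset\R\times(c+\tfrac1{2a}\Z)$; that is, $\Lambda'$ is contained in a family of vertical lines whose $x_2$-coordinates are spaced $1/(2a)$ apart. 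Since $\Lambda'$ is again a spectrum for $\Omega$ (see \secref{secLimits}), the system $E(\Lambda')$ is complete in $L^2(\Omega)$, and I will contradict this completeness.

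To do so I would build a nonzero $f\in L^2(\Omega)$ whose Fourier transform vanishes on every line $\R\times\{c+k/(2a)\}$, $k\in\Z$, and hence at every point of $\Lambda'$. Working slice by slice, for fixed $x_1$ let $\Omega_{x_1}$ be the $x_2$-section of $\Omega$, an interval of length $w(x_1)$, and set $f(x_1,\cdot)$ to be a translate, centred at the midpoint of $\Omega_{x_1}$, of one fixed profile $\phi_0$ supported on an interval of length slightly exceeding $2a$ and orthogonal to all the frequencies $c+k/(2a)$. Such a $\phi_0$ exists concretely: the exponentials $e^{2\pi i(c+k/(2a))y}$ are all $2a$-periodic up to the common factor $e^{2\pi i c y}$, so any $\phi_0$ of the form $e^{2\pi i c y}\bigl(\1_{J_0}-\1_{J_0+2a}\bigr)$, with $J_0$ a short interval, is orthogonal to every one of them; the phase picked up under translation does not affect vanishing at these frequencies. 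Then $\hat f(\cdot,c+k/(2a))\equiv0$ for every $k$, so $f\perp E(\Lambda')$ while $f\not\equiv0$, the desired contradiction. This is legitimate provided $\{x_1:w(x_1)>2a\}$ has positive measure. But $w$ is even and concave on its support $[-\tfrac12,\tfrac12]$ (here $\Omega\subset\{|x_1|\le\tfrac12\}$) with $w(\pm\tfrac12)=2a$; were $w(0)=2a$ as well, concavity would force $w\equiv2a$, so every section is a length-$2a$ interval with affine endpoints and $\Omega$ is a sheared rectangle, i.e.\ a parallelogram, contrary to assumption. Hence $w(0)>2a$, and by continuity $w>2a$ near $0$, supplying the required positive-measure set. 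This contradiction shows $\theta$ is constant.

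I expect the main obstacle to be making this slice-wise construction of $f$ fully rigorous: selecting the translated profiles measurably in $x_1$, keeping $f$ genuinely nonzero and in $L^2$ with support inside $\Omega$ as the section midpoint varies, and confirming that vanishing of $\hat f$ on the full lines (not merely at $\Lambda'$) is what is being used. By contrast, the reduction through \corref{corE7} and \corref{corD3}, and the convexity input $w(0)>2a$, are comparatively routine.
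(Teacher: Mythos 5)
Your argument is correct, and while its skeleton is the paper's own --- normalize affinely, apply \corref{corE7} so that it suffices to prove $\theta$ is constant on $\Pi$, and use \corref{corD3} to see that non-constancy of $\theta$ pins $\Pi$ inside a single arithmetic progression of step $1/(2a)$ (the paper phrases this as the dichotomy $\Pi-\Pi\subset\Z$ versus $\Pi-\Pi\not\subset\Z$ in its normalization $a=\tfrac12$) --- the way you dispose of the degenerate case is genuinely different. The paper fixes one point $\lambda_0=(t_0,s_0)\in\Lambda'$ and takes $f(x,y)=\1_I(x)e^{2\pi i t_0x}\,\1_I(y)e^{2\pi i s_0y}$, supported on the inscribed square; by \lemref{lemE1} every other point of $\Lambda'$ differs from $\lambda_0$ by a nonzero integer in some coordinate, so the Fourier expansion of $f$ with respect to the orthogonal basis $E(\Lambda')$ collapses to the single term $e_{\lambda_0}$, whence $|f|$ is a.e.\ constant on $\Omega$, forcing $\Omega=I\times I$, a parallelogram --- contradiction. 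You instead violate completeness alone: since $\Lambda'\subset\R\times(c+\tfrac{1}{2a}\Z)$, you build a nonzero $f\in L^2(\Omega)$ whose transform vanishes on all the lines $\R\times\{c+k/(2a)\}$, placing the modulated profile $e^{2\pi i cy}(\1_{J_0}-\1_{J_0+2a})$ slice by slice at the section midpoints, and the room needed for it --- sections of length exceeding $2a$ on a set of positive $x_1$-measure --- is exactly what concavity of the width function together with ``not a parallelogram'' supplies. Both proofs are sound, and the implementation worries you flag are routine to settle: for a convex polygon the section endpoints, hence midpoints, are piecewise affine and continuous, so $f$ is measurable, bounded and supported in $\Omega$; vanishing of each slice integral at the frequencies $c+k/(2a)$ gives, by Fubini, vanishing of $\hat f$ on the full lines and in particular at every point of $\Lambda'$; and the phase from translating the profile is harmless, as you note. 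What the paper's route buys is brevity (no width-function analysis, only the inscribed box) and the use of the full basis property; what yours buys is that only completeness of $E(\Lambda')$ is invoked, with the hypothesis that $\Omega$ is not a parallelogram entering transparently as a geometric statement about its sections.
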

\thmref{thmE8} follows immediately from a combination of \lemref{lemE9} and \corref{corC3}. Hence, it
only remains to prove the lemma. 
\par
\lemref{lemE9} was proved in
\cite[Proposition 3.1]{IKT03}, and was also used there
to  deduce that $\Omega$ tiles by translations. However, both our proof of
\lemref{lemE9}, and the argument we use to deduce \thmref{thmE8} from \lemref{lemE9}, are different
from those in \cite{IKT03}. 
\subsection{}
Now we give our proof of \lemref{lemE9}. 
\begin{proof}[Proof of \lemref{lemE9}] 
	Let $F$ be a facet of $\Omega$. We must show that if $\Lambda$ is a spectrum of
	$\Omega$, then it satisfies condition \eqref{eqE8.1}. By applying an affine
	transformation we may assume that $\Omega$ is symmetric about the origin,
	$\Omega=-\Omega$, and that $F=\{\tfrac{1}{2}\}\times I$, where $I$ is the
	interval $[-\tfrac{1}{2},\tfrac{1}{2}]$. Hence we have $\Sigma=I$,
	$\tau_F=(1,0)$, and condition \eqref{eqE8.1} becomes 
	\begin{equation}
		 \Lambda-\Lambda\subset\Z\times\R.
		 \label{eqE8.2}
	\end{equation}
	Let $\Pi\subset \R$ be the set associated to the spectrum $\Lambda$ defined as in
	Section \ref{secD1}, and $\theta(s)$ be the function on $\Pi$ given by
	\lemref{lemD2}. By \corref{corE7}, to establish \eqref{eqE8.2} it would be enough to show
	that $\theta(s)$ is constant on $\Pi$. 
\par
	Let us first consider the case when 
	\begin{equation}
		 \Pi-\Pi\subset\Z.
		 \label{eqE8.4}
	\end{equation}
	We will show that in this case we must have $\Omega=I\times I$, that is, $\Omega$
	is the unit cube, which is not possible as we have assumed that $\Omega$ is not a
	parallelogram. Indeed, suppose that \eqref{eqE8.4} holds, and let $\Lambda'$ be
	the spectrum constructed from $\Lambda$ in Section \ref{secE1}. Fix a point
	$\lambda_0=(t_0,s_0)\in\Lambda'$. It follows from \lemref{lemE1} and \eqref{eqE8.4}
	that if $\lambda'=(t',s')$ is any point in $\Lambda'$ other than $\lambda_0$,
	then at least one of the numbers $t'-t_0$ and $s'-s_0$ must be in $\Z\setminus \{0\}$.
	Now consider the function $f$ defined by
	\eqref{eqE6.1}. This function is supported by $\Omega$, and by \eqref{eqE6.1.2}
	its Fourier transform  $\hat{f}$ vanishes
	on all the points of $\Lambda'$ except from $\lambda_0$, since $\hat{\1}_I$ vanishes
	on $\Z \setminus \{0\}$. Hence $f$ is orthogonal  in $L^2(\Omega)$ to
	all the exponentials $\{e_\lambda\}$, $\lambda\in \Lambda'\setminus
	\{\lambda_0\}$. Since the system $E(\Lam')$ is orthogonal and complete in $L^2(\Omega)$,
	this implies that $f$ must coincide a.e.\ on $\Omega$ with a
	constant (non-zero) multiple of $e_{\lambda_0}$. In particular, $f$ cannot vanish
	on any subset of $\Omega$ of positive measure. On the other hand, by the
	definition of $f$ it does vanish on	$\Omega\setminus (I\times I)$. This is possible only if $\Omega=I\times
	I$.
\par
	We thus conclude that \eqref{eqE8.4} is not possible, so we must have
	\begin{equation}
		 \Pi-\Pi\not\subset\Z.
		 \label{eqE8.3}
	\end{equation}
	Let us then show that $\theta(s)$ is a constant function on $\Pi$. Indeed, due to \eqref{eqE8.3}
	there exist $s',s''\in \Pi$ such that $s'' - s' \notin \Z$. Since $\big\{\hat{\1}_I=0\big\} = \Z \setminus \{0\}$,
	\corref{corD3} implies that $\theta(s')=\theta(s'')$. Observe that for any $s \in \Pi$ we must have
	$s - s' \notin \Z$ or $s - s'' \notin \Z$, and in either case we obtain, again
	by \corref{corD3}, that $\theta(s)=\theta(s')=\theta(s'')$. This shows that $\theta(s)$ 
	must be a constant function on $\Pi$. \lemref{lemE9} is therefore proved. 
\end{proof}


\section{Prisms and cylindric sets} \label{secF1}
\subsection{}
The proof presented in \secref{secE2} that a spectral convex polygon in the plane
$\R^2$ can tile by translations,  eventually relied on showing that the function
$\theta(s)$ is constant on the set $\Pi$. In order to show this we had to exclude the case
when $\Omega$ is a parallelogram, but since a parallelogram automatically tiles by translations, this loss of
generality was innocuous in the proof.
\par
 In dimension $d=3$, however, the situation is more
complicated. Even if we exclude the case when $\Omega$ is a parallelepiped, one still
cannot expect to be able to prove that $\theta(s)$ is a constant function on $\Pi$.
Indeed, we have seen in \exampref{expD4.2} above that if $\Omega$ is a prism whose
base is a spectral set, then the
function $\theta(s)$ may attain countably many, arbitrary distinct values. Hence, the role
of the parallelogram in dimension $d=2$ will be played not by the parallelepiped,
but by the prism, in dimension $d=3$. 
\par
We remind the reader that by a \define{prism} in $\R^d$ one means a
polytope $\Omega$ which can be expressed as the Minkowski sum of a $(d-1)$-dimensional
polytope and a line segment. 
\par
Notice, however, that while a parallelogram in
$\R^2$ automatically tiles by translations, this is not so for a prism in $\R^3$. Hence it
is yet required to prove -- necessarily by a different method -- that a spectral convex prism in
$\R^3$ can tile by translations. 
\par
Let us formulate this result explicitly:
\begin{thm} \label{thmF1}
	Let $\Omega$ be a convex prism in $\R^3$. If $\Omega$ is spectral, then it tiles
	by translations. 
\end{thm}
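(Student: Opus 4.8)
The plan is to reduce the whole problem to a two-dimensional statement about the base of the prism and then invoke the planar result \thmref{thmE8}. Since spectrality is affine-invariant, I would first apply an affine transformation carrying the generating segment of the prism to the $\vec e_1$-direction and, using \thmref{thmA1} to translate the center to the origin, arrange that
$\Omega = I \times \Sigma$, where $I = [-\tfrac12,\tfrac12]$ and $\Sigma \subset \R^2$ is a convex polygon with $\Sigma = -\Sigma$ (central symmetry of $\Sigma$ being forced by $\Omega = -\Omega$, since $I$ is symmetric). The claim then becomes: \emph{the base $\Sigma$ is a spectral subset of $\R^2$}. Indeed, once $\Sigma$ is known to be spectral, \thmref{thmE8} produces a tiling $\Sigma + \Gamma_0 = \R^2$, and then $\Omega + (\Z\vec e_1 + \{0\}\times\Gamma_0)$ is a tiling of $\R^3$, because $I + \Z = \R$; this is exactly the conclusion sought.

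The heart of the matter is therefore to manufacture a spectrum for $\Sigma$ out of a given spectrum $\Lambda$ for $\Omega$. Here I would use the machinery of \secref{secD1} and \secref{secE1}: attach to $\Lambda$ the set $\Pi\subset\R^2$, the function $\theta$ of \lemref{lemD2}, its classes $\Pi_j$ with values $\theta_j$, and the weak-limit spectrum $\Lambda' \subset \bigcup_j (\Z+\theta_j)\times\Pi_j$ of \lemref{lemE1}. My candidate spectrum for $\Sigma$ is $\Pi$ itself. Its completeness in $L^2(\Sigma)$ is the easy half: if $g\in L^2(\Sigma)$ satisfies $\hat g(s)=0$ for all $s\in\Pi$, then $f(x,y):=\1_I(x)\,g(y)$ is supported in $\Omega$ and $\hat f(t,s)=\hat{\1}_I(t)\,\hat g(s)$ vanishes on every point of $\Lambda'$, so completeness of $E(\Lambda')$ forces $g=0$. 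For orthogonality I would invoke \corref{corD3}, which already yields $\Pi_k-\Pi_j\subset\{\hat{\1}_\Sigma=0\}$ for $k\ne j$, so that the exponentials attached to distinct classes are mutually orthogonal; note also that once orthogonality of all of $\Pi$ is established it is automatically uniformly discrete (separation $\ge\chi(\Sigma)$), hence countable, so $\Pi$ is a genuine orthogonal basis.

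The main obstacle — and the only remaining point — is orthogonality \emph{within} a single class, i.e. the statement $\Pi_j-\Pi_j\subset\{\hat{\1}_\Sigma=0\}\cup\{0\}$. This does not follow from orthogonality of $\Lambda$ alone: for two points $s',s''$ of the same class the nearby points of $\Lambda$ have first coordinates confined to $\Z+\theta_j$ by \lemref{lemD2}, and since $\{\hat{\1}_I=0\}=\Z\setminus\{0\}$, any difference whose first coordinate is a nonzero integer lies in $\{\hat{\1}_\Omega=0\}$ regardless of its $\Sigma$-component; thus the product structure $\hat{\1}_\Omega=\hat{\1}_I\,\hat{\1}_\Sigma$ can "hide" the planar orthogonality. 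To defeat this I would prove that $\Lambda'$ is \emph{full in the fibre direction}: over each $s\in\Pi_j$ the first coordinates occurring in $\Lambda'$ (among points whose second coordinate is $\hat{\1}_\Sigma$-close to $s$) exhaust a full coset $\Z+\theta_j$ — equivalently, that a weak limit invariant under translation by $\vec e_1$ can be arranged. Granting this, for $s',s''$ in the same class one matches a common large first coordinate to obtain points $(t,a),(t,b)\in\Lambda'$ with $a\to s'$, $b\to s''$; their difference $(0,a-b)\in(\Lambda'-\Lambda')\setminus\{0\}$ forces $\hat{\1}_I(0)\hat{\1}_\Sigma(a-b)=\hat{\1}_\Sigma(a-b)=0$, whence $\hat{\1}_\Sigma(s'-s'')=0$ in the limit. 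I expect the entire difficulty to reside in this fibre-fullness (periodicity) step, for which I would combine the relative density of the spectrum $\Lambda'$ with the completeness of $E(\Lambda')$ and the precise sinc-type zero set of $\hat{\1}_I$. Once it is in hand, $\Pi$ is at once orthogonal and complete, hence a spectrum for $\Sigma$, and the theorem follows as described in the first paragraph.
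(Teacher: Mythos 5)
Your outer architecture is the same as the paper's: reduce by an affine map to $\Omega=I\times\Sigma$, show that the base $\Sigma$ is spectral, invoke \thmref{thmE8} to get a tiling $\Sigma+\Gamma_0$ of $\R^2$, and lift it to the tiling $\Omega+(\Z\times\Gamma_0)$ of $\R^3$. But the paper does not prove the middle step (``$\Omega$ spectral $\Rightarrow$ $\Sigma$ spectral'') inside this paper at all: it quotes it as \thmref{thmF2} from \cite{GriLev16a}, where it is established by a different and substantially harder argument, and it explicitly flags the difficulty that a spectrum of a prism need not have any product structure. Your proposal replaces that black box by the claim that the set $\Pi$ of \secref{secD1} is itself a spectrum of $\Sigma$. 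The completeness half of your argument is correct (it is the same mechanism as the proof of \lemref{lemE3}, via \lemref{lemE1}); the orthogonality half is where the proposal breaks down, and not merely because the ``fibre-fullness'' step is left unproved.

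Both fibre-fullness and the conclusion you want from it, namely $\Pi_j-\Pi_j\subset\{\hat{\1}_\Sigma=0\}\cup\{0\}$, are in fact false. Take $\Omega=I^3$ (a prism with base $\Sigma=I\times I$), let $\alpha=(\sqrt2,\sqrt3)$, and consider $\Lambda=\bigcup_{k\in\Z}\{k\}\times(\Z^2+k\alpha)$. This is a spectrum of the cube: orthogonality is immediate from $\hat{\1}_{I^3}(\xi)=\hat{\1}_I(\xi_1)\hat{\1}_I(\xi_2)\hat{\1}_I(\xi_3)$, and if $f\in L^2(I^3)$ is orthogonal to $E(\Lambda)$ then for each $k$ the function $g_k(y)=\int_I f(x,y)e^{-2\pi ikx}\,dx$ has $\hat{g}_k$ vanishing on $\Z^2+k\alpha$, which is a translate of the lattice spectrum $\Z^2$ of $I^2$, so $g_k\equiv 0$ for every $k$ and hence $f=0$. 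Since $1,\sqrt2,\sqrt3$ are rationally independent, the points $k\alpha \bmod \Z^2$ are dense in the torus, so \emph{every} $s\in\R^2$ is a limit of points $s_k\in\Z^2+k\alpha$ with $k\to\infty$; thus $\Pi=\R^2$, with $\theta\equiv 0$, a single class. Hence $E(\Pi)$ is trivially complete in $L^2(\Sigma)$ but as far from orthogonal as possible, and no refinement of the limiting procedure can help: every weak limit $\Lambda'$ of $\Lambda-k_n\vec e_1$ has the form $\bigcup_{m\in\Z}\{m\}\times(\Z^2+\sigma+m\alpha)$ for some $\sigma$, which is never invariant under translation by $\vec e_1$; moreover, over any fixed first coordinate $m$ all points of $\Lambda'$ differ by vectors of $\Z^2$, so when $s'-s''\notin\Z^2$ there is no $t$ for which $\Lambda'$ contains points $(t,a),(t,b)$ with $a,b$ close to $s',s''$ --- your matching step is impossible. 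So the route through ``$\Pi$ is a spectrum of $\Sigma$'' is blocked in principle, which is exactly why the paper relies on the separately proved \thmref{thmF2} rather than on the machinery of Sections \ref{secD1}--\ref{secE1}.
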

\subsection{}
A bounded, measurable set $\Omega\subset\R^d$ $(d \geq 2)$ will be called a \define{cylindric set} if it
has the form $\Omega=I\times\Sigma$, where $I$ is an interval in $\R$, and $\Sigma$ is a
measurable set in $\R^{d-1}$. In this case, the set $\Sigma$ will be called the \define{base} of
the cylindric set $\Omega$.
\par
 If the base $\Sigma$ is a convex polytope in $\R^{d-1}$, then the set
$\Omega=I\times\Sigma$ is a convex prism. Conversely, any convex prism in $\R^d$ is the
affine image of some set of the form $I\times\Sigma$, where $I$ is an interval and
$\Sigma$ is a convex polytope in $\R^{d-1}$. 
\par
We will deduce \thmref{thmF1} from the
following result, proved in our paper \cite{GriLev16a}. The result is valid in all
dimensions $d\ge 2$ (not just $d=3$). 
\begin{thm}[\cite{GriLev16a}] \label{thmF2}
	A cylindric set $\Omega=I\times \Sigma$ is spectral
	(as a set in $\R^d$) if and only if its base $\Sigma$ is a spectral set (as a set in
	$\R^{d-1}$). 
\end{thm}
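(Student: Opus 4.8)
The plan is to prove the two implications separately; the ``if'' direction is soft, while the ``only if'' direction carries all the difficulty.

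For the easy direction, suppose $\Sigma$ is spectral and, after rescaling the first coordinate (which preserves spectrality), assume $I=[-\tfrac12,\tfrac12]$, so that $\Z$ is a spectrum for $I$. If $\Gamma\subset\R^{d-1}$ is a spectrum for $\Sigma$, then since $L^2(I\times\Sigma)$ is the Hilbert tensor product $L^2(I)\otimes L^2(\Sigma)$ and $e_{(n,\gamma)}(x,y)=e_n(x)\,e_\gamma(y)$, the product family $E(\Z\times\Gamma)$ is an orthogonal basis of $L^2(\Omega)$. Hence $\Z\times\Gamma$ is a spectrum for $\Omega=I\times\Sigma$, and $\Omega$ is spectral.

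For the converse, let $\Lambda$ be a spectrum for $\Omega=I\times\Sigma$ with $I=[-\tfrac12,\tfrac12]$. The key simplification over the general polytope case is that here the Fourier transform factorizes \emph{exactly}, $\hat{\1}_\Omega(t,s)=\hat{\1}_I(t)\,\hat{\1}_\Sigma(s)$, with $\{\hat{\1}_I=0\}=\Z\setminus\{0\}$. This identity plays the role of the asymptotic \lemref{lemD1}, so the construction of the set $\Pi$ and the function $\theta$ from \secref{secD1} applies directly, and even more cleanly, without any asymptotic passage: the orthogonality of $\Lambda$ gives, for distinct $(t,s),(t',s')\in\Lambda$, the dichotomy that either $t-t'\in\Z\setminus\{0\}$ or $\hat{\1}_\Sigma(s-s')=0$. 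As in \lemref{lemD2} and \corref{corD3}, this forces each fiber $\Lambda_s=\{t:(t,s)\in\Lambda\}$ into a single coset $\Z+\theta(s)$, and the level sets $\Pi_j$ of $\theta$ to satisfy $\Pi_j-\Pi_k\subset\{\hat{\1}_\Sigma=0\}$ for $j\ne k$; in particular $\Lambda\subset\bigcup_j(\Z+\theta_j)\times\Pi_j$. The candidate spectrum for $\Sigma$ is the projection $\Gamma:=\bigcup_j\Pi_j$. Orthogonality of $E(\Gamma)$ across different classes is immediate from this containment, while within a class it follows once we know the fibers are complete cosets, because two points $s',s''$ of the same class then share a common $t$-coordinate, forcing $\hat{\1}_\Sigma(s'-s'')=0$. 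For completeness I would combine Parseval for $\Lambda$, applied to the product functions $f(x,y)=e^{2\pi i t_0 x}\1_I(x)\,v(y)$ with $v\in L^2(\Sigma)$, with the Plancherel identity $\sum_{m\in\Z}|\hat{\1}_I(\tau-m)|^2=1$. Grouping the Parseval sum by fibers and averaging $t_0$ over one period converts each fiber into a weight $\rho_s\le1$, with $\rho_s=1$ precisely when $\Lambda_s$ is a full coset, and produces the relation $|\Sigma|\,\|v\|^2=\sum_{s\in\Gamma}\rho_s\,|\hat v(s)|^2$.

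Everything therefore reduces to the single point that each fiber $\Lambda_s$ is an entire coset $\Z+\theta(s)$: granting this, $\rho_s\equiv1$ yields Parseval for $E(\Gamma)$, which together with the orthogonality makes $E(\Gamma)$ an orthonormal basis of $L^2(\Sigma)$, so $\Gamma$ is a spectrum for $\Sigma$. I expect this fiber-filling to be the main obstacle. To overcome it I would replace $\Lambda$ by a weak limit $\Lambda'$ of its integer translates $\Lambda-m\,(1,0,\dots,0)$, which is again a spectrum for $\Omega$ by \secref{secLimits}, and argue that shifting along the period of $\hat{\1}_I$ homogenizes the fibers, so that those of $\Lambda'$ become complete cosets; reconciling the possible sparsity of an individual fiber with the relative density and completeness of the spectrum is precisely where the one-dimensionality of the factor $I$ must be used.
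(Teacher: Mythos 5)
Your ``if'' direction is correct, and it is exactly the argument the paper itself gives: note that the paper does not prove \thmref{thmF2} at all, but imports it from \cite{GriLev16a}; the only things proved here are the easy product-spectrum direction (which you reproduce) and a cautionary remark that the ``only if'' direction is non-trivial precisely because a spectrum $\Lambda$ of $\Omega$ need not have product structure, so that there is no obvious candidate for a spectrum of $\Sigma$.

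Your ``only if'' direction has a genuine gap, and it sits exactly at the point of that warning. The claim to which you reduce everything --- that each fiber $\Lambda_s=\{t:(t,s)\in\Lambda\}$ is an \emph{entire} coset $\Z+\theta(s)$, so that the projection $\Gamma=\bigcup_j\Pi_j$ is a spectrum of $\Sigma$ --- is false, not merely unproved. Take $d=2$ and $\Sigma=I$, so $\Omega$ is the unit square, and let
\[
\Lambda=\bigcup_{m\in\Z}\{m\}\times\big(\Z+\theta_m\big),\qquad \theta_m=\{m\alpha\},
\]
where $\alpha$ is irrational and $\{\cdot\}$ denotes the fractional part. By \cite[Theorem 4]{JorPed99} (this is \exampref{expD4.2} with the two coordinates interchanged; the square is invariant under that swap) this $\Lambda$ is a spectrum of $\Omega$, and it satisfies your orthogonality dichotomy. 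But every fiber is a \emph{singleton}: for $s=k+\theta_m$ the only point of $\Lambda$ above $s$ is $(m,s)$, since $\theta_{m'}-\theta_m\in\Z$ forces $m'=m$. Consequently your weights satisfy $\rho_s<1$ for every $s$, the Parseval identity for $E(\Gamma)$ fails, and indeed the projection $\Gamma=\bigcup_m(\Z+\theta_m)$ is dense in $\R$, hence not even an orthogonal system for $L^2(I)$, let alone a spectrum. Your proposed repair fails on the same example: any weak limit of the integer translates $\Lambda-k_n\cdot(1,0)$ is again of the form $\bigcup_{m}\{m\}\times(\Z+\{m\alpha+\beta\})$, with the same singleton fibers and dense projection, so shifting along the period of $\hat{\1}_I$ homogenizes nothing. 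Note also the mirror phenomenon: for the spectrum of \exampref{expD4.2} itself the fibers \emph{are} full cosets, but the slices $\{s:(t,s)\in\Lambda\}$ are singletons. A correct proof must handle both configurations simultaneously, which is why the argument of \cite{GriLev16a} cannot be a projection (or slicing) of a translate or weak limit of $\Lambda$, and is genuinely more involved than the scheme you outline.
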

This result thus provides a characterization of the cylindric spectral sets $\Omega$ in terms of the
spectrality of their base $\Sigma$. 
\par
 The ``if'' part of \thmref{thmF2} is obvious. Suppose for simplicity that
$I=[-\tfrac{1}{2},\tfrac{1}{2}]$. If $\Gamma\subset\R^{d-1}$ is a spectrum for $\Sigma$,
then it is easy to check that $\Z\times\Gamma$ is a spectrum for $\Omega$, hence
$\Omega$  is spectral. 
\par
On the other hand, the converse, ``only if'' part of the theorem (which is what we
shall need for our present goal), is non-trivial. Roughly speaking, 
the difficulty lies in that knowing $\Omega$ to
have a spectrum $\Lambda$ in no way implies that $\Lambda$ has a product structure as
$\Z\times \Gamma$. In particular, we do not have any obvious candidate for a set
$\Gamma\subset\R^{d-1}$ that might serve as a spectrum for $\Sigma$.

\begin{remark*}
In \cite{GriLev16a} we also gave a similar characterization 
of the cylindric sets $\Omega$ in $\R^d$ which can tile the space by translations.
Namely, it was proved there that a cylindric set $\Omega=I\times \Sigma$
tiles if and only if its base $\Sigma$ tiles.
\end{remark*}

\subsection{}
\thmref{thmF1} can now be obtained by a combination of \thmref{thmF2} and the result from
\cite{IKT03} that a spectral convex polygon in $\R^2$ can tile by translations, namely,
\thmref{thmE8} (for which we have provided an independent proof in \secref{secE2}).
\begin{proof}[Proof of \thmref{thmF1}]
	By applying an affine transformation we can assume that $\Omega=I\times \Sigma$,
	where $I$ is the interval $[-\tfrac{1}{2},\tfrac{1}{2}]$ and $\Sigma$ is a convex
	polygon in $\R^2$. Since $\Omega$ is spectral, it follows by \thmref{thmF2} that
	also $\Sigma$ is spectral. Hence by \thmref{thmE8}, $\Sigma$ tiles by
	translations, so there is a set $\Gamma\subset \R^2$ such that $\Sigma+\Gamma$ is
	a tiling of $\R^2$. It is then clear that $\Omega$ tiles $\R^3$ with the
	translation set $\Z\times\Gamma$, and this completes the proof. 
\end{proof}


\section{Prisms and zonotopes}\label{secF2}

In \secref{secF1} we explained why the case when the convex polytope $\Omega \subset \R^3$
is a prism requires a special treatment in our approach. In this case we obtained 
a complete solution to our problem, namely, it was proved that if a convex prism
in $\R^3$ is a spectral set, then it tiles by translations (\thmref{thmF1}). 
Hence, in what follows we will be mainly interested in
the case when $\Omega$ is not a prism. The goal of the present section is
to point out some geometric properties of such an $\Omega$, that will be useful
in the analysis of the spectrum later on.

\subsection{}
Let $\Omega \subset \R^3$ be a convex polytope, centrally symmetric and
with centrally symmetric facets. 
Let $F$ be a facet of $\Omega$, and $F'$ be the opposite facet. Recall that by the central symmetry,
$F'$ is a translate of $F$, and that we have denoted by $\tau_F$ the translation vector in
$\R^3$ which carries $F'$ onto $F$, that is, $F=F'+\tau_F$. 
\par
Suppose  now that $A$ is a subfacet of
$F$. Then $A$ is the image under the translation by $\tau_F$ of a subfacet $A'$ of
$F'$, that is, $A=A'+\tau_F$. We denote by $H_{F,A}$ the hyperplane which contains the
subfacets $A$ and $A'$. 
\begin{lem}\label{lemF3}
	If $\Omega$ is not a prism, then for any facet $F$ of $\Omega$ there is a subfacet
	$A$ such that $\interior(\Omega)$ intersects each one of the two open half-spaces bounded by
	$H_{F,A}$. 
\end{lem}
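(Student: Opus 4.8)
The plan is to prove the contrapositive: if there is a facet $F$ of $\Omega$ for which $H_{F,A}$ is a supporting hyperplane of $\Omega$ for \emph{every} subfacet $A$ of $F$, then $\Omega$ is a prism. This suffices because, for a convex body $\Omega$, the condition that $\interior(\Omega)$ meets both open half-spaces bounded by $H_{F,A}$ is exactly the statement that $H_{F,A}$ is \emph{not} a supporting hyperplane of $\Omega$: if $\Omega$ lies in one closed half-space then $\interior(\Omega)$ lies in the corresponding open half-space, and conversely a full-dimensional convex set with boundary points strictly on both sides of a plane has interior points on both sides. Thus the negation of the lemma for a given $F$ says precisely that all the planes $H_{F,A}$ support $\Omega$. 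Fixing coordinates so that $F\subset\{x_1=h\}$ with outward normal $\vec e_1$ and $F'\subset\{x_1=-h\}$ is the opposite facet, central symmetry gives $F=F'+\tau_F$ with $\dotprod{\tau_F}{\vec e_1}=2h>0$, so $\tau_F$ is transverse to the plane of $F$; for each edge $A$ of $F$ the corresponding edge of $F'$ is $A'=A-\tau_F$, and $H_{F,A}$ is the plane spanned by the two parallel segments $A$ and $A'$.

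The key observation is to recognize $H_{F,A}$ as a lateral facet-plane of a prism. Set $P:=\convex(F\cup F')$. Since $F'=F-\tau_F$ and $F$ is convex, one has $P=F+[-\tau_F,0]$, the Minkowski sum of the two-dimensional polytope $F$ with a segment; so $P$ is a prism in the sense used in this paper. Its facets are $F$, $F'$, and the lateral parallelograms $A+[-\tau_F,0]=\convex(A\cup A')$, one for each edge $A$ of $F$, and the lateral facet attached to $A$ lies in the plane $H_{F,A}$. Consequently the facet-hyperplanes of $P$ are exactly $\{x_1=h\}$, $\{x_1=-h\}$, and the planes $H_{F,A}$ as $A$ runs over the edges of $F$.

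The crux is then a half-space intersection argument. Each facet-plane of $P$ supports $\Omega$: the planes $\{x_1=\pm h\}$ because $F,F'$ are facets of $\Omega$ with outward normals $\pm\vec e_1$, and each $H_{F,A}$ by the standing hypothesis. For every such supporting plane $\Omega$ lies in a closed half-space, and since $P\subseteq\Omega$ this half-space must be the one containing $\interior(P)$, i.e.\ it is exactly the defining half-space of $P$ bounded by that plane. Intersecting over all facet-planes of $P$ reproduces $P$ as the intersection of its defining closed half-spaces and yields $\Omega\subseteq P$. Together with the trivial inclusion $P=\convex(F\cup F')\subseteq\Omega$ this gives $\Omega=P$, a prism, which proves the contrapositive.

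I expect the substantive content to lie entirely in the last step, the passage from ``every facet-plane of $P$ supports $\Omega$'' to ``$\Omega\subseteq P$''; the remaining points are routine convex geometry, namely the equivalence between the ``two open half-spaces'' formulation and non-support, and the verification that $\convex(F\cup F')$ is a prism whose lateral facets are indexed precisely by the edges of $F$ (so that no facet-plane of $P$ is overlooked and the half-space representation uses only planes already shown to support $\Omega$).
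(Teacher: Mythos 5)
Your proposal is correct and is essentially the same argument as the paper's: the paper also shows that if every $H_{F,A}$ supports $\Omega$, then intersecting the corresponding closed half-spaces with the slab between the planes of $F$ and $F'$ squeezes $\Omega$ into $\convex(F\cup F')$, which together with the reverse inclusion forces $\Omega=\convex(F\cup F')$, a prism. The only differences are cosmetic: the paper normalizes coordinates so that this prism is $I\times\Sigma$ and argues by contradiction, whereas you work coordinate-free with $P=F+[-\tau_F,0]$ and state the contrapositive.
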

\begin{proof}
	Let $F$ be a facet of $\Omega$. By applying an affine transformation we may assume
	that
	\[ \Omega=-\Omega, \quad F=\left\{ \tfrac{1}{2} \right\}\times\Sigma, \quad F'=\left\{
	-\tfrac{1}{2} \right\}\times\Sigma, \]
	where $\Sigma$ is a convex polygon in $\R^2$ such that $\Sigma=-\Sigma$. 
	Suppose to the contrary that for any subfacet $A$ of $F$, $\interior(\Omega)$
	entirely lies within one of the open half-spaces bounded by $H_{F,A}$. The
	intersection of the closures of all these half-spaces with the set $I\times\R^2$,
	where $I=[-\tfrac{1}{2},\tfrac{1}{2}]$, is equal to $I\times\Sigma$. Hence
	$\Omega$ is contained in $I\times\Sigma$. But $\Omega$ also contains
	$I\times\Sigma$, since $I \times \Sigma$ is the convex hull of the facets $F$ and $F'$.
	We conclude that $\Omega=I\times \Sigma$, which is not
	possible unless $\Omega$ is a prism. This contradiction ends the proof. 
\end{proof}
\subsection{}
By a \define{zonotope} in $\R^d$ one means a polytope which can be represented as the
Minkowski sum of a finite number of line segments. A zonotope is a convex, centrally
symmetric polytope, and all its facets are also zonotopes. In particular, all  the facets
of a zonotope are also centrally symmetric.
\par
 It is known, see e.g.\ \cite[Theorem 3.5.1]{Sch93}, that
in dimension $d=3$, a convex polytope which has centrally
symmetric facets must be a zonotope.
\par Remark, by the way, that this is not true in dimensions $d\ge 4$.
 A well-known example is the $24$-cell in $\R^4$, a convex polytope which tiles by
translations, and hence it is centrally symmetric and has centrally symmetric facets, but which
is not a zonotope. 
\subsection{} 
Let again $\Omega\subset \R^3$ be a convex polytope, centrally symmetric and with centrally
symmetric facets (and hence a zonotope). Let $F$ be a facet of $\Omega$, and $A, B$ be two
parallel subfacets of $F$. Let $F'$ and $A', B'$ be the facet and its two subfacets which
are carried onto $F$ and $A,B$ respectively by the translation vector $\tau_F$. We
denote by $S_{F,A,B}$ the closed slab which lies between the two parallel hyperplanes
$H_{F,A}$ and $H_{F,B}$. 
\begin{lem} \label{lemF4}
	Assume that the intersection of $\Omega$ and $S_{F,A,B}$ coincides with
	the convex hull of the facets $F$ and $F'$. Then $\Omega$ is a prism.
\end{lem}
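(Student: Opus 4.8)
The plan is to deduce the hypothesis is so rigid that it forces $\Omega\subseteq S_{F,A,B}$, whence $\Omega=\Omega\cap S_{F,A,B}=\convex(F\cup F')$ is a prism. First I would normalize exactly as in the proof of \lemref{lemF3}: after an affine transformation we may assume $\Omega=-\Omega$, that $F=\{\tfrac12\}\times\Sigma$ and $F'=\{-\tfrac12\}\times\Sigma$ with $\Sigma\subset\R^2$ a convex polygon satisfying $\Sigma=-\Sigma$, so that $\tau_F=(1,0,0)$ and $\Omega\subseteq\{|x_1|\le\tfrac12\}$. The two parallel subfacets $A,B$ then correspond to a pair of opposite edges $a$ and $b=-a$ of $\Sigma$, lying on parallel lines $\ell_a,\ell_b\subset\R^2$; let $P$ be the closed strip between them (so $\Sigma\subseteq P$). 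With $I=[-\tfrac12,\tfrac12]$ one has $H_{F,A}=\R\times\ell_a$, $S_{F,A,B}=\R\times P$ and $\convex(F\cup F')=I\times\Sigma$, so the hypothesis becomes $\Omega\cap(\R\times P)=I\times\Sigma$.

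Next I would reduce the whole statement to a two–dimensional fact by projecting. Since $\Omega\subseteq\{|x_1|\le\tfrac12\}$, it suffices to prove $\Omega\subseteq\R\times P$, for then $\Omega=\Omega\cap(\R\times P)=I\times\Sigma$, a right prism with base $\Sigma$. Now I exploit that, being a convex polytope in $\R^3$ with centrally symmetric facets, $\Omega$ is a zonotope; writing $\Omega=\sum_i[-v_i/2,v_i/2]$ and letting $\pi\colon(x_1,x_2,x_3)\mapsto(x_2,x_3)$ denote the projection, linearity gives $Q:=\pi(\Omega)=\Sigma+T$, where $T$ is the centered zonogon generated by the projections $\bar v_j$ of those generators $v_j$ transverse to $\{x_1=0\}$. (Here $\Sigma$ is exactly the planar part, and $\sum_j\bar v_j=\pi(\tau_F)=0$, so both summands are centered at the origin.) Taking $\pi$ of the hypothesis, and using that any $y\in Q\cap P$ lifts to a point of $\Omega\cap(\R\times P)=I\times\Sigma$, one obtains the clean identity $Q\cap P=\Sigma$.

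The heart of the argument is then to show $T\subseteq\{w=0\}$, where $w$ is the linear functional constant on lines parallel to $\ell_a$, normalized so that $P=\{|w|\le c\}$, $\ell_a=\{w=c\}$ and $a=\Sigma\cap\ell_a$. Granting this, $T$ adds no $w$–extent, so $\max_Q w=\max_\Sigma w=c$, giving $Q\subseteq P$, hence $\Omega\subseteq\R\times P$ and we are done by the previous paragraph. To prove $T\subseteq\{w=0\}$, suppose some generator $g=\bar v_j$ of $T$ has $w(g)\ne0$. Assuming $\Sigma$ is not a parallelogram, it has a vertex $\sigma$ with $w(\sigma)\in(-c,c)$ (the only vertices on $\ell_a\cup\ell_b$ are the endpoints of $a$ and $b$). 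Because the tangent cone of $\Sigma$ at a vertex is a convex cone of angle $<\pi$, it cannot contain the whole line $\R g$; so, replacing $g$ by $-g$ if necessary, $\sigma+sg\notin\Sigma$ for all small $s>0$. On the other hand $\sigma+sg\in\Sigma+T=Q$ (as $sg\in T$ for $s\le\tfrac12$), and $w(\sigma+sg)=w(\sigma)+s\,w(g)\to w(\sigma)\in(-c,c)$ stays in $P$. This exhibits a point of $Q\cap P$ outside $\Sigma$, contradicting $Q\cap P=\Sigma$.

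The two conceptual keys are the identity $Q=\Sigma+T$ (which is where the zonotope structure, i.e.\ central symmetry of the facets, is essential: without it $Q$ could bulge only locally and the hypothesis need not be violated) and the resulting planar relation $Q\cap P=\Sigma$. I expect the main obstacle to be the very last step, namely guaranteeing a perturbing generator that genuinely leaves $\Sigma$ while staying inside the strip. Two degenerate configurations must be dispatched: the case where $g$ happens to point along an edge of $\Sigma$ at the chosen vertex (resolved by selecting another open–strip vertex, or another generator of $T$ with $w\ne0$), and the case where $\Sigma$ is a parallelogram, so that all vertices lie on $\ell_a\cup\ell_b$; there one places $\sigma$ at a corner on $\ell_a$ and chooses the outward direction among $\pm g$ that decreases $w$, moving into the strip. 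Everything else is routine bookkeeping once the projection identity is in hand.
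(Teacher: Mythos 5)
There is a genuine gap, and it sits precisely at the point you dismiss as a ``degenerate configuration'': the case where $\Sigma$ is a parallelogram. Your entire scheme funnels through the planar identity $Q\cap P=\Sigma$ and aims at the chain $T\subseteq\{w=0\}$, hence $\Omega\subseteq\R\times P$, hence $\Omega=I\times\Sigma$. But when $\Sigma$ is a parallelogram these intermediate statements are simply false, so no repair of the vertex-perturbation step can save them. Concretely, take $\Sigma=I\times I$ (so $a=\{\tfrac12\}\times I$, $w=x_2$, $c=\tfrac12$), let $P'$ be the centrally symmetric hexagon in the $(x_1,x_2)$-plane with vertices $(\pm\tfrac12,\pm\tfrac12)$ and $(0,\pm1)$, and set $\Omega=P'\times I$, a hexagonal prism with axis $\vec e_3$. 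This $\Omega$ satisfies all the standing assumptions and the hypothesis of the lemma: $\Omega\cap(\R\times I\times\R)=\bigl(P'\cap\{|x_2|\le\tfrac12\}\bigr)\times I=I\times I\times I=\convex(F\cup F')$. Yet $\Omega\not\subseteq\R\times P$ and $\Omega\ne I\times\Sigma$; its transverse part is $T=[-\tfrac12,\tfrac12]\times\{0\}$, a segment pointing in the $w$-direction, while nevertheless $Q\cap P=\Sigma$ holds (here $Q=[-1,1]\times I$). So the claim ``$Q\cap P=\Sigma$ implies $T\subseteq\{w=0\}$'' is false: the projection discards exactly the information that distinguishes the full hypothesis from its planar shadow, and that information is what is needed in this case.

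The mechanism of the failure is visible inside your own patch: if a generator $g$ of $T$ is parallel to the side edges of the parallelogram $\Sigma$, then at a corner $\sigma$ on $\ell_a$ the sign of $g$ with $w(g)<0$ points along a side edge \emph{into} $\Sigma$ (no contradiction), while the opposite sign exits $\Sigma$ but also exits the strip; the corners on $\ell_b$ behave symmetrically, and indeed one computes $(\Sigma+[-g/2,g/2])\cap P=\Sigma$ for such $g$, so no vertex produces a point of $(Q\cap P)\setminus\Sigma$. Your argument for $\Sigma$ not a parallelogram is essentially sound, and runs parallel to the paper's first case (which likewise uses an interior vertex of $\Sigma$ to force $I\times\{0\}\times\{0\}$ to be a generator of the zonotope, whence all other generators are planar and $\Omega=I\times\Sigma$). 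But in the parallelogram case the correct conclusion is the one the paper proves in its second case: $\Omega$ is a prism in the direction of the subfacet $A$, i.e.\ $\Omega=P\times I$ for some polygon $P$. The paper gets this by noting that $A$ is an edge of $\Omega$, hence a translate of a generator, so $\{0\}\times\{0\}\times I$ is a generator, and (using $\Omega\subseteq\R\times\R\times I$, which follows from the hypothesis by convexity) all remaining generators lie in $\{x_3=0\}$. To close your proof you must change the \emph{target} in the parallelogram case, not merely the choice of vertex or generator.
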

\begin{proof}
	By applying an affine transformation we may assume that 
\[ \Omega=-\Omega, \quad
F\subset \{x_1=\tfrac{1}{2}\},\]
$F$ is symmetric about the point 	$(\tfrac{1}{2},0,0)$,
\[ A=\{\tfrac{1}{2}\}\times\{\tfrac{1}{2}\}\times I, \quad
B=\{\tfrac{1}{2}\}\times\{-\tfrac{1}{2}\}\times I, \]
 where $I$ denotes as usual the interval $[-\tfrac{1}{2},\tfrac{1}{2}]$. 
Hence $F=\{\tfrac{1}{2}\} \times \Sigma$, where
		$\Sigma$ is a convex polygon in $\R^2$ such that $\Sigma=-\Sigma$, and
such that
		$\{\tfrac{1}{2}\}\times I$, $\{-\tfrac{1}{2}\}\times I$ are facets of
		$\Sigma$. 
\par
	The assumption in the lemma thus means that 
		\begin{equation}
			\Omega\cap (\R\times I \times \R) = I\times \Sigma.
			\label{eqF4.1}
		\end{equation}
\par
		Since $\Omega$ is a zonotope, it can be represented as the Minkowski sum of
		several line segments
		$ S_1, S_2,\dots , S_n$. Thus we have $\Omega=S_1+S_2+\dots+S_n$.
		As $\Omega$ is symmetric about the origin, we can assume that the same is
		true for each line segment $S_j$, that is, $S_j=-S_j$. We can also assume that no two
		of the segments $S_j$ are parallel. 
\par
		Now we consider two distinct cases
		separately. Let us first consider the case when $\Sigma$ is not the cube
		$I\times I$. In this case there must exist at least one vertex $v$ of
		$\Sigma$, which belongs to $\interior(I\times \R)$. Hence $I\times\{v\}$
		is a subfacet of $I\times \Sigma$. By \eqref{eqF4.1} it follows that
		$I\times\{v\}$ is also a subfacet of $\Omega$. Each subfacet of $\Omega$
		is a translate of one of the $S_j$'s (see, for example, \cite{McMul71}).
		Hence one of the line segments, say $S_1$, must be equal to $I\times
		\{0\}\times\{0\}$. It then follows that all the other line segments
		$S_2,\dots , S_n$ must lie in $\{0\}\times \R\times\R$. Indeed, if this is
		not true for some $S_j$, then $S_1+S_j$ is not contained in
		$I\times\R\times\R$. But $S_1+S_j$ is contained in $\Omega$, and
		$\Omega$ is contained in $I\times\R\times\R$, so this is not possible. 
		Hence all the segments $S_2,\dots,S_n$ lie in $\{0\}\times\R\times\R$. It
		follows that $S_2+\dots+S_n=\{0\}\times \Sigma$, and $\Omega=I\times
		\Sigma$. This shows that $\Omega$ must be a prism.
\par
		 Now we consider the remaining case, namely, when $\Sigma  = I\times I$. In this
		case, the assumption \eqref{eqF4.1} becomes 
		\begin{equation}
			\label{eqF4.2}
			\Omega\cap (\R\times I\times\R) = I\times I\times I. 
		\end{equation}
		Hence $\R\times\R\times\{\tfrac{1}{2}\}$ and
		$\R\times\R\times\{-\tfrac{1}{2}\}$ are supporting hyperplanes of
		$\Omega$, and thus $\Omega\subset \R\times\R\times I$. Since
		$A=\{\tfrac{1}{2}\}\times\{\tfrac{1}{2}\}\times I$ is a subfacet of
		$\Omega$, then, as before, one of the line segments, say again $S_1$, must
		be equal to $\{0\}\times\{0\}\times I$. It then follows that all the other
		line segments $S_2,\dots, S_n$ must lie in $\R\times\R\times\{0\}$,
		since if not, then as before this would contradict the fact that
		$\Omega\subset\R\times\R\times I$. Hence $S_2+\dots+S_n=P\times \{0\}$ for
		a certain convex polygon $P\subset\R^2$, and $\Omega=P\times I$. Again we
		obtain that $\Omega$ must be a prism, so this proves the lemma.
\end{proof}


\section{Structure of spectrum, III} \label{secG1}

In this section our goal is to relate the geometric observations  made in
\secref{secF2} to the spectrality problem for convex polytopes in dimension $d=3$. 
More specifically, we will see how one can use the assumption that $\Omega$ is not a prism,
in order to obtain new information on the
structure of the spectrum $\Lam$.

\subsection{}
Let $\Omega \subset \R^3$ be a convex polytope, centrally symmetric and
with centrally symmetric facets. 
Assume, as before, that $\Omega=-\Omega$, that is, $\Omega$ is symmetric 
about the origin, $F$ is a facet of $\Omega$ contained in 
$\left\{ x_1 = \tfrac{1}{2} \right\}$, and that the center of $F$ is the point 
$ \left( \tfrac{1}{2}, 0,0 \right)$. Hence $F = \left\{ \tfrac{1}{2} \right\}\times \Sigma$,
where $\Sigma$ is a convex polygon in $\R^2$ such that $\Sigma = -\Sigma$.
\par
Suppose also that $\Lambda$ is a spectrum for $\Omega$. Let $\Pi\subset\R^2$ be
the set associated to the spectrum $\Lambda$ defined as in Section \ref{secD1}, and
$\theta(s)$ be the function on $\Pi$ given by \lemref{lemD2}. We also let $\Lambda'$ be the
new spectrum constructed from $\Lambda$ in Section \ref{secE1}. 
\par
 Recall that to each point $(t_0,s_0)\in \R\times\R^2$ we have
associated a function $f$, supported by $\Omega$, 
defined by \eqref{eqE6.1}. As an element of $L^2(\Omega)$ this function
$f$ admits  a Fourier expansion with respect to the spectrum $\Lam'$, given by
\begin{equation}
	\label{eqE4.1}
	f=\frac{1}{|\Omega|}\sum_{\lambda\in\Lambda'} \hat{f}(\lambda)e_\lambda.
\end{equation}
By \lemref{lemPL6} the series on the right hand side of
\eqref{eqE4.1} converges in $L^2$ on any bounded set to a measurable function
$\tilde{f}$ on $\R^3$, and $f$ coincides with $\tilde{f}$ a.e.\ on
$\Omega$.
\par
We now observe that for certain values of $(t_0,s_0)$, the Fourier expansion of $f$ with respect to
the spectrum $\Lambda'$ consists of exceptionally few terms:
\begin{lem} \label{lemE4}
	Let $(t_0,s_0)$ be a point belonging to $(\Z+\theta_j)\times\Pi_j$ for some $j$,
	and let $f$ be the function defined by \eqref{eqE6.1}. Then the Fourier expansion
	\eqref{eqE4.1} of $f$ with respect to the spectrum $\Lam'$ consists only of terms corresponding to
	$\lambda\in \Lambda'\cap \left( \{t_0\}\times\Pi_j \right)$.
\end{lem}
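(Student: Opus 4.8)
The plan is to read the conclusion directly off the product factorization of $\hat f$ recorded in \eqref{eqE6.1.2}, combined with the structural description of $\Lambda'$ from \lemref{lemE1} and the separation property \eqref{eqE1.1} between the equivalence classes $\Pi_j$. Since the coefficient multiplying $e_\lambda$ in the expansion \eqref{eqE4.1} is $\hat{f}(\lambda)/|\Omega|$, it suffices to prove that $\hat{f}(\lambda)=0$ for every $\lambda\in\Lambda'$ that does not lie in $\{t_0\}\times\Pi_j$. Writing $\lambda=(t',s')$, recall that by \eqref{eqE6.1.2} we have $\hat{f}(t',s')=\hat{\1}_I(t'-t_0)\,\hat{\1}_\Sigma(s'-s_0)$, so this coefficient vanishes as soon as one of the two factors vanishes.

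First I would invoke \lemref{lemE1}, which guarantees that every $\lambda=(t',s')\in\Lambda'$ satisfies $s'\in\Pi_k$ and $t'\in\Z+\theta_k$ for some index $k$. The argument then splits according to whether or not this index $k$ equals the given $j$. In the case $k\ne j$, I would use that $s_0\in\Pi_j$ (by hypothesis) and $s'\in\Pi_k$, so that $s'-s_0\in\Pi_k-\Pi_j$. By \eqref{eqE1.1} this difference lies in $\big\{\hat{\1}_\Sigma=0\big\}$, whence the factor $\hat{\1}_\Sigma(s'-s_0)$ vanishes and $\hat{f}(\lambda)=0$. In the remaining case $k=j$, we have $s'\in\Pi_j$, and since both $t'$ and $t_0$ belong to $\Z+\theta_j$ their difference $t'-t_0$ is an integer. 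If $t'\ne t_0$ then $t'-t_0\in\Z\setminus\{0\}=\big\{\hat{\1}_I=0\big\}$, so now the first factor $\hat{\1}_I(t'-t_0)$ vanishes and again $\hat{f}(\lambda)=0$. Consequently the only terms of \eqref{eqE4.1} with a nonzero coefficient come from $\lambda$ with $t'=t_0$ and $s'\in\Pi_j$, that is, $\lambda\in\{t_0\}\times\Pi_j$, as claimed.

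The conceptual point of the argument is that the two factors of $\hat{f}$ are controlled by two genuinely different pieces of information: the $s$-factor is killed by the separation \eqref{eqE1.1} between distinct $\Pi$-classes, while the $t$-factor is killed by the integrality forced \emph{within} a single class $\Z+\theta_j$ together with the fact that the zero set of $\hat{\1}_I$ is exactly $\Z\setminus\{0\}$. I do not expect any real obstacle here; the only point requiring a little care is to confirm that the two cases $k=j$ and $k\ne j$ exhaust all of $\Lambda'$ (which is precisely the content of \lemref{lemE1}), and that in the case $k=j$ the index $t'=t_0$ is indeed admissible, which is guaranteed by the standing assumption that $(t_0,s_0)$ lies in $(\Z+\theta_j)\times\Pi_j$.
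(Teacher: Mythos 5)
Your proposal is correct and follows essentially the same argument as the paper's own proof: both invoke \lemref{lemE1} to place each $\lambda=(t',s')\in\Lambda'$ in some $(\Z+\theta_k)\times\Pi_k$, then split into the cases $k\ne j$ (where \eqref{eqE1.1} kills the factor $\hat{\1}_\Sigma(s'-s_0)$) and $k=j$ (where integrality of $t'-t_0$ together with the vanishing of $\hat{\1}_I$ on $\Z\setminus\{0\}$ kills the factor $\hat{\1}_I(t'-t_0)$ unless $t'=t_0$), using the factorization \eqref{eqE6.1.2} throughout. No gaps; the write-up matches the paper's reasoning step for step.
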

In other words, all the coefficients $\hat{f}(\lambda)$ in the expansion \eqref{eqE4.1} must
vanish except for  possibly those which correspond to $\lambda=(t,s)\in \Lambda'$ such that
$t=t_0$ and $s\in \Pi_j$.
\begin{proof}[Proof of \lemref{lemE4}]
	If $(t,s)\in \Lambda'$, then by \lemref{lemE1} there is $k$ such that $t\in
	\Z+\theta_k$ and $s\in\Pi_k$. If $k\ne j$ then $\hat{\1}_\Sigma(s-s_0)=0$ due to
	\eqref{eqE1.1}, and it follows from \eqref{eqE6.1.2}
	that $\hat{f}(t,s)=0$. If $k=j$ then both $t_0$ and $t$ belong to $\Z+\theta_j$,
	hence $t-t_0$ is an integer. Since $\hat{\1}_I$ vanishes on $\Z\setminus \{0\}$, it
	follows again by \eqref{eqE6.1.2} that $\hat{f}(t,s)=0$ unless
	$t=t_0$. This shows that in the series  \eqref{eqE4.1} the non-zero coefficients
	can only correspond to $\lambda=(t,s)$ such that $t=t_0$ and $s\in \Pi_j$. This
	proves the claim.
\end{proof}
\begin{remark*}
It may be interesting to notice that \lemref{lemE4} implies that $\Lambda'$ must contain
points from \emph{each one} of the sets $\{t_0\}\times \Pi_j$,
where $t_0$ goes through the elements of $\Z+\theta_j$. 
\end{remark*}
\subsection{}
Now suppose that $\Omega$ is \emph{not a prism}. Then by
\lemref{lemF3} there is a subfacet $A$ of $F$ such that $\interior(\Omega)$
intersects each one of the two open half-spaces bounded by the  hyperplane
$H_{F,A}$. Let us assume, for simplicity, that this subfacet is
$A=\{\tfrac{1}{2}\}\times\{\tfrac{1}{2}\}\times I$, where
$I=[-\tfrac{1}{2},\tfrac{1}{2}]$
(later on, the general situation will be reduced to this case by applying an affine transformation).
Thus $\{\tfrac{1}{2}\}\times I$  is a facet of the convex polygon $\Sigma$.
\par
We can now use \lemref{lemE4} to obtain some additional information on the structure of the
components $\Pi_j$ of the set $\Pi$.
\begin{lem}\label{lemG3}
	For each $j$ we have 
	\begin{equation}
		 \Pi_j-\Pi_j\not\subset \Z\times \R.
		 \label{eqG3.1}
	\end{equation}
\end{lem}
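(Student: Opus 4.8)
The plan is to argue by contradiction. Suppose that $\Pi_j-\Pi_j\subset\Z\times\R$ for some $j$. Writing points of $\R^3$ as $(x_1,y)$ with $y=(y_1,y_2)=(x_2,x_3)\in\R^2$, I fix a point $(t_0,s_0)\in(\Z+\theta_j)\times\Pi_j$ and consider the function $f$ defined by \eqref{eqE6.1}, whose expansion \eqref{eqE4.1} with respect to $\Lambda'$ converges in $L^2$ on bounded sets to a function $\tilde f$ agreeing with $f$ a.e.\ on $\Omega$ (\lemref{lemPL6}). By \lemref{lemE4}, only the frequencies $\lambda=(t_0,s)$ with $s\in\Pi_j$ occur, so $\tilde f$ factors as
\[ \tilde f(x_1,y)=e^{2\pi i t_0 x_1}\,g(y),\qquad g(y):=\frac1{|\Omega|}\sum_{s}\hat f(t_0,s)\,e^{2\pi i\dotprod{s}{y}}, \]
where the sum runs over those $s\in\Pi_j$ with $(t_0,s)\in\Lambda'$. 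Since all these frequencies lie in $\Pi_j$, the assumption $\Pi_j-\Pi_j\subset\Z\times\R$ means that their first coordinates lie in a single coset of $\Z$; factoring out the corresponding exponential in $y_1$ shows that $|g|$ is $1$-periodic in $y_1$. This is the only place the assumption is used.

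Next I would read off the pointwise behaviour of $g$ from $\tilde f=f$ (a.e.\ on $\Omega$). Because $F$ and $-F$ are facets with outward normals $\pm\vec e_1$, one has $\Omega\subset I\times\R^2$, so the factor $\1_I(x_1)$ in $f$ equals $1$ on $\Omega$; comparing with the factorization of $\tilde f$ gives $g(y)=\1_\Sigma(y)\,e^{2\pi i\dotprod{s_0}{y}}$ for a.e.\ $y$ whose fibre $\Omega_y:=\{x_1:(x_1,y)\in\Omega\}$ has positive length. In particular $|g|=1$ a.e.\ on $\interior(\Sigma)$, while $|g|=0$ a.e.\ on the ``bulge set'' $\{y\notin\Sigma:\ |\Omega_y|>0\}$.

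The heart of the matter is a geometric step locating the bulge, and this is where the hypothesis that $\Omega$ is not a prism (through \lemref{lemF3}) enters. In the present normalization $A=\{\tfrac12\}\times\{\tfrac12\}\times I$, so $H_{F,A}=\{x_2=\tfrac12\}$ and the corresponding edge of $\Sigma$ is $A_\Sigma=\{\tfrac12\}\times[-\tfrac12,\tfrac12]$, with opposite edge $-A_\Sigma=\{-\tfrac12\}\times[-\tfrac12,\tfrac12]$; note that translation by $(1,0)$ carries $-A_\Sigma$ onto $A_\Sigma$. By \lemref{lemF3} the interior of $\Omega$ meets $\{x_2>\tfrac12\}$, and it also meets $\{x_2<\tfrac12\}$ (the interior of the prism $I\times\Sigma$); hence $\{x_2=\tfrac12\}$ is not a supporting hyperplane, so the relative interior of the planar polygon $Q:=\Omega\cap\{x_2=\tfrac12\}$ is contained in $\interior(\Omega)$. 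Since $Q$ contains the unit square $\{|x_1|\le\tfrac12,\ |x_3|\le\tfrac12\}$ (the cross-section of the prism), the point $(\tfrac12-\varepsilon,\tfrac12,0)$ lies in $\interior(\Omega)$ for small $\varepsilon>0$. Consequently a small half-disk $U$ around $(\tfrac12,0)$ inside the half-plane $\{y_1>\tfrac12\}$ consists of points $y$ with $y\notin\Sigma$ and $|\Omega_y|>0$, so $|g|=0$ a.e.\ on $U$.

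Finally I would obtain the contradiction. The translate $U-(1,0)$ is a half-disk around $(-\tfrac12,0)$, the midpoint of the edge $-A_\Sigma$, lying in $\{y_1>-\tfrac12\}$; for $U$ small enough this half-disk sits inside $\interior(\Sigma)$, where $|g|=1$ a.e. But the $1$-periodicity of $|g|$ in $y_1$ forces $|g(y+(1,0))|=|g(y)|$ a.e., so $|g|$ would equal $1$ a.e.\ on $U$ as well, contradicting $|g|=0$ a.e.\ on the positive-measure set $U$. This contradiction yields $\Pi_j-\Pi_j\not\subset\Z\times\R$. I expect the main obstacle to be the geometric localization of the bulge over the midpoint $y_2=0$ of the edge, namely the verification that $\{x_2=\tfrac12\}$ is non-supporting and that the relative interior of the slice $Q$ lies in $\interior(\Omega)$; once that is in place, the remaining work is just bookkeeping with the factorization of $\tilde f$ and the periodicity of $|g|$.
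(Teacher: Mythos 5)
Your proof is correct and follows essentially the same route as the paper's: \lemref{lemE4} together with \lemref{lemPL6} gives a (modulus-)periodic extension, in the $x_2$-direction, of the function $f$ from \eqref{eqE6.1}, and this periodicity is played off against the support of $f$ in $I\times\Sigma$ and the geometric conclusion of \lemref{lemF3}. The only difference is organizational: the paper propagates $|\tilde f|=1$ from the unit cube to conclude $\Omega\cap(I\times\R\times I)=I\times I\times I$, so that $\{x_2=\tfrac12\}$ would be a supporting hyperplane of $\Omega$ (contradicting the choice of the subfacet $A$), whereas you run the same argument in the contrapositive direction, using \lemref{lemF3} to exhibit a positive-measure ``bulge'' of fibres just beyond the edge midpoint $(\tfrac12,0)$, on which the support condition forces $|g|=0$ while periodicity forces $|g|=1$.
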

\begin{proof}
	Suppose that \eqref{eqG3.1} is not true for some $j$. By translating the spectrum
	$\Lambda$ we can assume that $\Pi_j$ contains the origin, and hence 
	\begin{equation}
		 \Pi_j\subset \Z\times\R.
		 \label{eqG3.2}
	\end{equation}
\par
	Choose a point $(t_0,s_0)\in (\Z+\theta_j)\times \Pi_j$, and let $f$ be the
	function associated to this point defined by \eqref{eqE6.1}. By \lemref{lemE4} and
	due to \eqref{eqG3.2}, the Fourier expansion of $f$ with respect to $\Lambda'$
	consists only of exponentials $e_\lambda$ such that $\lambda\in \Lambda'\cap
	(\R\times\Z\times\R)$. It follows (\lemref{lemPL6}) that the right-hand side of \eqref{eqE4.1} is a
	function $\tilde{f}$ on $\R^3$ which is periodic with respect to the vector $(0,1,0)$, and
	$f$ coincides with $\tilde{f}$ a.e.\ on $\Omega$. 
\par
	Recall that we have chosen the subfacet $A$ of $F$ (using \lemref{lemF3}) such that
	$\interior(\Omega)$ intersects each one of the two open half-spaces bounded by the
	hyperplane $H_{F,A}$. Since it was assumed that 
	$ A=\left\{ \tfrac{1}{2} \right\}\times\left\{ \tfrac{1}{2} \right\}\times I$,
	this means that 
	$ H_{F,A} = \left\{ x_2=\tfrac{1}{2} \right\}$,
	and hence 
	\begin{equation}
		 \Omega \not\subset  \left\{ x_2 \leq \tfrac{1}{2} \right\}.
		 \label{eqG3.3}
	\end{equation}
\par
	Recall also that $F=\{\tfrac{1}{2}\}\times\Sigma$, where $\Sigma$ is a convex
	polygon in $\R^2$, $\Sigma=-\Sigma$, and $\{ \tfrac{1}{2} \}\times I$ is a face of
	$\Sigma$. By convexity, $\Sigma$ contains the unit square $I\times I$, and hence
	$I\times \Sigma$ contains the unit cube $I\times I\times I$. Thus
	$|\tilde{f}|= |f| =1$ a.e.\ on $I\times I\times I$. By the periodicity of $\tilde{f}$ this implies that
	$|\tilde{f}|=1$ a.e.\ on $I\times \R\times I$. 
	In particular, $|\tilde{f}|=1$ a.e.\ on the set
	\begin{equation}
		 \label{eqG3.4}
	 \Omega \cap (I \times (\R \setminus I) \times I). 
	\end{equation}
	On the other hand, the set \eqref{eqG3.4} is disjoint from $I \times \Sigma$, hence
	$|f|=0$ on this set. It follows that the set \eqref{eqG3.4} cannot have positive measure, and therefore
	\[ \Omega\cap (I\times \R\times I) = I\times I\times I. \]
	This implies that $\{x_2=\tfrac{1}{2}\}$ is a supporting hyperplane of $\Omega$, which 
	contradicts \eqref{eqG3.3}.
\end{proof}
\begin{lem}\label{lemG4}
	For each $j$ we have 
	\begin{equation}
		 \Pi_j-\Pi_j\not\subset \R\times\Z.
		 \label{eqG4.2}
	\end{equation}
\end{lem}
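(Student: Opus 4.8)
\emph{The plan} is to run the argument of \lemref{lemG3} with the roles of the $x_2$- and $x_3$-coordinates interchanged, but to close it by invoking \lemref{lemF4} rather than by producing a supporting hyperplane. Suppose, to the contrary, that \eqref{eqG4.2} fails for some $j$, so that $\Pi_j-\Pi_j\subset\R\times\Z$; after translating $\Lambda$ we may assume $\Pi_j\subset\R\times\Z$. Fix a point $(t_0,s_0)\in(\Z+\theta_j)\times\Pi_j$ and let $f$ be the associated function \eqref{eqE6.1}. By \lemref{lemE4} the expansion \eqref{eqE4.1} of $f$ with respect to $\Lambda'$ involves only $e_\lambda$ with $\lambda\in\Lambda'\cap(\{t_0\}\times\Pi_j)$, and since $\Pi_j\subset\R\times\Z$ every such $\lambda$ lies in $\R\times\R\times\Z$. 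Hence, by \lemref{lemPL6}, the right-hand side of \eqref{eqE4.1} is a function $\tilde f$ on $\R^3$ that is periodic with respect to the vector $(0,0,1)$ and agrees with $f$ a.e.\ on $\Omega$.

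I would then propagate the modulus across this period exactly as in \lemref{lemG3}. The edge $\{\tfrac{1}{2}\}\times I$ of $\Sigma$ and its reflection $\{-\tfrac{1}{2}\}\times I$ force $I\times I\subset\Sigma$, so the unit cube $I\times I\times I\subset I\times\Sigma\subset\Omega$; thus $|\tilde f|=|f|=1$ a.e.\ on $I\times I\times I$, and by periodicity in $x_3$ also $|\tilde f|=1$ a.e.\ on $I\times I\times\R$. Since $f$ is supported on $I\times\Sigma$ and coincides with $\tilde f$ on $\Omega$, every point of $\Omega\cap(I\times I\times\R)$ must lie in $I\times\Sigma$; together with the obvious reverse inclusion this gives
\[
\Omega\cap(I\times I\times\R)=I\times\Sigma=\convex(F,F').
\]

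The decisive step, and the point at which the proof diverges from that of \lemref{lemG3}, is to eliminate the constraint $x_1\in I$. Because $F$ and $-F$ are facets of $\Omega$ with outward normals $\vec e_1$ and $-\vec e_1$, we have $\Omega\subset I\times\R\times\R$, so for points of $\Omega$ the condition $x_1\in I$ is automatic and $\Omega\cap(I\times I\times\R)=\Omega\cap(\R\times I\times\R)$. Writing $B=\{\tfrac{1}{2}\}\times\{-\tfrac{1}{2}\}\times I$ for the subfacet of $F$ parallel to $A$, we have $H_{F,A}=\{x_2=\tfrac{1}{2}\}$ and $H_{F,B}=\{x_2=-\tfrac{1}{2}\}$, so $\R\times I\times\R$ is precisely the slab $S_{F,A,B}$. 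Thus $\Omega\cap S_{F,A,B}=\convex(F,F')$, and \lemref{lemF4} forces $\Omega$ to be a prism, contradicting our standing assumption. I expect the main obstacle to be recognising that the naive mirror of \lemref{lemG3} cannot work---it would require $\Sigma$ to be bounded in the $s_2$-direction by an axis-parallel edge, which need not be the case---and that the correct substitute is to use the containment $\Omega\subset\{|x_1|\le\tfrac{1}{2}\}$ to enlarge the region to a full slab and then apply the prism criterion \lemref{lemF4}.
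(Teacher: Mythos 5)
Your proposal is correct and takes essentially the same route as the paper's own proof: translate $\Lambda$ so that $\Pi_j\subset\R\times\Z$, use \lemref{lemE4} and \lemref{lemPL6} to obtain $\tilde f$ periodic with respect to $(0,0,1)$, propagate $|\tilde f|=1$ from the unit cube to conclude $\Omega\cap(I\times I\times\R)=I\times\Sigma$, and then invoke \lemref{lemF4} to contradict the assumption that $\Omega$ is not a prism. The only difference is that you make explicit the passage from $I\times I\times\R$ to the full slab $S_{F,A,B}$ via $\Omega\subset I\times\R\times\R$, a point the paper leaves implicit when applying \lemref{lemF4}.
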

\begin{proof}
	We argue in a way similar to the proof of the previous lemma. If \eqref{eqG4.2} is violated for some $j$, then by
	translating $\Lambda$ we can assume that 
	\begin{equation}
	 \Pi_j\subset \R\times \Z. 
		 \label{eqG4.3}
	\end{equation}
	Hence, choosing a point $(t_0,s_0)\in (\Z+\theta_j)\times \Pi_j$, the corresponding function $f$ defined by \eqref{eqE6.1} 
	coincides a.e.\ on $\Omega$ with a function $\tilde{f}$ on $\R^3$, which by \eqref{eqG4.3} and \lemref{lemE4}
	is periodic with respect to the vector $(0,0,1)$. 
\par
	Since we have
	$|\tilde{f}|=|f|=1$ a.e.\ on $I\times I\times I$, the periodicity of $\tilde{f}$ implies that
	$|\tilde{f}|=1$ a.e.\ on $I\times I\times\R$.
	In particular, $|\tilde{f}|=1$ a.e.\ on the set
	\begin{equation}
		 \label{eqG4.1}
	 \Omega \cap (I \times ((I \times \R) \setminus \Sigma)).
	\end{equation}
	But since this set is disjoint from $I \times \Sigma$, we have	$|f|=0$ on the set \eqref{eqG4.1}.
	So the set \eqref{eqG4.1} cannot have positive measure, and therefore
	\[ \Omega \cap (I\times I\times \R) = I\times \Sigma. \]
	By \lemref{lemF4} this is possible only if $\Omega$ is a prism, so this concludes the proof.
\end{proof}
\begin{lem} \label{lemG5}
	Let $X$ be a subset of an abelian group $G$, and let $H_1$ and $H_2$ be two
	subgroups of $G$. Assume that 
	\begin{equation}
		 X-X\subset H_1\cup H_2.
		 \label{eqG5.1}
	\end{equation}
	Then $X-X\subset H_1$ or $X-X\subset H_2$. 
\end{lem}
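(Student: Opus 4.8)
The plan is to prove the statement by contradiction, after a preliminary reduction that makes the difference set amenable to a subgroup argument. The obstacle to attacking $X-X$ directly is that $X-X$ is symmetric and contains $0$ (when $X\ne\emptyset$), but it is \emph{not} closed under addition; so from two elements $a,b\in X-X$ one cannot in general form $a-b$ and remain inside $H_1\cup H_2$. To get around this I would first fix a base point $x_0\in X$ (the case $X=\emptyset$ being trivial, as then $X-X=\emptyset$) and pass to the translate $Y:=X-x_0$. This set satisfies $0\in Y$, $Y\subset X-X\subset H_1\cup H_2$, and, most importantly, $Y-Y=X-X$, so that the difference of any two elements of $Y$ is again governed by the hypothesis.

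With this reduction in hand, the goal becomes to show that $Y\subset H_1$ or $Y\subset H_2$. I would argue by contradiction: if neither inclusion holds, I choose $u\in Y\setminus H_1$ and $v\in Y\setminus H_2$; since $Y$ lies in $H_1\cup H_2$, this forces $u\in H_2$ and $v\in H_1$. Now $u-v$ belongs to $Y-Y=X-X\subset H_1\cup H_2$, and a two-line case check finishes the job: if $u-v\in H_1$ then $u=(u-v)+v\in H_1$, contradicting the choice of $u$; if instead $u-v\in H_2$ then $v=u-(u-v)\in H_2$, contradicting the choice of $v$. Either alternative is impossible, so one of the inclusions $Y\subset H_1$, $Y\subset H_2$ must hold.

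To conclude, I would use that $0\in Y$ to upgrade the inclusion $Y\subset H_i$ back to $X-X=Y-Y\subset H_i$, which is precisely the desired dichotomy. The only genuinely load-bearing step is the passage to the translate $Y$; everything afterwards is elementary group arithmetic. I therefore expect the main point to get right is the recognition that one must anchor $X$ at a single point so that subtraction closes up inside the difference set, rather than trying to manipulate arbitrary pairs of elements of $X-X$ directly.
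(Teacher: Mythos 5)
Your proof is correct and is essentially the same elementary argument as the paper's: both boil down to taking three points of $X$ and observing that their three pairwise differences, each forced into $H_1\cup H_2$, cannot be distributed without one subgroup absorbing everything, using only that $H_1,H_2$ are closed under addition and subtraction. The sole organizational difference is that you anchor $X$ at a base point $x_0$ and argue by contradiction on $Y=X-x_0$, while the paper argues directly that $X$ lies in a single coset $x+H_2$; note also that your final step needs only that $H_i$ is a subgroup (so $Y-Y\subset H_i$ follows from $Y\subset H_i$), not the fact that $0\in Y$.
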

\begin{proof}
	Suppose that $X-X\not\subset H_1$, so there exist $x,y\in X$ such that
	$x-y\not\in H_1$. Then by \eqref{eqG5.1} we have $x-y\in H_2$. The property
	$x-y\not\in H_1$ implies that
	for each $z\in X$ we must have $z-x\notin H_1$ or $z-y\notin H_1$. But
	in either case,  it follows from \eqref{eqG5.1} that $z \in x + H_2 = y+ H_2$, so we
	conclude that $X \subset x + H_2 = y+ H_2$. Thus $X-X\subset H_2$. 
\end{proof}
\begin{corollary} \label{corG6}
	 For each $j$ we have 
	 \begin{equation}
		 \Pi_j-\Pi_j \not\subset (\Z\times \R) \cup (\R\times \Z). 
		 \label{eqG6.1}
	 \end{equation}
\end{corollary}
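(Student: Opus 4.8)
The plan is to deduce this immediately from the two preceding non-containment lemmas together with the abelian-group dichotomy provided by \lemref{lemG5}. First I would apply \lemref{lemG5} with the set $X=\Pi_j$ sitting inside the additive group $G=\R^2$, taking $H_1=\Z\times\R$ and $H_2=\R\times\Z$. The only thing to check for the lemma to be applicable is the trivial observation that $\Z\times\R$ and $\R\times\Z$ are indeed subgroups of $\R^2$.

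Next I would argue by contradiction. Suppose \eqref{eqG6.1} fails for some $j$, so that $\Pi_j-\Pi_j\subset(\Z\times\R)\cup(\R\times\Z)=H_1\cup H_2$. Then the hypothesis \eqref{eqG5.1} of \lemref{lemG5} is satisfied with $X=\Pi_j$, and the lemma forces the dichotomy $\Pi_j-\Pi_j\subset H_1$ or $\Pi_j-\Pi_j\subset H_2$; that is, either $\Pi_j-\Pi_j\subset\Z\times\R$ or $\Pi_j-\Pi_j\subset\R\times\Z$.

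Finally I would note that the first alternative directly contradicts \lemref{lemG3}, while the second directly contradicts \lemref{lemG4}. Since both alternatives are thereby excluded, the assumption is untenable, and \eqref{eqG6.1} must hold.

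I do not expect any real obstacle at this step: the genuine content has already been invested in the geometric \lemref{lemG3} and \lemref{lemG4}, which is precisely where the hypothesis that $\Omega$ is not a prism, the choice of the subfacet $A$ via \lemref{lemF3}, and the prism criterion \lemref{lemF4} are used, as well as in the purely combinatorial \lemref{lemG5}. The present corollary is then just the formal combination of these three facts, and the write-up should be only a few lines.
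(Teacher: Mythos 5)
Your proposal is correct and is exactly the argument the paper intends: the paper's proof of \corref{corG6} consists of the single sentence that it is an immediate consequence of Lemmas \ref{lemG3}, \ref{lemG4} and \ref{lemG5}, and your write-up (taking $X=\Pi_j$, $H_1=\Z\times\R$, $H_2=\R\times\Z$ in \lemref{lemG5} and then invoking the two non-containment lemmas to rule out both branches of the dichotomy) is precisely that combination, spelled out.
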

This is an immediate consequence of Lemmas \ref{lemG3}, \ref{lemG4} and \ref{lemG5}.


\section{Structure of spectrum, IV} \label{secH1}

In this section we continue to analyze the structure of the spectrum of a convex polytope
$\Omega$ in dimension $d=3$. Although we are mainly interested in the case when $\Omega$
is not a prism, we will not need to assume this in the present section. 
\subsection{}
Let $\Omega$ be a convex polytope in $\R^3$, centrally symmetric and with centrally
symmetric facets. Assume that $\Omega$ is in our ``standard position'', namely,
$\Omega=-\Omega$, $F$ is a facet of $\Omega$ contained in $\{x_1=\frac{1}{2}\}$, and
$F$ is symmetric about the point $(\frac{1}{2},0,0)$. Assume also that
$A=\{\frac{1}{2}\}\times \{\frac{1}{2}\}\times I$ is a subfacet of $F$, where
$I=[-\frac{1}{2},\frac{1}{2}]$. Hence $F=\{\frac{1}{2}\}\times\Sigma$, where
$\Sigma$ is a convex polygon in $\R^2$, $\Sigma=-\Sigma$, and
$\{\frac{1}{2}\}\times I$ is a facet of $\Sigma$.
\par
 Suppose that $\Lambda$ is a
spectrum for $\Omega$. Let $\Pi\subset \R^2$ be the set associated to the
spectrum $\Lambda$ defined in \secref{secD1}, and $\theta(s)$ be the function on
$\Pi$ given by \lemref{lemD2}. Recall that in \secref{secE1}, a new spectrum
$\Lambda'$ was constructed from the given spectrum $\Lambda$, by taking the weak
limit of a sequence of translates of $\Lambda$. The new spectrum $\Lam'$ was shown
(\lemref{lemE1}) to enjoy a particular structure, namely
\begin{equation}
	\label{eqH1.1}
	\Lambda' \subset \bigcup_{j\ge 0} (\Z+\theta_j)\times \Pi_j,
\end{equation}
where $\Pi_j$ are the components of the set $\Pi$,
and $\theta_j$  are respectively the values of the function $\theta(s)$
on these components. The sets $\Pi_j$ were shown (\corref{corD3}) to satisfy
\begin{equation}
	\label{eqH1.2}
	\Pi_k-\Pi_j \subset \big\{ \hat{\1}_\Sigma=0 \big\} \quad (j\ne k).
\end{equation}
\par
When we want to further analyze the structure of the spectrum in dimension
$d=3$, a new complication arises, that was not present in the case $d=2$. Namely,
the zero set $\big\{\hat{\1}_\Sigma=0\big\}$ is not known explicitly, except in the special
case when $\Sigma$ is the cube $I\times I$. In order to address this difficulty, a further
limiting procedure will now be performed on the spectrum $\Lambda'$, yielding a
third spectrum $\Lambda''$ of $\Omega$. 
\subsection{}
The new spectrum $\Lambda''$ is constructed as follows. Consider the sequence
of translates of the spectrum $\Lam'$ given by
\[ \Lambda'-r\cdot (0,1,0), \quad  r=1,2,3,\dots.\]
As in \secref{secE1} we may extract from this sequence a subsequence 
\begin{equation}
	\label{eqH1.3}
	\Lambda'-r_n\cdot (0,1,0), \quad r_n\to\infty,
\end{equation}
which converges weakly to some set $\Lambda''$, which is again a spectrum of $\Omega$.
\par
According to \eqref{eqH1.1} we may form a partition of the spectrum $\Lambda'$
into sets defined by
\begin{equation}
	\label{eqH1.3a}
	\Lambda'_j := \Lambda'\cap ( (\Z+\theta_j)\times \Pi_j).
\end{equation}
It would be convenient for us to know that for each $j$, the sequence of translates
\begin{equation}
	\label{eqH1.4}
	\Lambda_j'-r_n\cdot (0,1,0)
\end{equation}
of each component $\Lam_j'$ has a weak limit as $n\to\infty$. 
This does not follow automatically from the weak convergence of the sequence
\eqref{eqH1.3}, though, since we have not excluded the possibility that there may be
infinitely many $\theta_j$'s and that they may have accumulation points. Nevertheless, we
can assume that \eqref{eqH1.4} has a weak limit as $n\to\infty$ for each
$j$, simply by selecting a further subsequence if necessary.
\par
We shall denote by $\Lambda_j''$ the weak limit of \eqref{eqH1.4}. Observe that 
a point   $(t,u,v) \in \R^3$ belongs to $\Lambda_j''$ if and only if there
is a sequence $(t_n,u_n,v_n)\in\Lambda_j'$ such that 
\[ (t_n,u_n-r_n,v_n)\to (t,u,v), \quad n\to\infty. \]
Remark that while by \lemref{lemE4} none of the components $\Lam_j'$ may be empty, 
this is not true for the sets $\Lam_j''$ that we cannot exclude some of which to be empty.

\par
It follows from
\eqref{eqH1.3a} that 
\begin{equation}
	\label{eqH1.5}
\Lambda_j'' \subset \Lambda''\cap ( (\Z+\theta_j)\times \R^2), 
\end{equation}
hence the sets $\Lambda_j''$ are disjoint subsets of $\Lambda''$. Remark, however, that these sets do not
necessarily form a partition of $\Lambda''$, namely, their union need not be equal to the whole
$\Lambda''$. Again, this may happen only if there are infinitely many $\theta_j$'s. An example
of such a situation can be obtained if $\Omega$ is a prism whose base is a spectral set.
Indeed, we have seen in \exampref{expD4.2} that in such a case the
function $\theta(s)$ may attain countably many, arbitrary distinct values, and that the 
components $\Pi_j$ of the set $\Pi$ may be singletons. This implies that every $\Lambda_j''$ 
is empty, while $\Lambda''$ certainly cannot be empty being a spectrum for
$\Omega$.
\par
 This makes it necessary for us in general to consider also the subset of
$\Lambda''$ defined by 
\[ \Lambda''_{\infty}:= \Lambda''\setminus \bigcup_{j\ge 0}\Lambda_j''. \]

\begin{lem}
	\label{lemH1}
Let $(t,u,v) \in \R^3$. Then $(t,u,v)$
belongs to $\Lambda_\infty''$ if and only if there is a sequence $k_n\to\infty$, and 
for each $n$ there is a point $(t_n,u_n,v_n)\in\Lambda_{k_n}'$, such that 
\[ (t_n,u_n-r_n,v_n)\to (t,u,v), \quad n\to\infty. \]
\end{lem}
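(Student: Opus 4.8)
The plan is to recast everything in terms of weak convergence and to use the uniform discreteness of the sets $\Lambda'-r_n\cdot(0,1,0)$ to pin down, for a given point of $\Lambda''$, the essentially unique approximating sequence of points of $\Lambda'$ together with the indices $k_n$ of the components $\Lambda_{k_n}'$ containing them. Recall from \eqref{eqH1.1} and \eqref{eqH1.3a} that the sets $\Lambda_j'$ form a \emph{partition} of $\Lambda'$ (the products $(\Z+\theta_j)\times\Pi_j$ are pairwise disjoint since the $\Pi_j$ are), so every point of $\Lambda'$ lies in exactly one component and the index is well defined.

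First I would record two elementary facts about a sequence of uniformly discrete sets with a common separation constant $\delta>0$ (here all the sets are translates of the single spectrum $\Lambda'$). Fact (i): if $\nu_n$ lies in the $n$-th set and $\nu_{n_m}\to q$ along \emph{any} subsequence $n_m\to\infty$, then $q$ belongs to the weak limit. This follows directly from the $B_R$-definition of weak convergence recalled in \secref{secLimits}: were $q$ outside the (closed) weak limit, a small ball $B_{2\varepsilon}(q)$ would avoid it, yet the defining inclusion forces $\nu_{n_m}$ into an $\varepsilon$-neighbourhood of the limit for large $m$, a contradiction. Fact (i) applies both to $\Lambda'-r_n\cdot(0,1,0)\to\Lambda''$ and, for each fixed $j$, to $\Lambda_j'-r_n\cdot(0,1,0)\to\Lambda_j''$. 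Fact (ii): since distinct points of $\Lambda'-r_n\cdot(0,1,0)$ are at distance $\ge\delta$, a point $(t,u,v)\in\Lambda''$ has for all large $n$ a unique point $\mu_n$ of $\Lambda'-r_n\cdot(0,1,0)$ within $\delta/2$, with $\mu_n\to(t,u,v)$, and any approximating sequence must eventually coincide with $\mu_n$. Writing $\mu_n=p_n-r_n\cdot(0,1,0)$ with $p_n\in\Lambda'$, the component index $k_n$ with $p_n\in\Lambda_{k_n}'$ is therefore well defined for large $n$.

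Now I would prove both implications. For the ``if'' direction, given $k_n\to\infty$ and $(t_n,u_n,v_n)\in\Lambda_{k_n}'$ with $(t_n,u_n-r_n,v_n)\to(t,u,v)$, Fact (i) gives $(t,u,v)\in\Lambda''$; if it lay in some $\Lambda_{j_0}''$ there would be $p_n'\in\Lambda_{j_0}'$ with $p_n'-r_n\cdot(0,1,0)\to(t,u,v)$, and uniform discreteness would force $p_n=p_n'$ for all large $n$, hence $k_n=j_0$ eventually, contradicting $k_n\to\infty$; thus $(t,u,v)\in\Lambda_\infty''$. For the ``only if'' direction, take $(t,u,v)\in\Lambda_\infty''$, let $p_n$ be its approximating sequence from Fact (ii), and let $k_n$ be the indices. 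If some value $j_0$ occurred for infinitely many $n$, then along that subsequence the points $p_n-r_n\cdot(0,1,0)\in\Lambda_{j_0}'-r_n\cdot(0,1,0)$ converge to $(t,u,v)$, so Fact (i) yields $(t,u,v)\in\Lambda_{j_0}''$, contradicting membership in $\Lambda_\infty''$. Hence each index value is attained only finitely often; since the $k_n$ are non-negative integers, this is exactly the statement $k_n\to\infty$, and $p_n$ is the required sequence.

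The step demanding the most care, and really the crux, is the subsequence-to-full-sequence passage in the ``only if'' direction: membership in $\Lambda_{j_0}''$, which is defined as the weak limit of the \emph{full} sequence $\Lambda_{j_0}'-r_n\cdot(0,1,0)$, must be deduced from points converging to $(t,u,v)$ along a mere subsequence. Fact (i) is precisely what legitimizes this, and it is this observation that upgrades the weak conclusion ``$k_n$ is not eventually constant'' into the sharp ``$k_n\to\infty$,'' using the discreteness of the index set $\{j\ge 0\}$. I would therefore take care to state and justify Fact (i) for subsequences rather than only for full sequences, since the whole argument hinges on it.
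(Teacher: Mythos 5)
Your proof is correct and takes essentially the same approach as the paper: both implications are argued by contradiction, using the fact that a limit of points drawn from the translated sets (even along a subsequence) must belong to the corresponding weak limit, and the eventual uniqueness, by uniform discreteness, of the sequence in $\Lambda'-r_n\cdot(0,1,0)$ approximating a given point of $\Lambda''$. Your Facts (i) and (ii) simply make explicit two steps that the paper's proof uses implicitly (``this implies that $(t,u,v)$ must belong to the weak limit of \eqref{eqH1.4}'' and ``then we must have $k_n=j$ for all sufficiently large $n$'').
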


\begin{proof}
Suppose first that $(t,u,v)$ is a point in $\Lambda_\infty''$. Then
$(t,u,v) \in \Lam''$, and since $\Lambda''$ is the weak limit of 
\eqref{eqH1.3}, there exist $(t_n,u_n,v_n)\in\Lambda'$ such that
$(t_n,u_n-r_n,v_n)\to(t,u,v)$. Due to \eqref{eqH1.1} and \eqref{eqH1.3a}, for each
$n$ there is $k_n\ge 0$ such that 
$ (t_n,u_n,v_n)\in \Lambda_{k_n}'$. 
If $k_n\not\to \infty$, then $k_n$ admits infinitely often a certain value, say $k_n=j$
for infinitely many $n$'s. But this implies that $(t,u,v)$ must belong to the weak limit
of \eqref{eqH1.4}, and hence $(t,u,v)\in\Lambda_j''$, so it cannot
lie in $\Lambda_\infty''$. Hence we must have $k_n \to \infty$.
\par
Conversely, suppose that the point $(t,u,v)$ satisfies the condition in the lemma.
The condition implies that $(t,u,v)$ belongs to the weak limit of 
\eqref{eqH1.3}, hence $(t,u,v) \in \Lam''$. If  $(t,u,v)$ is not in
$\Lambda_\infty''$, then it belongs to one of the sets $\Lambda_j''$. 
But then we must have $k_n=j$ for all sufficiently large $n$, so $k_n\not\to \infty$,
a contradiction. Hence $(t,u,v) \in \Lambda_\infty''$.
\end{proof}

We also point out that the
inclusion \eqref{eqH1.5} is not necessarily an equality, as the right hand side of
\eqref{eqH1.5} may contain elements of $\Lambda_\infty''$. 
\subsection{} 
Now we establish some properties satisfied by the new spectrum $\Lambda''$ and its
components $\Lambda_k''$ ($0\le k\le\infty$). The first property is derived from the
condition \eqref{eqH1.2}.
\begin{lem}
	\label{lemH2}
	For each $0\le j$, $k\le \infty$, $j\ne k$, we have 
	\begin{equation}
		\label{eqH2.1}
		\Lambda_k''-\Lambda_j'' \subset \R\times \big\{ \hat{\1}_\Sigma =0
		\big\}.
	\end{equation}
\end{lem}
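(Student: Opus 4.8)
The plan is to read off \lemref{lemH2} from the weak-limit descriptions of the components $\Lambda_k''$ and $\Lambda_j''$ together with the separation property \eqref{eqH1.2} (equivalently \corref{corD3}) of the sets $\Pi_k$ and $\Pi_j$. The decisive point is that every component $\Lambda_k''$ is obtained by translating the corresponding $\Lambda_k'$ by the \emph{same} vector $r_n\cdot(0,1,0)$. Hence, when we form a difference $p-q$ with $p\in\Lambda_k''$ and $q\in\Lambda_j''$, this common translation cancels, and we are left comparing points whose $\R^2$-components originate in $\Pi_k$ and $\Pi_j$ respectively. Since $\hat{\1}_\Sigma$ is continuous, it will then suffice to apply \eqref{eqH1.2} \emph{before} passing to the limit.

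Concretely, I would fix $p=(t,u,v)\in\Lambda_k''$ and $q=(t',u',v')\in\Lambda_j''$, and first treat the case where both $j$ and $k$ are finite. The definition of the weak limit of \eqref{eqH1.4} supplies sequences $(a_n,b_n,c_n)\in\Lambda_k'$ and $(a_n',b_n',c_n')\in\Lambda_j'$ with $(a_n,b_n-r_n,c_n)\to p$ and $(a_n',b_n'-r_n,c_n')\to q$. By \eqref{eqH1.3a} the $\R^2$-components satisfy $(b_n,c_n)\in\Pi_k$ and $(b_n',c_n')\in\Pi_j$, and since $j\ne k$, the inclusion \eqref{eqH1.2} gives $\hat{\1}_\Sigma(b_n-b_n',c_n-c_n')=0$ for every $n$. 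Now $b_n-b_n'=(b_n-r_n)-(b_n'-r_n)\to u-u'$ and $c_n-c_n'\to v-v'$, so continuity of $\hat{\1}_\Sigma$ yields $\hat{\1}_\Sigma(u-u',v-v')=0$, which is precisely the statement that $p-q\in\R\times\{\hat{\1}_\Sigma=0\}$.

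It remains to cover the case in which one of the indices, say $k$, equals $\infty$; by the hypothesis $j\ne k$ at most one of them can be $\infty$. Here I would invoke \lemref{lemH1}: a point $p\in\Lambda_\infty''$ is approximated by points $(a_n,b_n,c_n)\in\Lambda_{m_n}'$ with $m_n\to\infty$ and $(a_n,b_n-r_n,c_n)\to p$, so $(b_n,c_n)\in\Pi_{m_n}$. Since $m_n\to\infty$ while $j$ is fixed, we have $m_n\ne j$ for all large $n$, and hence \eqref{eqH1.2} still applies to the pair $(b_n,c_n)\in\Pi_{m_n}$, $(b_n',c_n')\in\Pi_j$; the same limiting argument then goes through verbatim, and the case $j=\infty$ is handled symmetrically.

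I do not expect any genuine obstacle: the entire content of the lemma is the cancellation of the common translation $r_n\cdot(0,1,0)$ in the difference $p-q$, combined with the continuity of $\hat{\1}_\Sigma$. The only step requiring care is the bookkeeping in the $\infty$ case, namely verifying that the approximating indices $m_n$ eventually differ from the fixed finite index, so that the hypothesis $j\ne k$ of \eqref{eqH1.2} is actually available at each stage of the limiting process.
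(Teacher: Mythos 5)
Your proposal is correct and follows essentially the same route as the paper: approximate points of $\Lambda_k''$ and $\Lambda_j''$ by points of $\Lambda_{k_n}'$ and $\Lambda_j'$ translated by the common vector $r_n\cdot(0,1,0)$, note that this translation cancels in differences, apply \eqref{eqH1.2} (valid since $k_n\ne j$ for large $n$, using \lemref{lemH1} when an index is $\infty$), and pass to the limit via the continuity of $\hat{\1}_\Sigma$. The paper merely packages your two cases into one by writing $k_n=k$ (finite case) or $k_n\to\infty$ (infinite case) after reducing to $j<k$ by symmetry; the substance is identical.
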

\begin{proof}
	By symmetry we may assume that $0\le j < k\le \infty$. Let $(t,u,v)\in
	\Lambda_j''$ and $(t',u',v')\in \Lambda_k''$. Then there exist two sequences
\[ (t_n,u_n,v_n)\in
	\Lambda_j', \quad  (t_n,u_n-r_n,v_n)\to (t,u,v), \]
and
\[ (t_n',u_n',v_n')\in
	\Lambda'_{k_n}, \quad  (t_n',u_n'-r_n,v_n')\to (t',u',v'), \]
where $k_n=k$ in the case when
	$k$ is finite, or $k_n\to\infty$ if $k=\infty$ (\lemref{lemH1}). In any case we have $k_n\ne j$ for
	all sufficiently large $n$. Since by \eqref{eqH1.3a} we have 
	\[ (u_n,v_n)\in \Pi_j,  \quad (u_n',v_n')\in\Pi_{k_n},\]
	then it follows from \eqref{eqH1.2} that 
	\[(t_n',u_n'-r_n,v_n')-(t_n,u_n-r_n,v_n)=
	(t_n'-t_n,u_n'-u_n,v_n'-v_n)\in \R\times \big\{\hat{\1}_\Sigma = 0\big\}.\] 
	Letting $n\to\infty$ we obtain 
\[ (t',u',v')-(t,u,v)\in \R\times
	\big\{\hat{\1}_\Sigma=0\big\}, \]
which confirms \eqref{eqH2.1}.
\end{proof}
\lemref{lemH2} shows that the structure 
\eqref{eqH1.2}  is basically preserved in the
 new spectrum $\Lambda''$ and its components $\Lambda_k''$ ($0\le k\le\infty$).
However, our motivation for introducing this new spectrum is due to the following lemma:
\begin{lem}
	\label{lemH3}
	Let $0\le j<\infty$, $0\le k\le\infty$, $k\ne j$. Then 
	\begin{equation}
		\label{eqH3.1}
		\Lambda_k'' - \R\times \Pi_j \subset (\R\times\Z\times\R)\cup
		(\R\times\R\times(\Z\setminus \{0\})).
	\end{equation}
	In other words, if $(u_0,v_0)\in \Pi_j$ and if $(t,u,v)\in \Lambda_k''$, then
	$u-u_0\in\Z$ or $v-v_0\in\Z\setminus \{0\}$. 
\end{lem}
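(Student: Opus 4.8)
The plan is to unwind the definition of the component $\Lambda_k''$ back to the level of the spectrum $\Lambda'$, where the structural relation \eqref{eqH1.2} between the sets $\Pi_j$ is available, and then to read off the two integrality alternatives from the known asymptotic behaviour of $\hat{\1}_\Sigma$ in the direction normal to the facet $\{\tfrac12\}\times I$ of $\Sigma$. The point is that comparing $\Lambda_k''$ with the \emph{fixed} cylinder $\R\times\Pi_j$ (rather than with $\Lambda_j''$, as in \lemref{lemH2}) pushes the relevant frequency difference out to infinity in the $u$-direction, which is exactly the regime governed by \lemref{lemD1}.

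First I would fix a point $(u_0,v_0)\in\Pi_j$ and a point $(t,u,v)\in\Lambda_k''$, and use \lemref{lemH1} (or the definition of $\Lambda_k''$ when $k$ is finite) to produce points $(t_n,u_n,v_n)\in\Lambda_{k_n}'$ with $(t_n,u_n-r_n,v_n)\to(t,u,v)$, where $k_n=k$ if $k<\infty$ and $k_n\to\infty$ if $k=\infty$. In either case $k_n\ne j$ for all large $n$, and since $(u_n,v_n)\in\Pi_{k_n}$ while $(u_0,v_0)\in\Pi_j$, the relation \eqref{eqH1.2} gives $\hat{\1}_\Sigma(u_n-u_0,\,v_n-v_0)=0$ for all large $n$. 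The crucial observation is that here the first coordinate runs off to infinity, $u_n-u_0\to+\infty$ (because $u_n-r_n\to u$ while $r_n\to\infty$), whereas the second coordinate stays bounded, $v_n-v_0\to v-v_0$. Hence the points $(u_n-u_0,v_n-v_0)$ eventually lie in the cone $\{|\xi_2|\le\alpha|\xi_1|\}$ to which \lemref{lemD1}, applied to the centrally symmetric convex polygon $\Sigma$ (which has $\{\tfrac12\}\times I$ as a facet), pertains.

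Applying that asymptotic formula to $\Sigma$ in place of $\Omega$ yields the key identity
\[
\pi(u_n-u_0)\,\hat{\1}_\Sigma(u_n-u_0,v_n-v_0)=\sin\big(\pi(u_n-u_0)\big)\,\hat{\1}_I(v_n-v_0)+O\big(|u_n-u_0|^{-1}\big),
\]
and since the left-hand side vanishes, $\sin\big(\pi(u_n-u_0)\big)\,\hat{\1}_I(v_n-v_0)\to0$. Then I would split into two cases. If $\hat{\1}_I(v-v_0)\ne0$, then $\hat{\1}_I(v_n-v_0)$ is bounded away from $0$, so $\dist(u_n-u_0,\Z)\to0$; using that the $r_n$ are integers, so that $\dist(u_n-u_0,\Z)=\dist\big((u_n-r_n)-u_0,\Z\big)$, and that $(u_n-r_n)-u_0\to u-u_0$, continuity of $\dist(\cdot,\Z)$ forces $u-u_0\in\Z$. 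If instead $\hat{\1}_I(v-v_0)=0$, then, since $\hat{\1}_I(\zeta)=\sin(\pi\zeta)/(\pi\zeta)$ vanishes exactly on $\Z\setminus\{0\}$, we obtain $v-v_0\in\Z\setminus\{0\}$. In both cases the asserted conclusion holds.

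I expect the only real subtlety to be the transfer step in the first case: the asymptotic delivers $\dist(u_n-u_0,\Z)\to0$ for the \emph{divergent} pre-limit coordinates $u_n$, and one must convert this into the exact relation $u-u_0\in\Z$ for the limit coordinate $u$. This is precisely where the integrality of the translation amounts $r_n$ enters, absorbing the drift. Everything else is a routine combination of \lemref{lemH1}, the inclusion \eqref{eqH1.2}, and the asymptotic \lemref{lemD1} read off from the distinguished facet $\{\tfrac12\}\times I$ of $\Sigma$; in particular the restriction $j<\infty$ is used only so that $\Pi_j$ is a genuine component supplying the fixed point $(u_0,v_0)$, while $k$ is allowed to be $\infty$ because the pre-limit indices $k_n$ are always finite and eventually distinct from $j$.
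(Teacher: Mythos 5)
Your proposal is correct and follows essentially the same route as the paper's own proof: approximate the point of $\Lambda_k''$ by points of $\Lambda_{k_n}'$ via \lemref{lemH1}, invoke \eqref{eqH1.2} to kill $\hat{\1}_\Sigma(u_n-u_0,v_n-v_0)$, apply \lemref{lemD1} to the polygon $\Sigma$ and its facet $\{\tfrac12\}\times I$ to get $\sin\pi(u_n-u_0)\cdot\hat{\1}_I(v_n-v_0)\to 0$, and then use the integrality of $r_n$ to pass the conclusion $\dist(u_n-u_0,\Z)\to 0$ to the limit. Your case split on whether $\hat{\1}_I(v-v_0)$ vanishes is logically identical to the paper's contrapositive formulation, and your handling of the "transfer step" (integer shifts preserve distance to $\Z$) matches the paper's triangle-inequality argument.
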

This lemma is similar in spirit to \lemref{lemD2}. 
To see the resemblance between the two lemmas, recall that $\{\frac{1}{2}\}\times I$ is a
facet of the polygon $\Sigma$, and $\{\hat{\1}_I=0\}=\Z\setminus \{0\}$. 
The assertion of \eqref{eqH3.1} is equivalent to the statement that if $(u_0,v_0)\in\Pi_j$,
$(t,u,v)\in\Lambda_k''$, and if $\hat{\1}_I(v-v_0)\ne 0$, then $u\in\Z+u_0$. 
The proof is also similar to that of \lemref{lemD2}. 
\begin{proof}[Proof of \lemref{lemH3}]
	Let $(u_0,v_0)\in \Pi_j$ and $(t,u,v)\in\Lambda_k''$. Regardless of whether
	$k$ is finite or not, there is a sequence $k_n$ and there are points
	$(t_n,u_n,v_n)\in\Lambda_{k_n}'$ such that 
	\begin{equation}
		\label{eqH3.1b}
		(t_n,u_n-r_n,v_n)\to (t,u,v), \quad n\to\infty.
	\end{equation}
	Indeed, if $0\le k<\infty$ then $k_n=k$ for all $n$, while if $k=\infty$ then
	$k_n\to\infty$  (\lemref{lemH1}). In any case, we have $k_n\ne j$ for all sufficiently large
	$n$. Since $(t_n,u_n,v_n)\in \Lambda_{k_n}'$ we have $(u_n,v_n)\in\Pi_{k_n}$ by
	\eqref{eqH1.3a}. Hence by \eqref{eqH1.2} this implies that 
	\begin{equation}
		\label{eqH3.2}
		\hat{\1}_\Sigma(u_n-u_0,v_n-v_0)=0, 
	\end{equation}
	for all sufficiently large $n$.
\par
 Observe that since $r_n\to\infty$,
	\eqref{eqH3.1b} implies that also $u_n\to\infty$. Hence using
	\lemref{lemD1} for the polygon $\Sigma$ and its facet 
$\{\frac{1}{2}\}\times I$,
 it follows from \eqref{eqH3.2} that
	\[\sin \pi(u_n-u_0)\cdot \hat{\1}_I(v_n-v_0)\to 0, \quad n\to\infty.\]
	Indeed, the polygon  $\Sigma$ is  centrally symmetric and it has centrally
	symmetric facets, as the facets of $\Sigma$ are line segments, hence all the 
	conditions of \lemref{lemD1} are satisfied.
\par
Now suppose that $v-v_0\not\in \Z\setminus \{0\}$. Then $v-v_0$ is not
	contained in the zero set of $\hat{\1}_I$, and hence
	$|\hat{\1}_I(v_n-v_0)|$ remains bounded away from zero as $n\to\infty$.
	So we must have  $\sin \pi(u_n-u_0)\to 0$, 
	or equivalently,  $ \dist (u_n-u_0,\Z)\to 0$.
	But since $r_n$ is an integer,
	\eqref{eqH3.1b} implies that also
	$ \dist (u_n-u,\Z)\to 0$. It follows that
\[
	 \dist(u-u_0,\Z) \leq \dist(u_n-u_0,\Z) + \dist(u_n-u,\Z) \to 0.
\]
	We conclude that $u-u_0\in \Z$ as required.
\end{proof}
From the previous lemma it is easy to deduce the next one:
\begin{lem}
	\label{lemH4}
	For each $0\le j,k\le \infty$, $j\ne k$, we have 
	\begin{equation}
		\label{eqH4.1}
		\Lambda_k''-\Lambda_j''\subset (\R\times\Z\times\R)\cup
	(\R\times\R\times(\Z\setminus \{0\})). 
	\end{equation}
\end{lem}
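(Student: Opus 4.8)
The plan is to obtain \eqref{eqH4.1} as a direct consequence of \lemref{lemH3}, by feeding points of $\Pi_j$ into that lemma and passing to the limit. First I would record that the target set $W:=(\R\times\Z\times\R)\cup(\R\times\R\times(\Z\setminus\{0\}))$ satisfies $W=-W$, so that $\Lambda_k''-\Lambda_j''\subset W$ holds if and only if $\Lambda_j''-\Lambda_k''\subset W$. Since $j\ne k$, at most one of $j,k$ equals $\infty$, hence at least one of them is finite; using the symmetry just noted I may therefore assume without loss of generality that $j<\infty$. This reduction is exactly what is needed in order to apply \lemref{lemH3}, whose hypothesis requires the index of $\Pi_j$ to be finite (while $k$ is allowed to be $\infty$, so the case $k=\infty$ will be handled uniformly).

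Next I would fix $(t,u,v)\in\Lambda_k''$ and $(t',u',v')\in\Lambda_j''$. By the description of $\Lambda_j''$ as the weak limit of the translates \eqref{eqH1.4}, there are points $(t_n',u_n',v_n')\in\Lambda_j'$ with $(t_n',u_n'-r_n,v_n')\to(t',u',v')$, and by \eqref{eqH1.3a} we have $(u_n',v_n')\in\Pi_j$ for every $n$. Applying \lemref{lemH3} with the point $(u_n',v_n')\in\Pi_j$ and with $(t,u,v)\in\Lambda_k''$, I obtain that for each $n$ at least one of
\[
u-u_n'\in\Z \quad\text{or}\quad v-v_n'\in\Z\setminus\{0\}
\]
holds. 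The crucial point is that the $r_n$ are \emph{integers}: since $u_n'-r_n\to u'$, this gives $\dist(u-u_n',\Z)\to\dist(u-u',\Z)$, while $v_n'\to v'$ gives $v-v_n'\to v-v'$.

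The main (and only) subtlety is that the alternative in the dichotomy may switch as $n$ varies, so I would split into two cases. If $u-u_n'\in\Z$ for infinitely many $n$, then along that subsequence $\dist(u-u_n',\Z)=0$, whence in the limit $\dist(u-u',\Z)=0$ and $u-u'\in\Z$. Otherwise $u-u_n'\notin\Z$ for all large $n$, which forces $v-v_n'\in\Z\setminus\{0\}$ for all large $n$; a sequence of nonzero integers converging to $v-v'$ must be eventually constant, so $v-v'\in\Z\setminus\{0\}$. In either case $(t,u,v)-(t',u',v')\in W$, and since $(t,u,v)\in\Lambda_k''$ and $(t',u',v')\in\Lambda_j''$ were arbitrary this yields \eqref{eqH4.1}. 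I expect no serious obstacle here beyond the careful bookkeeping of the limit; the work of controlling the unknown zero set $\{\hat{\1}_\Sigma=0\}$ has already been absorbed into \lemref{lemH3}.
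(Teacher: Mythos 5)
Your proof is correct and follows essentially the same route as the paper: both reduce by symmetry (using $W=-W$) to the case $j<\infty$, apply \lemref{lemH3} to points of $\Pi_j$ coming from $\Lambda_j'$ against a fixed point of $\Lambda_k''$, and pass to the weak limit using that the $r_n$ are integers. The paper merely packages the limiting step set-theoretically --- observing that the constraint set \eqref{eqH4.2} is invariant under translations by $\{0\}\times\Z\times\{0\}$ and (being closed) contains the weak limit $\Lambda_j''$ --- whereas you unfold the same fact pointwise via a subsequence case analysis.
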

Actually we will not use \lemref{lemH4} in what follows. We state
 it merely  to demonstrate an essential advantage  of the newly constructed spectrum
$\Lambda''$. On one hand, according to \eqref{eqH2.1} it basically inherits the structure
 of the previously constructed spectrum $\Lambda'$, while on the other hand, 
condition \eqref{eqH4.1} reveals an extra structure in $\Lam''$.
\par
Since the proof of \lemref{lemH4} is quite short, we include it for completeness.
\begin{proof}[Proof of \lemref{lemH4}]
	By symmetry we may assume that $0\le j<k\le \infty$. Let $(t,u,v)\in\Lambda_k''$.
	Then by \lemref{lemH3} the set $\R\times \Pi_j$ must be contained in 
	\begin{equation}
		\label{eqH4.2}
		(\R\times(u+\Z)\times\R)\cup
		(\R\times\R\times(v+(\Z\setminus \{0\}))).
	\end{equation}
	Due to \eqref{eqH1.3a} we have $\Lambda_j'\subset \R\times\Pi_j$, hence also the
	set $\Lambda_j'$
	is contained in \eqref{eqH4.2}. Since the set \eqref{eqH4.2} is
	invariant under translations by vectors in $\{0\}\times\Z\times\{0\}$, it follows
	that all the sets \eqref{eqH1.4} are also contained in \eqref{eqH4.2}, and hence
	the same is true for their weak limit $\Lambda_j''$. This implies that
	$\Lambda_j''-(t,u,v)$ is contained in the set on the right-hand side of
	\eqref{eqH4.1}. As $(t,u,v)$ was an arbitrary element of $\Lambda_k''$,
	this establishes \eqref{eqH4.1}.
\end{proof}


\section{Auxiliary lemmas} \label{secF3}

In this section we establish some specific facts about the spectrum of 
a convex polytope $\Omega$ that will be used later on. 
These facts are true in arbitrary dimension, so in the present
section we do not restrict the discussion to three dimensions.

\subsection{}
Let $\Omega\subset\R^d$ be a convex polytope. Let $F$ and $F'$ 
be two parallel facets of $\Omega$, and assume that
$ F\subset \{ x_1=\tfrac{1}{2} \}$, $ F'\subset \{
	x_1=-\tfrac{1}{2} \}$,
 and that $F$ is the image of $F'$ under translation by
	the vector $\vec{e}_1$. These assumptions imply that 
	\[ F=\left\{ \tfrac{1}{2} \right\}\times \Sigma, \quad F'=\left\{
	-\tfrac{1}{2} \right\}\times \Sigma, \]
	where $\Sigma$ is a convex  polytope in $\R^{d-1}$. 
\par
Assume also that $\Omega$ is
	spectral, and let $\Lambda$ be a spectrum for $\Omega$. 
\begin{lem} \label{lemF5}
	If $\Omega$ is not a prism, then $\Lambda$ cannot contain any set of the form 
	\begin{equation}
		\label{eqF5.1}
		(\Z+\theta)\times \{s\},
	\end{equation}
	where $\theta\in\R$ and $s\in\R^{d-1}$.
\end{lem}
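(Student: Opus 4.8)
The plan is to prove the contrapositive: assuming that $\Lambda$ contains a set $(\Z+\theta)\times\{s\}$, I will show that $\Omega$ must be a prism. Since the property of being a spectrum is invariant under translation of $\Lambda$, I first translate the spectrum by $-(\theta,s)$, so that without loss of generality $\theta=0$ and $s=0$; thus $\Lambda\supseteq \Z\times\{0\}$. I write a point of $\R^d$ as $(x_1,y)$ with $y\in\R^{d-1}$, and set $\Omega_{x_1}:=\{y:(x_1,y)\in\Omega\}$. Since $F,F'$ lie in the supporting hyperplanes $\{x_1=\pm\tfrac12\}$ we have $\Omega\subseteq I\times\R^{d-1}$, so each slice $\Omega_{x_1}$ is empty unless $x_1\in I$; and since $I\times\Sigma$ is the convex hull of $F$ and $F'$, we have $I\times\Sigma\subseteq\Omega$.

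The heart of the argument is to show that $\hat{\1}_\Sigma(s')=0$ for every $(t',s')\in\Lambda$ with $s'\neq 0$. Fix such a point. Testing the orthogonality relation \eqref{eqP1.2} against every $(n,0)\in\Z\times\{0\}\subseteq\Lambda$ gives $\hat{\1}_\Omega(t'-n,s')=0$ for all $n\in\Z$; that is, $\hat{\1}_\Omega(\,\cdot\,,s')$ vanishes on the whole coset $t'+\Z$. By Fubini, $w\mapsto\hat{\1}_\Omega(w,s')$ is the one-dimensional Fourier transform of $g(x_1):=\hat{\1}_{\Omega_{x_1}}(s')$, a function supported on the unit interval $I$. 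Because $\{e^{2\pi i c x_1}:c\in t'+\Z\}$ is an orthonormal basis of $L^2(I)$, the vanishing of all the coefficients $\hat g(c)$, $c\in t'+\Z$, forces $g\equiv 0$; hence $\hat{\1}_\Omega(w,s')=\hat g(w)=0$ for \emph{every} real $w$. Now I feed this into the asymptotic identity supplied by \lemref{lemP4} applied to the parallel facets $F,F'$: since $\hat\sigma_F(\xi)=e^{-\pi i\xi_1}\hat{\1}_\Sigma(\xi_2,\dots,\xi_d)$ and $\hat\sigma_{F'}(\xi)=e^{\pi i\xi_1}\hat{\1}_\Sigma(\xi_2,\dots,\xi_d)$, the difference equals $-2i\sin(\pi\xi_1)\hat{\1}_\Sigma(\xi_2,\dots,\xi_d)$, so \lemref{lemP4} reproduces the identity \eqref{eqD1.1} of \lemref{lemD1} (no central symmetry needed). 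Evaluating it at $\xi=(N+\tfrac12,s')$ with $N\to\infty$, the left-hand side is identically $0$ while $|\sin\pi\xi_1|=1$, so the main term is $\pm\hat{\1}_\Sigma(s')$; letting $N\to\infty$ yields $\hat{\1}_\Sigma(s')=0$.

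The same coset/slice argument with $s'=0$ shows that $\Lambda$ has no point $(t',0)$ with $t'\notin\Z$: otherwise $g(x_1)=\hat{\1}_{\Omega_{x_1}}(0)=|\Omega_{x_1}|$ would vanish identically, contradicting $|\Omega|>0$. Hence the fibre of $\Lambda$ over $0$ is exactly $\Z\times\{0\}$. I now test the spectrum with $f:=\1_{I\times\Sigma}$, which is supported on $\Omega$ and has $\hat f(t,\eta)=\hat{\1}_I(t)\,\hat{\1}_\Sigma(\eta)$. For $(t',s')\in\Lambda$ with $s'\neq 0$ the factor $\hat{\1}_\Sigma(s')$ vanishes, while for $(t',0)\in\Lambda$ we have $t'\in\Z$ and $\hat{\1}_I(t')=0$ unless $t'=0$. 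Thus $\hat f(\lambda)=0$ for every $\lambda\in\Lambda\setminus\{0\}$, and $\hat f(0)=|\Sigma|$, so the Fourier expansion \eqref{eqF5.3} of $f$ collapses to the single constant term, giving $\1_{I\times\Sigma}=|\Sigma|/|\Omega|$ a.e.\ on $\Omega$. As this function equals $1$ on the positive-measure set $I\times\Sigma\subseteq\Omega$, the constant is $1$, forcing $|\Omega|=|\Sigma|$ and hence $\Omega=I\times\Sigma$ up to a null set; being convex, $\Omega=I\times\Sigma$ is a prism.

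I expect the main obstacle to be the middle step, where the vanishing of $\hat{\1}_\Omega(\,\cdot\,,s')$ on a single coset of $\Z$ must be upgraded to vanishing on all of $\R$ and then converted into information about $\hat{\1}_\Sigma$. This is precisely the point where the slice description of $\Omega$ (which makes $g$ supported on the unit interval) and the completeness of $\{e^{2\pi i c x_1}:c\in t'+\Z\}$ in $L^2(I)$ combine with the boundary asymptotics of $\hat{\1}_\Omega$ from \lemref{lemP4}. Once $\hat{\1}_\Sigma(s')=0$ is established along the spectrum, the concluding steps are routine consequences of completeness and Parseval.
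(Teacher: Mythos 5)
Your proposal is correct, but it takes a genuinely different and considerably longer route than the paper. The paper's own proof uses orthogonality alone: from $\Lambda-\Lambda\supset\Z\times\{0\}$ it gets $\hat{\1}_\Omega(k,0)=0$ for $k\in\Z\setminus\{0\}$, then considers the single slice-volume function $\varphi(x)=|\Omega_x|$, supported on $I$, whose one-dimensional Fourier transform is $\hat{\1}_\Omega(t,0)$; completeness of $E(\Z)$ in $L^2(I)$ forces $\varphi$ to be constant, and since $\varphi(\pm\tfrac12)=|\Sigma|$ while $\Omega_x\supset\{x\}\times\Sigma$ by convexity, this gives $\Omega_x=\{x\}\times\Sigma$ for all $x\in I$, i.e.\ $\Omega=I\times\Sigma$. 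Your argument shares the same core trick (slicing plus one-dimensional completeness of a coset of exponentials on $I$ --- your $s'=0$ step is exactly the paper's argument transplanted to the coset $t'+\Z$), but you apply it at every frequency $s'$ to the complex-valued slice transform $g(x_1)=\hat{\1}_{\Omega_{x_1}}(s')$, upgrade the conclusion to $\hat{\1}_\Sigma(s')=0$ via the boundary asymptotics of \lemref{lemP4}, and then invoke \emph{completeness} of $E(\Lambda)$ through the Fourier expansion \eqref{eqF5.3} of $\1_{I\times\Sigma}$ to collapse $\Omega$ onto $I\times\Sigma$. What each approach buys: yours yields the stronger intermediate information that the nonzero projections of spectrum points land in $\{\hat{\1}_\Sigma=0\}$ (in the spirit of \lemref{lemD2}), but it consumes more machinery (\lemref{lemP4}, \lemref{lemPL6}, completeness of the spectrum); the paper's proof is shorter and, because it uses only the difference-set condition, it establishes the slightly stronger statement recorded in the Remark following the lemma --- the conclusion holds for any subset of $\Lambda$ whose difference set contains $\Z\times\{0\}$, whereas your orthogonality tests and the final expansion step both use the actual containment of the translated set $\Z\times\{0\}$, including the point $(0,0)\in\Lambda$, in an essential way.
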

\begin{proof}
	Suppose to the contrary that $\Lambda$ does contain a set of the form
	\eqref{eqF5.1}. This implies that the set $\Lambda-\Lambda$ contains $\Z\times
	\{0\}$. On the other hand, since $\Lambda$ is a spectrum for $\Omega$, 
	 the set $\Lambda-\Lambda$ must be contained in
	$\{\hat{\1}_\Omega=0\}\cup\{0\}$. We conclude that 
	\begin{equation}
		\label{eqF5.2}
		 \hat{\1}_\Omega (k,0) =0, \quad k\in\Z\setminus \{0\}.
	\end{equation}
\par
	For each $x\in \R$ denote by $\Omega_x$ the $(d-1)$-dimensional polytope obtained
	by the intersection of $\Omega$ with the hyperplane $\{x\}\times \R^{d-1}$, and
	let $\varphi(x)$ be the $(d-1)$-dimensional volume of $\Omega_x$. Then the
	function $\varphi$ vanishes off the interval
	$I=\left[-\frac{1}{2},\frac{1}{2}\right]$, it is continuous on $I$, and $\varphi(
	\frac{1}{2})=\varphi(-\frac{1}{2})=|\Sigma|$. Notice that, by convexity, $\Omega_x$
	contains $\{x\}\times \Sigma$ for every $x\in I$. In particular this implies that 
	$\varphi(x)\ge |\Sigma|$, $x\in I$.
\par
 It follows from the definition of the
	function $\varphi$ that its Fourier transform is given by 
	\[ \hat{\varphi}(t)=\hat{\1}_\Omega(t,0), \quad t\in\R. \]
	Combining this with \eqref{eqF5.2} we obtain that $\hat{\varphi}$ vanishes on
	$\Z\setminus\{0\}$. Since $\varphi$ is supported on $I$, this implies that
	$\varphi$ is orthogonal in $L^2(I)$ to all the exponentials $\{e_k\}$,
	$k\in\Z\setminus \{0\}$. But as the system $E(\Z)$ is orthogonal and complete in
	$L^2(I)$, this is possible only if $\varphi$ is constant  on $I$. Hence
	$\varphi(x)=|\Sigma|$ for all $x\in I$. In turn, this implies that
	$\Omega_x=\{x\}\times\Sigma$, $x\in I$. We conclude that $\Omega=I\times \Sigma$,
	and so $\Omega$ is a prism, a contradiction.
\end{proof}
\begin{remark*}
One can see from the proof that the only property of the set \eqref{eqF5.1} that was
actually used was that its difference set contains $\Z\times \{0\}$. Hence the lemma
remains true if \eqref{eqF5.1} is replaced by any other set for which the latter property
is satisfied.  
\end{remark*}

\subsection{}
Denote by $Q=I^{d-1}$ the unit cube in $\R^{d-1}$. As usual, $I$ is the interval
$\left[ -\frac{1}{2},\frac{1}{2} \right]$.
\begin{lem}
	\label{lemF6}
	Assume that $\Sigma$ contains $Q$. If $\Omega$ is not a prism, then $\Lambda$ cannot
	be covered by the union of two translates of $\Z^d$. 
\end{lem}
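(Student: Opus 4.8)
The plan is to exploit the hypothesis $\Sigma\supseteq Q$, which guarantees $I^d=I\times Q\subseteq I\times\Sigma\subseteq\Omega$, and to test the spectrum against $\1_{I^d}$, whose transform $\hat{\1}_{I^d}(\xi)=\prod_{j=1}^{d}\tfrac{\sin\pi\xi_j}{\pi\xi_j}$ is especially simple on $\Z^d$. Suppose to the contrary that $\Lambda\subseteq(\Z^d+a)\cup(\Z^d+b)$; translating $\Lambda$ we may assume $0\in\Lambda$ and $a=0$, and set $\Lambda_1=\Lambda\cap\Z^d$, $\Lambda_2=\Lambda\cap(\Z^d+b)$. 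Since $\hat{\1}_{I^d}$ vanishes on $\Z^d\setminus\{0\}$ and equals $1$ at the origin, the Fourier expansion of $\1_{I^d}$ with respect to the basis $E(\Lambda)$ collapses on the coset $\Z^d$ to its single constant term, so by \lemref{lemPL6} one gets, a.e.\ on $\Omega$, an identity $\1_{I^d}=\tfrac1{|\Omega|}+e_b\,P$, where $P$ carries only frequencies in $\Z^d$ and hence is $\Z^d$-periodic as a function defined a.e.\ on $\R^d$.

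First I would extract a dichotomy. Because $I^d$ is a fundamental domain for $\Z^d$ and $e_bP=\1_{I^d}-\tfrac1{|\Omega|}$ equals the constant $1-\tfrac1{|\Omega|}$ on $I^d$, periodicity determines $e_bP$ everywhere, yielding
\[
\1_{I^d}(y)=\frac{1}{|\Omega|}+\frac{|\Omega|-1}{|\Omega|}\,e^{2\pi i\langle b,\lfloor y\rceil\rangle}\qquad\text{a.e.\ on }\Omega,
\]
where $\lfloor y\rceil\in\Z^d$ is the lattice point with $y-\lfloor y\rceil\in I^d$. Evaluating on $\Omega\setminus I^d$, where the left side vanishes, forces $e^{2\pi i\langle b,\lfloor y\rceil\rangle}=-1/(|\Omega|-1)$; taking moduli shows that either $\Omega=I^d$ (a prism, contradiction) or $|\Omega|=2$. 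Equivalently, Parseval for $\1_{I^d}$ gives $|\Omega|=1+\sum_{\lambda\in\Lambda_2}|\hat{\1}_{I^d}(\lambda)|^2\le1+\sum_{\lambda\in\Z^d+b}|\hat{\1}_{I^d}(\lambda)|^2=2$, using the identity $\sum_{m}|\hat{\1}_I(m+c)|^2=1$ in each coordinate; so $|\Omega|\le2$, with equality \emph{iff} $\Lambda_2$ contains every point of $\Z^d+b$ at which $\hat{\1}_{I^d}\ne0$.

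In the surviving case $|\Omega|=2$ I would use this equality. The points of $\Z^d+b$ with $\hat{\1}_{I^d}\ne0$ are exactly those whose $j$-th coordinate is $0$ when $b_j\in\Z$ and free in $\Z+b_j$ when $b_j\notin\Z$; hence $\Lambda_2$ contains a full progression $\Z+b_k$ along every coordinate axis $e_k$ with $b_k\notin\Z$ (other coordinates held fixed). If $b_1\notin\Z$ this progression is a set $(\Z+\theta)\times\{s\}$ as in \eqref{eqF5.1}, so \lemref{lemF5} applies directly and makes $\Omega$ a prism, the desired contradiction; the subcase $b\in\Z^d$ is absorbed into the dichotomy above, forcing $\Omega=I^d$. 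This settles every case in which the non-integrality of $b$ lies along the distinguished axis $e_1$ of the facet $F=\{\tfrac12\}\times\Sigma$.

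The main obstacle is the complementary case $b_1\in\Z$, i.e.\ $\Lambda\subseteq\Z\times\R^{d-1}$. Here the cube identity only produces a progression of $\Lambda$ in a \emph{transverse} direction $e_k$ $(k\ge2)$, and the difference set it supplies is $\Z e_k$ rather than $\Z\times\{0\}=\Z e_1$, so the remark following \lemref{lemF5} does not fire: that lemma's cross-section argument is tied to the facet pair $\pm F$ perpendicular to $e_1$, whereas $\Omega$ need neither have facets perpendicular to $e_k$ nor lie in a unit slab in the $x_k$-direction. I expect this to be the crux. I would attack it by converting the transverse progression into a one-dimensional orthogonality statement for the slice-volume $\varphi_k(x_k)=|\Omega\cap\{x_k=\,\cdot\,\}|$, whose transform is $\hat{\1}_\Omega(x_k e_k)$ and which, by the equality case together with $0\in\Lambda$, must vanish on $(\Z\setminus\{0\})\cup(\Z+b_k)$; then, combining this vanishing with the convexity and central symmetry of $\Omega$, I would try to force the slices to be constant, i.e.\ $\Omega$ to be a prism in the $x_k$-direction, contradicting the hypothesis. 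Carrying out this last deduction rigorously, rather than the earlier bookkeeping, is where the real difficulty lies.
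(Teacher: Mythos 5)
Your argument through the dichotomy ($\Omega=I^d$ or $|\Omega|=2$), the Parseval equality, and the two branches $b_1\notin\Z$ and $b\in\Z^d$ is sound. But the remaining case --- $b_1\in\Z$ while $b\notin\Z^d$ --- is a genuine gap, and the repair you sketch does not close it; it even starts from an unjustified claim. The vanishing of $\hat{\1}_\Omega$ on $(\Z+b_k)\vec e_k$ that your slice-volume plan needs is not a consequence of the Parseval equality together with $0\in\Lambda$: the equality only guarantees that $\Lambda$ contains the points of $\Z^d+b$ whose $j$-th coordinate lies in $\Z+b_j$ for \emph{every} non-integer direction $j$, and as soon as $b$ has two or more non-integer coordinates, none of these points lies on the $\vec e_k$-axis, so $(\Z+b_k)\vec e_k$ is not guaranteed to meet $\Lambda-\Lambda$ at all. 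And even where the vanishing does hold (a single non-integer coordinate), the cross-section argument of \lemref{lemF5} cannot be transplanted to a transverse direction: it relies on $\Omega$ lying in the unit slab $|x_1|\le\tfrac12$, so that the slice-volume function is supported on an interval of length $1$, where $E(\Z)$ is an orthogonal basis. In the $\vec e_k$-direction $\Omega$ can be arbitrarily wide, and no completeness statement for $E\big((\Z\setminus\{0\})\cup(\Z+b_k)\big)$ on a long interval is at hand. So the difficulty you flag at the end is exactly where a proof is missing.

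The idea you are missing is the opposite normalization, which is how the paper removes every case distinction. Do not translate $\Lambda$ to put $0$ \emph{in} it; instead, invoke \lemref{lemF5} first: since $\Lambda$ cannot contain the whole progression $\Z\times\{0\}$, you may translate $\Lambda$ by a suitable vector of $\Z\times\{0\}$ --- this preserves the inclusion $\Lambda\subset\Z^d\cup(\Z^d+\tau)$ --- so that $0\notin\Lambda$. Then $\hat{\1}_{I\times Q}$ vanishes at \emph{every} point of $\Lambda\cap\Z^d$, constant term included, so the expansion \eqref{eqF5.3} of $\1_{I\times Q}$ carries only frequencies from $\Z^d+\tau$, and by \lemref{lemPL6} it represents a function $\tilde f(x)=e^{2\pi i\langle\tau,x\rangle}g(x)$ with $g$ a $\Z^d$-periodic function, coinciding with $\1_{I\times Q}$ a.e.\ on $\Omega$. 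Now $|\tilde f|=|g|$ is $\Z^d$-periodic and equals $1$ a.e.\ on the fundamental domain $I\times Q$, hence equals $1$ a.e.\ on all of $\R^d$; since $\1_{I\times Q}=\tilde f$ a.e.\ on $\Omega$, the set $\Omega\setminus(I\times Q)$ must have measure zero, so $\Omega=I\times Q$ is a prism --- a contradiction, uniformly in $\tau$. In short, \lemref{lemF5} should be used at the start to annihilate the entire coset $\Z^d$ in the expansion, not at the end to dispose of one sub-case; that single change eliminates precisely the case your proof cannot handle.
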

\begin{proof}
	Suppose to the contrary that $\Lambda$ is contained in the union of two translates
	of $\Z^d$. By translating $\Lambda$ we may assume that 
	\begin{equation}
		\label{eqF6.1}
		 	 \Lambda \subset \Z^d\cup  ( \Z^d + \tau  )
	\end{equation}
	for some $\tau \in \R^d$. 
	According to \lemref{lemF5}, the spectrum $\Lambda$ cannot contain the whole set
	$\Z\times \{0\}$. This implies that by further translating $\Lambda$ by a certain vector
	in $\Z\times \{0\}$, we may additionally assume that $\Lambda$ does not contain
	the origin. 
	\par
	Since $\Sigma$ is assumed to contain $Q$, and since by convexity
	$\Omega$ contains $I\times \Sigma$, then $\Omega$ must contain $I\times Q$, the
	unit cube in $\R^d$. Hence the function $f=\1_{I\times Q} $ is supported by
	$\Omega$. Consider the Fourier expansion \eqref{eqF5.3} of this function
	$f$. Since $\hat{f}$ vanishes on all the points of $\Z^d$ except the origin, and
	since the origin does not belong to $\Lambda$, it follows from \eqref{eqF6.1} that only exponentials
	$e_\lambda$ such that $\lambda\in\Lambda \cap  ( \Z^d+\tau  )$ may have a
	non-zero coefficient in the expansion \eqref{eqF5.3}. Hence by \lemref{lemPL6} the right-hand side of
	\eqref{eqF5.3} represents a function $\tilde{f}$ of the form 
	\[ \tilde{f}(x)=e^{2\pi i \dotprod{\tau}{x}} g(x), \quad x\in\R^d, \]
	where $g$ is some $\Z^d$-periodic function, and $f$ coincides with $\tilde{f}$ 
	a.e.\ on $\Omega$. Notice that $|g|=|\tilde{f}|=|f|=1$ a.e.\ on $I\times Q$. By the
	periodicity of $g$ this implies that $|g|=1$ a.e.\ on $\R^d$. Hence
	$|f|=|\tilde{f}|=|g|=1$ a.e.\ on $\Omega$. In particular, $f$ cannot vanish on any
	subset of $\Omega$ of positive measure. On the other hand, by the definition of
	$f$ it does vanish on $\Omega\setminus (I\times Q)$. This is possible only if
	$\Omega=I\times Q$, namely, $\Omega$ is the unit cube in $\R^d$. But this
	contradicts the  assumption that $\Omega$ is not a prism, so the proof is
	complete.
\end{proof}


\section{Structure of spectrum, V} \label{secH3}

In this section  we complete the analysis of the spectrum in dimension
$d=3$. 
\subsection{}
Our assumptions will be the following.
\par
 Let $\Omega\subset\R^3$ be a convex polytope,
centrally symmetric and with centrally symmetric facets. We assume that $\Omega$ is not a
prism. Suppose that $\Omega$ is in the ``standard position'', namely, $\Omega=-\Omega$,
$F$ is a facet of $\Omega$ contained in $\{x_1=\frac{1}{2}\}$, and $F$ is symmetric about
	the point $(\frac{1}{2},0,0)$. Hence $F=\{\frac{1}{2}\}\times \Sigma$, where
	$\Sigma$ is a convex polygon in $\R^2$ such that
	$\Sigma=-\Sigma$. We  assume that
	$A=\{\frac{1}{2}\}\times\{\frac{1}{2}\}\times I$ is a subfacet of $F$, where
	$I=[-\frac{1}{2},\frac{1}{2}]$, and therefore
	$\{\frac{1}{2}\}\times I$ is a facet of $\Sigma$. 
	We also suppose that $\interior(\Omega)$ intersects each one of the two open
	half-spaces $\{x_2<\frac{1}{2}\}$ and $\{x_2>\frac{1}{2}\}$. 
\par
	Suppose now that $\Lambda$ is a spectrum for $\Omega$. Let
	$\Pi$ be the set constructed from $\Lambda$ in \secref{secD1}, and
	$\theta(s)$ be the function on $\Pi$ given by \lemref{lemD2}. Let $\Lambda'$ be
	the spectrum for $\Omega$ constructed from $\Lambda$ in \secref{secE1}, and
	$\Lambda''$ be the spectrum constructed from $\Lambda'$ in \secref{secH1}. We
	shall continue to use the notations $\Pi_j$, $\theta_j$, $\Lambda'_j$, $\Lambda''_j$  and
	$\Lambda''_\infty$ with the same meaning as in the previous sections. 
\par
	Our goal in the present section is to prove that, under the assumptions above, the function $\theta(s)$ is
	necessarily constant on $\Pi$. 
	\subsection{}
	It will be convenient to introduce the following notation. Let
	\begin{equation}
		\label{eqHS.3}
		G:=(\Z\times \R)\cup (\R\times \Z), 
	\end{equation}
	and 
	\begin{equation}
		\label{eqHS.4}
		G_0:=(\Z\times \R)\cup  (\R\times(\Z\setminus \{0\}) ).
	\end{equation}
\begin{lem}
	\label{lemHS.5}
Let $\Pi_j$ $(0\le j< \infty)$ be one of the components of $\Pi$, 
 and let $0\le k\le\infty$, $k\ne j$. Then we have
	\begin{equation}
		\label{eqH5.1.2}
		\Lambda''_k \subset \R\times \bigcap_{s \in \Pi_j} (s + G_0).
	\end{equation}
Also, if the set $\Lam''_k$ is not empty, then we have
	\begin{equation}
		\label{eqH5.1.3}
		\Pi_j \subset \bigcap_{(t,s) \in \Lambda''_k} (s + G_0).
	\end{equation}
\end{lem}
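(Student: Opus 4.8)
The plan is to derive both inclusions as immediate consequences of \lemref{lemH3}, after reformulating its conclusion. First I would observe that the set on the right-hand side of \eqref{eqH3.1} factors through the second and third coordinates as $\R\times G_0$, where $G_0$ is the set from \eqref{eqHS.4}: indeed $(\R\times\Z\times\R)\cup(\R\times\R\times(\Z\setminus\{0\}))=\R\times\big((\Z\times\R)\cup(\R\times(\Z\setminus\{0\}))\big)$. Thus \lemref{lemH3} asserts precisely that whenever $(t,s)\in\Lambda_k''$ and $s'\in\Pi_j$, the $\R^2$-components satisfy $s-s'\in G_0$ (the $\R$-component of the difference being unconstrained). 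I would also record the elementary observation that $G_0$ is symmetric, $G_0=-G_0$, which holds because each of $\Z$, $\R$ and $\Z\setminus\{0\}$ is invariant under negation.

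To establish \eqref{eqH5.1.2} I would fix a point $(t,s)\in\Lambda_k''$. By the reformulated \lemref{lemH3} we have $s-s'\in G_0$ for every $s'\in\Pi_j$, that is, $s\in s'+G_0$ for each such $s'$. Hence $s$ lies in $\bigcap_{s'\in\Pi_j}(s'+G_0)$, and therefore $(t,s)\in\R\times\bigcap_{s'\in\Pi_j}(s'+G_0)$, which is \eqref{eqH5.1.2}.

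To establish \eqref{eqH5.1.3} I would instead fix $s'\in\Pi_j$. For every $(t,s)\in\Lambda_k''$ we again have $s-s'\in G_0$, and the symmetry $G_0=-G_0$ then gives $s'-s\in G_0$, i.e.\ $s'\in s+G_0$. Intersecting over all $(t,s)\in\Lambda_k''$ yields $s'\in\bigcap_{(t,s)\in\Lambda_k''}(s+G_0)$, which is exactly \eqref{eqH5.1.3}. The non-emptiness of $\Lambda_k''$ is only needed to ensure the intersection on the right is a genuine subset of $\R^2$; otherwise the inclusion holds vacuously. I do not expect any real obstacle here, since all the analytic content is already carried by \lemref{lemH3}; the only care required is the bookkeeping of which factor of $G_0$ encodes integrality in which coordinate, together with the appeal to $G_0=-G_0$ in the second inclusion.
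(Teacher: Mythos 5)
Your proof is correct and takes essentially the same route as the paper, which likewise disposes of the lemma by noting that both \eqref{eqH5.1.2} and \eqref{eqH5.1.3} are mere reformulations of condition \eqref{eqH3.1} of \lemref{lemH3}. You simply spell out what the paper leaves implicit: the factorization of the right-hand side of \eqref{eqH3.1} as $\R\times G_0$, and the symmetry $G_0=-G_0$ needed to pass from $s-s'\in G_0$ to $s'\in s+G_0$ in the second inclusion.
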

In fact, each one of \eqref{eqH5.1.2} and \eqref{eqH5.1.3} is just a reformulation
of condition \eqref{eqH3.1}. Hence \lemref{lemHS.5} is a consequence
of \lemref{lemH3}.

\subsection{}
\begin{lem}
	\label{lemH11}
	If for some $0\le k\le \infty$, the set $\Lambda''_k$ is not empty, then 
	\begin{equation}
		\label{eqH11.1}
		\Lambda''_k-\Lambda''_k \not\subset \R\times G.
	\end{equation}
\end{lem}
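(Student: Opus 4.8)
The plan is to argue by contradiction. I would assume that $\Lambda''_k$ is nonempty but that $\Lambda''_k-\Lambda''_k\subset\R\times G$, and then derive a contradiction with one of the two standing hypotheses of this section, namely that $\interior(\Omega)$ meets $\{x_2>\tfrac12\}$ and that $\Omega$ is not a prism. The first move is to split into two cases. Projecting $\Lambda''_k$ onto its last two coordinates yields a set $P\subset\R^2$ satisfying $P-P\subset G=(\Z\times\R)\cup(\R\times\Z)$, so applying \lemref{lemG5} with ambient group $\R^2$ and the two subgroups $\Z\times\R$ and $\R\times\Z$, I obtain that either $P-P\subset\Z\times\R$ or $P-P\subset\R\times\Z$. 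In the first case every point of $\Lambda''_k$ has its second coordinate in a single coset $u_0+\Z$, i.e. $\Lambda''_k\subset\R\times(u_0+\Z)\times\R$; in the second case $\Lambda''_k\subset\R\times\R\times(v_0+\Z)$.

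The key device is a \emph{single-component} Fourier expansion with respect to the full spectrum $\Lambda''$. I would fix any $\lambda^*=(t^*,u^*,v^*)\in\Lambda''_k$ and take the function $f$ attached to $(t^*,(u^*,v^*))$ by \eqref{eqE6.1}; it is supported on $\Omega$ and has modulus $1$ on $I\times\Sigma$, which contains the unit cube $I\times I\times I$ since $\Sigma\supset I\times I$ by convexity. Expanding $f$ via \eqref{eqF5.3} and \lemref{lemPL6}, the crucial observation is that \lemref{lemH2} gives $\Lambda''_m-\Lambda''_k\subset\R\times\{\hat{\1}_\Sigma=0\}$ for every index $m\ne k$, so the factor $\hat{\1}_\Sigma(\,\cdot-(u^*,v^*))$ in $\hat f$ kills all terms coming from components other than $\Lambda''_k$; this works uniformly whether $k$ is finite or $k=\infty$. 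Hence the resulting locally-$L^2$ sum $\tilde f$ (which agrees with $f$ a.e.\ on $\Omega$) involves only $\lambda=(t,u,v)\in\Lambda''_k$. In the first case all such $u$ lie in $u_0+\Z$, so $\tilde f(x+(0,1,0))=e^{2\pi i u_0}\tilde f(x)$ and $|\tilde f|$ is periodic with respect to $(0,1,0)$; in the second case $|\tilde f|$ is periodic with respect to $(0,0,1)$.

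From this point the two cases reduce verbatim to the endgames of \lemref{lemG3} and \lemref{lemG4}. In the first, periodicity together with $|\tilde f|=1$ a.e.\ on $I\times I\times I$ gives $|\tilde f|=1$ a.e.\ on $I\times\R\times I$; comparing with $f$, which vanishes off $I\times\Sigma$, forces $\Omega\cap(I\times\R\times I)=I\times I\times I$, so that $\{x_2=\tfrac12\}$ is a supporting hyperplane of $\Omega$ — contradicting the hypothesis that $\interior(\Omega)$ intersects $\{x_2>\tfrac12\}$. In the second, one gets $|\tilde f|=1$ a.e.\ on $I\times I\times\R$, which forces $\Omega\cap(I\times I\times\R)=I\times\Sigma$; by \lemref{lemF4} this makes $\Omega$ a prism, again a contradiction. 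In either case the assumption $\Lambda''_k-\Lambda''_k\subset\R\times G$ is untenable, establishing \eqref{eqH11.1}.

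I expect the main obstacle to be recognizing and justifying that a single component $\Lambda''_k$ — which is not itself a spectrum — can be isolated inside a genuine Fourier expansion taken with respect to the whole spectrum $\Lambda''$. This is precisely what \lemref{lemH2} provides: the vanishing of $\hat{\1}_\Sigma$ on all cross-differences $\Lambda''_m-\Lambda''_k$ annihilates every other component simultaneously, so the periodicity defect carried by $\Lambda''_k$ alone propagates into $\tilde f$. A secondary point needing care is that one should invoke the periodicity of $|\tilde f|$ rather than of $\tilde f$ itself, since the common residues $u_0$ and $v_0$ need not be integers; this is harmless because the geometric conclusions use only $|\tilde f|$.
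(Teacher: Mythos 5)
Your proof is correct and takes essentially the same route as the paper's: both isolate the single component $\Lambda''_k$ inside the Fourier expansion of the function \eqref{eqE6.1} using \lemref{lemH2} together with \eqref{eqE6.1.2}, derive periodicity in the $x_2$- or $x_3$-direction via \lemref{lemPL6}, and reach the contradictions exactly as in \lemref{lemG3} and \lemref{lemG4} (the latter through \lemref{lemF4}), with \lemref{lemG5} disposing of the union $\R\times G$. The only cosmetic differences are that you invoke \lemref{lemG5} at the outset to split into two cases, where the paper invokes it at the end to combine the two exclusions \eqref{eqH11.3} and \eqref{eqH11.4}, and that you work with the periodicity of $|\tilde f|$ under a unimodular factor rather than translating $\Lambda$ to reduce the common coset to the integers --- both harmless variants of the same argument.
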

\begin{proof}
	The proof is very similar to that of \corref{corG6}, and therefore it will only be
	outlined. The proof involves several steps. 
\par \emph{Step 1}. 
	Let $(t_0,s_0)$ be a point in $\Lambda''_k$, and let $f$ be the function defined
	by \eqref{eqE6.1}. Then the Fourier expansion 
	\begin{equation}
		\label{eqH11.2}
		f=\frac{1}{|\Omega|}\sum_{\lambda\in\Lambda''}\hat{f}(\lambda)e_\lambda
	\end{equation}
	of $f$ with respect to the spectrum $\Lambda''$ consists only of terms
	corresponding to $\lambda\in\Lambda''_k$. This follows from \lemref{lemH2} and the
	expression \eqref{eqE6.1.2} for the Fourier transform of $f$. 
\par \emph{Step 2}. 
	We have 
	\begin{equation}
		\label{eqH11.3}
		\Lambda''_k-\Lambda''_k \not\subset \R\times\Z\times\R.
	\end{equation}
	Indeed, if this is not true then by translating $\Lambda$ we may assume that
	$\Lambda''_k\subset \R\times \Z\times \R$. Hence from the Fourier expansion
	\eqref{eqH11.2} it follows (\lemref{lemPL6}) that $f$ coincides a.e.\ on $\Omega$ with a function
	$\tilde{f}$ on $\R^3$ which is periodic with respect to the vector $(0,1,0)$. 
	As in the proof of \lemref{lemG3} this leads to a contradiction to the assumption 
	that $\interior(\Omega)$ intersects both half-spaces $\{x_2<\frac{1}{2}\}$ and
	$\{x_2>\frac{1}{2}\}$. 
\par \emph{Step 3}. 
	We have 
	\begin{equation}
		\label{eqH11.4}
		\Lambda''_k-\Lambda''_k\not\subset \R\times\R\times\Z.
	\end{equation}
	In the same way, if this does not hold then by translating $\Lambda$ we can assume that
	$\Lambda''_k\subset \R\times\R\times \Z$. As in Step 2 this implies that
	$f$ coincides a.e.\ on $\Omega$ with a function $\tilde{f}$ on $\R^3$ which is
	periodic with respect to the vector $(0,0,1)$. As in the proof of
	\lemref{lemG4}, this together with \lemref{lemF4} implies that $\Omega$ must be a
	prism, a contradiction. 
\par \emph{Step 4}. 
	We have 
	\[ \Lambda''_k-\Lambda''_k\not\subset \R\times G. \]
	This follows by combining \eqref{eqH11.3}, \eqref{eqH11.4} and
	\lemref{lemG5}. 
\end{proof}

\subsection{}
\begin{lem}
	\label{lemH5}
	Let $s,s',s''$ be three points in $\R^2$, and 
	\begin{equation}
		\label{eqH5.1b}
		X=(s+G)\cap (s'+G)\cap (s''+G). 
	\end{equation}
	If the points $s,s',s''$ are distinct modulo $\Z^2$, then $X-X\subset G$. 
\end{lem}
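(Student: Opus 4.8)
The plan is to reduce the statement to a purely combinatorial observation about the three points modulo $\Z^2$. First I would record the two elementary membership criteria that make $G$ easy to handle: for points $p=(p_1,p_2)$ and $a=(a_1,a_2)$ in $\R^2$, one has $p\in a+G$ if and only if $p_1-a_1\in\Z$ or $p_2-a_2\in\Z$; in particular $p-q\in G$ if and only if $p_1-q_1\in\Z$ or $p_2-q_2\in\Z$. Thus the assertion $X-X\subset G$ is exactly the claim that any two points of $X$ agree modulo $\Z$ in at least one coordinate.

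I would argue by contradiction. Suppose $p,q\in X$ violate this, so that $p_1-q_1\notin\Z$ and $p_2-q_2\notin\Z$. The key step is to show that, under this assumption, each of the three points $s,s',s''$ is forced to be congruent modulo $\Z^2$ to one of only two possible points. Fix $a\in\{s,s',s''\}$. Since $p\in a+G$ and $q\in a+G$, we have $p_1-a_1\in\Z$ or $p_2-a_2\in\Z$, and likewise $q_1-a_1\in\Z$ or $q_2-a_2\in\Z$. If $p_1-a_1\in\Z$, then $q_1-a_1\in\Z$ is impossible, since it would give $p_1-q_1\in\Z$; hence $q_2-a_2\in\Z$, and so $a\equiv(p_1,q_2)\pmod{\Z^2}$. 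Otherwise $p_2-a_2\in\Z$, and then $q_2-a_2\in\Z$ is likewise impossible, forcing $q_1-a_1\in\Z$; hence $a\equiv(q_1,p_2)\pmod{\Z^2}$.

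Since $p_1-q_1\notin\Z$, the two candidate points $(p_1,q_2)$ and $(q_1,p_2)$ are incongruent modulo $\Z^2$. Each of $s,s',s''$ lands in one of these two congruence classes, so by the pigeonhole principle two of them must coincide modulo $\Z^2$, contradicting the hypothesis that $s,s',s''$ are distinct modulo $\Z^2$. This contradiction yields $p-q\in G$, and hence $X-X\subset G$, as desired.

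As for difficulty, there is essentially no analytic obstacle here: $G$ is a union of cosets of the two coordinate axes, so everything is governed by the integrality of coordinate differences, and the whole argument lives on the torus $\R^2/\Z^2$. The only real content is the dichotomy in the middle paragraph, namely the observation that the \emph{failure} of the conclusion pins each of $s,s',s''$ down to exactly two positions modulo $\Z^2$. Once that observation is made, the pigeonhole finish is immediate, so I expect the proof to be short, with the case analysis being the main step rather than any computation.
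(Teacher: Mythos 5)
Your proof is correct. Note that the paper itself offers no proof of this lemma---it simply states ``This is not difficult to verify, and we omit the details''---so there is nothing to compare against; your argument (reducing membership in $a+G$ to congruence of a coordinate modulo $\Z$, observing that a failure $p-q\notin G$ forces each of $s,s',s''$ into one of the two classes $(p_1,q_2)$ or $(q_1,p_2)$ modulo $\Z^2$, and finishing by pigeonhole) is a complete and clean verification of exactly the kind the authors intended to leave to the reader.
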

This is not difficult to verify, and we omit the details.
\begin{lem}
	\label{lemH6}
	Suppose that there is a component $\Pi_j$ of the set $\Pi$ $(0\le j < \infty)$
	such that for any $0\le k\le \infty$, $k\ne j$, the set $\Lambda''_k$ is empty.
	Then $\Pi=\Pi_j$, namely $\Pi_j$ is the unique component of $\Pi$, and so the
	function $\theta(s)$ is constant on $\Pi$. 
\end{lem}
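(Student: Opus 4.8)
The plan is to argue by contradiction. Suppose that $\Pi \ne \Pi_j$, so that there is a second component $\Pi_m$ with $0 \le m < \infty$ and $m \ne j$; I aim to derive a contradiction, which then forces $\theta$ to be constant on $\Pi$. The first observation is that the hypothesis collapses the spectrum: since $\Lambda''_k$ is empty for every $k \ne j$ (including $k = \infty$), we simply have $\Lambda'' = \Lambda''_j$, and this set is nonempty because $\Lambda''$ is a spectrum for $\Omega$.

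Next I would feed the hypothetical component $\Pi_m$ into \lemref{lemHS.5}. Applying \eqref{eqH5.1.2} with the component taken to be $\Pi_m$ and the index $k = j$ gives $\Lambda'' = \Lambda''_j \subset \R \times \bigcap_{s \in \Pi_m}(s + G_0)$, so the projection of $\Lambda''$ onto the last two coordinates is contained in $\bigcap_{s \in \Pi_m}(s + G)$. By \corref{corG6} we have $\Pi_m - \Pi_m \not\subset G$, so I may fix $s, s' \in \Pi_m$ with $s - s' \notin G$; equivalently both coordinates of $s - s'$ are non-integers, and in particular $s$ and $s'$ lie in distinct cosets of $\Z^2$.

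I would then distinguish two cases according to whether $\Pi_m$ meets a third coset of $\Z^2$. If some $s'' \in \Pi_m$ lies outside $(s + \Z^2) \cup (s' + \Z^2)$, then $s, s', s''$ are distinct modulo $\Z^2$; the projection of $\Lambda''$ lies in $(s + G) \cap (s' + G) \cap (s'' + G)$, so \lemref{lemH5} yields $\Lambda'' - \Lambda'' \subset \R \times G$, contradicting \lemref{lemH11} with $k = j$ (recall $\Lambda''_j = \Lambda'' \ne \emptyset$). Otherwise $\Pi_m \subset (s + \Z^2) \cup (s' + \Z^2)$, so the projection of $\Lambda''$ lies in $(s + G) \cap (s' + G)$. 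Here the computational point is that, because both coordinates of $s - s'$ are non-integers, this intersection is a union of exactly two translates of $\Z^2$; combined with $\Lambda'' = \Lambda''_j \subset (\Z + \theta_j) \times \R^2$ from \eqref{eqH1.5}, it follows that $\Lambda''$ is covered by two translates of $\Z^3$. Since $\Sigma$ contains the square $I \times I$ (as $\{\tfrac{1}{2}\} \times I$ and $\{-\tfrac{1}{2}\} \times I$ are facets of the symmetric polygon $\Sigma$) and $\Omega$ is not a prism, this contradicts \lemref{lemF6} applied to the spectrum $\Lambda''$.

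I expect the second case to be the main obstacle. The first case is the generic configuration and closes cleanly using only \lemref{lemH5} and \lemref{lemH11}, whereas in the degenerate two-coset situation one must explicitly identify $(s + G) \cap (s' + G)$ as a union of two translates of $\Z^2$ and then invoke \lemref{lemF6}; this is the point where the hypothesis that $\Omega$ is not a prism genuinely enters.
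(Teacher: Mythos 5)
Your proof is correct, but it runs in the mirror-image direction of the paper's. The paper first applies \lemref{lemF6} to show that the projection of $\Lambda''=\Lambda''_j$ onto the last two coordinates contains \emph{three} points distinct modulo $\Z^2$; it then takes any hypothetical other component $\Pi_k$ and uses \eqref{eqH5.1.3} (spectrum points constraining the component) together with \lemref{lemH5} to conclude $\Pi_k-\Pi_k\subset G$, contradicting \corref{corG6}. You instead use \eqref{eqH5.1.2} (the component $\Pi_m$ constraining the spectrum) and place the contradiction on the spectrum side, via \lemref{lemH11} in your first case and via \lemref{lemF6} in your second. Both routes are valid, and they use the same toolkit with the roles of ``spectrum'' and ``component'' interchanged. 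One remark worth making: your case distinction is unnecessary, and your assessment of which case is the obstacle is backwards. The argument you give in the second case is fully general: the containment of the projection of $\Lambda''$ in $(s+G)\cap(s'+G)$ requires only the two points $s,s'\in\Pi_m$ with $s-s'\notin G$ supplied by \corref{corG6}, regardless of whether $\Pi_m$ meets a third coset of $\Z^2$; then the identification of this intersection as two translates of $\Z^2$ (which is exactly \lemref{lemH9} of the paper, so you need not recompute it), combined with $\Lambda''=\Lambda''_j\subset(\Z+\theta_j)\times\R^2$ from \eqref{eqH1.5}, puts $\Lambda''$ in two translates of $\Z^3$ and \lemref{lemF6} finishes. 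So your ``degenerate'' case is in fact a complete proof by itself, of comparable economy to the paper's, and it dispenses with \lemref{lemH5} and \lemref{lemH11} altogether; the first case can simply be deleted.
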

\begin{proof}
	The assumption means that $\Lambda''=\Lambda''_j$. By \eqref{eqH1.5} we therefore
	have 
	\[ \Lambda'' \subset (\Z+\theta_j)\times \R^2. \]
\par
	Consider the set of all points $s\in \R^2$ for which there is $t\in \Z+\theta_j$
	such that $(t,s)\in \Lambda''$. We claim that this set must contain at least three
	points which are distinct modulo $\Z^2$. Indeed, if this is not true then the
	spectrum $\Lambda''$ is contained in a union of two sets of the form
	\[ (\Z+\theta_j)\times (\Z^2+s), \quad s\in \R^2. \]
	But this would imply that $\Lambda''$ can be covered by the union of two
	translates of $\Z^3$, which is not possible according to \lemref{lemF6} since
	$\Omega$ is not a prism (notice that $\Sigma$ contains the cube $I\times I$, so
	we may use \lemref{lemF6}). Hence there must exist three points $(t,s)$, $(t',s')$,
	$(t'',s'')$ in the spectrum $\Lambda''$, such that $s,s',s''$ are distinct modulo
	$\Z^2$. 
\par
	Let $\Pi_k$,  $0\le k<\infty$, be any one of the components of $\Pi$ other than
	$\Pi_j$. Then by applying \eqref{eqH5.1.3} (with $j,k$ interchanged) we obtain
	\[ \Pi_k\subset (s+G)\cap (s'+G)\cap (s''+G). \]
	Using \lemref{lemH5} this implies that $\Pi_k-\Pi_k\subset G$, which is 
	impossible due to \corref{corG6}. It follows that $\Pi_j$ must be the unique
	component of $\Pi$. This means that $\theta(s)=\theta_j$ for all $s\in \Pi$,
	thus $\theta(s)$ is constant on $\Pi$. The lemma is therefore proved.
\end{proof}
\subsection{} 
At this point it will be useful to introduce the following:
\begin{definition}
	\label{defH8}
	Let $(s_0,s'_0)$ be a pair of points in $\R^2$ such that $s'_0-s_0\not\in G$. If
	$(s_1,s'_1)$ is another pair of points in $\R^2$, then  we say that $(s_1,s'_1)$ is
	\define{dual} to $(s_0,s'_0)$ if the following conditions are satisfied:
	\begin{enumerate-math}
		\item $s_1-s'_0$ and $s'_1-s_0$ are both in $\Z\times \R$;
		\item $s_1-s_0$ and $s'_1-s'_0$ are both in $\R\times \Z$. 
		\end{enumerate-math}
\end{definition}
For example, consider the pair $(s_0,s'_0)$ given by $s_0=(0,0)$, $s'_0=(\alpha,\beta)$,
where $\alpha,\beta$ are two real numbers which are both not in $\Z$. Then the pair
$(s_1,s'_1)$ given by $s_1=(\alpha,0)$, $s'_1=(0,\beta)$ is dual to $(s_0,s'_0)$.
\par
 It is not difficult to check that the duality relation just defined satisfies the following
properties:
\par
1.\ If $(s_1,s'_1)$ is dual to $(s_0,s'_0)$ then, since it was assumed that
$s'_0-s_0\not\in G$, it follows that also $s'_1-s_1\not\in G$.
\par
2.\ The duality relation is symmetric, that is, if $(s_1,s'_1)$ is dual to
$(s_0,s'_0)$, then also $(s_0,s'_0)$ is dual to $(s_1,s'_1)$. 
\par
3.\ Whether two given pairs are dual to each other or not depends only on the congruence classes of
the points modulo $\Z^2$. In other words, if $(s_1,s'_1)$ and $(s_2,s'_2)$ are two pairs such that
$s_2-s_1$ and $s'_2-s'_1$ are both in $\Z^2$, and if $(s_1,s'_1)$ is dual to a certain pair
$(s_0,s'_0)$, then also $(s_2,s'_2)$ is dual to $(s_0,s'_0)$. 
\par
4.\ For every pair $(s_0,s'_0)$ such that $s'_0-s_0 \not\in G$ there
exists a dual pair $(s_1,s'_1)$, and this dual pair is unique modulo $\Z^2$. 
\par
The reason for introducing the duality relation above is the following:
\begin{lem}
	\label{lemH9}
	Let $(s_0,s'_0)$ be a pair of points in $\R^2$ such that $s'_0-s_0\not\in G$. Then
	\begin{equation}
		\label{eqH9.1}
		(s_0+G)\cap (s'_0+G) = \Z^2+\{s_1,s'_1\},
	\end{equation}
	where $(s_1,s'_1)$ is any pair which is dual to $(s_0,s'_0)$. 
\end{lem}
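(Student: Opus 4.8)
The plan is to reduce the identity to coordinatewise congruences modulo $\Z$, where it becomes a short logical case check. Write $s_0=(a,b)$ and $s_0'=(a',b')$. Since $G=(\Z\times\R)\cup(\R\times\Z)$ consists of exactly those points having at least one integer coordinate, the hypothesis $s_0'-s_0\notin G$ says precisely that $a'-a\notin\Z$ and $b'-b\notin\Z$, that is, $a\not\equiv a'$ and $b\not\equiv b'$ modulo $\Z$. I would then note that a point $x=(x_1,x_2)$ lies in $s_0+G$ if and only if $x_1\equiv a$ or $x_2\equiv b \pmod{\Z}$, and lies in $s_0'+G$ if and only if $x_1\equiv a'$ or $x_2\equiv b' \pmod{\Z}$.

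Next I would combine the two disjunctions. Thus $x\in(s_0+G)\cap(s_0'+G)$ exactly when, with all congruences understood modulo $\Z$,
\[
(x_1\equiv a \ \text{or}\ x_2\equiv b)\ \text{and}\ (x_1\equiv a'\ \text{or}\ x_2\equiv b').
\]
Distributing produces four cases. The case $x_1\equiv a$ together with $x_1\equiv a'$ is vacuous, because $a\not\equiv a'$; likewise the case $x_2\equiv b$ together with $x_2\equiv b'$ is vacuous, because $b\not\equiv b'$. This elimination is precisely where the hypothesis $s_0'-s_0\notin G$ is used, and it carries the only real content of the argument. Only the two mixed cases survive: either $x_1\equiv a'$ and $x_2\equiv b$, i.e.\ $x\equiv(a',b)\pmod{\Z^2}$, or $x_1\equiv a$ and $x_2\equiv b'$, i.e.\ $x\equiv(a,b')\pmod{\Z^2}$.

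Finally I would match these two congruence classes with the dual pair. By Definition~\ref{defH8}, any pair $(s_1,s_1')$ dual to $(s_0,s_0')$ satisfies $s_1\equiv(a',b)$ and $s_1'\equiv(a,b')$ modulo $\Z^2$: condition (i) forces the first coordinates of $s_1,s_1'$ to agree with those of $s_0',s_0$, and condition (ii) forces the second coordinates to agree with those of $s_0,s_0'$. Hence $(s_0+G)\cap(s_0'+G)=\Z^2+\{(a',b),(a,b')\}=\Z^2+\{s_1,s_1'\}$, which is \eqref{eqH9.1}. Since $G$ is invariant under $\Z^2$, the right-hand side depends only on the classes of $s_1,s_1'$ modulo $\Z^2$, so the identity holds for any choice of dual pair, consistently with the uniqueness of the dual pair modulo $\Z^2$ recorded above. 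I do not expect a genuine obstacle here; once the problem is phrased in coordinates it is a finite tautology, the only point requiring care being the bookkeeping of which coordinate of each dual point is inherited from which of $s_0,s_0'$.
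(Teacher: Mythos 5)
Your proof is correct. The paper offers no argument for this lemma (it is dismissed with ``This can be checked easily''), and your coordinatewise reduction modulo $\Z^2$ --- eliminating the two diagonal cases via $s'_0-s_0\notin G$ and matching the two surviving congruence classes $(a',b)$ and $(a,b')$ with the dual pair from Definition~\ref{defH8} --- is precisely the routine verification the authors intended to leave to the reader.
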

This can be checked easily. It is also easy to see that \lemref{lemH9} implies:
\begin{lem}
	\label{lemH9.2}
Let $(s_0,s'_0)$ and $(s_1,s'_1)$ be two pairs of points in $\R^2$, such that 
$s'_0-s_0$  and $s'_1-s_1$ are both not in $G$. If the pairs
$(s_0,s'_0)$ and $(s_1,s'_1)$ are not dual to each other, then the set
\begin{equation}
	\label{eqH9.2.1}
	 Y = (s_0+G)\cap(s'_0+G)\cap (\Z^2+\{s_1,s'_1\})
\end{equation}
	is contained in a translate of $\Z^2$. 
\end{lem}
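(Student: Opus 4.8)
The plan is to combine \lemref{lemH9} with the elementary properties of the duality relation recorded after Definition~\ref{defH8}. First I would apply \lemref{lemH9} to the pair $(s_0,s'_0)$: since $s'_0-s_0\notin G$, there is a pair $(\sigma,\sigma')$ dual to $(s_0,s'_0)$, and
\[
(s_0+G)\cap(s'_0+G)=\Z^2+\{\sigma,\sigma'\}.
\]
Substituting this into \eqref{eqH9.2.1}, the set $Y$ becomes the intersection of the two sets $\Z^2+\{\sigma,\sigma'\}$ and $\Z^2+\{s_1,s'_1\}$, each of which is a union of at most two cosets of $\Z^2$.

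Next I would analyze this intersection coset by coset. A coset $\Z^2+p$ is contained in $Y$ precisely when $p$ is congruent modulo $\Z^2$ to one of $\sigma,\sigma'$ and also to one of $s_1,s'_1$; hence $Y$ is the union of those cosets represented by the classes in $\{\sigma,\sigma'\}\cap\{s_1,s'_1\}$, viewed as subsets of $\R^2/\Z^2$. Consequently $Y$ is contained in a single translate of $\Z^2$ unless the two-element sets $\{\sigma,\sigma'\}$ and $\{s_1,s'_1\}$ share \emph{two} classes modulo $\Z^2$. Here I would invoke the hypotheses: since $s'_0-s_0\notin G$, the dual pair satisfies $\sigma'-\sigma\notin G$, and as $\Z^2\subset G$ this yields $\sigma\not\equiv\sigma'$ modulo $\Z^2$; likewise $s'_1-s_1\notin G$ gives $s_1\not\equiv s'_1$. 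Thus two common classes would force the equality $\{\sigma,\sigma'\}=\{s_1,s'_1\}$ modulo $\Z^2$.

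Finally I would rule this equality out. If $\{s_1,s'_1\}$ coincided with the dual set $\{\sigma,\sigma'\}$ modulo $\Z^2$, then, using that the duality relation depends only on the congruence classes of the points modulo $\Z^2$ and that the dual pair of $(s_0,s'_0)$ is unique modulo $\Z^2$, the pair $(s_1,s'_1)$ would itself be dual to $(s_0,s'_0)$, contradicting the assumption that the two pairs are not dual. Therefore $\{\sigma,\sigma'\}$ and $\{s_1,s'_1\}$ share at most one class modulo $\Z^2$, so $Y$ meets at most one coset of $\Z^2$, i.e.\ it is contained in a translate of $\Z^2$, as claimed.

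I expect the only delicate point to be the bookkeeping in the last step. Since duality as defined is a relation between \emph{ordered} pairs, while the set $\{\sigma,\sigma'\}$ produced by \lemref{lemH9} is intrinsically unordered, I must be careful that coincidence of the unordered sets $\{\sigma,\sigma'\}$ and $\{s_1,s'_1\}$ modulo $\Z^2$ genuinely places $(s_1,s'_1)$, in one of its two orderings, into the dual relation with $(s_0,s'_0)$; this matching is exactly what the congruence-invariance and the uniqueness of the dual pair modulo $\Z^2$ are used for, and it is the step where the hypothesis ``not dual'' has to be read as excluding both orderings.
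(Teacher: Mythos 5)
Your proposal is correct and follows exactly the route the paper intends: the paper gives no written proof of \lemref{lemH9.2}, stating only that it follows easily from \lemref{lemH9}, and your argument (rewrite $(s_0+G)\cap(s'_0+G)$ as $\Z^2+\{\sigma,\sigma'\}$ via \lemref{lemH9}, intersect the two coset unions class by class, and exclude two common classes using properties 1, 3 and 4 of the duality relation) is precisely that missing derivation. Your closing caveat is also well taken: since duality is defined on ordered pairs but is preserved under simultaneously swapping both pairs, the hypothesis ``not dual to each other'' must indeed be read as excluding both orderings of $(s_1,s'_1)$ — otherwise the lemma would fail — and this reading is consistent with how the lemma is invoked in \lemref{lemH12}, where relabeling $s_1$ and $s'_1$ is harmless.
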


\subsection{}
\begin{lem}
	\label{lemH7}
	Suppose that the set $\Pi$ can be covered by the union of two translates of
	$\Z^2$. Then the function $\theta(s)$ is constant on $\Pi$.
\end{lem}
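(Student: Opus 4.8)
The plan is to argue by contradiction on the number of components of $\Pi$, but to run the argument at the level of the spectrum $\Lambda''$ and its pieces $\Lambda''_k$ $(0\le k\le\infty)$ rather than on $\Pi$ directly. The point is that distinct components of $\Pi$ are related only through the implicit zero set $\{\hat{\1}_\Sigma=0\}$ in \eqref{eqH1.2}, whereas for $\Lambda''$ the facet $\{\tfrac{1}{2}\}\times I$ of $\Sigma$ furnishes the \emph{explicit} sets $G$ and $G_0$ through \lemref{lemH3}, \lemref{lemH4} and \lemref{lemH11}.

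First I would set up the coset bookkeeping. Write the hypothesis as $\Pi\subset(\Z^2+a)\cup(\Z^2+b)$. Since $\Lambda'\subset\R\times\Pi$ by \eqref{eqH1.1}, and $\Lambda''$ is a weak limit of translates of $\Lambda'$ by integer multiples of $(0,1,0)$, while $\R\times(\Z^2+\{a,b\})$ is closed and invariant under these translations, the $\R^2$-coordinate of every point of $\Lambda''$ lies in $\Z^2+\{a,b\}$. Because $G$ is invariant under $\Z^2$, membership in $G$ depends only on the residue modulo $\Z^2$. Now each component $\Pi_j$ must meet both cosets: otherwise $\Pi_j-\Pi_j\subset\Z^2\subset G$, contradicting \corref{corG6}. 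Moreover \corref{corG6} forces some cross-coset difference out of $G$, so $b-a\notin G$; hence any same-coset difference lies in $\Z^2\subset G$ while any cross-coset difference lies outside $G$. The identical reasoning applied to \lemref{lemH11} shows that the $\R^2$-projection of each nonempty $\Lambda''_k$ $(0\le k\le\infty)$ also meets both cosets. (If the two cosets coincide, $\Pi$ sits in a single coset and \corref{corG6} already fails, so the cosets are distinct.)

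With this in hand the conclusion follows quickly. I would first rule out $\Lambda''_\infty\neq\emptyset$: by \lemref{lemH1} this would produce infinitely many finite components $\Pi_j$; picking $p\in\Pi_j$ and $q$ in the projection of $\Lambda''_\infty$ with $p\equiv a$ and $q\equiv b$ modulo $\Z^2$ yields $q-p\equiv b-a\notin G$, contradicting $q-p\in G_0\subset G$ supplied by \lemref{lemH3}. Thus $\Lambda''_\infty=\emptyset$. The same cross-coset trick, now with \lemref{lemH4} in place of \lemref{lemH3}, shows that two distinct finite components $\Lambda''_j,\Lambda''_k$ cannot both be nonempty. Since $\Lambda''$ is a spectrum, hence nonempty, exactly one $\Lambda''_{j_0}$ survives and every $\Lambda''_k$ with $k\neq j_0$ (including $k=\infty$) is empty. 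This is precisely the hypothesis of \lemref{lemH6}, which then gives $\Pi=\Pi_{j_0}$, so that $\theta$ is constant on $\Pi$.

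The main obstacle is forcing the explicit group $G$ to bear on a hypothesis that only constrains $\Pi$ modulo $\Z^2$: one must transport the two-coset information from $\Pi$ up to every component-projection of $\Lambda''$, and, crucially, dispose of the residual piece $\Lambda''_\infty$, which has no component $\Pi_j$ attached to it and so cannot be fed into \lemref{lemH6} directly. Once those two points are settled, a single mechanism closes every case: a cross-coset difference is forced out of $G$ by $b-a\notin G$ and simultaneously into $G$ by \lemref{lemH3}/\lemref{lemH4}.
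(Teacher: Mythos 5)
Your proof is correct, and it takes a genuinely different route to the same endpoint (\lemref{lemH6}). The paper's proof shares your opening moves --- containing $\Lambda''$ in $\R\times(\Z^2+\{s_0,s'_0\})$, and using \corref{corG6} to arrange that the two coset representatives lie in a single component $\Pi_0$ with difference outside $G$ --- but from there it traps every $\Lambda''_k$, $1\le k\le\infty$, inside $\R\times\bigl((s_0+G)\cap(s'_0+G)\bigr)$ via \eqref{eqH5.1.2}, and identifies this intersection, through the duality relation (Definition \ref{defH8}, \lemref{lemH9}), as $\R\times(\Z^2+\{s_1,s'_1\})$ with $(s_1,s'_1)$ dual to $(s_0,s'_0)$; since dual pairs are incongruent to $s_0,s'_0$ modulo $\Z^2$, each such $\Lambda''_k$ lies in two disjoint sets and is empty --- all pieces, $\Lambda''_\infty$ included, are eliminated in one stroke, with no appeal to \lemref{lemH11}, \lemref{lemH4} or \lemref{lemH1}. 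You avoid the duality machinery entirely and instead run pairwise cross-coset contradictions: \lemref{lemH11} forces the projection of every nonempty piece to meet both cosets, and then \lemref{lemH3} (against a finite component, killing $\Lambda''_\infty$) and \lemref{lemH4} (between two nonempty finite pieces) place a cross-coset difference inside $G_0\subset G$, while your bookkeeping ($b-a\notin G$, again from \corref{corG6}) places it outside $G$. What the paper's route buys is economy: \lemref{lemH9} is elementary, whereas \lemref{lemH11} is the heavy function-theoretic ingredient. What yours buys is that the duality relation is not needed here at all (the paper develops it anyway for \lemref{lemH12}), and that \lemref{lemH4}, which the paper states but pointedly never uses, finally earns its keep. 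Two streamlinings are possible in your argument: \lemref{lemH3} alone could replace \lemref{lemH4}, since a finite component $\Pi_j$ itself meets both cosets and can play the role of the second piece in every case; and the appeal to \lemref{lemH1} is unnecessary --- you only need one finite component to exist, which holds simply because $\Pi\ne\emptyset$. All the lemmas you invoke are established before and independently of \lemref{lemH7}, so there is no circularity.
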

\begin{proof}
	By the assumption of the lemma there exist two points $s_0,s'_0\in\R^2$ such that 
	\begin{equation}
		\label{eqH7.1}
		\Pi\subset \Z^2+\{s_0,s'_0\}.
	\end{equation}
	Due to \eqref{eqH1.1} we have $\Lambda'\subset \R\times\Pi$, and together with
	\eqref{eqH7.1} this implies that $\Lambda'$ is contained in the set
	\begin{equation}
		\label{eqH7.2}
		\R\times  (\Z^2+\{s_0,s'_0\} ).
	\end{equation}
	Hence all the sets in \eqref{eqH1.3}, as well as their weak limit $\Lambda''$, are
	also contained in \eqref{eqH7.2}.
\par
 The set $\Pi$ has at least one component
	$\Pi_0$. Since by \corref{corG6} we have $\Pi_0-\Pi_0\not\subset G$, we may assume
	that $s_0,s'_0$ both belong to $\Pi_0$ and that $s'_0-s_0\not\in G$. Hence using
	\eqref{eqH5.1.2} for $j=0$ we conclude that 
	\[ \Lambda''_k\subset \R\times  ( (s_0+G)\cap (s'_0+G) )\]
	for every $1\le k\le\infty$. In turn, by \lemref{lemH9} this implies that
	$\Lambda''_k$ is contained in a set of the form 
	\begin{equation}
		\label{eqH7.3}
		\R\times  (\Z^2+\{s_1,s'_1\} ),
	\end{equation}
	where $(s_1,s'_1)$ is a pair which is dual to $(s_0,s'_0)$. 
\par
We conclude that for
	every $1\le k\le\infty$, the set $\Lambda''_k$ is contained in both
	\eqref{eqH7.2} and \eqref{eqH7.3}, hence $\Lambda''_k$ must be the empty set. Now
	\lemref{lemH6} allows us to deduce that $\Pi_0$ is the unique component of
	$\Pi$, and that $\theta(s)$ is a constant function on $\Pi$. The lemma is thus
	proved.
\end{proof}
\begin{lem}
	\label{lemH10}
	Suppose that one of the components $\Pi_j$ of $\Pi$ cannot be covered by the union
	of two translates of $\Z^2$. Then the function $\theta(s)$ is constant on
	$\Pi$. 
\end{lem}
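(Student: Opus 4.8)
The plan is to reduce \lemref{lemH10} to \lemref{lemH6} by showing that the hypothesis forces all the ``other'' components $\Lambda''_k$ of the spectrum $\Lambda''$ to be empty. Denote by $\Pi_j$ the component of $\Pi$ that cannot be covered by the union of two translates of $\Z^2$ (so $0\le j<\infty$). The key claim I would establish is that $\Lambda''_k$ is empty for every $k\neq j$, $0\le k\le\infty$; once this is known, \lemref{lemH6} applies verbatim and yields that $\Pi=\Pi_j$ is the unique component of $\Pi$, so $\theta(s)$ is constant on $\Pi$.

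To prove the claim I would argue by contradiction: suppose $\Lambda''_k$ is nonempty for some $k\neq j$. The crucial input is \lemref{lemH11}, which gives $\Lambda''_k-\Lambda''_k\not\subset\R\times G$; unpacking this produces two points $(t_0,s_0),(t'_0,s'_0)\in\Lambda''_k$ with $s'_0-s_0\notin G$. Now I would invoke the nonempty case \eqref{eqH5.1.3} of \lemref{lemHS.5}, which gives $\Pi_j\subset\bigcap_{(t,s)\in\Lambda''_k}(s+G_0)$, and in particular $\Pi_j\subset(s_0+G_0)\cap(s'_0+G_0)$. Since $G_0\subset G$, this yields $\Pi_j\subset(s_0+G)\cap(s'_0+G)$. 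Because $s'_0-s_0\notin G$, \lemref{lemH9} identifies this intersection explicitly as $\Z^2+\{s_1,s'_1\}$ for a pair $(s_1,s'_1)$ dual to $(s_0,s'_0)$, that is, as the union of two translates of $\Z^2$. Hence $\Pi_j$ would be covered by two translates of $\Z^2$, contradicting the hypothesis of the lemma. This forces $\Lambda''_k$ to be empty for every $k\neq j$, completing the claim.

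The argument is short once the earlier machinery is in place, so there is no heavy computation; the delicate part is the logical packaging. I would take care that the reasoning runs uniformly for finite $k$ and for $k=\infty$ — this is precisely why \lemref{lemH11} and the relation \eqref{eqH5.1.3} were stated for the whole range $0\le k\le\infty$ — and that the two points produced by \lemref{lemH11} genuinely satisfy $s'_0-s_0\notin G$ (not merely a nonzero difference), since it is exactly this property that licenses the application of \lemref{lemH9}. I do not expect to need \corref{corG6} in this lemma, in contrast to the companion \lemref{lemH7}: here the non-coverability hypothesis on $\Pi_j$ plays the role that \corref{corG6} played there, directly contradicting the conclusion $\Pi_j\subset\Z^2+\{s_1,s'_1\}$.
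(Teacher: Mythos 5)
Your proof is correct, but it runs the contradiction in the opposite direction from the paper's and rests on a different geometric kernel. The paper starts from the hypothesis: non-coverability gives three points $s,s',s''\in\Pi_j$ distinct modulo $\Z^2$; then \eqref{eqH5.1.2} places every $\Lambda''_k$ $(k\ne j)$ inside $\R\times X$ with $X=(s+G)\cap(s'+G)\cap(s''+G)$, and \lemref{lemH5} gives $X-X\subset G$, so $\Lambda''_k-\Lambda''_k\subset\R\times G$ and \lemref{lemH11} forces $\Lambda''_k$ to be empty. You instead start from an assumed nonempty $\Lambda''_k$, use \lemref{lemH11} to extract two of its points whose $\R^2$-components differ by a vector outside $G$, and then use the other half of \lemref{lemHS.5} --- the inclusion \eqref{eqH5.1.3}, which constrains $\Pi_j$ rather than $\Lambda''_k$ --- together with \lemref{lemH9} to exhibit a covering of $\Pi_j$ by two translates of $\Z^2$, contradicting the hypothesis head-on. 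So the two arguments use dual halves of \lemref{lemHS.5}, and where the paper needs the three-translate \lemref{lemH5}, you need the two-translate \lemref{lemH9} and the duality relation of Definition \ref{defH8}. Your route makes the role of the non-coverability hypothesis completely transparent and dispenses with \lemref{lemH5} altogether; the paper's route avoids invoking the duality machinery at this stage, reserving it for \lemref{lemH7} and \lemref{lemH12}. Your points of care are exactly the right ones: both \lemref{lemH11} and \eqref{eqH5.1.3} are stated in the paper for the full range $0\le k\le\infty$, and \lemref{lemH11} does deliver a difference outside $\R\times G$ (not merely a nonzero one), which is precisely what licenses \lemref{lemH9}; likewise, neither your proof nor the paper's needs \corref{corG6} here.
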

\begin{proof}
	The assumption means that the component $\Pi_j$ contains three points $s,s',s''$
	which are distinct modulo $\Z^2$. Hence by \lemref{lemH5} the set $X$ defined by
	\eqref{eqH5.1b} satisfies $X-X\subset G$. By \eqref{eqH5.1.2}, for any $0\le k\le
	\infty$, $k\ne j$, we have $\Lambda''_k\subset \R\times X$, so it follows that 
	\[ \Lambda''_k-\Lambda_k''\subset \R\times G. \]
	But according to \lemref{lemH11} this is possible only if $\Lambda''_k$ is empty.
	We conclude that all the sets $\Lambda''_k$ such that $0\le k\le \infty$,
	$k\ne j$, are empty. By \lemref{lemH6} this implies that $\Pi_j$ is the unique
	component of $\Pi$, and $\theta(s)$ is constant on $\Pi$, as we had to show. 
\end{proof}

\subsection{}
\begin{lem}
	\label{lemH12}
	Suppose that the function $\theta(s)$ is not constant on $\Pi$. Then there exist
	two components $\Pi_{j_0}$ and $\Pi_{j_1}$ $(j_0 \neq j_1)$ of the set $\Pi$, and
	there are points $s_0,s'_0\in \Pi_{j_0}$ and $s_1,s'_1\in\Pi_{j_1}$, such that:
	\begin{enumerate-math}
	\item \label{lemH12.item1}
		$\Pi_{j_0}$ is contained in the set 
		\begin{equation}
			\label{eqH12.11}
			X_0 := \Z^2+\{s_0,s'_0\},
		\end{equation}
		while $\Pi_{j_1}$ is contained in 
		\begin{equation}
			\label{eqH12.12}
			X_1:= \Z^2+\{s_1,s'_1\};
		\end{equation}
	\item  \label{lemH12.item2}
		$\Lambda''_{j_0}\subset (\Z+\theta_{j_0})\times X_0$
		and $\Lambda''_{j_1}\subset 
		(\Z+\theta_{j_1})\times X_1$;
	\item  \label{lemH12.item3}
		the two pairs $(s_0,s'_0)$ and $(s_1,s'_1)$ are dual to each other;
	\item \label{lemH12.item4}
		$\Lambda''_k$ is empty for every $0\le k\le\infty$, $k\ne j_1$, $k\ne j_2$.
	\end{enumerate-math}
\end{lem}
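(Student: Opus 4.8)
The plan is to assume, for contradiction with the eventual goal of the section, that $\theta(s)$ is not constant on $\Pi$, and to locate the two components $\Pi_{j_0},\Pi_{j_1}$ by a short sequence of reductions, most of which are contrapositives of the lemmas already proved. First I would record what the standing hypothesis buys us. By \corref{corG6} every component satisfies $\Pi_j-\Pi_j\not\subset G$; by the contrapositive of \lemref{lemH10} every component $\Pi_j$ is nevertheless covered by a union of two translates of $\Z^2$; and by the contrapositive of \lemref{lemH7} the whole set $\Pi$ is not. The first two facts together show that each $\Pi_j$ occupies exactly two cosets of $\Z^2$ and meets both, so one may pick $s_0,s'_0\in\Pi_j$ with $s'_0-s_0\notin G$ and write $\Pi_j\subset \Z^2+\{s_0,s'_0\}$. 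Finally, by the contrapositive of \lemref{lemH6}, for every component $\Pi_j$ there is an index $k\neq j$ (with $0\le k\le\infty$) for which $\Lambda''_k$ is nonempty.

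The central computational device is the following. Fix a finite component $\Pi_{j_0}$ and a pair $s_0,s'_0\in\Pi_{j_0}$ with $s'_0-s_0\notin G$, let $(s_1,s'_1)$ be the pair dual to $(s_0,s'_0)$, and put $X_0=\Z^2+\{s_0,s'_0\}$, $X_1=\Z^2+\{s_1,s'_1\}$. From $\Pi_{j_0}\subset X_0$ and \eqref{eqH5.1.2} I obtain, for every $k\neq j_0$, that $\Lambda''_k\subset\R\times((s_0+G_0)\cap(s'_0+G_0))\subset\R\times((s_0+G)\cap(s'_0+G))=\R\times X_1$, the last equality by \lemref{lemH9}. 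A direct check using the two duality conditions of \define{Definition~\ref{defH8}} together with $s'_0-s_0\notin G$ shows that all four congruences between $\{s_0,s'_0\}$ and $\{s_1,s'_1\}$ modulo $\Z^2$ fail, so $X_0\cap X_1=\emptyset$. Symmetrically, whenever $\Lambda''_k$ is nonempty \lemref{lemH11} forces it to meet both cosets of the relevant two-coset set, which through \eqref{eqH5.1.3} and \lemref{lemH9} squeezes the component on the other side into the dual pair of cosets.

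With this device I would carry out three steps. Step one: eliminate $\Lambda''_\infty$. If $\Lambda''_\infty\neq\emptyset$ then $\Lambda''_\infty\subset\R\times X_1$ and, by \lemref{lemH11}, it meets both cosets of $X_1$; applying \eqref{eqH5.1.3} to each finite $\Pi_j$ against $\Lambda''_\infty$ and using \lemref{lemH9} squeezes every $\Pi_j$ into $X_0$, so $\Pi\subset X_0$ would be covered by two translates of $\Z^2$, contradicting \lemref{lemH7}; hence $\Lambda''_\infty=\emptyset$. Step two: by the last preliminary fact there is a now necessarily finite index $j_1\neq j_0$ with $\Lambda''_{j_1}\neq\emptyset$, and by \lemref{lemH11} it meets both cosets of $X_1$; for any third index $j_2\neq j_0,j_1$, \lemref{lemH4} gives $\Lambda''_{j_2}-\Lambda''_{j_1}\subset\R\times G$, so intersecting against the two cosets of $\Lambda''_{j_1}$ lands $\Lambda''_{j_2}$ in $\R\times(X_0\cap X_1)=\emptyset$, which is item (4); moreover $\Lambda''_{j_0}$ must itself be nonempty, since otherwise $\Lambda''_{j_1}$ would be the only nonempty component and \lemref{lemH6} would make $\theta$ constant. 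Step three: items (1) for $\Pi_{j_0}$ and (3) hold by construction, while $\Pi_{j_1}\subset X_1$ follows by applying \eqref{eqH5.1.3} to $\Pi_{j_1}$ against the nonempty $\Lambda''_{j_0}$ (which meets both cosets of $X_0$) and \lemref{lemH9}; I then replace $s_1,s'_1$ by genuine representatives of $\Pi_{j_1}$ in its two cosets, which is legitimate since duality depends only on classes modulo $\Z^2$. Item (2) is immediate because $X_0,X_1$ are $\Z^2$-invariant, so the weak limits defining $\Lambda''_{j_0},\Lambda''_{j_1}$ stay in $\R\times X_0$ and $\R\times X_1$.

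I expect the main obstacle to be Step one, the elimination of $\Lambda''_\infty$: this is exactly the point where the possibility of infinitely many components $\Pi_j$ (noted after \lemref{lemH1}) must be neutralized, and it is what guarantees that both indices $j_0,j_1$ are honest finite components rather than the exceptional set $\Lambda''_\infty$. The remaining bookkeeping — verifying $X_0\cap X_1=\emptyset$ from the duality relations and tracking which coset each point occupies so that the repeated invocations of \lemref{lemH9} and \lemref{lemH11} are valid — is routine but must be handled with care.
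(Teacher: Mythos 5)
Your proof is correct, but it takes a genuinely different route from the paper's, so a comparison is worthwhile. Both arguments open identically: \corref{corG6} together with the contrapositives of Lemmas \ref{lemH10}, \ref{lemH7} and \ref{lemH6} pin each component into exactly two cosets of $\Z^2$, show that $\Pi$ itself needs more than two, and supply a second active index. From there the paper works geometrically: it takes $s_1$ to be an \emph{actual point} of $\Pi$ outside $X_0$ (this is how $\Pi_{j_1}$ is found), kills every other $\Lambda''_k$ --- including $\Lambda''_\infty$ --- in one stroke via the triple-intersection \lemref{lemH5} combined with \lemref{lemH11}, completes the pair $s_1,s'_1$ inside $\Pi_{j_1}$ by \corref{corG6} and \lemref{lemH10}, and proves item (3) \emph{last}, by contradiction, through \lemref{lemH9.2}: were the pairs not dual, $\Lambda''_{j_0}$ and $\Lambda''_{j_1}$ would also vanish and $\Lambda''$ would be empty. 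You invert this: $(s_1,s'_1)$ is \emph{defined} as the dual pair, so item (3) is automatic once you transfer to genuine representatives of $\Pi_{j_1}$ (legitimate by the congruence-invariance of duality noted after Definition~\ref{defH8}); the index $j_1$ is located spectrally, from nonemptiness of $\Lambda''_{j_1}$, which is exactly why you must do two things the paper never needs --- eliminate $\Lambda''_\infty$ first (your Step one, so that $j_1$ is an honest finite index) and show $\Lambda''_{j_0}\neq\emptyset$ (needed for your squeeze $\Pi_{j_1}\subset X_1$ via \eqref{eqH5.1.3}); and item (4) is derived from \lemref{lemH4} (which the paper states explicitly that it does not use) together with the disjointness $X_0\cap X_1=\emptyset$, in place of \lemref{lemH5} and \lemref{lemH9.2}. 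Each route buys something: the paper's is shorter, since the lemma claims no nonemptiness and the paper accordingly proves none; yours makes the duality structural rather than derived, replaces the two ad hoc intersection lemmas by the single symmetric dual-coset computation, and yields as a by-product the (true, though unstated) fact that $\Lambda''_{j_0}$ and $\Lambda''_{j_1}$ are both nonempty. I verified the individual steps --- the failure of all four cross-congruences between $\{s_0,s'_0\}$ and $\{s_1,s'_1\}$ modulo $\Z^2$ (hence $X_0\cap X_1=\emptyset$) does follow from $s'_0-s_0\notin G$ and the two duality conditions, and each invocation of the pattern ``a nonempty set lying in two cosets whose difference is outside $G$ must meet both (\lemref{lemH11}), whereupon \eqref{eqH5.1.3} and \lemref{lemH9} squeeze the other side into the dual cosets'' is applied under exactly those hypotheses --- so there is no gap.
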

\begin{proof}
	Assume that the function $\theta(s)$ is not constant on $\Pi$. Let $\Pi_{j_0}$ be
	one of the components of $\Pi$. By \corref{corG6} we have
	$\Pi_{j_0}-\Pi_{j_0}\not\subset G$, hence there exist two points $s_0,s'_0$ in
	$\Pi_{j_0}$ such that $s'_0-s_0\not\in G$. Observe that by \lemref{lemH10} the
	component $\Pi_{j_0}$ must be contained in the union of two translates of
	$\Z^2$, which are necessarily given by $\Z^2+s_0$ and $\Z^2+s'_0$. That is, 
	\begin{equation}
		\label{eqH12.2}
		\Pi_{j_0}\subset \Z^2+\{s_0,s'_0\}.
	\end{equation}
\par
	By \lemref{lemH7}, the set $\Pi$ cannot be covered by the union of two translates
	of $\Z^2$. Hence the set $\Pi$ must contain some point $s_1$ which is distinct
	modulo $\Z^2$ from both $s_0$ and $s'_0$. According to \eqref{eqH12.2}, the new
	point $s_1$ cannot belong to $\Pi_{j_0}$, hence it belongs to some other
	components $\Pi_{j_1}$. 
\par
Using \eqref{eqH5.1.2} it follows that for every $0\le k\le
	\infty$, $k\ne j_0$, $k\ne j_1$, we have 
	\[ \Lambda''_k\subset \R\times  ( (s_0+G)\cap (s'_0+G) \cap (s_1+G) ).\]
	But then \lemref{lemH5} implies that $\Lambda''_k-\Lambda''_k\subset \R\times G$.
	According to \lemref{lemH11} this is not possible unless $\Lambda''_k$ is empty.
	We conclude that all the sets $\Lambda''_k$, where $0\le k\le \infty$, $k\ne j_0$,
	$k\ne j_1$, are empty. 
\par
Due to \corref{corG6}, the component $\Pi_{j_1}$ cannot be
	contained in the set $\Z^2 + s_1$, hence there is another point $s'_1$ in
	$\Pi_{j_1}$ which is not congruent to $s_1$ modulo $\Z^2$. It then follows from
	\lemref{lemH10} that 
	\begin{equation}
		\label{eqH12.3}
		\Pi_{j_1}\subset \Z^2+\{s_1,s'_1\}.
	\end{equation}
	In turns, this implies that we must have $s'_1-s_1\not\in G$, again by \corref{corG6}.
\par
	Recalling the definition of the sets $\Lambda''_{j_0}$ and $\Lambda''_{j_1}$,
	the conditions \eqref{eqH12.2} and \eqref{eqH12.3} now imply that the property
	\ref{lemH12.item2} in the lemma is satisfied.
\par
 It remains to show that the pairs
	$(s_0,s'_0)$ and $(s_1,s'_1)$ are dual to each other. If this is not the case, then
	by \lemref{lemH9.2} the set $Y$ defined by \eqref{eqH9.2.1} 
	is contained in a translate of $\Z^2$. But we have $\Lambda''_{j_1}\subset
	(\Z+\theta_{j_1}) \times Y$, due to \eqref{eqH5.1.2} and  property
	\ref{lemH12.item2}. This implies that
	$\Lambda''_{j_1}-\Lambda''_{j_1}\subset \Z\times\Z^2$, and consequently
	$\Lambda''_{j_1}$ must be empty by \lemref{lemH11}. In a completely similar way we
	can also deduce that $\Lambda''_{j_0}$ must be empty. But this yields that all the
	sets $\Lambda''_k$, for every $0\le k\le\infty$, are empty, which is impossible
	since $\Lambda''$ cannot be empty being a spectrum for $\Omega$. This
	contradiction confirms that $(s_0,s'_0)$ and $(s_1,s'_1)$ must be dual to each
	other, and concludes the proof.
\end{proof}

\subsection{}
\begin{lem}
	\label{lemH13}
	The function $\theta(s)$ is necessarily constant on $\Pi$. 
\end{lem}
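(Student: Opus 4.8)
The plan is to argue by contradiction: suppose that $\theta(s)$ is \emph{not} constant on $\Pi$. Then \lemref{lemH12} applies and furnishes two distinct components $\Pi_{j_0},\Pi_{j_1}$ of $\Pi$, together with points $s_0,s_0'\in\Pi_{j_0}$ and $s_1,s_1'\in\Pi_{j_1}$, such that $\Pi_{j_0}\subset X_0$, $\Pi_{j_1}\subset X_1$ (with $X_0,X_1$ as in \eqref{eqH12.11}, \eqref{eqH12.12}), the inclusions $\Lambda''_{j_0}\subset(\Z+\theta_{j_0})\times X_0$ and $\Lambda''_{j_1}\subset(\Z+\theta_{j_1})\times X_1$ hold, the pairs $(s_0,s_0')$ and $(s_1,s_1')$ are dual, and $\Lambda''_k$ is empty for every $0\le k\le\infty$ with $k\ne j_0,j_1$. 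Recall that by \corref{corG6} we have $s_0'-s_0\notin G$ and $s_1'-s_1\notin G$, where $G$ is the group \eqref{eqHS.3}. Thus $\Lambda''=\Lambda''_{j_0}\cup\Lambda''_{j_1}$, and both sets on the right are nonempty, for otherwise $\Pi$ would consist of a single component and $\theta$ would be constant by \lemref{lemH6}. By translating $\Lambda$ we normalize so that $s_0\in\Z^2$ and $\theta_{j_0}=0$; the duality then places the four $s$-classes of $\Lambda''$ at the vertices of a ``rectangle'' in $\R^2/\Z^2$, and $\Lambda''$ is distributed over the corresponding four translates of $\Z^3$.

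Next I would extract from each component a quasi-periodic description of $\1_\Sigma$. Fix $(t_0,s_0)\in\Lambda''_{j_0}$ and let $f$ be the function \eqref{eqE6.1} associated with it; it is supported by $\Omega$. By \lemref{lemH2} the transform $\hat f$ vanishes on all of $\Lambda''_{j_1}$, and the integer-fiber reasoning used in \lemref{lemE4} (reproduced in Step~1 of \lemref{lemH11}) shows that the only surviving terms in the expansion of $f$ with respect to $\Lambda''$ are those with $t=t_0$. Hence, by \lemref{lemPL6}, $f$ agrees a.e.\ on $\Omega$ with a function $\tilde f(x_1,x_2,x_3)=e^{2\pi i t_0 x_1}g(x_2,x_3)$, where the frequencies of $g$ all lie in $X_0$; comparing $\tilde f$ with the explicit form of $f$ and using $\Omega\subset I\times\R^2$ gives $g=\1_\Sigma\,e_{s_0}$ on the projection $\Sigma^{\ast}$ of $\Omega$ onto the $(x_2,x_3)$-plane. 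Running the same construction from a base point $\hat s_1$ of $\Lambda''_{j_1}$ produces $h$ whose frequencies lie in $X_1$ and which satisfies $h=\1_\Sigma\,e_{\hat s_1}$ on $\Sigma^{\ast}$.

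The heart of the matter is to convert these two descriptions into a contradiction with the assumption that $\Omega$ is not a prism. Consider $\Phi:=e_{-s_0}g-e_{-\hat s_1}h$. On the one hand $\Phi$ vanishes a.e.\ on $\Sigma^{\ast}$, since both $e_{-s_0}g$ and $e_{-\hat s_1}h$ equal $\1_\Sigma$ there; on the other hand the frequencies of $\Phi$ lie in only three cosets of $\Z^2$, namely $\Z^2$ itself and the two nontrivial cosets of $s_0'-s_0$ and of $s_1'-s_1$. I would show $\Phi\equiv 0$: writing $\Phi$ as a sum of $\Z^2$-periodic functions modulated by the distinct characters attached to these cosets, the vanishing of $\Phi$ on $\Sigma^{\ast}$ — which, because $\interior(\Omega)$ meets both half-spaces $\{x_2<\tfrac12\}$ and $\{x_2>\tfrac12\}$, strictly contains $\Sigma$ and spills across the facets $\{\pm\tfrac12\}\times I$ of $\Sigma$ — lets one decouple and annihilate the modulated pieces. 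Once $\Phi\equiv0$, the frequencies of $e_{-s_0}g$ are confined to the intersection of the two two-coset supports, i.e.\ to $\Z^2$, so $e_{-s_0}g$ is $\Z^2$-periodic; being equal to $\1_\Sigma$ on $\Sigma^{\ast}\supset\Sigma\supset I\times I$, a $\Z^2$-periodic function equal to $1$ on the fundamental domain $I\times I$ must be identically $1$, contradicting its vanishing on the positive-measure set $\Sigma^{\ast}\setminus\Sigma$. This contradiction forces $\theta(s)$ to be constant on $\Pi$.

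The step I expect to be the main obstacle is precisely the separation argument yielding $\Phi\equiv0$. The covering lemma \lemref{lemF6} only excludes $\Lambda''$ from lying in two translates of $\Z^3$, whereas the dual rectangle produced by \lemref{lemH12} spreads $\Lambda''$ over four such translates, so a bare covering bound cannot close the argument; one is compelled to use the finer information that $g$ and $h$ each have frequencies in merely two cosets of $\Z^2$ and coincide with $\1_\Sigma$ up to a unimodular factor on $\Sigma^{\ast}$. Making rigorous how the polygonal geometry of $\Sigma$ and the crossing of $\{x_2=\tfrac12\}$ provide enough ``room'' in $\Sigma^{\ast}$ to separate the quasi-periodic components — and, in the degenerate case where the two nontrivial cosets coincide or $\Sigma=I\times I$, invoking instead \lemref{lemF4} or \lemref{lemF6} directly — is the delicate part that must be handled with care.
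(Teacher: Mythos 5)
Your opening — contradiction via \lemref{lemH12}, the normalization $s_0=(0,0)$, $\theta_{j_0}=0$, and the fact that $\Lambda''=\Lambda''_{j_0}\cup\Lambda''_{j_1}$ — coincides with the paper's, and your two expansions are legitimate: for a base point in $\Lambda''_{j_0}$ the cross-component terms die by \lemref{lemH2} and the same-component terms with $t\ne t_0$ die because $\hat{\1}_I$ vanishes on $\Z\setminus\{0\}$, so $f$ agrees a.e.\ on $\Omega$ with $e^{2\pi i t_0x_1}g(x_2,x_3)$, with the frequencies of $g$ in $X_0$; likewise for $h$. The genuine gap is exactly the step you flag: deducing $\Phi\equiv 0$ from $\Phi=0$ a.e.\ on $\Sigma^{*}$. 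This is not merely unproven — as a general principle it is false. Write $\Phi=P_0+e_{\mu}P_1$ (two cosets, $\mu\notin\Z^2$, with $P_0,P_1$ being $\Z^2$-periodic), choose $P_1\not\equiv 0$ vanishing a.e.\ on the set of points of $I\times I$ congruent mod $\Z^2$ to a point of $\Sigma^{*}\setminus(I\times I)$ (a set of measure $<1$ when $\Sigma^{*}$ is a modest enlargement of $I\times I$), and define $P_0:=-e_\mu P_1$ on $I\times I$, extended $\Z^2$-periodically. Then $\Phi$ vanishes a.e.\ on $\Sigma^{*}$, yet for $x\in I\times I$ and $v\in\Z^2$ one has $\Phi(x+v)=e_\mu(x)P_1(x)\bigl(e^{2\pi i\langle\mu,v\rangle}-1\bigr)\not\equiv 0$. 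So an open set slightly larger than a fundamental domain gives no decoupling whatsoever; to exclude such configurations you would have to exploit the finer structure (that $e_{-s_0}g$ and $e_{-\hat s_1}h$ each equal $\1_\Sigma$ on $\Sigma^{*}$, plus the geometry of the spill-over), and nothing in your sketch does this. Moreover, even granting $\Phi\equiv0$, your endgame requires the cosets of $(\alpha,\beta)$ and $(-\alpha,\beta)$ to be distinct mod $\Z^2$, i.e.\ $2\alpha\notin\Z$; in the degenerate case your fallback to \lemref{lemF6} does not work, since $\Lambda''$ still occupies four translates of $\Z^3$ and \lemref{lemF6} only forbids coverage by two.

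The paper closes this hole by a device your plan never uses. It applies \eqref{eqH5.1.2} with $j=j_0$, $k=j_1$ and $s_0=(0,0)\in\Pi_{j_0}$ to get the refined inclusion $\Lambda''_{j_1}\subset\R\times G_0$, where it is essential that $G_0$ of \eqref{eqHS.4}, and not just $G$, appears: combined with property (ii) of \lemref{lemH12} this shows every point of $\Lambda''$ has one of four explicit forms, and in the form $(k+\theta,n+\alpha,m)$ the integer $m$ is forced to be non-zero. Then, instead of the prism functions \eqref{eqE6.1}, the paper expands the indicator of the unit cube $f=\1_{I\times I\times I}$ in $E(\Lambda'')$: since $\hat f$ vanishes whenever some coordinate is a non-zero integer, the types $(k,n,m)$ and $(k+\theta,n+\alpha,m)$ drop out, and every surviving frequency has last coordinate in $\Z+\beta$. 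Hence $|\tilde f|$ is $1$-periodic in $z$ — the decoupling is achieved by killing the bad frequencies with a well-chosen test function, not by separating characters — and the geometric conclusions $\Omega\cap(I\times I\times\R)=I\times I\times I$, then $\Sigma=I\times I$, together with \lemref{lemF4}, give the prism contradiction uniformly, with no case distinction on $\alpha$. As written, the core analytic step of your proof is missing, and the mechanism you propose for it cannot work without substantial additional input.
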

\begin{proof}
	Suppose to the contrary that this is not the case. Then by \lemref{lemH12} there are two
	components $\Pi_{j_0}$ and $\Pi_{j_1}$ $(j_0 \neq j_1)$  of the set $\Pi$, and there are points
	$s_0,s'_0\in\Pi_{j_0}$ and $s_1,s'_1\in\Pi_{j_1}$ satisfying all the properties
	\ref{lemH12.item1}--\ref{lemH12.item4} of that lemma.
\par
 By translating the spectrum
	$\Lambda$ by a vector in $\{0\}\times\R^2$ we may assume that $s_0=(0,0)$.
	Since $s'_0-s_0 \notin G$, we have
	$s'_0=(\alpha,\beta)$ for certain real numbers $\alpha,\beta$ none of which is an
	integer. Since the pair $(s_1,s'_1)$ is dual to $(s_0,s'_0)$, it follows that
	$s_1$ and $s'_1$ are congruent modulo $\Z^2$ to the points $(\alpha,0)$ and
	$(0,\beta)$ respectively. In other words, we have $s_1\in \Z^2+(\alpha,0)$ and
	$s'_1\in\Z^2+(0,\beta)$. 
\par
By further translating $\Lambda$ by a vector in
	$\R\times \{(0,0)\}$ we may also assume that $\theta_{j_0}=0$. It will be
	convenient to denote $\theta:=\theta_{j_1}$ (notice that we then have $0<\theta<1$,
	since $\theta_{j_0}$ and $\theta_{j_1}$ are different numbers).
\par
 According to
	\lemref{lemF5}, the spectrum $\Lambda''$ cannot contain the whole set $\Z\times
	\{(0,0)\}$. This implies that by translating $\Lambda$ once more by some vector in
	$\Z\times \{(0,0)\}$ we may additionally assume that $\Lambda''$ does not contain
	the origin $(0,0,0)$. 
\par
By property \ref{lemH12.item2} from \lemref{lemH12} we have
	\begin{equation}
		\label{eqH13.1}
		\Lambda''_{j_0} \subset \Z\times  ( \Z^2+\{(0,0),(\alpha,\beta)\} ).
	\end{equation}
	Hence each point in $\Lambda''_{j_0}$ belongs to one of two possible types:
	\begin{enumerate-text}
		\item \label{type1}
			Points of the form $(k,n,m)$ where $k,n,m$ are integers, not
			all of which are zero (that $k,n,m$ cannot all be zero follows from the assumption that $\Lambda''$ does not
			contain the origin);
		\item \label{type2}
			Points of the form $(k,n+\alpha,m+\beta)$ where $k,n,m$ are
			integers. 
	\end{enumerate-text}
\par
	By the same property \ref{lemH12.item2} from \lemref{lemH12}, we also have
	\begin{equation}
		\label{eqH13.2}
		\Lambda''_{j_1}\subset  (\Z+\theta )\times
		 (\Z^2+\{(\alpha,0),(0,\beta)\}  ).
	\end{equation}
\par
	Notice that so far, we have always used \eqref{eqH5.1.2} and \eqref{eqH5.1.3}
	with the set $G_0$ on the right hand side
	actually replaced by $G$ (which is valid since $G_0$ is a subset of
	$G$). However, at this point the fact that $G_0$, and not just $G$,
	appears on the right hand side of \eqref{eqH5.1.2} will be important. We apply
	\eqref{eqH5.1.2} with $j=j_0$ and $k=j_1$, and use the assumption that $(0,0)=s_0\in
	\Pi_{j_0}$, to conclude that 
	\begin{equation}
		\label{eqH13.3}
		\Lambda''_{j_1} \subset \R \times G_0.
	\end{equation}
\par
	It then follows from \eqref{eqH13.2} and \eqref{eqH13.3} that also each point in
	$\Lambda''_{j_1}$ belongs to one of two possible types:
	\begin{enumerate-text}
		\setcounter{enumi}{2}
		\item \label{type3}
			Points of the form $(k+\theta,n+\alpha,m)$ where $k,n,m$ are
			integers, and $m$ is non-zero (that $m$ cannot be zero follows
			from \eqref{eqH13.3} and the fact that $\alpha$ is not an
			integer);
		\item \label{type4}
			Points of the form $(k+\theta,n,m+\beta)$ where $k,n,m$ are
			integers.
	\end{enumerate-text}
\par
	By property \ref{lemH12.item4} of \lemref{lemH12}, the spectrum $\Lambda''$ is the
	union of the two disjoint sets $\Lambda''_{j_0}$ and $\Lambda''_{j_1}$. We
	conclude that each point of $\Lambda''$ belongs to one of the four types 1, 2, 3 and 4
	described above. 
\par
	Now
	consider the function \[ f(x,y,z):= \1_I(x)\1_I(y)\1_I(z), \quad (x,y,z)\in \R^3, \]
 where
	$I=[-\frac{1}{2},\frac{1}{2}]$, namely, $f$ is the indicator function of the unit
	cube in $\R^3$. Then $f$ is supported by $\Omega$. Consider the Fourier expansion 
	\begin{equation}
		\label{eqH13.4}
		f=\frac{1}{|\Omega|}\sum_{\lambda\in\Lambda''}\hat{f}(\lambda)e_\lambda
	\end{equation}
	of $f$ with respect to the spectrum $\Lambda''$. Since we have 
	\[ \hat{f}(t,u,v)=\hat{\1}_I(t) \, \hat{\1}_I(u) \, \hat{\1}_I(v), \quad (t,u,v)\in \R^3, \]
	it follows that $\hat{f}(t,u,v)=0$ whenever at least one of $t,u,v$ is a non-zero
	integer. This implies that $\hat{f}$ vanishes on all the points of $\Lambda''$
	which belong to types 1 and 3. Hence only exponentials
	$e_\lambda$ such that $\lambda$ is of type $2$ or $4$ may have a non-zero
	coefficient in the expansion \eqref{eqH13.4}.
\par
 It follows (\lemref{lemPL6}) that the right-hand side
	of \eqref{eqH13.4} is a function $\tilde{f}$ of the form 
	\begin{equation}
		\label{eqH13.5}
		\tilde{f}(x,y,z)=e^{2\pi i (\alpha y+\beta z)}g(x,y,z)+e^{2\pi i
		(\theta x+\beta z)}h(x,y,z), \quad (x,y,z)\in \R^3,
	\end{equation}
	where $g$ and $h$ are $\Z^3$-periodic functions, and $f$ coincides with
	$\tilde{f}$ a.e.\ on $\Omega$. Notice that it follows from \eqref{eqH13.5} that
	the function $|\tilde{f}|$ is periodic with respect to the vector $(0,0,1)$. Since
	we have $|\tilde{f}|=|f|=1$ a.e.\ on $I\times I\times I$, the periodicity of
	$|\tilde{f}|$ implies that $|\tilde{f}|=1$ a.e.\ on $I\times I\times \R$. Hence
	$|f|=|\tilde{f}|=1$ a.e.\ on the set $\Omega\cap (I\times I\times \R)$. On the
	other hand, by its definition $f$ vanishes on the set 
	\[ \Omega\cap \big( I\times I\times (\R\setminus I)\big), \]
	so the latter set must have measure zero. We conclude that 
	\begin{equation}
		\label{eqH13.6}
		\Omega\cap (I\times I\times \R)= I\times I\times I.
	\end{equation}
\par
	Since $\Omega$ contains the prism $I\times \Sigma$, and since
	$\{\frac{1}{2}\}\times I$ and $\{-\frac{1}{2}\}\times I$ are facets of $\Sigma$, it
	follows from \eqref{eqH13.6} that $\Sigma=I\times I$. Moreover, we obtain that the
	intersection of $\Omega$ and the slab $\R\times I\times \R$ coincides with
	$I\times \Sigma$. However, by \lemref{lemF4} this contradicts our assumption that
	$\Omega$ is not a prism. This completes the proof. 
\end{proof}


\section{Spectral convex polytopes in $\R^3$ tile by translations} \label{secG2}

Based on the results obtained in the previous sections, we can now deduce:

\begin{thm}\label{thmG1}
	Let $\Omega$ be a convex polytope in $\R^3$. If $\Omega$ is spectral, then it
	tiles by translations.	
\end{thm}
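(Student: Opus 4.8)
The plan is to reduce everything to the sufficient condition already packaged in \corref{corC3}. First I would invoke \thmref{thmA1} and \thmref{thmA2} to record that a spectral convex polytope $\Omega\subset\R^3$ is centrally symmetric with centrally symmetric facets; this is precisely what lets me speak of the translation vectors $\tau_F$ and the group $T$ at all. I would then split into two cases according to whether or not $\Omega$ is a prism. If $\Omega$ is a prism, I would simply quote \thmref{thmF1}, which already settles that case, and be done. So the substance lies in the non-prism case.

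Assume then that $\Omega$ is not a prism, and let $\Lambda$ be a spectrum for $\Omega$. My goal is to verify the hypothesis of \corref{corC3}, namely that $\dotprod{\Lambda-\Lambda}{\tau_F}\subset\Z$ for every facet $F$; once this is established, \corref{corC3} immediately yields that $\Omega+T$ is a tiling. I would fix an arbitrary facet $F$ and exploit that this condition is invariant under affine changes of variables, which transform $\Omega$, $\Lambda$ and $\tau_F$ in a compatible way that preserves the pairing $\dotprod{\lambda}{\tau_F}$. After such a transformation I may assume $\Omega=-\Omega$ and $F=\{\tfrac{1}{2}\}\times\Sigma$ with $\Sigma$ a centrally symmetric convex polygon in $\R^2$, so that $\tau_F=(1,0,0)$ and the target condition for $F$ reads $\Lambda-\Lambda\subset\Z\times\R^2$.

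Next I would bring in the geometry supplied by the non-prism hypothesis. By \lemref{lemF3} there is a subfacet $A$ of $F$ whose hyperplane $H_{F,A}$ has $\interior(\Omega)$ meeting both of its open half-spaces. A further affine transformation acting only on the $(x_2,x_3)$-coordinates---which fixes $F\subset\{x_1=\tfrac{1}{2}\}$ and keeps $\tau_F=(1,0,0)$---lets me normalize $A=\{\tfrac{1}{2}\}\times\{\tfrac{1}{2}\}\times I$, so that $\{\tfrac{1}{2}\}\times I$ becomes a facet of $\Sigma$ and $\interior(\Omega)$ intersects each of $\{x_2<\tfrac{1}{2}\}$ and $\{x_2>\tfrac{1}{2}\}$. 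These are exactly the standing assumptions of \secref{secH3}, so \lemref{lemH13} applies and tells me that the phase function $\theta(s)$ is constant on the set $\Pi$ attached to $\Lambda$. Feeding this into \corref{corE7} produces $\Lambda-\Lambda\subset\Z\times\R^2$, that is, $\dotprod{\Lambda-\Lambda}{\tau_F}\subset\Z$ for this $F$. Since $F$ was arbitrary the condition holds for every facet, and \corref{corC3} finishes the proof.

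I expect essentially no new obstacle at this final stage: the genuinely hard work is \lemref{lemH13}, which forces $\theta$ to be constant on $\Pi$ for a non-prism in standard position, together with the whole machinery of Sections~\ref{secD1}--\ref{secH3} on which it rests. The only points needing care here are organizational---treating the prism case separately through \thmref{thmF1}, and checking that the two affine normalizations can be carried out compatibly, first placing $F$ as $\{\tfrac{1}{2}\}\times\Sigma$ and then placing the subfacet $A$, so that all the hypotheses of \lemref{lemH13} are met simultaneously while $\tau_F=(1,0,0)$ is preserved.
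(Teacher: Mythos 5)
Your proposal is correct and is essentially identical to the paper's own proof: the paper likewise combines Theorems~\ref{thmA1} and \ref{thmA2}, handles the prism case via \thmref{thmF1}, and in the non-prism case proves (as \lemref{lemG2}) that every spectrum satisfies $\dotprod{\Lambda-\Lambda}{\tau_F}\subset\Z$ by normalizing to standard position with \lemref{lemF3}, applying \lemref{lemH13} and \corref{corE7}, and then concluding with \corref{corC3}. The only difference is organizational --- the paper isolates the facet condition as a standalone lemma, while you verify it inline with a slightly more explicit discussion of the affine normalizations.
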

\subsection{}
By Theorems \ref{thmA1} and \ref{thmA2}, the polytope $\Omega$
must be centrally symmetric and have centrally symmetric facets. Since
\thmref{thmG1} was already proved in the case when $\Omega$ is a prism (\thmref{thmF1}),
it remains to consider the case when $\Omega$ is not a prism. 
\begin{lem} \label{lemG2}
	Let $\Omega$ be a convex polytope in $\R^3$, centrally symmetric and with
	centrally symmetric facets, which is not a prism. If $\Lambda$ is a spectrum of
	$\Omega$, then 
	\begin{equation}
		\dotprod{\Lambda-\Lambda}{\tau_F}\subset\Z 
		\label{eqG2.1}
	\end{equation}
	for every facet $F$ of $\Omega$. 
\end{lem}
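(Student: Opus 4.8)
The plan is to fix a single facet $F$ of $\Omega$ and prove the inclusion $\dotprod{\Lambda-\Lambda}{\tau_F}\subset\Z$ for that facet; since $F$ is arbitrary, this gives the lemma. The point is that almost all of the work has already been carried out in the preceding sections, so the argument reduces to putting $\Omega$ into the right normalized position and then invoking the structural results established earlier.

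First I would observe that the condition to be proved is invariant under invertible affine transformations. Indeed, if $M$ is an invertible matrix, then $(M^{-1})^\top\Lambda$ is a spectrum for $M\Omega$, and the facet $F$ with translation vector $\tau_F$ is carried to the facet $MF$ of $M\Omega$ with translation vector $M\tau_F$; a one-line computation gives $\dotprod{(M^{-1})^\top(\Lambda-\Lambda)}{M\tau_F}=\dotprod{\Lambda-\Lambda}{\tau_F}$. Hence we are free to place $\Omega$ into the ``standard position'' used from \secref{secD1} onward: $\Omega=-\Omega$, with $F\subset\{x_1=\tfrac{1}{2}\}$ centered at $(\tfrac12,0,0)$, so that $F=\{\tfrac12\}\times\Sigma$ for a centrally symmetric convex polygon $\Sigma\subset\R^2$ and $\tau_F=(1,0,0)$. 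In these coordinates the target $\dotprod{\Lambda-\Lambda}{\tau_F}\subset\Z$ becomes $\Lambda-\Lambda\subset\Z\times\R^2$.

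Next, because $\Omega$ is not a prism, \lemref{lemF3} provides a subfacet $A$ of $F$ such that $\interior(\Omega)$ meets both open half-spaces bounded by the hyperplane $H_{F,A}$. Applying a further affine transformation acting only on the $(x_2,x_3)$-coordinates---which therefore preserves the normalization already achieved---I would arrange that $A=\{\tfrac12\}\times\{\tfrac12\}\times I$, so that $\{\tfrac12\}\times I$ is a facet of $\Sigma$ and $\interior(\Omega)$ meets both $\{x_2<\tfrac12\}$ and $\{x_2>\tfrac12\}$. These are precisely the hypotheses imposed at the beginning of \secref{secH3}, so \lemref{lemH13} applies and shows that the function $\theta(s)$ is constant on $\Pi$. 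Then \corref{corE7} turns the constancy of $\theta$ directly into $\Lambda-\Lambda\subset\Z\times\R^2$, which is exactly \eqref{eqG2.1} in the normalized coordinates; undoing the transformations gives the claim for $F$.

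The only genuine care required is in the bookkeeping of the two normalizations: the first pins down the hyperplane $\{x_1=\tfrac12\}$ and the symmetry $\Omega=-\Omega$, while the second, acting purely in the $(x_2,x_3)$-plane, refines the position of the chosen subfacet $A$ without disturbing these features, and one checks that the property of being a prism is itself affine-invariant. Beyond this there is no real obstacle here, because the substantive difficulty---ruling out that $\theta$ attains two distinct values, which needed the entire chain of limiting constructions $\Lambda\to\Lambda'\to\Lambda''$ together with the duality analysis of \secref{secH3}---is already entirely absorbed into \lemref{lemH13}.
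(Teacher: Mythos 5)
Your proof is correct and follows essentially the same route as the paper's own argument: reduce to the standard position via an affine transformation (using \lemref{lemF3} to position the subfacet $A$ so that $\interior(\Omega)$ meets both half-spaces bounded by $H_{F,A}$), then invoke \lemref{lemH13} to get constancy of $\theta(s)$ on $\Pi$ and \corref{corE7} to conclude $\Lambda-\Lambda\subset\Z\times\R^2$. The extra detail you supply on the affine-invariance bookkeeping is fine and merely makes explicit what the paper leaves implicit.
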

This result is the three-dimensional analog of \lemref{lemE9}. By combining
\lemref{lemG2} with \corref{corC3} we immediately obtain that
$\Omega$ tiles by translations,  hence it only remains to prove the lemma. 
\subsection{}
\lemref{lemG2} is a direct consequence of our previous results:
\begin{proof}[Proof of \lemref{lemG2} ]
	Let $F$ be a facet of $\Omega$. We must show that if $\Lambda$ is a spectrum of
	$\Omega$, then it satisfies condition \eqref{eqG2.1}. Since $\Omega$ is not a
	prism, we may use \lemref{lemF3} to select a subfacet $A$ of $F$ such that
	$\interior(\Omega)$ intersects each one of the two open half-spaces bounded by the
	hyperplane $H_{F,A}$. 
\par
By applying an affine transformation we may suppose
	that $\Omega$ is in our ``standard position'', namely, $\Omega=-\Omega$,
	$F=\{\frac{1}{2}\}\times\Sigma$ where $\Sigma$ is a convex polygon in $\R^2$,
	$\Sigma=-\Sigma$, and $A=\{\frac{1}{2}\}\times\{\frac{1}{2}\}\times I$, where
		$I=\left[-\frac{1}{2},\frac{1}{2}\right]$. The hyperplane 
		$H_{F,A}$ is therefore given by $\{x_2=\frac{1}{2}\}$, and hence $\interior(\Omega)$ intersects
		both half-spaces $\{x_2<\frac{1}{2}\}$ and $\{x_2>\frac{1}{2}\}$. We also
		have $\tau_F=(1,0,0)$, so that condition \eqref{eqG2.1} becomes 
		\begin{equation}
			\label{eqG2.3}
			\Lambda-\Lambda \subset \Z\times \R^2.
		\end{equation}
\par
		Let $\Pi$ be the set constructed from $\Lambda$ in \secref{secD1}, and
		$\theta(s)$ be the function on $\Pi$ given by \lemref{lemD2}. Since all
		the assumptions of \secref{secH3} are satisfied, we may apply
		\lemref{lemH13}, which yields that the function $\theta(s)$ is constant on
		$\Pi$. By \corref{corE7} this implies that \eqref{eqG2.3} holds, which concludes
		the proof.
\end{proof}


\section{Uniqueness of the spectrum} \label{secG3}
The approach that was used above to prove that in dimensions $d=2,3$
any spectral convex polytope $\Omega$ can tile by translations, also allows us to establish that, 
except in the case when $\Omega$ is a prism, the spectrum is unique up to translation.
\subsection{}
To prove this we use the following lemma, which is valid in any dimension
$d$ (not just $d=2,3$). 
\begin{lem}
	\label{lemG7}
	Let $\Omega\subset \R^d$ be a convex polytope, centrally symmetric and with
	centrally symmetric facets. Suppose that $\Omega$ has a spectrum $\Lam$
	satisfying \eqref{eqC1.2}  for every 
	facet $F$ of $\Omega$. Then $\Lambda$ is a translate of the
	lattice $T^*$, the dual of the lattice $T$ given by \eqref{eqC1.1}. 
\end{lem}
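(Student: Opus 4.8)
The plan is to show that, after normalizing $\Lambda$ to contain the origin, the hypothesis \eqref{eqC1.2} forces $\Lambda\subset T^*$, and then to upgrade this inclusion to an equality by a completeness argument. First I would use the translation-invariance of the spectrum property to assume, without loss of generality, that $0\in\Lambda$. Since \eqref{eqC1.2} is assumed for every facet $F$, and the vectors $\tau_F$ generate $T$, it follows that for every $\tau=\sum_F k_F\tau_F\in T$ and every $\lambda,\lambda'\in\Lambda$ we have $\dotprod{\lambda'-\lambda}{\tau}=\sum_F k_F\dotprod{\lambda'-\lambda}{\tau_F}\in\Z$. Thus $\dotprod{\Lambda-\Lambda}{\tau}\subset\Z$ for all $\tau\in T$, which is precisely the statement that $\Lambda-\Lambda\subset T^*$. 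Because $0\in\Lambda$, this yields $\Lambda\subset T^*$.

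Next I would recall that, by \corref{corC3}, the present hypotheses guarantee that $T$ is a lattice and that $\Omega+T$ is a tiling. By Fuglede's observation recorded in \secref{secI1} (a lattice tiling of $\Omega$ implies that the dual lattice is a spectrum for $\Omega$), the dual lattice $T^*$ is itself a spectrum for $\Omega$; in particular the system $E(T^*)$ is both orthogonal and complete in $L^2(\Omega)$.

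Finally I would show that $\Lambda$ cannot be a proper subset of $T^*$. Suppose to the contrary that there is a point $\mu\in T^*\setminus\Lambda$. Since $\Lambda\subset T^*$ and $E(T^*)$ is orthogonal, the exponential $e_\mu$ is orthogonal in $L^2(\Omega)$ to every $e_\lambda$ with $\lambda\in\Lambda$; indeed $\dotprod{e_\lambda}{e_\mu}_{L^2(\Omega)}=\hat{\1}_\Omega(\mu-\lambda)=0$, because $\mu-\lambda$ is a nonzero element of the lattice $T^*$. As $e_\mu$ does not vanish in $L^2(\Omega)$, this contradicts the completeness of $E(\Lambda)$. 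Hence $\Lambda=T^*$, and undoing the initial translation shows that $\Lambda$ is a translate of $T^*$. The only genuine ingredient beyond elementary bookkeeping is the fact that $T^*$ is a spectrum, which rests on \corref{corC3} together with Fuglede's lattice criterion; given these, the remaining orthogonality-and-completeness step is immediate, so I anticipate no serious obstacle.
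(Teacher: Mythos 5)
Your proposal is correct and follows essentially the same route as the paper's own proof: normalize $\Lambda$ to contain the origin, deduce $\Lambda\subset T^*$ from \eqref{eqC1.2}, invoke \corref{corC3} together with Fuglede's lattice criterion to conclude that $T^*$ is a spectrum, and then obtain $\Lambda=T^*$ because a spectrum cannot properly contain another spectrum. The only difference is cosmetic: the paper cites this last fact without proof, whereas you spell out the underlying orthogonality-and-completeness argument explicitly.
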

\begin{proof}
	By \corref{corC3}, the set $T$ given by \eqref{eqC1.1} is a lattice, and $\Omega+T$ is a tiling.
	Hence by Fuglede's theorem the dual lattice $T^*$  is a spectrum for $\Omega$. 
	By translating $\Lambda$ we may assume that it contains the origin. So \eqref{eqC1.2} implies that 
	\[ \dotprod{\Lambda}{\tau}\subset \Z, \quad \tau\in T. \]
	This means that $\Lambda$ is a subset of $T^*$. But since
	no proper subset of a spectrum
	can also be  a spectrum, we must therefore have $\Lambda=T^*$. The lemma is thus proved.
\end{proof}
From this lemma we immediately obtain the following
sufficient condition for a spectral convex polytope 
to admit a unique spectrum up to translation:
\begin{corollary}
	\label{corG7.1}
	Let $\Omega\subset \R^d$ be a convex polytope, centrally symmetric and with
	centrally symmetric facets. Assume that $\Omega$ is spectral, and that condition
	\eqref{eqC1.2} is satisfied for every spectrum $\Lambda$ of $\Omega$ and every
	facet $F$ of $\Omega$. Then $\Omega$ has a unique spectrum up to translation.
	More specifically, every spectrum $\Lambda$ of $\Omega$ is a translate of the
	lattice $T^*$. 
\end{corollary}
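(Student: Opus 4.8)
The plan is to deduce the corollary directly from \lemref{lemG7}, which already carries the entire analytic content; the corollary merely upgrades the conclusion from a single favorable spectrum to \emph{all} spectra. Accordingly, the statement to be proved is a universally quantified one: every spectrum of $\Omega$ must be a translate of the one fixed lattice $T^*$.

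First I would fix an arbitrary spectrum $\Lambda$ of $\Omega$. By the standing hypothesis of the corollary, this particular $\Lambda$ satisfies condition \eqref{eqC1.2} for every facet $F$ of $\Omega$ --- which is exactly the hypothesis needed to invoke \lemref{lemG7}. Applying that lemma to $\Lambda$ yields at once that $\Lambda$ is a translate of $T^*$, where $T$ is the subgroup defined in \eqref{eqC1.1}. Since $\Lambda$ was an arbitrary spectrum, the same conclusion holds for every spectrum of $\Omega$: each one lies in the single translation class of $T^*$. In particular, any two spectra of $\Omega$ differ by a translation, which is precisely the assertion that $\Omega$ has a unique spectrum up to translation, and it also gives the sharper claim that every spectrum is a translate of $T^*$.

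I do not expect any genuine obstacle: the difficulty has already been discharged inside \lemref{lemG7}, and the only addition here is the passage from ``some spectrum'' to ``every spectrum,'' handled by the word \emph{arbitrary} in the step above. The one point worth recording, to see that the statement is not vacuous, is that \lemref{lemG7} already invokes \corref{corC3} to guarantee that $T$ is in fact a lattice, that $\Omega + T$ is a tiling, and hence (by Fuglede's theorem) that the dual lattice $T^*$ is a well-defined spectrum of $\Omega$. Thus the common translation class is non-empty and contains $T^*$ itself, so the uniqueness conclusion is meaningful.
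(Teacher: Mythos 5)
Your proposal is correct and matches the paper exactly: the paper states that \corref{corG7.1} follows immediately from \lemref{lemG7}, which is precisely your argument of applying that lemma to an arbitrary spectrum $\Lambda$ (each of which satisfies \eqref{eqC1.2} by hypothesis) to conclude that every spectrum is a translate of $T^*$. Your added remark on non-vacuousness, namely that $T^*$ is itself a spectrum via \corref{corC3} and Fuglede's theorem, is also consistent with the paper's reasoning inside \lemref{lemG7}.
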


\subsection{}
The criterion just proved can now be applied to the following situations: 
\begin{thm}\label{thmG8}
	Let $\Omega$ be a spectral convex polygon in $\R^2$ which is not a parallelogram.
	Then $\Omega$ admits a unique spectrum up to translation.
\end{thm}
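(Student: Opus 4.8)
The plan is to reduce this directly to the machinery already in place, specifically \corref{corG7.1}. That corollary says that if $\Omega$ is a spectral convex polytope which is centrally symmetric with centrally symmetric facets, and if condition \eqref{eqC1.2} holds for \emph{every} spectrum $\Lambda$ and every facet $F$, then $\Omega$ has a unique spectrum up to translation. So the entire task is to verify its hypotheses in the present setting.

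First I would note that since $\Omega$ is a spectral convex polygon in $\R^2$, by \thmref{thmA1} it is centrally symmetric, and since its facets are one-dimensional line segments they are automatically centrally symmetric. Thus the standing hypotheses of \corref{corG7.1} are met. Next, I would invoke \lemref{lemE9}: that lemma asserts precisely that for a convex, centrally symmetric polygon $\Omega$ in $\R^2$ which is \emph{not a parallelogram}, every spectrum $\Lambda$ of $\Omega$ satisfies $\dotprod{\Lambda-\Lambda}{\tau_F}\subset\Z$ for every facet $F$. This is exactly condition \eqref{eqC1.2}, and crucially \lemref{lemE9} holds for an \emph{arbitrary} spectrum $\Lambda$, not just for some distinguished one. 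Hence the quantifier ``for every spectrum $\Lambda$'' demanded by \corref{corG7.1} is supplied for free.

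Combining these, \corref{corG7.1} applies verbatim and yields that $\Omega$ has a unique spectrum up to translation — indeed that every spectrum is a translate of the dual lattice $T^*$, where $T=T(\Omega)$ is the group generated by the facet translation vectors $\tau_F$ as in \eqref{eqC1.1}. This completes the proof.

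Since almost all the content is packaged in the earlier lemmas, there is no serious obstacle here; the one point requiring a moment's care is simply checking that the hypothesis of \lemref{lemE9} (that $\Omega$ is not a parallelogram) is exactly the exclusion built into the statement of \thmref{thmG8}, so the two match perfectly. The role of excluding parallelograms is essential and cannot be dropped: as noted in the introduction (see \cite[Section 2]{JorPed99}), a parallelogram admits infinitely many translation-inequivalent spectra, so the uniqueness conclusion genuinely fails in that case. Thus the proof is a clean two-step citation of \lemref{lemE9} and \corref{corG7.1}.
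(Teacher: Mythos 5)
Your proof is correct and is essentially identical to the paper's own argument: the paper likewise deduces \thmref{thmG8} by combining \thmref{thmA1} (plus the automatic central symmetry of one-dimensional facets) with \lemref{lemE9} and \corref{corG7.1}. Your explicit remark that \lemref{lemE9} applies to an arbitrary spectrum, which is what supplies the universal quantifier required by \corref{corG7.1}, is exactly the point the paper relies on.
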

\begin{thm}\label{thmG9}
	Let  $\Omega$ be a spectral convex polytope in $\R^3$ which is not a prism. Then
	$\Omega$ admits a unique spectrum up to translation.
\end{thm}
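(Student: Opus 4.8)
The plan is to deduce \thmref{thmG9} directly from the uniqueness criterion of \corref{corG7.1}. That corollary reduces the uniqueness of the spectrum to the following two ingredients: that $\Omega$ be centrally symmetric with centrally symmetric facets, and that the condition \eqref{eqC1.2}, namely $\dotprod{\Lambda-\Lambda}{\tau_F}\subset\Z$, hold for \emph{every} spectrum $\Lambda$ of $\Omega$ and every facet $F$. So the entire task is to verify these hypotheses for a spectral convex polytope in $\R^3$ which is not a prism.

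First I would record that the structural hypotheses are automatic. Since $\Omega$ is spectral, \thmref{thmA1} (Kolountzakis) gives that $\Omega$ is centrally symmetric, and \thmref{thmA2} gives that all of its facets are centrally symmetric as well. Consequently the opposite facet $F'$ of each facet $F$ is a translate of $F$, the translation vectors $\tau_F$ are well defined, and the group $T$ of \eqref{eqC1.1} makes sense; thus the setting of \corref{corG7.1} is in force. The heart of the argument is then to supply condition \eqref{eqC1.2} for an arbitrary spectrum, and this is exactly the content of \lemref{lemG2}: for a convex polytope in $\R^3$ which is centrally symmetric, has centrally symmetric facets, and is \emph{not} a prism, any spectrum $\Lambda$ satisfies $\dotprod{\Lambda-\Lambda}{\tau_F}\subset\Z$ for every facet $F$. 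Feeding this into \corref{corG7.1} yields at once that $\Omega$ admits a unique spectrum up to translation, and more precisely that every spectrum is a translate of the dual lattice $T^*$.

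Granting \lemref{lemG2}, the proof of \thmref{thmG9} is therefore a short assembly, and there is no genuine obstacle remaining at this level. The real difficulty is entirely absorbed into \lemref{lemG2}, which in turn rests on the structural analysis of the spectrum culminating in \lemref{lemH13} (the constancy of $\theta(s)$ on $\Pi$) together with \corref{corE7} and the covering statement \thmref{thmC1}. Two points are worth emphasizing in the write-up. First, \corref{corG7.1} demands \eqref{eqC1.2} for \emph{every} spectrum, not just for a single spectrum or for one constructed by a limiting procedure; it is precisely because \lemref{lemG2} is proved for an arbitrary spectrum $\Lambda$ that the uniqueness conclusion is available. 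Second, the hypothesis that $\Omega$ is not a prism is indispensable here: as \exampref{expD4.2} shows, a prism over a spectral base admits spectra of the non-product form $\bigcup_{\gamma}(\Z+\theta(\gamma))\times\{\gamma\}$ with the $\theta(\gamma)$ arbitrary, which violate \eqref{eqC1.2} and furnish infinitely many non translation-equivalent spectra — exactly the behaviour excluded in the statement.
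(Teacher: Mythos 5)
Your proposal is correct and follows exactly the paper's own argument: central symmetry of $\Omega$ and its facets from Theorems \ref{thmA1} and \ref{thmA2}, verification of condition \eqref{eqC1.2} for an arbitrary spectrum via \lemref{lemG2}, and then the uniqueness criterion of \corref{corG7.1}. Your two emphasized points (that \eqref{eqC1.2} must hold for \emph{every} spectrum, and that the non-prism hypothesis is essential in view of \exampref{expD4.2}) are also precisely the considerations underlying the paper's deduction.
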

Indeed, by Theorems \ref{thmA1} and \ref{thmA2}, the polytope $\Omega$ must be centrally
symmetric and have centrally symmetric facets. Hence \thmref{thmG8} follows from \lemref{lemE9} and
\corref{corG7.1}, while \thmref{thmG9} is a consequence of \lemref{lemG2} and \corref{corG7.1}.
\par
Remark that the assumptions that  $\Omega$ is not a parallelogram in $\R^2$, and that it
is not a prism in $\R^3$, are necessary in these
results. Indeed, we have seen in \exampref{expD4.2} that if $\Omega$ is a prism, then it admits 
infinitely many non  translation-equivalent spectra.


\section{Remarks and open problems} \label{secJ1}
\subsection{}
It would be interesting to extend \thmref{thmI1.2} to dimensions $d\ge 4$. 
\begin{problem}
	\label{probJ1.1}
	Let $\Omega$ be a convex polytope in $\R^d$ $(d\ge 4)$. Prove that if $\Omega$ is 
	spectral, then it can tile the space by translations.
\end{problem}
We know (Theorems \ref{thmA1} and \ref{thmA2}) that such an $\Omega$ must be centrally symmetric and
have centrally symmetric facets. 
\par
Using our previous results, the assertion in Problem \ref{probJ1.1} can be verified
for the class of \emph{four-dimensional convex prisms} 
(the polytopes $\Omega \subset \R^4$ which can be expressed as the Minkowski sum of a three-dimensional
convex polytope and a line segment):
\begin{thm}
	\label{thmJ1.2}
	Let $\Omega$ be a convex prism in $\R^4$. If $\Omega$ is spectral, then it can tile
	by translations. 
\end{thm}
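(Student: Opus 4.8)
The plan is to reduce \thmref{thmJ1.2} to the three-dimensional result \thmref{thmG1} by peeling off the segment factor of the prism, in exactly the same way that \thmref{thmF1} was obtained from \thmref{thmE8}. First I would apply an affine transformation to put $\Omega$ in the form $\Omega = I \times \Sigma$, where $I = [-\tfrac{1}{2},\tfrac{1}{2}]$ and $\Sigma$ is a (full-dimensional) convex polytope in $\R^3$. This is legitimate because, by definition, any convex prism in $\R^4$ is the affine image of the Minkowski sum of a three-dimensional convex polytope and a line segment, and the spectrality and tiling properties are both preserved under invertible affine maps.

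Next, since $\Omega$ is assumed to be spectral, I would invoke \thmref{thmF2}, which is valid in all dimensions $d \ge 2$ and hence applies with $d=4$, to conclude that the base $\Sigma$ is a spectral set in $\R^3$. At this point $\Sigma$ is a spectral convex polytope in three dimensions, so \thmref{thmG1} (equivalently \thmref{thmI1.2}) applies and yields that $\Sigma$ tiles $\R^3$ by translations. Fix a set $\Gamma \subset \R^3$ such that $\Sigma + \Gamma$ is a tiling of $\R^3$.

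Finally I would assemble a tiling of $\R^4$ from this data. With $I=[-\tfrac{1}{2},\tfrac{1}{2}]$, the family of translates $\Omega + (\Z \times \Gamma)$ forms a partition of $\R^4$ up to measure zero, since $\{I+k\}_{k \in \Z}$ tiles $\R$ while $\{\Sigma+\gamma\}_{\gamma \in \Gamma}$ tiles $\R^3$; alternatively one may cite the tiling analog of \thmref{thmF2} from \cite{GriLev16a}, that a cylindric set tiles if and only if its base does. Undoing the affine transformation then shows that the original $\Omega$ tiles $\R^4$ by translations, which completes the proof.

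I do not expect a serious obstacle here: the statement is essentially a corollary of the already-established \thmref{thmF2} and \thmref{thmG1}, paralleling the deduction of \thmref{thmF1} from \thmref{thmF2} and \thmref{thmE8}. The only points requiring any care are confirming that the base $\Sigma$ is genuinely a three-dimensional convex polytope (so that \thmref{thmG1} is applicable rather than a lower-dimensional or degenerate case), and verifying that the product translation set $\Z \times \Gamma$ really does give a measure-theoretic partition of $\R^4$; both are routine.
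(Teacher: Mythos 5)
Your proposal is correct and is essentially identical to the paper's own proof: the paper deduces \thmref{thmJ1.2} by combining \thmref{thmF2} with \thmref{thmG1}, exactly mirroring the deduction of \thmref{thmF1} from Theorems \ref{thmF2} and \ref{thmE8}, including the affine normalization $\Omega = I \times \Sigma$ and the product translation set $\Z \times \Gamma$.
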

Indeed, this follows from a combination of Theorems \ref{thmF2} and \ref{thmG1}, in the same way as
we have deduced \thmref{thmF1} from Theorems \ref{thmE8} and \ref{thmF2}.

\subsection{} 
It is conceivable that Problem \ref{probJ1.1} could be solved in the general case by an
appropriate development of our approach. However, there are certain difficulties
which should be addressed in extending our proof to higher dimensions.

\par
One problem is to identify the
class of polytopes that would play the role of the parallelograms in two dimensions, and of
the prisms in three dimensions. The spectral polytopes in these classes do not have a unique
spectrum up to translation, and it was therefore necessary to exclude them in Lemmas
\ref{lemE9} and \ref{lemG2}, and, for $d=3$, to prove by a different method that they can tile by
translations (\thmref{thmF1}).
\par
 Another problem in higher dimensions might be to obtain an analog of \lemref{lemH3}. In that lemma we have
used the fact that in three dimensions, all the subfacets of $\Omega$ are line segments,
and hence in particular they are also centrally symmetric. However, 
a spectral convex polytope $\Omega$ in $\R^d$  $(d\ge 4)$ need not have centrally symmetric $k$-dimensional
faces, for any $2\le k\le d-2$ (see \secref{subsecA2.1}).
\par
The latter problem disappears, though, if we impose the extra assumption that the
convex polytope $\Omega$ is a \define{zonotope}. Thus we propose
the following restricted version of Problem \ref{probJ1.1}.
\begin{problem}
	\label{probJ1.3}
	Let $\Omega$ be a zonotope in $\R^d$ $(d\ge 4)$. Prove that if $\Omega$ is
	spectral, then it tiles by translations. 
\end{problem}
\subsection{} 
It would also be interesting to know whether the conclusion of \thmref{thmI1.2} is true
for any convex body $\Omega$ (not assumed a priori to be a polytope).
The paper \cite{IKT03}  contains a proof that, in two dimensions, a spectral convex body $\Omega$ 
must be a polygon. As far as we know, no such a result has been proved in
dimensions $d\ge 3$.
\begin{problem}
	\label{probJ1.4}
	Let $\Omega$ be a convex body in $\R^d$. Prove that if $\Omega$ is
	a spectral set, then it must be a polytope. 
\end{problem}
It is known \cite{IKT01} that $\Omega$ cannot have a smooth boundary. Using the results in
\cite{GriLev16a} it follows that the assertion is also true if $\Omega$ is a cylindric 
convex body whose base has a smooth boundary.


\end{document}